\newtheorem{theorem}{Theorem}[section]
\newtheorem{definition}[theorem]{Definition}
\newtheorem{prop}[theorem]{Proposition}
\newtheorem{lemma}[theorem]{Lemma}
\newtheorem{cor}[theorem]{Corollary}
\newcommand{\Ee}{{\mathcal E}}
\newcommand{\Rr}{{\mathcal R}}
\newcommand{\Uu}{{\mathcal U}}
\newcommand{\CC}{{\bf C}}
\newcommand{\NN}{{\bf N}}
\newcommand{\id}{\mbox{id}}
\newcommand{\AM}{{\mathbb A}}
\newcommand{\TM}{{\mathbb T}}
\newcommand{\R}{\mathbb{R}}
\newcommand{\N}{\mathbb{N}}
\newcommand{\Z}{\mathbb{Z}}
\newcommand{\C}{\mathbb{C}}
\newcommand{\Del}{{\mathcal L}}
\newcommand{\Xmax}{X_{max}}
\newcommand{\XMmax}{\X_{max}}
\newcommand{\pmax}{\pi_{max}}
\newcommand{\fb}[1]{\pmax^{-1}(#1)}
\newcommand{\pt}{I}
\newcommand{\pct}{\mathfrak p}
\newcommand{\tct}{\mathfrak t}
\newcommand{\Tct}{\mathfrak T}
\newcommand{\nd}{{N}}
\newcommand{\np}{{N^\perp}}
\newcommand{\Omax}{\Omega_{max}}
\newcommand{\X}{{\Xi}}
\newcommand{\Tp}{\TM^\perp}
\newcommand{\OP}{\Omega_\Phi}
\newcommand{\Phimax}{\Phi_{max}}
\newcommand{\supp}{\mbox{\rm supp}}
\newcommand{\TT}{\mathbb{T}}
\newcommand{\Gdual}{\widehat{{G}}}
\newcommand{\Oomega}{(X,G)}
\newcommand{\Ttheta}{(Y,G)}
\newcommand{\oplam}{\mbox{\Large $\curlywedge$}}
\newcommand{\Hm}[1]{\leavevmode{\marginpar{\tiny%
$\hbox to 0mm{\hspace*{-0.5mm}$\leftarrow$\hss}%
\vcenter{\vrule depth 0.1mm height 0.1mm width \the\marginparwidth}%
\hbox to 0mm{\hss$\rightarrow$\hspace*{-0.5mm}}$\\\relax\raggedright
#1}}}
\title{Equicontinuous factors, proximality and Ellis semigroup for
  Delone sets}
\author{J.-B.~Aujogue, M.~Barge, J.~Kellendonk, D.~Lenz}
\date{\today}
\begin{document}

\maketitle

\tableofcontents

\section{Introduction}

Recurrence properties of patterns in a Euclidean point set may be effectively studied by means of an associated dynamical system. The idea, due originally to Dan Rudolph (\cite{Rudolph}) and analogous to the notion of a sub-shift in Symbolic Dynamics, is as follows: Given a point set $\Del\subset\R^N$, let $\Omega_\Del$, the {\em hull} of $\Del$, be the collection of all point sets in $\R^N$, each of which is locally indistinguishable from $\Del$. There is a natural {\em local topology} on $\Omega_\Del$, $\R^N$ acts on $\Omega_\Del$ by translation, and the structure of $\Del$ is encoded in the topological dynamics of the system $(\Omega_\Del,\R^N)$. In this article we consider the correspondence between  properties of the point set $\Del$ and the closeness to equicontinuity of  the system $(\Omega_\Del,\R^N)$. (For an excellent survey of the dynamical properties of the hull of a point set, with an emphasis on sets arising from substitutions, see \cite{Robinson2004}.)

Consider, for example, the `crystalline' case of a completely periodic set $\Del=\{a+kv_i:i=1,\ldots,N,\, k\in\Z\}\subset\R^N$ with $a\in\R^N$ and linearly independent $v_i\in\R^N$. The hull of $\Del$ is in this case just the set of translates $\Omega_\Del=\{\Del-t:t\in\R^N\}$. Letting $L$ be the lattice spanned by the basis $\{v_i\}_{i=1}^N$, we may identify $\Omega_\Del$ with the torus $\R^N/L$ by $\Del-t\leftrightarrow L-t$, and we see that the dynamical system $(\Omega_\Del,\R^N)$ is simply the translation action on a compact abelian group. This is an {\em equicontinuous action}, that is, the collection of homeomorphisms $\{\alpha_t:\Del'\mapsto \Del'-t\}_{t\in\R^n}$ of $\Omega_\Del$ is an equicontinuous family; in particular, if $\Del',\Del''\in \Omega_\Del$ are {\em proximal} (that is, the distance between $\Del'-t$ and $\Del''-t$ is not bounded away from zero)  then $\Del'=\Del''$.

We are of course more interested in $\Del$ that are highly structured (in this article, always Delone with finite local complexity, usually repetitive, and often Meyer) but not periodic.
The hull will no longer be a group, and there will be distinct elements which are proximal so the action is no longer equicontinuous, but we can
begin to understand the system $(\Omega_\Del,\R^N)$, and thus the structure of $\Del$, by comparing $(\Omega_\Del,\R^N)$ with its largest (maximal) equicontinuous factor $(\Omax,\R^N)$.
Existence of a maximal equicontinuous factor is immediate: Let $R_{max}$ be the intersection of all closed invariant equivalence relations  $R$ on $\Omega_\Del$ for which the action on $\Omega_\Del/R$ is equicontinuous. Then $\Omax:=\Omega_\Del/R_{max}$ is a maximal equicontinuous factor; uniqueness of $\Omax$ (up to topological conjugacy) is a consequence of maximality. But understanding what is collapsed in passing from $\Omega_\Del$ to $\Omax$ requires a more concrete formulation of the equicontinuous structure relation $R_{max}$.
It is clear that elements $\Del',\Del''\in \Omega_\Del$ which are proximal must be identified in $\Omax$, and for many of the familiar $\Del$, $R_{max}$ is just the proximal relation. Clearly, the equicontinuous structure relation is a closed equivalence relation; in general, however, the proximal relation is neither closed nor transitive. To illustrate this, consider the example\footnote{This example is not repetitive and hence is a bit misleading in its simplicity: the hull of any repetitive and non-periodic Delone set is considerably more complicated, having uncoutably many $\R^N$-orbits and being locally the product of a Cantor set with a Euclidean disk (see, for example, \cite{SadunWilliams}).}  with $\Del:=-2\N\cup\N\subset\R$. The hull $\Omega_\Del$ is the disjoint union of three $\R$-orbits, namely it contains besides the translates of $\Del$ also those of $\Del^+:=\Z$ and $\Del^-:=2\Z$.
It thus consists of two circles $\{\Del^{+}-t: t\in\R\}=\R/\Z$, $\{\Del^{-}-t: t\in\R\}=\R/2\Z$ and
the curve $\{\Del-t:t\in\R\}$ winding from one to the other. Notice that the distance between $\Del-t$ and  $\Del^{+}-t$ goes to zero as $t\to +\infty$
and the distance between $\Del-t$ and  $\Del^{-}-t$ goes to zero as $t\to -\infty$.
Thus $\Del$ is proximal with each of $\Del^-$ and $\Del^+$, but these latter are not proximal with each other. So the proximal relation is not transitive.
Note also that, for $m\in\Z$, $\Del^-=\Del^--2m$ is proximal with $\Del-2m$ and the latter tends to $\Del^+$ as $m\to\infty$. Thus the proximal relation is not topologically closed. The maximal equicontinuous factor is the circle $\Omax=\R/\Z$ with $\infty$-to-1 factor map $\pi_{max}:\Omega_\Del
 \to\Omax$ given by $\pi(\Del'-t)=\Z-t$ for $\Del'\in\{\Del^-,\Del,\Del^+\}$.

 In Section \ref{Equicontinuous} we show that points of $\Omega_\Del$ are identified in $\Omax$
if and only if they have the same image under all continuous eigenfunctions of $(\Omega_\Del,\R^N)$ (see Theorem \ref{conjugate})  and in Section \ref{Proximality}, several intrinsically defined variants of the proximal relation are discussed. These include a uniform version of proximality called {\em syndetic proximality} and  {\em regional proximality}, which for minimal systems always agrees with the equicontinuous structure relation. Each of the proximal, syndetic proximal and regional proximal relations takes a `strong' form for hulls of repetitive Meyer sets (repetitive Delone sets $\Del$ for which $\Del-\Del$ is also Delone). For hulls of repetitive Delone sets, if the regional proximal relation is equal to a statistical version of proximality called {\em statistical coincidence}, all of these relations are the same and the system is close to being equicontinuous in a specified sense (Corollary~\ref{Agreement}).

To measure the closeness of the system $(\Omega_\Del,\R^N)$ to equicontinuity, one can consider the cardinality of a fiber $\pi_{max}^{-1}(\xi)$, $\xi\in\Omax$, of the equicontinuous factor map $\pi_{max}$. But which $\xi$? By ergodicity of $(\Omax,\R^N)$ there is $m\in\N\cup\{\infty\}$ such that $\sharp\pi_{max}^{-1}(\xi)=m$ for Haar-almost all $\xi$, but there are other useful notions for the rank of $\pi_{max}$. Two obvious ones are the {\em minimal rank} and {\em maximal rank}: $mr:=\inf_{\xi\in\Omax}\sharp\pi_{max}^{-1}(\xi)$ and $Mr:=
\sup_{\xi\in\Omax}\sharp\pi_{max}^{-1}(\xi)$. A third, which turns out to be extremely useful,
is the {\em coincidence rank}, $cr$, defined as the supremum, over $\xi\in\Omax$, of the supremum of cardinalities of subsets of $\pi_{max}^{-1}(\xi)$ whose elements are pairwise non-proximal. For a repetitive Delone set $\Del$ with hull $\Omega_\Del$, the proximal relation
and the equicontinuous structure relation $R_{max}$ are the same if and only if $cr=1$ (Theorem \ref{characterization-PequalQ}). If $mr<\infty$, then the proximality relation and $R_{max}$ are the same if and only if the proximality relation is closed (Theorem 2.15 of \cite{BargeKellendonk12}) and if $cr<\infty$, then the proximal relation is closed if and only if $cr=1$ (Theorem \ref{Pclosed}). The meaning of the coincidence rank is revealed most clearly when $\Omega_\Del$ is the hull of a repetitive Meyer set and the fiber distal points have full Haar measure: 
For $R$ sufficiently large, for each $\xi\in\Omax$,
there is a set $A\subset\R^N$ of density 1 so that there are exactly $cr$ distinct sets of the form $\Del'\cap B_R(v)$, $\Del'\in \pi_{max}^{-1}(\xi)$ , for each $v\in A$ (see Theorem~\ref{thm-cr-stat}). That is to say, viewed out to radius $R$, a fiber typically appears to have cardinality $cr$.

Due to its connection with pure point diffraction spectrum of $\Del$ (see \cite{Dwo} and \cite{LMS,BaakeLenz04}), the $mr=1$ case (that is, $\pi_{max}$ is somewhere 1-1) has received the most attention. The following results (the first two due to Baake, Lenz, and Moody \cite{BaakeLenzMoody07} and the third to Aujogue \cite{AujoguePhD}) for the hull of a repetitive Meyer set $\Del$ are discussed in Section \ref{Hierarchy}:
\begin{itemize}
\item $\pi_{max}$ is everywhere 1-1 if and only if $\Del$ is completely periodic (Theorem \ref{Meyer-equal-to-mef});
\item $\pi_{max}$ is almost everywhere 1-1 if and only if $\Del$ is a regular complete Meyer set (Theorem \ref{Meyer-equal-to-mef-almost-everywhere}); and
\item $\pi_{max}$ is somewhere 1-1 if and only if $\Del$ is a complete Meyer set (Theorem \ref{Meyer-equal-to-mef-one-point}).
\end{itemize}
In the above,  a {\em complete Meyer set} is a repetitive inter-model set whose window is the closure of its interior; such a set is  {\em regular} if the boundary of the window has zero measure. This hierarchy gives a rather satisfying picture of the correspondence between injectivity properties of $\pi_{max}$ and structural properties of the set $\Del$.

When the coincidence rank is greater than 1, the equicontinuous structure relation is no longer given by proximality and the situation is considerably more complicated. We are able to make a few observations in Section \ref{Proximality} when the coincidence rank is known to be finite (as is the case for `Pisot type' substitutive systems). For example, if $cr<\infty$ for the dynamical system on the hull of an $N$-dimensional repetitive Delone set with finite local complexity, then the system is topologically conjugate to that on the hull of a repetitive Meyer set. Moreover, if the Delone set has no periods, then its topological eigenvalues are dense in $\hat{\R}^N$ (Theorem \ref{thm-cr-finite}). Also, for repetitive Delone sets whose system $(\Omega_\Del,\R^N)$ has finite coincidence rank, if the set of points $\xi$ in $\Omax$ with the property that the points in the fiber $\pi_{max}^{-1}(\xi)$ are pairwise non proximal has full Haar measure, and if $\mu$ is any ergodic probability measure on $\Omega_\Del$, then the continuous eigenfunctions generate $L^2(\Omega_\Del,\mu)$ if and only if $cr=1$, and these conditions imply unique ergodicity of $(\Omega_\Del,\R^N)$ (see Theorem \ref{characterisation-pure-point}). But a systematic understanding of the structure of $(\Omega_\Del,\R^N)$ when the minimal rank is greater than 1 remains a challenging problem for the future.

One can view the hull of $\Del$ and its maximal equicontinuous factor as compactifications of the acting group $\R^N$ (at least
when $\Del$ has no periods, so that $\R^N$ acts faithfully). Another compactification, which preserves more of the topology of $\Omega_\Del$ than does $\Omax$, while still introducing additional algebraic structure, is provided by the Ellis semigroup $E(\Omega_\Del,\R^N)$. This is defined as the closure of the set of homeomorphisms $\{\alpha_t:\Del'\mapsto \Del'-t\,|\,t\in\R^n\}\subset \Omega_\Del^{\Omega_\Del}$ in the Tychonov topology with semigroup operation given by composition. For our considerations, two algebraic properties of the Ellis semigroup are
particularly relevant: $E(\Omega_\Del,\R^N)$ has a unique minimal ideal if and only if the proximal relation is an equivalence relation; and two elements of $\Omega_\Del$ are proximal if and only if they have the same image under some idempotent belonging to a minimal ideal of $E(\Omega_\Del,\R^N)$.
Consider the example above with $\Del:=-2\N\cup\N$. The sequence $(\alpha_{2m})_{m\in\N}$ converges to the element $\alpha^-\in E(\Omega_\Del,\R)$: $\alpha^-$ is the identity on each of the circles $\TT^{\pm}:=\{\Del^{\pm}-t:t\in\R\}$ and collapses the curve $\{\Del-t:t\in\R\}$ onto $\TT^-$ by $\Del-t\mapsto \Del^--t$. In particular, $\alpha^-$ is an idempotent ($\alpha^-\circ \alpha^-=\alpha^-$) and $\alpha^-$ identifies the proximal points $\Del$ and $\Del^-$.
There is a similarly defined idempotent $\alpha^+$ that identifies $\Del$ and $\Del^+$, and the Ellis semigroup is isomorphic with the disjoint union: $E(\Omega_\Del,\R)\simeq \TT\times\{\alpha^-\}\cup \R\times\{id\}\cup\TT\times\{\alpha^+\}$, where $\TT=\Omax$ which is a group, and the operation is coordinate-wise (and non-abelian, since $\alpha^-\circ \alpha^+=\alpha^-$ while $\alpha^+\circ \alpha^-=\alpha^+$).
There are two minimal (left) ideals, corresponding to $\TT\times\{\alpha^+\}$ and $\TT\times\{\alpha^-\}$, reflecting the fact that proximality is not transitive.

The Ellis semigroup is typically a very complicated gadget. One can't even expect  countable neighborhood bases for the topology. It is thus surprising that there is a relatively simple algebraic and topological description of the Ellis semigroup of the hull of any {\em almost canonical} cut-and-project set, similar to that given in the previous paragraph. In this description, found by Aujogue in \cite{AujoguePhD} and explained here in Section 5 (see also \cite{Aujogue13}), the finite sub-monoid of idempotents effectively captures the proximal structure of the hull. This family of examples suggests that the deep and well-developed abstract theory of the Ellis semigroup may find significant further application in the study of highly structured Delone sets.

We begin this article with a review of relevant facts from dynamics and basic constructions of Delone sets in Section 2. Maximal equicontinuous factors associated with Delone sets are considered in Section \ref{Equicontinuous} and variants of the proximal relation are discussed in Section \ref{Proximality}. The final Section \ref{Ellis} introduces the Ellis semigroup and concludes with the example of the semigroup of the hull of the Octagonal tiling.

\section{Background}

\subsection{General background}
In this article we  consider  actions of topological groups on topological spaces.
The spaces,  often denoted by $X$,  will be  compact  Hausdorff spaces.
The groups will be locally compact, $\sigma$-compact  \textbf{abelian} groups and mostly  denoted by $G$. The group composition
will  generally be written additively as $+$.  The neutral element  will be denoted by $e$. The dual of a group $G$ consists of all continuous homomorphisms from $G$ to the unit circle. It is again a locally compact abelian group and will be denoted by $\Gdual$. As the elements of $\Gdual$ are maps on $G$ there is a dual pairing between a group $G$  and its  dual group $\Gdual$. It   will be denoted as $(\cdot, \cdot)$.

We will assume metrizability of $X$ in some  cases in order to ease the presentation of certain concepts. If the corresponding results are valid without the metrizability assumption we have stated them without this assumption. In certain cases we will also  need the groups to be compactly generated.  The main application we have in mind are Delone dynamical systems in Euclidean space. These systems are metrizable and the underlying group (Euclidean space) is compactly generated. Thus, \textbf{all} our results below apply in this situation.

\subsection{Background on dynamical systems}
Whenever the locally compact abelian group  $G$
acts on the compact space $X$  by a continuous action
\begin{equation*}
 \alpha : \; G\times X \; \longrightarrow \; X
   \, , \quad (t,x) \, \mapsto \,  \alpha_t x\, ,
\end{equation*}
where $G\times X$ carries the product topology, the triple $(X,G,\alpha)$
 is called a {\em topological dynamical system\/} over $G$. We will mostly suppress  the action $\alpha$ in our notation and write
$$ t\cdot x := \alpha_t x.$$
Accordingly, we will then also suppress $\alpha$ in the notation for the dynamical system and just write $\Oomega$ instead of $(X,G,\alpha)$.

A dynamical system  $\Oomega$ is called {\em
minimal\/} if, for all $x\in X$, the $G$-\textit{orbit}
$\{t\cdot x: t \in G\}$ is dense in $X$.

  Let two topological dynamical systems\/ $\Oomega$ and\/ $\Ttheta$
  over $G$ be given.   Then, a continuous map $\varrho : X\longrightarrow Y$ is called a \textit{$G$-map} if $\varrho(t\cdot x ) = t \cdot \varrho (x)$ holds for all $x\in X$ and $t\in G$. A $G$-map is called a \textit{factor map} if it is onto. In this case,  $\Ttheta$ is called a \textit{factor}  of $\Oomega$. Factor maps will mostly denoted by $\pi$.
A $G$-map is called a \textit{conjugacy} if it  is a homeomorphism. Then, the dynamical systems are called {\em conjugate}.  In this case, of course, each system is a factor of the other.

  \medskip

   An important role in our subsequent considerations will be played by dynamical systems in which $X$ is a compact group and the action is induced by a homomorphism.  In order to simplify the notation we provide a special name for such systems.

\begin{definition} (Rotation on a compact abelian group) A dynamical system $(X,G)$ is called a
rotation on a compact abelian group
if $X$ is a compact abelian group and the action of $G$ on $X$  is induced by a homomorphism $j : G\longrightarrow X$  such that $t \cdot x = j (t) +  x$ for all $t\in G$ and $ x\in X$.
\end{definition}

\textbf{Remarks.}
 \begin{itemize}
 \item The name of rotation on a compact group comes from the example of a rotation by the angle $\alpha \in \R$ on the unit circle. In that case $X$ is given by the unit circle $S^1 =\{ z\in \C : |z| =1\} $ and $j$ is the  map from the group $\Z$ of integers into $S^1$ mapping $n$ to $ e^{i \alpha n}$.

\item  If $G = \R$ such systems are also known as Kronecker flows. More generally, for $G$ arbitrary they are known as Kronecker systems.

\item We will think of the compact group as a form of torus. Accordingly, in the sequel we will often  denote rotations on a compact group by $(\TT,G)$.

\item A rotation on a compact group is minimal if and only if  $j$ has  dense range.
\end{itemize}

\smallskip

Whenever $\Oomega$ is a dynamical system then
 $\chi\in \Gdual$  is called a {\em continuous eigenvalue}  if there exists a continuous $f$ on $X$ with $f\neq 0$ and
$$ f(t\cdot x) = (\chi,t) f(x)$$
for all $t\in G$ and $x\in X$. Such an $f$ is then called an \textit{continuous eigenfunction (to the eigenvalue $\chi$)}.

 If $(X,G)$ is minimal short arguments show the following: Firstly,  any  continuous eigenfunction has constant modulus and therefore  does not vanish anywhere.  Secondly,  two continuous eigenfunction to the same eigenvalue are linear dependent. In particular, the dimension of the space of all continuous eigenfunctions to a fixed eigenvalue is always one.

The set of all continuous eigenvalues is a subgroup of $\Gdual$. Indeed, the constant function $1$ is always a  continuous eigenfunction to the eigenvalue $0$, the product of two continuous eigenfunction is a continuous eigenfunction (to the product of the eigenvalues) and the complex conjugate of a continuous eigenfunction is a continuous eigenfunction (to the inverse of the eigenvalue).
 The group of continuous eigenvalues of the dynamical system $(X,G)$, equipped with the discrete topology, plays a crucial role in the subsequent considerations and we denote it by $\mathcal{E}_{top} (X,G)$.


There is a strong connection between the group of continuous eigenvalues and a certain rotation on a compact group, called the \textit{maximal equicontinuous factor}.  This factor is at the heart of the investigations of this chapter.  In fact, Section \ref{Hierarchy}   is  devoted to  how the maximal equicontinuous factor  controls the original dynamical system and  Section \ref{Proximality} deals with the fine analysis of the equivalence relation induced by this factor.

\smallskip

For our investigations to be meaningful we will need non-triviality of the group of continuous eigenvalues. Dynamical systems without non-trivial continuous eigenvalues are called \textit{topologically weakly mixing}. This property can be seen to be equivalent to transitivity  of the product system with the diagonal action.

\bigskip

In the present description of dynamical systems we have so far been concerned with the topological point of view. Indeed, this is the main focus of our considerations. However, for certain issues we will need measure theoretical aspects as well.

As a compact space the set $X$ underlying the dynamical system $(X,G)$ carries naturally the Borel-$\sigma$-algebra which is the smallest $\sigma$-algebra containing all compact sets.
A measure $m$ on $X$ is  called \textit{invariant} if $m (\alpha_t B) = m(B)$ for any Borel measurable set $B$ and any $t\in G$.
An invariant probability measure is a called \textit{ergodic}  if  any Borel set $B$ with  $\alpha_t (B) = B$ for all $t\in G$ satisfies $m(B) = 0$ or $m(B) =1$. Any dynamical system admits invariant probability measures and the  ergodic measures can be shown to be the extremal points of the set of invariant probability measures (see, for example, the monographs \cite{DenkerGrillenbergerSiegmund76,Walters82}). In particular, if  a dynamical system admits only one invariant probability measure, then this measure is automatically ergodic. In  this case the dynamical system is called \textit{uniquely ergodic}.

Whenever $m$ is an invariant probability  measure on $(X,G)$ the triple $(X,G,m)$ is called a measurable dynamical system. The   action of $G$  induces a unitary representation on $L^2 (X,m)$ as follows: For any $t\in G$ there is a unitary map
$$T_t : L^2 (X,m)\longrightarrow L^2 (X,m), \: T_t f (x) = f(t \cdot x).$$
The behaviour of the action can be analysed through the behaviour of the representation $T$. This is sometimes known as \textit{Koopmanism}.

As usual an element $f\in L^2 (X,m)$ is called a \textit{measurable eigenfunction} to the  \textit{measurable eigenvalue} $\chi \in \Gdual$ if $$ T_t f  = (\chi,t) f$$ holds for all $t\in G$. Here, the equality is meant in the sense of $L^2$.

Ergodicity implies that  the modulus of any measurable  eigenfunction is constant almost surely and that two measurable eigenfunctions to the same eigenvalue are linearly dependent.  The set of all measurable  eigenvalues forms a subgroup of $\Gdual$. This subgroup, equipped with the discrete topology, is  denoted by $\mathcal{E}_{meas} (X,T)$. If  the closed subspace of $L^2 (X,m)$ generated by the eigenfunctions  agrees with $L^2 (X,m)$ then $(X,G,m)$ is said to have \textit{pure point spectrum}.

\subsection{Background on Delone sets}
Let $G$ be a locally compact abelian group.  We will deal with subsets $\Del$ of $G$.
A subset $\Del$ of $G$ is  called \textit{uniformly discrete} if there exists an open neighborhood $U$ of the identity  in $G$ such that
$$(x + U) \cap (y + U) = \emptyset$$
for all $x,y\in \Del$ with $x\neq y$. A subset $\Del$ of $G$ is called \textit{relatively dense} if there exists a compact neighborhood $K$ of the identity of $G$ such that
$$ G = \bigcup_{x\in \Del} (x + K).$$
A subset $\Del$ of $G$ is called a \textit{Delone set} if it is both uniformly discrete and relatively dense. There is a natural action of $G$ on the set $\mathcal{U} (G)$ of   uniformly discrete sets in $G$ via
$$ G\times \mathcal{U}(G) \longrightarrow \mathcal{U} (G), \, (t,\Del)\mapsto \Del -t:=\{x -t  : x\in \Del\}.$$ We refer to it as \textit{translation action}. 

\smallskip

Most prominent is the case $G = \R^N$. In that case one  can express the above definitions using balls with respect to   Euclidean distance.  The open set $U$ is expressed by an open ball and the compact set $K$ by a closed ball.  We denote the open ball with radius $r$ around $x\in \R^N$ by $U_r (x)$ and the closed ball around $x$ with radius $R$ by $B_R (x)$. Then, $\Del \subset \R^N$ is uniformly discrete if and only if  there exists an $r>0$ with $U_r (x) \cap U_r (y) = \emptyset$ for all $x,y\in \Del$ with $x\neq y$, i.e.,  if and only if there exists an $r>0$ such that the distance between any two different points of $\Del$ is at least $2 r$.  Such a set will then be called \textit{$r$-discrete}. The set $\Del \subset \R^N$ is relatively dense if and only if there exists an $R>0$ with $\R^N = \cup_{x\in \Del} B_R (x)$, that is, if and only if any point of $\R^N$ has distance not exceeding $R$ to $\Del$.

\smallskip

Whenever $\Del$ is a uniformly discrete subset of $G$ a set of the form $(\Del - x)\cap K$ with $x\in \Del$ and $K$ compact is  called a \textit{patch of $\Del$}.  A uniformly discrete subset    $\Del$  in $G$ is said to have \textit{finite local complexity} (FLC) if for any compact $K$ in $G$   the set
$$\{ (\Del - x) \cap K : x\in \Del\}$$
is finite.  This just means that there are only finitely many patches for fixed 'size' $K$. It is not hard to see that $\Del$ has finite local complexity if and only if the set
$$\Del - \Del = \{x -y : x,y\in \Del\}$$
is locally finite, i.e., has finite intersection with any compact subset of $G$. This in turn is equivalent to $\Del - \Del$ being closed and discrete. A Delone set with finite local complexity will be referred to as an \textit{FLC Delone set}.

For an  FLC Delone set $\Del$ we define the \textit{hull} $\Omega_\Del$ to be the set  of all Delone sets whose patches are also patches of $\Del$. This set is obviously invariant under the translation action of $G$ given above. Moreover,  it is compact in a natural topology (discussed below). So, when equipped with the translation action, $\Omega_\Del$ becomes a dynamical system, $(\Omega_\Del,G)$, which we refer to as the \textit{dynamical system associated to $\Del$}.

When $G =\R^N$ it is possible to further characterize finite local complexity. A Delone set $\Del\subset\R^N$ with $\R^N = \cup_{x\in \Del}  B_R (x)$ for some $R>0$ has finite local complexity if and only if the set
$$\{ (\Del - x)\cap B_{2R} (0) : x\in \Del\}$$
is finite \cite{Lagarias99}.  Thus, in this case one needs to test for  finiteness of the number of patches only for patches of a  certain fixed size. For this reason, the hull of a Delone  set with finite local complexity in $\R^N$ can be thought of as a geometric analogue to  a subshift over a finite alphabet.

An \textit{occurrence of the patch } $(\Del - x) \cap K$ in a Delone set  $\Del$ is an element of
$$ \{y \in \Del : (\Del - x)\cap K \subset (\Del - y)  \}.$$
A Delone set $\Del$ is called  \textit{repetitive}  if for any nonempty patch  the set of occurrences is relatively dense.
For an FLC Delone set $\Del$, repetitivity is equivalent to minimality of the associated system $(\Omega_\Del, G)$.
\smallskip

There are various equivalent approaches to define a topology on the set of all uniformly discrete sets.
One is based on the identification of point sets with measures and the vague topology
\cite{BellissardHerrmannZarrouati00,BaakeLenz04}, another uses uniform structures \cite{BaakeLenz04}. If $G=\R^N$ one can also make precise the idea of defining a metric by the principle that two sets are the $\epsilon$-close if they coincide up to an error of $\epsilon$ on the $\frac1\epsilon$-ball around $0$ \cite{Rudolph,Solomyak,FHKmem}.
In general the error of coincidence is measured with the help of the Hausdorff distance. A particularily elegant formulation of this idea uses the stereographic projection and has been worked out in detail in \cite{LenzStollmann01}.

If the sets in question have finite local complexity, which is the only case which we consider in more detail here, then the description of the topology simplifies.
A net
\footnote{A \textit{net} in a topological space $X$ is a function from a directed set $A$ to $X$: the image of $\alpha\in A$ is denoted $x_{\alpha}$ and the net is denoted $(x_{\alpha})_{\alpha}$. The net \textit{converges} to $x\in X$ if for each neighborhood $U$ of $x$ there is $\beta\in A$ so that $x_{\alpha}\in U$ for all $\alpha\ge\beta$. Convergence of nets completely describes the topology of $X$ - see \cite{Kel}.} $(\Del_\iota)_\iota$ in the hull $\Omega_\Del$ of an FLC Delone set
converges to $\Del'$ if and only if there exists a net $(t_\iota)_\iota$ in $G$ converging to $e$ and for all compact $K\subset G$ an $\iota_K$ such that $(\Del_\iota -t_\iota) \cap K = \Del^{'} \cap K$ for  all $\iota > \iota_K$.
Moreover, if $G=\R^N$ then the topology on hull $\Omega_\Del$ of a Delone set with finite local complexity is induced by the metric \cite{AP}
$$d(\Del,\Del') = \inf \left\{\frac\epsilon{\epsilon+1} : \exists t,t'\in B_{\epsilon}(0): B_{\frac1\epsilon}[\Del-t]=B_\frac1\epsilon[\Del'-t']\right\}.$$
Here $B_R[\Del]=\Del\cap B_R(0)$ is the $R$-patch of $\Del$ at $0$, i.e., the patch defined by the closed $R$-ball at $0$.


\subsection{Background on lattices, Model sets and  Meyer sets}\label{Background-Meyer}
Meyer sets  can be thought of as (quite natural) generalizations of lattices.
They have been introduced by Meyer in \cite{Meyer72} in the purely theoretical context of `expanding sets via Fourier transforms'. After the discovery of quasicrystals  they have become a most  prominent class of examples for such structures.   Our discussion follows  \cite{Moody97,Moody00,Schlottmann00} to which we refer for further details and references. For the topic of regular complete Meyer sets we also highlight \cite{Moody}, which gives an introduction into the topic by surveying the results of \cite{BaakeLenzMoody07}. 

\bigskip

A \textit{lattice}  in a group $G$ is a uniformly discrete subgroup such that the quotient of $G$ by this subgroup  is compact. Thus,  any lattice is a Delone set.  It is not hard to see that a Delone set $L$ is a lattice if and only if it  satisfies
$$ L - L =  L.$$
 Obviously, this gives that $L - L$ is uniformly discrete and  hence locally finite.   Thus, a lattice is a Delone set of finite local complexity. Moreover, whenever $L$ is a lattice then
$$L^\ast:=\{ \chi \in \Gdual : (\chi,x)  =1 \;\mbox{for all} \; x\in L\}$$
is a lattice in $\Gdual$. It is called the \textit{dual lattice}. Thus, whenever $L$ is a lattice the set of its $\varepsilon$-dual characters
$$L^\varepsilon := \{\chi \in \Gdual : |(\chi,x) - 1| \leq \varepsilon \;\mbox{for all} \; x\in L\}$$
is relatively dense.  Meyer sets are generalizations of lattices. It turns out that they can be characterized by suitable relaxations of each of the features discussed so far. In fact, each of the features given in the next theorem can be seen as a weakening of a corresponding feature of a lattice.

\begin{theorem} \label{theorem-characterization-Meyer} Let $\Del$ be a Delone set in $G$. Then, the following assertions are equivalent:

\begin{itemize}

\item[{\rm (i)}] $\Del - \Del\subset \Del + F$ for some finite set $F\subset G$.

\item[{\rm (ii)}] For any $\varepsilon >0$ the set
$$\Del^\varepsilon =\{ \chi \in \Gdual : |(\chi,x) - 1| \leq \varepsilon \;\mbox{for all} \; x\in \Del\}$$
of $\varepsilon$-dual characters of $\Del$ is relatively dense in $\Gdual$.
\item[{\rm (iii)}] There exists a cut-and-project scheme $(H,\tilde{L})$ over $G$ together with a compact $W$ with $\Del \subset \oplam (W)$.
\end{itemize}
\end{theorem}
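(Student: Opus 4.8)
The plan is to prove the three equivalences by closing the cycle $(i)\Rightarrow(ii)\Rightarrow(iii)\Rightarrow(i)$, but to attack the implications in increasing order of difficulty, namely $(iii)\Rightarrow(i)$, then $(ii)\Rightarrow(iii)$, then $(i)\Rightarrow(ii)$. Since all three conditions are unaffected by replacing $\Del$ with a translate (in $(iii)$ the window merely shifts), I may assume $e\in\Del$, so that $L:=\langle\Del\rangle$ coincides with the group generated by $\Del-\Del$.

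First I would dispose of $(iii)\Rightarrow(i)$, which is pure geometry. Write $\star\colon\pi_G(\tilde L)\to H$ for the star map, i.e.\ the homomorphism sending $x=\pi_G(\ell)$ to $\pi_H(\ell)$ (well defined because $\pi_G|_{\tilde L}$ is injective). Since $\star$ is a homomorphism one has $\oplam(W)-\oplam(W)\subset\oplam(W-W)$, and iterating gives $\Del-\Del-\Del\subset\oplam(W-W-W)$ with $W-W-W$ compact. As $\tilde L$ is a lattice in $G\times H$, any model set over a compact window is uniformly discrete, hence $\Del-\Del-\Del$ is locally finite. Choosing a compact $K\subset G$ with $G=\Del+K$ (relative denseness), the set $F:=(\Del-\Del-\Del)\cap K$ is finite. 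Given $z\in\Del-\Del$, relative denseness yields $w\in\Del$ with $z-w\in K$, and since $z-w\in\Del-\Del-\Del$ we get $z-w\in F$, whence $z\in\Del+F$. This is $(i)$.

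For $(ii)\Rightarrow(iii)$ I would construct the scheme by completing $L$ in a second, coarser group topology adapted to the dual characters. Let $\sigma$ be the group topology on $L$ with neighbourhood base at $e$ given by the sets $\{x\in L:|(\chi,x)-1|<\delta\text{ for all }\chi\in\Phi\}$, with $\Phi\subset\Gdual$ finite and $\delta>0$; let $H$ be the Hausdorff completion of $(L,\sigma)$, with canonical map $\bar{\,\cdot\,}\colon L\to H$ of dense image, and set $\tilde L:=\{(x,\bar x):x\in L\}\subset G\times H$. Injectivity of $\pi_G|_{\tilde L}$ and density of $\pi_H(\tilde L)$ are built in, and discreteness of $\tilde L$ is immediate since FLC of $\Del$ makes $L$ uniformly discrete in $G$. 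The two substantive points are: (a) precompactness of $\{\bar x:x\in\Del\}$ in $H$, where condition $(ii)$ enters, the relatively dense families $\Del^\varepsilon$ confining the $\sigma$-image of $\Del$ to a bounded region, so that $W:=\overline{\{\bar x:x\in\Del\}}$ is compact with $\Del\subset\oplam(W)$; and (b) cocompactness of $\tilde L$, which I would obtain by showing $(G\times H)/\tilde L$ is complete (automatic, $H$ being a completion) and totally bounded (reflecting relative denseness of $\Del$ in $G$), a complete totally bounded quotient group being compact.

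The heart of the theorem, and the step I expect to be the main obstacle, is $(i)\Rightarrow(ii)$: one must cross from the combinatorial \emph{almost lattice} property to relative denseness of $\varepsilon$-dual characters on the Fourier side. I would first note that iterating $(i)$ keeps every iterated difference set locally finite (each lies in $\Del+F_k$ for a finite $F_k$, a finite union of translates of the locally finite set $\Del$), so $\Del$ is genuinely FLC. The plan is then to run an almost-periodicity argument in the spirit of Meyer: condition $(i)$ forces $\Del$, viewed inside the Bohr compactification of $G$, to be almost periodic, and its almost periods translate by Pontryagin duality into a relatively dense supply of characters that are uniformly close to $1$ on $\Del$, which is exactly $(ii)$. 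The delicate and genuinely analytic point, resisting the soft arguments that suffice for the other two implications, is the quantitative passage from ``almost periods of the Dirac comb of $\Del$'' to honest members of a prescribed $\Del^\varepsilon$, together with the guarantee that the resulting character set is relatively dense rather than merely dense in a statistical sense.
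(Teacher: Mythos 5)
Your treatment of (iii)$\Rightarrow$(i) is exactly the argument the paper gives (via the uniform discreteness of $\oplam(W)$ for compact $W$, i.e.\ Proposition \ref{properties-oplam}), and it is correct; note, though, that this is the \emph{only} implication the paper proves, the remaining ones being delegated to \cite{Meyer72,Moody97,Moody00,Schlottmann00}. As a proof of the full theorem your proposal has genuine gaps. The sharpest one is in (ii)$\Rightarrow$(iii): the assertion that ``FLC of $\Del$ makes $L$ uniformly discrete in $G$'' is false --- the group $L=\langle\Del\rangle$ generated by a Meyer set is typically \emph{dense} in $G$ (for the Fibonacci model set, $L=\Z[\tau]$ is dense in $\R$), so discreteness of $\tilde L$ is not ``immediate''. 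Worse, with $\sigma$ as you define it --- neighbourhoods cut out by \emph{arbitrary} finite sets $\Phi\subset\Gdual$, i.e.\ the full Bohr topology on $L$ --- the graph $\tilde L$ is genuinely non-discrete whenever $L$ is non-discrete in $G$: take $x_n\in L\setminus\{e\}$ with $x_n\to e$ in $G$; for any finite $\Phi$ and $\delta>0$, pigeonholing the points $((\chi,x_n))_{\chi\in\Phi}$ in the compact torus $(S^1)^\Phi$ yields $m\neq n$ with $y=x_m-x_n\neq e$ arbitrarily close to $e$ in $G$ and satisfying $|(\chi,y)-1|<\delta$ for all $\chi\in\Phi$. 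So the construction must be run with a carefully restricted family of characters (e.g.\ the $\varepsilon$-dual sets $\Del^\varepsilon$ themselves, or the group they generate, with a topology of uniform approximation on $\Del$), chosen precisely so that being small in both $G$ and $\sigma$ forces triviality; and then local compactness of the completion $H$ --- which you never address --- becomes a real issue, and is exactly where the relative denseness in (ii) must enter. Your points (a) and (b) flag the right difficulties but do not execute them, and the discreteness step, as written, fails.

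The second gap is that (i)$\Rightarrow$(ii), which you correctly identify as the heart of the theorem, is left as an intention rather than an argument: you yourself describe the quantitative passage from almost periods to honest relatively dense subsets of a prescribed $\Del^\varepsilon$ as unresolved. That passage is precisely the content of Meyer's and Moody's work (in Moody's route one first upgrades (i) to uniform discreteness of $\Del-\Del$ \`a la Lagarias, then runs the harmonious-set duality $\Del\subset\Del^{\varepsilon\varepsilon'}$ machinery); without it your cycle (i)$\Rightarrow$(ii)$\Rightarrow$(iii)$\Rightarrow$(i) does not close. A minor additional caution: your opening reduction is fine only because you translate by a point of $\Del$ --- for such translates the window indeed merely shifts, since $\Del-t\subset\oplam(W-t^\star)$ requires $t\in L$; for an arbitrary $t\in G$ the translation invariance of (iii) is not evident from the definition and is most easily obtained through the equivalence with (i). In sum: you reproduce the paper's proof where the paper gives one, but the two implications the paper outsources remain unproven here, one of them resting on a step that is actually wrong.
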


 Here, the last point requires some explanation.
 A {\em cut-and-project scheme\/ over $G$} consists of a locally compact abelian group $H$ together with a  lattice $\widetilde{L}$ in $G
 \times H$ such that the two natural projections $p_1 \! : \, G\times H
 \longrightarrow G$, $(t,h)\mapsto t,$ and $p_2 \! : \, G\times H
 \longrightarrow H$, $(t,h)\mapsto h,$
satisfy the following properties:
\begin{itemize}
\item The restriction $p_1|_{\tilde{L}}$ of $p_1$ to
      $\tilde{L}$ is injective.
\item The image $p_2(\tilde{L})$ is dense in $H$.
\end{itemize}
Let $L:=p_1 (\tilde{L})$ and $(.)^\star \! : \, L \longrightarrow H$ be the
mapping $p_2 \circ (p_1|_{\tilde{L}})^{-1}$. Note that ${}^\star$
is indeed well defined on $L$. Given  an arbitrary (not necessarily compact)  subset  $W \subset H$, we define $\oplam (W)$ via
\[
    \oplam (W) \; := \; \{x\in L : x^\star \in W\}.
\]
The set $W$ is then sometimes  referred to as the \textit{window}.

The preceding discussion explains all notation needed in the third point of the above theorem. While we do not give a complete proof of the  theorem here, we will include some explanation in order to give the reader some of the ideas involved. In particular, we will provide a sketch of how (iii) implies (i). To do so we first highlight two very crucial features of the construction via $\oplam$.

\begin{prop}\label{properties-oplam}
Let  $(H,\widetilde{L})$ be a cut-and-project scheme over $G$. Then,
\begin{itemize}
\item $\oplam (W)$ is relatively dense if the interior of $W$ is
non-empty.
\item  $\oplam (W)$ is uniformly discrete if the closure of $W$ is
compact.
\end{itemize}
\end{prop}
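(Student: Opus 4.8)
The plan is to lift everything from $L$ up to the lattice $\widetilde L\subset G\times H$, where cocompactness, uniform discreteness, injectivity of $p_1|_{\widetilde L}$ and density of $p_2(\widetilde L)$ are all directly at hand. The governing observation is that $\oplam(W)=p_1\bigl(\widetilde L\cap(G\times W)\bigr)$: for $x\in L$ the unique $\widetilde x\in\widetilde L$ with $p_1(\widetilde x)=x$ satisfies $p_2(\widetilde x)=x^\star$, so $x^\star\in W$ exactly when $\widetilde x\in G\times W$. Since $\oplam$ is monotone in the window, I may shrink or enlarge $W$ as convenient.

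For the second bullet assume $\overline W$ is compact and look at the difference set $\oplam(W)-\oplam(W)$. Any difference $x-y$ equals $p_1(\widetilde x-\widetilde y)$ with $\widetilde x-\widetilde y\in\widetilde L$ and $p_2(\widetilde x-\widetilde y)=x^\star-y^\star\in\overline W-\overline W$, the latter being compact. Fixing a relatively compact neighbourhood $U$ of $e$ in $G$, the differences that lie in $U$ are therefore the $p_1$-images of elements of $\widetilde L\cap\bigl(\overline U\times(\overline W-\overline W)\bigr)$, which is a finite set because $\widetilde L$ is discrete and closed. Hence $(\oplam(W)-\oplam(W))\cap U$ is finite; and since $x\neq y$ forces $\widetilde x\neq\widetilde y$, injectivity of $p_1|_{\widetilde L}$ gives $x-y\neq e$, so the value $e$ occurs in the difference set only trivially. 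Passing to a symmetric open $V$ with $V-V$ contained in $U$ and avoiding the finitely many remaining nonzero points then yields $(x+V)\cap(y+V)=\emptyset$ for $x\neq y$, i.e.\ uniform discreteness.

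For the first bullet assume the interior $O$ of $W$ is nonempty; as $\oplam(O)\subseteq\oplam(W)$ it suffices to make $\oplam(O)$ relatively dense. By cocompactness of $\widetilde L$ choose a compact $C=K_G\times K_H$ (replacing a given compact by the product of its projections) with $\widetilde L+C=G\times H$, and arrange $K_H$ symmetric. The decisive step invokes density of $p_2(\widetilde L)$: the open sets $O-p_2(\widetilde l)$, $\widetilde l\in\widetilde L$, cover $H$, so by compactness finitely many $\widetilde l_1,\dots,\widetilde l_n$ give $K_H\subseteq\bigcup_{j}\bigl(O-p_2(\widetilde l_j)\bigr)$. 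Now, given $t\in G$, choose $\widetilde l\in\widetilde L$ with $(t,0)-\widetilde l\in C$; then $p_2(\widetilde l)\in K_H$, so $p_2(\widetilde l+\widetilde l_j)\in O$ for some $j$. Setting $x:=p_1(\widetilde l+\widetilde l_j)\in\oplam(O)$, one reads off $t-x\in K_G-\{p_1(\widetilde l_1),\dots,p_1(\widetilde l_n)\}$, a fixed compact set $K$ independent of $t$. Thus $G=\bigcup_{x\in\oplam(W)}(x+K)$, which is relative density.

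The only genuine difficulty sits in the relative-density half, and it is concentrated in the single step where density of $p_2(\widetilde L)$ is used: a priori the window $O$ may be far smaller than the $H$-projection $K_H$ of a fundamental domain, so the naive lattice-covering of $G\times H$ does not by itself push the star coordinate into $O$. Density is exactly what permits refining the covering by finitely many lattice translates, correcting the second coordinate into $O$ while displacing the first coordinate only inside a fixed compact set. Everything else is the routine topological-group bookkeeping with products of compact sets and symmetrisation, which I would not belabour.
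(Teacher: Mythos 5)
Your proof is correct. Note that the paper itself gives no proof of Proposition \ref{properties-oplam} --- it states the two properties as ``crucial features'' and defers to the references \cite{Moody97,Moody00,Schlottmann00}, using them only in its sketch of (iii)$\Longrightarrow$(i) of Theorem \ref{theorem-characterization-Meyer}. Your argument is precisely the standard one from those sources: for uniform discreteness, the intersection of the closed discrete subgroup $\widetilde L$ with the compact set $\overline U\times(\overline W-\overline W)$ is finite, and a symmetric neighbourhood $V$ with $V-V$ inside $U$ and avoiding the finitely many nonzero differences does the rest; for relative density, the decisive use of density of $p_2(\widetilde L)$ to refine the cocompact covering by finitely many lattice translates $\widetilde l_1,\dots,\widetilde l_n$, pushing the star coordinate into the open window at the cost of a bounded displacement $K_G-\{p_1(\widetilde l_1),\dots,p_1(\widetilde l_n)\}$ in $G$, is exactly the right mechanism, and your identification of this as the one non-routine step is accurate. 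Two cosmetic remarks only: the clause ``injectivity of $p_1|_{\widetilde L}$ gives $x-y\neq e$'' is redundant, since $x\neq y$ in $G$ already gives $x-y\neq e$ (injectivity is needed only to define the star map); and the compact set $K$ you produce need not be a neighbourhood of the identity as the paper's definition of relative density demands, but replacing $K$ by $(K\cup\{e\})+N$ for a compact neighbourhood $N$ of $e$ fixes this trivially.
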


Now, whenever a Delone set $\Del$ is contained in $\oplam (W)$ with $W$ compact, then $\Del - \Del$ is contained in $\oplam (W) - \oplam (W) \subset \oplam (W - W)$. As $W - W$ is compact, we infer uniform discreteness of $\Del - \Del$. In fact, this argument can be extended to give that
any set of the form $\Del \pm \cdots \pm \Del$  (with finitely many terms) is uniformly discrete.

We can now provide a proof for (iii)$\Longrightarrow$ (i) as follows. As $\Del$ is a Delone set, there exists a compact  $K\subset G$ with $\Del + K = G$. By (iii) and the argument we just gave we have that
$$ F:= (\Del - \Del - \Del)\cap K$$
is finite. Consider now arbitrary $x,y\in \Del$. Then,
$$ x-y = z + k$$
for some $k\in K$ and $z\in \Del$ (as $\Del + K = G$). Now $k$ satisfies $k = x - y - z$ and hence belongs to $\Del - \Del - \Del$ as well. These considerations show   that $k$ belongs to the finite set $F$, finishing the proof.

\medskip

\textbf{Remark.} If the group  $G$ is compactly generated  (as is the case for $G = \R^N$)  even more is known. In this case the Delone set $\Del$ satisfies $\Del - \Del \subset \Del +  F$ for some finite $F$  if and only if $\Del - \Del $ is uniformly discrete. This was first shown by Lagarias for $G=\R^N$ \cite{Lagarias96}. As discussed  in \cite{BaakeLenzMoody07} the proof  carries over to compactly generated $G$.

\medskip

\begin{definition} (Meyer set) A Delone set $\Del \subset G$ is called Meyer if it satisfies one  of the equivalent properties of the preceding theorem. The dynamical system induced by a Meyer set is called a Meyer dynamical system.
\end{definition}

It is worth noting that any Meyer set is an FLC-Delone set. Indeed, it is a Delone set by definition. Moreover, by the  first property in Theorem \ref{theorem-characterization-Meyer}  it satisfies $\Del -  \Del \subset \Del + F$ for a finite $F$. This immediately implies that $\Del - \Del$ is locally finite and, hence, $\Del$ has  finite local complexity.  Let us also emphasize that the Meyer property is substantially stronger than the FLC property.

 \medskip

Meyer sets can be further distinguished depending on properties of $W$:

An {\em inter  model set},  associated with the  cut-and-project scheme $(H,\widetilde{L})$, is a non-empty subset  $\Del$ of $G$ of the
form
\[
x + \oplam (y + W^\circ) \subset     \Del \subset x   + \oplam (y + W),
\]
where $x\in G$, $y\in H$, and $W\subset H$ is compact with $$W=\overline{W^\circ}.$$
Such an inter model set is called   {\em regular\/} if the Haar measure of the boundary $\partial W$ of $W$ is zero.

Observe that any inter  model set is a Meyer set, i.e.,  a Delone set contained in some $\oplam (W)$ for $W$ compact.
Indeed,  by construction it is contained in some $\oplam (W)$ with $W$ compact and the Delone property follows from Proposition \ref{properties-oplam}.

\medskip

Our results below show that it is worth  providing special names for inter model sets which are repetitive.

\begin{definition}(Complete Meyer set) A set $\Del$ in $G$ is called a complete Meyer set if it is repetitive and there exists a cut  and project scheme $(H,\widetilde{L})$ and a compact $W\subset H$ with $W=\overline{W^\circ}$ such that
$x + \oplam (y + W^\circ) \subset     \Del \subset x   + \oplam (y + W)$. If furthermore the Haar measure of the boundary of $W$ is zero, the set $\Del$ is called a  regular complete Meyer set.
\end{definition}

\textbf{Remark.} Given $y\in H$, if $y+\partial W$ does not intersect $L^*$ one has
$\oplam (y + W^\circ) = \oplam (y + W)$ and so the inter model set defined by such parameters $y$ and $x\in G$ equals $x+\oplam (y + W)$ and is repetitive. It is then referred to as a model set (or a cut-and-project set). Any repetitive model set is thus a complete Meyer set. In order to carefully state our results also in the case of singular choices for $y$ (that is, $y$ for which $y+\partial W$ meets $L^*$) we use the new name `complete Meyer set'.
So strictly speaking, some complete Meyer sets are not model sets 
(for the given CPS and window)\footnote{In some works the terminology repetitive model set is used for any element in the hull of a model set with non-singular parameter $y$.
With this usage a complete Meyer set and a repetitive model set are the same thing.}. Furthermore, the name complete Meyer set suggests a process of 
completing a Meyer set, namely to a repetitive inter model set it sits in. Concrete ideas about that can be found in \cite{AujoguePhD}.
\smallskip

\textbf{Remark.} Regular complete Meyer sets have been a prime source of models for quasicrystals. The reason is that these sets have (pure) point diffraction and this is a characteristic feature of quasicrystals. Indeed,  a rigorous mathematical framework for diffraction was given    by Hof in \cite{Hof}. In this work it is also shown that regular complete Meyer sets have a lot of point diffraction.   That this spectrum  is pure was then shown later in \cite{Hof2,Schlottmann00}. These works actually prove that 
 the dynamical systems arising from regular complete Meyer sets have pure point dynamical spectrum (with continuous eigenfunctions). This is then combined with a result originally due to Dworkin \cite{Dwo}   giving that pure point dynamical spectrum implies pure point  diffraction. Recent years have seen quite some activity towards a further understanding of this result of Dworkin. In this context we mention results on a converse (i.e., that pure point diffraction implies pure point spectrum) obtained in \cite{LMS,BaakeLenz04,Gouere05,LenzStrungaru09}. The relevance of regular complete Meyer sets  in the study of quasicrystals has been underlined by recent results showing that
 the other main class of examples - those arising from primitive substitutions - is actually a subclass  if pure point diffraction is assumed (compare  Corollary \ref{subst-complete-Meyer} and subsequent discussion).

\medskip

As is clear from the discussion, any lattice  $L$ in $G$ is a Meyer set as well. In this case we can take $H $ to be the trivial group and $W = H$ and this gives that a lattice is a regular complete Meyer set. Later we will also encounter  Delone sets $\Del$ whose periods
 $$P (\Del):=\{t\in G: \Del -t = \Del\}$$
  form a lattice. Such a Delone set is  called \textit{completely periodic}  (or \textit{crystalline}). Note that the periods of a Delone set automatically form a discrete subgroup of $G$. Thus, the requirement of complete periodicity is really that the periods form a relatively dense set.  It is not hard to see that any completely periodic Delone set $\Del$ has the form
  $$\Del = L + F$$
  with a lattice $L$ (viz the periods) and a finite set $F$. This easily gives that
 any completely periodic Delone set is a Meyer set again.

\medskip

Any Meyer set over $G$ gives rise to a rotation on an compact abelian group. This will play quite a role in the subsequent analysis. By (iii) of Theorem~\ref{theorem-characterization-Meyer}, given a Meyer set there exists a cut-and-project scheme $(H,\widetilde{L})$ over $G$.
As   $\tilde{L}$ is  a discrete and co-compact subgroup of
$G\times H$, the quotient
\[
   \TT \; := \; (G\times H) / \tilde{L}
\]
is a compact abelian group and there is a natural group homomorphism
  $$ G \longrightarrow \TT,\:t \mapsto (t,0) + \tilde{L}.$$
  In this way, there is natural action of $G$ on $\TT$ and  $(\TT,G)$ is a rotation over $G$.  
%
The system $(\TT,G)$ is sometimes referred to as the canonical torus associated to the cut-and-project scheme \cite{BaakeMoodyPleasants}.

\section{Equicontinuous factors and Delone  dynamical systems}\label{Equicontinuous}
In this section  we will deal with special dynamical systems. When defining the concepts of equicontinuity and of almost periodicity we will assume  that the topology of $X$ comes from a metric $d$.  We will then refer to the corresponding dynamical systems as \textit{metrizable dynamical systems}.
This assumption of metrizability  is not necessary for the subsequent results to hold.  In fact, in order to formulate the concepts and prove the results,  it suffices to have a topology generated by a uniform structure. In particular, the results apply to Delone dynamical systems on arbitary locally compact, $\sigma$-compact abelian groups (as these systems can be topologized by a uniform structure \cite{Schlottmann00}). So, in order to simplify the presentation we will define the concepts in the metric case only, but state the results for the general case. We refer the reader to Auslander's book \cite{Auslander88} for more detailed information  and further background.

\subsection{Equicontinuous actions}
In this section we recall some of the theory of equicontinuous systems. Further aspects related to proximality will be discussed later in Section \ref{Proximality}.

Consider a minimal dynamical system $(X,G,\alpha)$ with $X$ a compact metric space and $G$ a locally compact abelian group acting by $\alpha$ on $X$.
If the action is free (that is, $\alpha_t(x)=x$ for some $x\in X$ implies $t$ is the identity of $G$) then $X$ can be seen as a compactification of $G$: it is the completion of one orbit and this orbit is a copy of $G$. One might ask when is $X$ a group compactification, that is, when does $X$ carry a group structure such that the orbit is a subgroup isomorphic to $G$, or, in other words, when is $(X,G)$ a rotation on a compact abelian group?
An answer to this question can be given in terms of equicontinuity.
\begin{definition} (Equicontinuous system)
The metrizable dynamical system $(X,G,\alpha)$ is called equicontinuous
if the family of homeomorphisms $\{\alpha_t \}_{t\in G}$ is equicontinuous, i.e., if for all $\varepsilon >0$ there exists a $\delta>0$ such that
$$ d(\alpha_t(x),\alpha_t(y))<\epsilon$$
for all $t\in G$ and all $x,y\in X$ with $d(x,y)<\delta$.
\end{definition}

\textbf{Remark.} The definition of equicontinuity may seem to depend on the particular choice of the  metric. However, by compactness of the underlying space $X$ is turns out to hold for one metric if and only if it holds for any metric (which induces  the topology).

\medskip

An equicontinuous  system admits an invariant metric which induces the same topology. Indeed one can just take
$$\overline{d}(x,y) := \sup_{t\in G} d(t \cdot x,t \cdot y).$$
Likewise, any compact metrizable abelian group $\TT$ admits a left invariant metric: Whenever $d$ is a metric  then
$\overline{d}(x,y) := \sup_{t\in \TT} d(x-t,y-t)$ is a metric on $\TT$ which is invariant. This similarity is not a coincidence: According to the following well-known theorem (see, for example, \cite{Kurka}) equicontinuous minimal systems are rotations on groups.


\begin{theorem} (Equicontinuous systems as
 rotations on compact groups) \label{cor-equi} The minimal dynamical system $(X,G)$ is equicontinuous if and only if it
is conjugate to a minimal rotation on a compact abelian group.
\end{theorem}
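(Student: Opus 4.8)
The plan is to prove both implications, the substance lying in the "only if" direction. For the easy direction, suppose $(X,G)$ is conjugate to a minimal rotation $(\TT,G)$ with $t\cdot x=j(t)+x$. As recorded just above, $\TT$ carries an invariant metric $\overline d$, and with respect to it each $\alpha_t$ is an isometry, since $\overline d(j(t)+x,\,j(t)+y)=\overline d(x,y)$; hence $\{\alpha_t\}$ is trivially equicontinuous (take $\delta=\varepsilon$). Because equicontinuity is a property of the system that does not depend on the choice of metric (by the Remark above), and is plainly preserved under conjugacy, $(X,G)$ is equicontinuous.

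For the converse, assume $(X,G)$ is minimal and equicontinuous, and pass to the invariant metric $\overline d(x,y)=\sup_{t\in G}d(t\cdot x,t\cdot y)$, for which every $\alpha_t$ is an isometry. Let $\TT:=\overline{\{\alpha_t:t\in G\}}$ be the closure of the translation family in the space of maps $X\to X$ in the topology of uniform convergence. Since $\{\alpha_t\}$ is equicontinuous and $X$ is compact, Arzel\`a--Ascoli shows $\TT$ is compact; on an equicontinuous family uniform and pointwise convergence coincide and composition is jointly continuous, so $\TT$ is a compact topological semigroup under composition, with identity $\mathrm{id}=\alpha_e$. Each element of $\TT$ is a uniform limit of isometries, hence an isometry of the compact space $(X,\overline d)$, hence surjective, hence a homeomorphism; and each commutes with every $\alpha_t$, since the relation $\alpha_s\alpha_t=\alpha_t\alpha_s$ passes to the limit. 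As $G$ is abelian, the same passage shows $\TT$ is commutative.

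The crux is to upgrade $\TT$ from a semigroup to a group, and this is the main obstacle; the rest is soft topology. Here I would invoke the standard fact that a cancellative compact Hausdorff topological semigroup is a topological group. Cancellativity of $\TT$ is immediate because its elements are bijections. To obtain inverses, recall that a nonempty compact topological semigroup contains an idempotent; cancellation forces the unique idempotent to be $\mathrm{id}$. For $f\in\TT$ the set $\overline{\{f^n:n\ge 1\}}$ is a compact subsemigroup, hence contains $\mathrm{id}$; choosing a net $f^{n_\alpha}\to\mathrm{id}$ with $n_\alpha\ge 2$ and a convergent subnet $f^{n_\alpha-1}\to g$ gives $fg=\mathrm{id}$, so $f^{-1}=g\in\TT$. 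Continuity of inversion follows from a closed-graph argument using compactness. Thus $\TT$ is a compact abelian group.

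Finally I would build the conjugacy by evaluation at a base point. Fix $x_0\in X$ and set $\phi:\TT\to X$, $\phi(g)=g(x_0)$. This map is continuous; its image is compact, hence closed, and contains the orbit $\{\alpha_t(x_0)\}$, which is dense by minimality, so $\phi$ is onto. If $g(x_0)=h(x_0)$, then using that $g,h$ commute with every $\alpha_t$ we get $g(\alpha_t x_0)=\alpha_t g(x_0)=\alpha_t h(x_0)=h(\alpha_t x_0)$, so $g$ and $h$ agree on the dense orbit of $x_0$ and hence everywhere; thus $\phi$ is injective. A continuous bijection from a compact space to a Hausdorff space is a homeomorphism, so $\phi$ is a conjugacy of the underlying spaces. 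Defining the continuous homomorphism $j:G\to\TT$, $j(t)=\alpha_t$, one checks $\alpha_t(\phi(g))=(\alpha_t\circ g)(x_0)=\phi(\alpha_t\circ g)$, so that, writing the operation of $\TT$ additively, the $G$-action corresponds to $t\cdot\xi=j(t)+\xi$. Since $\{\alpha_t\}=j(G)$ is dense in $\TT$, the range of $j$ is dense and the rotation is minimal. This exhibits $(X,G)$ as conjugate to a minimal rotation on a compact abelian group, completing the proof.
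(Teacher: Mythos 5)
Your proof is correct, and it takes a genuinely different route from the paper, whose argument for the hard direction is only a two-line sketch: fix $x_0\in X$ and extend the partially defined addition $t_1\cdot x_0 + t_2\cdot x_0 := (t_1+t_2)\cdot x_0$ from the dense orbit to all of $X$. Making that sketch rigorous requires checking that this operation is well defined and uniformly continuous on the orbit (which is exactly where the invariant metric and equicontinuity enter), whereas you sidestep the well-definedness issue entirely by building the group elsewhere: you take $\TT$ to be the uniform closure of $\{\alpha_t\}$ --- the enveloping (Ellis) group of the system, which for an equicontinuous system is compact by Arzel\`a--Ascoli --- upgrade it from a compact cancellative abelian topological semigroup to a topological group via the idempotent (Numakura-type) argument, and then transport the structure to $X$ by the evaluation map $g\mapsto g(x_0)$, whose injectivity follows from commutation with the $\alpha_t$ and density of the orbit. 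The two constructions produce the same object (your $\phi$ is precisely the identification of the closed translation group with the closed orbit), but your version buys rigor from standard machinery --- joint continuity of composition on equicontinuous families, surjectivity of isometries of compact spaces, the compact-semigroup lemma --- at the cost of length, while the paper's version is shorter and makes the group law on $X$ explicit. Two small points you assert without proof but which are easily supplied: that $t\mapsto\alpha_t$ is continuous into the uniform topology (needed for $j$ to be a continuous homomorphism with dense range; it follows from joint continuity of the action and compactness of $X$), and that $\overline d$ generates the original topology (again a consequence of equicontinuity, as recorded in the paper just before the theorem).
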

  \textit{Sketch of proof:} A rotation on a compact abelian group  is obviously  equicontinuous. Conversely,
if $(X,G)$ is equicontinuous, given any point $x_0\in X$ the operation $t_1\cdot x_0 + t_2\cdot x_0 := (t_1 + t_2)\cdot x_0$ extends to an addition in $X$ so that
$X$ becomes a group with $x_0$ as neutral element.   \hfill \qed

\medskip

\textbf{Remark.} An equicontinuous dynamical system need not be minimal but a transitive equicontinuous dynamical system  (i.e., one containing a dense orbit) is always minimal. So in the context of Delone (and tiling - see Section \ref{Meyer substitutions}) dynamical  systems, which are by definition the closure of one orbit,  equicontinuous systems are always minimal.

\medskip

Equicontinuity is strongly related to almost periodicity. In order to explain this further we will need some notation.
Let $(X,G)$ be a metrizable  dynamical system and $d$ the  metric on $X$. Then, the $\epsilon$-ball around $x\in X$ is denoted by $B_\epsilon (x)$. The elements of
 $$\Rr(x,\epsilon):=\{t\in G: t\cdot x\in B_\epsilon(x)\}$$
are called \textit{return vectors} to $B_\epsilon (x)$.
 Now, $(X,G)$ is called
\textit{uniformly almost periodic} if, for any $\epsilon>0$ the joint set of return vectors to $\epsilon$-balls, given by
$$A = \bigcap_{x\in X} \Rr(x,\epsilon)$$
is relatively dense (i.e., there exists a compact $K$ with $A + K = G$).

\begin{theorem}[\cite{Auslander88}] (Equicontinuity via almost periodicity)\label{thm-Auslander-equi}  The minimal dynamical system
$(X,G)$ is equicontinuous if and only if it is uniformly almost periodic.
\end{theorem}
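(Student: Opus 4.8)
I would prove the two implications separately, noting at the outset that the reverse direction (uniform almost periodicity $\Rightarrow$ equicontinuity) uses neither minimality nor the structure theory, working directly with the given metric $d$, whereas the forward direction is cleanest if I first invoke Theorem~\ref{cor-equi} to replace $(X,G)$ by a conjugate minimal rotation on a compact abelian group. Both equicontinuity and uniform almost periodicity are invariants of topological conjugacy (a conjugacy between compact metric systems is uniformly continuous, so it transports $\epsilon$-balls and relative density), so such a replacement is harmless.

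For the forward direction, assume $(X,G)$ is minimal and equicontinuous. By Theorem~\ref{cor-equi} I may assume $X=\TT$ is a compact abelian group with $t\cdot x = j(t)+x$ for a homomorphism $j\colon G\to\TT$ of dense range. Equip $\TT$ with a translation-invariant metric $\overline{d}$ (as noted before the statement, such a metric exists). Then $\overline{d}(t\cdot x,x)=\overline{d}(j(t)+x,x)=\overline{d}(j(t),0)$ is \emph{independent of $x$}, so every return set coincides with $A:=\{t\in G:\overline{d}(j(t),0)<\epsilon\}$ and the joint return set $\bigcap_{x}\Rr(x,\epsilon)$ equals $A$. It remains to see $A$ is relatively dense. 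Since $\TT$ is compact and $j(G)$ is dense, the open cover $\{j(s)+B_\epsilon(0):s\in G\}$ of $\TT$ admits a finite subcover indexed by $t_1,\dots,t_m$; given any $t\in G$ one has $j(t)\in j(t_i)+B_\epsilon(0)$ for some $i$, i.e. $j(t-t_i)\in B_\epsilon(0)$, i.e. $t-t_i\in A$. Hence $G=A+\{t_1,\dots,t_m\}$, giving relative density of $A$ with $K=\{t_1,\dots,t_m\}$ compact.

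For the reverse direction, assume $(X,G)$ is uniformly almost periodic and fix $\epsilon>0$; I will produce $\delta>0$. Set $\eta:=\epsilon/3$ and let $A:=\bigcap_{x}\Rr(x,\eta)$, which by hypothesis is relatively dense, so $A+K=G$ for some compact $K$. Crucially, $\eta$ (hence $A$ and $K$) is chosen \emph{first}. Since the action $G\times X\to X$ is continuous and $K\times X$ is compact, the family $\{\alpha_k\}_{k\in K}$ is equicontinuous, so there is $\delta>0$ with $d(x,y)<\delta\Rightarrow d(k\cdot x,k\cdot y)<\epsilon/3$ for all $k\in K$. Now take any $t\in G$ and write $t=a+k$ with $a\in A$, $k\in K$; using that $G$ is abelian, $t\cdot x=a\cdot(k\cdot x)$, and that $a\in A$ means $d(a\cdot z,z)<\eta$ for \emph{every} $z\in X$, I estimate, for $d(x,y)<\delta$,
$$d(a\cdot(k\cdot x),\,a\cdot(k\cdot y))\;\le\; d(a\cdot(k\cdot x),\,k\cdot x)+d(k\cdot x,\,k\cdot y)+d(k\cdot y,\,a\cdot(k\cdot y))\;<\;\eta+\tfrac{\epsilon}{3}+\eta\;=\;\epsilon.$$
As $t$ was arbitrary, $d(t\cdot x,t\cdot y)<\epsilon$ for all $t\in G$, which is equicontinuity.

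The step I expect to be the real crux is the reverse estimate, and in particular the \emph{ordering} in the decomposition $t=a+k$: one must apply the compact part $k$ first and the relatively-dense part $a$ last, so that the two errors contributed by $a$ are controlled by the \emph{a priori} bound $\eta$ (uniform in the base point) rather than by the equicontinuity modulus $\delta$. Fixing $\eta=\epsilon/3$ before extracting $K$ is what averts the apparent circularity that $K$ depends on $\eta$ while $\delta$ depends on $K$. The forward direction is comparatively routine once the rotation structure is available; its only real content is the compactness-plus-dense-orbit argument yielding relative density of the return set.
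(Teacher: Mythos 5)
Your proof is correct; note that the paper itself offers no argument for this statement --- it is quoted from Auslander's book with a citation --- so the comparison is with the classical proof, which you essentially reproduce, and correctly. The reverse direction is exactly the standard three-epsilon argument, and you have isolated its two genuine crux points: $\eta$ (hence $A$ and the compact set $K$ with $A+K=G$) must be fixed \emph{before} the modulus $\delta$ for $\{\alpha_k\}_{k\in K}$ is extracted, and in the decomposition $t=a+k$ the compact part must act \emph{first}, so that both errors contributed by $a$ are bounded by the base-point-uniform $\eta$ rather than by anything depending on $\delta$. The uniform equicontinuity of $\{\alpha_k\}_{k\in K}$ that you invoke does follow from joint continuity of the action plus compactness (if it failed, nets $k_\lambda\in K$, $x_\lambda,y_\lambda$ with $d(x_\lambda,y_\lambda)\to 0$ and $d(k_\lambda\cdot x_\lambda,k_\lambda\cdot y_\lambda)\ge\epsilon$ would subconverge to a contradiction), and you rightly observe that this half uses neither minimality nor commutativity --- indeed $\alpha_{a+k}=\alpha_a\circ\alpha_k$ holds for any action. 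In the forward direction your route through Theorem \ref{cor-equi} is legitimate (the statement supplies the minimality that theorem needs, and your remark that equicontinuity and uniform almost periodicity are conjugacy invariants is justified by uniform continuity of homeomorphisms of compact metric spaces, which also licenses the silent change of metric to $\overline{d}$); the finite-subcover argument showing $G=A+\{t_1,\dots,t_m\}$ is the standard one. The only caveat is that Theorem \ref{cor-equi} is itself only sketched in the paper; if you wish to avoid leaning on it, the same conclusion follows directly from the invariant metric $\overline{d}(x,y)=\sup_{t\in G}d(t\cdot x,t\cdot y)$: invariance, commutativity of $G$ and minimality give $\overline{d}(s\cdot x,x)=\overline{d}(s\cdot x_0,x_0)$ for every $x$ in the (dense, hence full) orbit closure of $x_0$, so the return sets are again independent of the base point, and relative density follows from the same compactness argument applied to the cover $\{\alpha_{-t}(B_\epsilon(x_0))\}_{t\in G}$ of $X$. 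That variant stays closer to Auslander's treatment but buys nothing essential over your version.
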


For Delone dynamical systems we can even be more specific.  Recall that a continuous bounded function $f$ on $G$ is called {\em Bohr almost periodic} if for any $\varepsilon >0$ the set
$$\{t\in G : \|f - f(\cdot -t)\|_\infty \leq \varepsilon\}$$
of its {\em almost $\varepsilon$-periods} is relatively dense.

\begin{theorem} (Characterization equicontinuous Delone systems)  Let $\Del$ be a Delone set in the locally compact, $\sigma$-compact abelian group  $G$. Then, the following assertions are equivalent:

\begin{itemize}
\item[(i)] The function
 $$f_{\Del,\varphi} : G\longrightarrow \C, \:\; f_{\Del, \varphi} (t) = \sum_{x\in \Del} \varphi (t - x)$$
 is Bohr-almost periodic for any continuous and compactly supported function $\varphi$ on $G$.
\item[(ii)] The hull $\Omega_\Del$ is  a compact abelian group   with  neutral element $\Del$ and group addition  satisfying $(\Del-t) + (\Del-s) = (\Del-s -t)$ for all   $t,s\in G$.
\item[(iii)]  The dynamical system $(\Omega_\Del,G)$ is equicontinuous.
\end{itemize}
\end{theorem}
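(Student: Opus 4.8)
The plan is to treat (ii)$\Leftrightarrow$(iii) as essentially a restatement of Theorem~\ref{cor-equi}, and to concentrate the real work on linking these to the Bohr almost periodicity condition (i). Throughout I would use the bridging functions $F_\varphi:\Omega_\Del\to\C$, $F_\varphi(\Del')=\sum_{x\in\Del'}\varphi(-x)$, which are continuous on the hull: in the FLC/local topology this is immediate because only the finitely many points of $\Del'$ in $-\supp\varphi$ contribute, and in general it is the continuity of the embedding of the hull into measures with the vague topology. The key identity is $f_{\Del',\varphi}(t)=\sum_{x\in\Del'}\varphi(t-x)=F_\varphi(t\cdot\Del')$, so that $f_{\Del',\varphi}$ is exactly $F_\varphi$ read along the orbit of $\Del'$, and $f_{\Del-s,\varphi}=f_{\Del,\varphi}(\cdot+s)$.

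For (ii)$\Rightarrow$(iii) I would set $j(t):=\Del-t$ and check from the addition formula in (ii) that $j$ is a continuous homomorphism $G\to\Omega_\Del$ with $j(e)=\Del$ and $t\cdot x=j(t)+x$ (first on the dense orbit, then everywhere by continuity); thus $(\Omega_\Del,G)$ is a rotation on a compact abelian group and so is equicontinuous. For (iii)$\Rightarrow$(ii) I would first note that $\Omega_\Del$ is the closure of the single orbit of $\Del$, hence the system is transitive; a transitive equicontinuous system is minimal (Remark after Theorem~\ref{cor-equi}), and the group construction in the proof of Theorem~\ref{cor-equi}, carried out with base point $x_0=\Del$, yields the compact abelian group structure with neutral element $\Del$ and precisely the addition $(\Del-t)+(\Del-s)=\Del-s-t$.

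For (iii)$\Rightarrow$(i), fix $\varphi$. Since $\Omega_\Del$ is compact, $F_\varphi$ is uniformly continuous, so given $\varepsilon>0$ there is $\delta$ with $|F_\varphi(\xi)-F_\varphi(\eta)|<\varepsilon$ whenever $d(\xi,\eta)<\delta$; equicontinuity then provides $\delta'$ with $d(\xi,\eta)<\delta'\Rightarrow\sup_{t}d(t\cdot\xi,t\cdot\eta)<\delta$. Using the identity above, $|f_{\Del,\varphi}(t)-f_{\Del,\varphi}(t-s)|=|F_\varphi(t\cdot\Del)-F_\varphi(t\cdot(\Del+s))|$, so every $s$ with $d(\Del,\Del+s)<\delta'$ is an almost $\varepsilon$-period of $f_{\Del,\varphi}$. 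These $s$ form, up to sign, the return vector set $\Rr(\Del,\delta')$, which is relatively dense because an equicontinuous system is uniformly almost periodic (Theorem~\ref{thm-Auslander-equi}); hence $f_{\Del,\varphi}$ is Bohr almost periodic.

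The main obstacle is (i)$\Rightarrow$(iii), where almost periodicity is assumed only at the base point $\Del$ yet must be upgraded to equicontinuity of the whole system. For each $\varphi$ I would let $\mathcal{H}_\varphi$ be the $\|\cdot\|_\infty$-closure of $\{f_{\Del,\varphi}(\cdot+s):s\in G\}$; by (i) this is compact, and since translation acts on it by isometries it is an equicontinuous $G$-system. I would then form the equivariant map $\Phi:\Omega_\Del\to\prod_\varphi\mathcal{H}_\varphi$, $\Phi(\Del')=(f_{\Del',\varphi})_\varphi$, where the product (again equicontinuous) carries the diagonal translation action. The crux is well-definedness and continuity: for $\Del'=\lim_\iota(\Del-s_\iota)$ in the hull, the functions $f_{\Del-s_\iota,\varphi}$ lie in the compact set $\mathcal{H}_\varphi$ and converge pointwise to $f_{\Del',\varphi}$ (since for each fixed $t$ the map $\Del''\mapsto f_{\Del'',\varphi}(t)=F_\varphi(t\cdot\Del'')$ is continuous on the hull), and on a compact equicontinuous family pointwise convergence forces uniform convergence, so $f_{\Del',\varphi}\in\mathcal{H}_\varphi$ and $\Phi$ is continuous into the sup-norm product topology. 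As $\Phi$ is injective (the values $f_{\Del',\varphi}(0)=\sum_{x\in\Del'}\varphi(-x)$, over all continuous compactly supported $\varphi$, already determine the point set $\Del'$) and $\Omega_\Del$ is compact, $\Phi$ is a conjugacy onto a $G$-invariant subset of an equicontinuous system; therefore $(\Omega_\Del,G)$ is equicontinuous. I expect the reconciliation of the hull's local/vague topology with the sup-norm topology on the $\mathcal{H}_\varphi$ — the pointwise-to-uniform step — to be the delicate point.
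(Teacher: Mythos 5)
Your proof is correct, but it takes a genuinely different route from the paper, which gives no self-contained argument: the paper delegates the theorem to \cite{KellendonkLenz13}, noting only (as you do, with the same mechanism) that (ii)$\Leftrightarrow$(iii) is immediate from Theorem \ref{cor-equi}, and remarking that the proof of the equivalence with (i) in \cite{KellendonkLenz13} is based on \cite{LenzRichard07}. That route goes through the theory of almost periodic measures: condition (i) says precisely that the Dirac comb $\delta_\Del$ is a strongly almost periodic measure (since $f_{\Del,\varphi}$ is $\delta_\Del$ convolved with a reflected $\varphi$), and the Lenz--Richard machinery then produces the compact group structure on the hull. You instead run the classical Bochner argument directly: sup-norm compactness of the translation hull $\mathcal{H}_\varphi$ of each $f_{\Del,\varphi}$, the pointwise-to-uniform upgrade (your flagged ``delicate point'' is in fact sound: in a sup-norm compact set, a pointwise convergent net has a unique uniform cluster point and hence converges uniformly), and the injective equivariant embedding $\Phi$ of $\Omega_\Del$ into $\prod_\varphi\mathcal{H}_\varphi$, with injectivity because the values $F_\varphi(\Del')$ over all $\varphi\in C_c(G)$ determine the measure $\delta_{\Del'}$ and hence $\Del'$. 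Your approach buys a self-contained, elementary proof valid for general locally compact $\sigma$-compact abelian $G$; its costs are bookkeeping you should make explicit: (a) Bochner's criterion (relatively dense almost $\varepsilon$-periods imply precompactness of the translates) requires $f_{\Del,\varphi}$ to be bounded and uniformly continuous, which follows from uniform discreteness of $\Del$ but deserves a line; (b) with uncountably many $\varphi$ the product $\prod_\varphi\mathcal{H}_\varphi$ is compact Hausdorff but not metrizable, so equicontinuity of the ambient system and of the subsystem $\Phi(\Omega_\Del)$ must be read in the uniform-structure sense the paper explicitly sanctions at the start of Section \ref{Equicontinuous} (alternatively, restrict to a countable family of $\varphi$'s when $G$ is second countable); and (c) in (iii)$\Rightarrow$(i) you correctly pass through minimality (transitive plus equicontinuous) before invoking Theorem \ref{thm-Auslander-equi}, and one should also observe that the relevant set of almost periods is the negative of a return-vector set, harmless as $G$ is abelian. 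What the paper's delegated measure-theoretic route buys in exchange is structural payoff beyond equicontinuity: strong almost periodicity of $\delta_\Del$ connects directly to diffraction and the cut-and-project apparatus of Section \ref{Background-Meyer}, which your dynamical argument does not touch.
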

The theorem (and a proof) can be found in \cite{KellendonkLenz13}. Of course, the equivalence between (ii) and (iii) follows from the above Theorem \ref{cor-equi}.  The equivalence between (i) and (ii) is close in spirit to Theorem \ref{thm-Auslander-equi}. However, the  proof given in \cite{KellendonkLenz13} is based on \cite{LenzRichard07}.

\bigskip

It is possible to describe (up to conjugacy) all equicontinuous systems over $G$ via   subgroups of $\Gdual$. The reason is basically that an equicontinuous system is a rotation on a compact group due to Theorem \ref{cor-equi}. This  compact group in turn is determined by its dual group, which is just a subgroup of $\Gdual$.  Moreover, the  elements of this dual group turn out to be just the continuous  eigenvalues of the system.  This highlights  the role of the continuous eigenvalues.

As it  is both instructive in itself  and also enlightening for the material presented  in the next section we now give a more detailed discussion of these connections.  We start with a general construction and then state the result describing the equicontinuous minimal systems over $G$.

\smallskip

Let $\mathcal{E}$ be a  subgroup of $\Gdual$.
We equip $\mathcal{E}$ with the discrete topology and denote the  dual group of $\mathcal{E}$ by $\TT_\mathcal{E}$, i.e.,
$$\TT_\mathcal{E} := \widehat{\mathcal{E}}.$$
Then, $\TT_\mathcal{E}$ is a compact abelian group. The inclusion  
$\mathcal{E} \hookrightarrow \Gdual$ gives rise (by Pontryagin duality) to a group homomorphism $j : G \longrightarrow \TT_\mathcal{E}$ with dense range. In this way there is a natural action of $G$ on $\TT_\mathcal{E}$ via
$$ G\times \TT_\mathcal{E}\longrightarrow \TT_\mathcal{E}, \; (t,x)\mapsto t\cdot x := j(t) x,$$
where on the right hand side the elements $j(t)$ and $x$ of $\TT_\mathcal{E}$ are just multiplied via the group multiplication of $\TT_\mathcal{E}$.  This action can be explicitly calculated as
$$ (t\cdot x) (\chi) = (\chi,t) (x,\chi)$$
for $\chi \in \mathcal{E}$.  As $j$ has dense range, this group action is minimal and hence (as $\TT_\mathcal{E}$ is compact), it is uniquely ergodic with the (normalized) Haar measure on $\TT_\mathcal{E}$ as the invariant measure.  Thus, $(\TT_\mathcal{E},G)$ is a minimal uniquely ergodic  rotation on a compact group. 
By  Theorem \ref{cor-equi}, such a system is   equicontinuous. Furthermore, countability of $\mathcal{E}$    is equivalent to metrizability  of the dual group $\TT_{\mathcal{E}}$ by standard harmonic analysis.

\begin{theorem} \label{Description-equicontinuous-systems-over-G} Let $G$ be a locally compact abelian group.

(a) Let $(X,G)$ be an  equicontinuous minimal dynamical system. Then,
  $(X,G)$ is conjugate to $(\TT_{\mathcal{E}_{ {top}} (X,G)},G)$.

(b) Whenever $\mathcal{E}$ is a subgroup of $\Gdual$ then $(\TT_{\mathcal{E}},G)$ is the  unique (up to conjugacy) equicontinuous minimal dynamical system whose set of continuous eigenvalues is given by $\mathcal{E}$. \end{theorem}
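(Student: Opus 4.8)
The plan is to prove part (a) first and then obtain part (b) from (a) together with a direct computation of the continuous eigenvalues of $(\TT_\mathcal{E},G)$. For (a), I would begin by reducing to the group case: by Theorem \ref{cor-equi} the system $(X,G)$ is conjugate to a minimal rotation on a compact abelian group, and since the group of continuous eigenvalues is a conjugacy invariant, I may assume from the outset that $X=K$ is a compact abelian group with action $t\cdot x = j_0(t)+x$ for a homomorphism $j_0\colon G\to K$ of dense range. The first substantive step is then to identify the continuous eigenfunctions of $(K,G)$ with the characters of $K$: if $f$ is a continuous eigenfunction with eigenvalue $\chi$, then by minimality $f$ has constant modulus, so after normalizing $|f|\equiv 1$ and $f(0)=1$, evaluating $f(j_0(t)+x)=(\chi,t)f(x)$ at $x=0$ gives $f(j_0(t))=(\chi,t)$, whence $f(j_0(t)+x)=f(j_0(t))f(x)$; density of $j_0(G)$ and continuity of $f$ then force $f$ to be a character $\gamma\in\widehat K$ with eigenvalue $\chi=\gamma\circ j_0$.

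This establishes a bijection between $\widehat K$ and $\mathcal{E}_{top}(K,G)$. The dual map $\widehat{j_0}\colon \widehat K\to\Gdual$, $\gamma\mapsto\gamma\circ j_0$, is injective because $j_0$ has dense range, and by the previous step its image is precisely $\mathcal{E}_{top}(K,G)$. As $K$ is compact, $\widehat K$ is discrete, so $\widehat{j_0}$ is a topological isomorphism of discrete groups onto $\mathcal{E}_{top}(K,G)$. Dualizing and invoking Pontryagin duality yields $\TT_{\mathcal{E}_{top}(K,G)}=\widehat{\mathcal{E}_{top}(K,G)}\cong\widehat{\widehat K}\cong K$. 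To see this is a conjugacy and not merely an abstract isomorphism, I would exhibit it explicitly as $\Phi\colon K\to\TT_{\mathcal{E}_{top}(K,G)}$, $\Phi(x)(\chi)=\gamma_\chi(x)$, where $\gamma_\chi$ is the unique character of $K$ with $\gamma_\chi\circ j_0=\chi$; the intertwining relation $\Phi(j_0(t)+x)=j(t)\cdot\Phi(x)$ then follows at once from $\gamma_\chi(j_0(t))=(\chi,t)$ together with the defining formula $(j(t))(\chi)=(\chi,t)$ for the action on $\TT_{\mathcal{E}_{top}}$.

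For (b), the preamble already shows that $(\TT_\mathcal{E},G)$ is a minimal equicontinuous rotation, so it remains to compute its continuous eigenvalues and to deduce uniqueness. Repeating the character analysis of (a) for the compact group $\TT_\mathcal{E}=\widehat{\mathcal{E}}$, the continuous eigenfunctions are exactly its characters, namely $\widehat{\widehat{\mathcal{E}}}=\mathcal{E}$; and for $\chi\in\mathcal{E}$ the corresponding evaluation eigenfunction $\mathrm{ev}_\chi(x)=x(\chi)$ satisfies $\mathrm{ev}_\chi(t\cdot x)=(j(t)x)(\chi)=(\chi,t)\,\mathrm{ev}_\chi(x)$, so its eigenvalue is $\chi$ itself. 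Hence $\mathcal{E}_{top}(\TT_\mathcal{E},G)=\mathcal{E}$. Uniqueness is then immediate from part (a): any equicontinuous minimal system whose eigenvalue group is $\mathcal{E}$ is conjugate to $(\TT_{\mathcal{E}},G)$.

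The main obstacle I anticipate is bookkeeping rather than conceptual. One must carefully track the several layers of Pontryagin duality and the various dual pairings so that the abstract isomorphism $K\cong\TT_{\mathcal{E}_{top}}$ is verified to intertwine the two $G$-actions, and one must confirm that $\Phi$ (the double dual of $\widehat{j_0}$) is a genuine homeomorphism of compact groups. This is exactly where compactness of $K$ — equivalently discreteness of $\widehat K$ and of $\mathcal{E}_{top}$ — and the Pontryagin duality theorem do the essential work, turning a continuous bijection of discrete groups into a topological isomorphism and hence $\Phi$ into a conjugacy.
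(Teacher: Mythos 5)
Your proposal is correct and follows essentially the same route as the paper's proof: reduce to a minimal rotation on a compact abelian group via Theorem \ref{cor-equi}, identify the continuous eigenfunctions with the characters of the group (the paper's Claim 1), dualize the injection $\widehat{K}\to\Gdual$ and invoke Pontryagin duality to get the conjugacy with $\TT_{\mathcal{E}_{top}}$ (Claim 2), and compute $\mathcal{E}_{top}(\TT_\mathcal{E},G)=\mathcal{E}$ via the evaluation eigenfunctions (Claim 3). Your explicit construction and verification of the intertwining map $\Phi$ simply spells out what the paper compresses into ``unwinding the definitions.''
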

\begin{proof} A sequence of claims establishes the statements of the theorem.

\smallskip

\textit{Claim 1}.
 Let $(\TT,G)$ be  a minimal rotation on the  compact group $\TT$  with  action of $G$ induced by the homomorphism $j : G\longrightarrow \TT$. Let $\iota : \widehat{\TT} \longrightarrow \Gdual $ be the dual of $j$ (i.e., $\iota(\chi) (t) =   (\chi , j( t))$ for $\chi \in \widehat{\TT}$ and $t\in G$). Then, $\iota$ is injective and the  set $\mathcal{E}$ of  continuous  eigenvalues of $\TT$ is just the image of  $\widehat{\TT}$  under $\iota$.
 In this way, the system $(\TT,G)$ is completely determined by the set of its continuous eigenvalues.

\textit{Proof.}
 The action of $G$ on $\TT$ comes from
 a homomorphism $j: G\longrightarrow \TT$ with  dense range. Thus, its dual map $\iota : \widehat{\TT}\longrightarrow \Gdual$ is injective. We therefore have to show that the continuous eigenvalues of $(\TT,G)$ are just given by the $\iota (\widehat{\TT})$. This in turn follows easily from the definitions:
Any $\chi$ in the dual group of $\TT$  gives rise to the continuous  $f_\chi : \TT  \longrightarrow \C, f_\chi (x) = (\chi,x)$ which takes the value $1$ at the neutral element of $\TT$. This $f_\chi$ is an eigenfunction to $\iota (\chi)$ as
$$f_\chi (t\cdot x) =  (\chi, j(t) x  )=  (\chi,x)  (\chi,j(t)) =  (\iota  (\chi),t) f_\chi (x).$$  Conversely,  whenever  $\lambda $ is a continuous eigenvalue of $(\TT,G)$ with associated eigenfunction $f_\lambda$ we can assume without loss of generality that $f_\lambda $ takes the value $1$ on the neutral element of $\TT$. Then the eigenvalue equation gives that $f_\lambda$ is actually an element of the dual group of $\TT$.

\medskip

\textit{Claim 2.} Consider the map $\iota $ from the previous claim as a bijective group homomorphism from  the discrete group  $\widehat{\TT}$ onto the discrete group $\mathcal{E}$.  Let $\kappa$ be its dual  mapping    $ \TT_\mathcal{E} = \widehat{\mathcal{E}}$ to $\TT$. Then $\kappa$ establishes a topological conjugacy between  $\TT_\mathcal{E}$ and $\TT$.

\textit{Proof.} This follows directly from unwinding the definitions.

\medskip

\textit{Claim 3.} Let $\mathcal{E}$ be a  subgroup of $\Gdual$ and $(\TT_\mathcal{E},G)$ be the associated equicontinous dynamical system. Then the  set $\mathcal{E}_{top} (\TT_\mathcal{E},G)$ of continuous eigenvalues of $(\TT_\mathcal{E},G)$ can naturally be identified with  $\mathcal{E}$.

\textit{Proof.}  The previous claim  shows that the set of continuous eigenvalues is just given by the dual of the group $\TT_\mathcal{E}$. By construction this dual is just $\mathcal{E}$.

\smallskip

The statements of the theorem follow directly from the above claims.
\end{proof}

\smallskip

\textbf{Remark.} Part (b) of the previous theorem concerns the construction of equicontinuous systems with a given group of continuous eigenvalues. In the context of the present paper it seems appropriate to point out that (for subgroups of the Euclidean space)  such a system can even be constructed as the torus of a cut-and-project scheme \cite{Robinson07}.

\subsection{Maximal equicontinuous factor}
In this section we consider   a topological minimal  dynamical system  $(X,G)$.  There exists a largest (in a natural sense) equicontinuous factor of this system. It is known as {\em maximal equicontinuous factor}. This factor can be obtained in various ways  including
 \begin{itemize}
\item via  the dual of the  topological eigenvalues,
\item via  a quotient construction,
\item via the  Gelfand spectrum of continuous eigenfunctions.
\end{itemize}
All of this is certainly well known. In fact, substantial parts  can be found in the book \cite{Auslander88} for example. Other parts of the theory, while still quite elemenary, seem to be scattered over the literature.  In  particular, in the context of our interests, corresponding constructions are discussed in \cite{BaakeLenzMoody07,BargeKellendonk12}.
 For the convenience of the reader we give a rather detailed discussion here.

\bigskip

Recall that $\mathcal{E}_{top}(X,G)$ carries the discrete topology.  Thus we are exactly in the situation discussed at the end of the previous section. In particular, there is a
 group homomorphism $j : G \longrightarrow \TT_{\mathcal{E}_{top} (X,G)}$ with dense range. This group homomorphism induces an action of $G$ on $\TT_{\mathcal{E}_{top} (X,G)}$ making it into a rotation on a compact group. In this way we obtain a minimal uniquely ergodic dynamical system $(\TT_{\mathcal{E}_{top} (X,G)},G)$ out of our data.  Explicitly, the  action of $G$ on $\TT_{\mathcal{E}_{top} (X,G)}$ is given as $$ (t\cdot x) (\chi) = (\chi,t) (x,\chi)$$
for $\chi\in \mathcal{E}_{top}(X,G)$.
We present two remarkable properties of this dynamical system in the next two lemmas.

\begin{lemma} (Description of equicontinuous factors) \label{description-equicontinuous-factors} Let $(X,G)$ be a minimal dynamical system and $(Y,G)$ an equicontinuous factor. Then $\mathcal{E}_{top}(Y,G)$ is a subgroup of $\mathcal{E}_{top}(X,G)$ and  $(Y,G)$ is a factor of  $(\TT_{\mathcal{E}_{top}(X,G)},G)$.
\end{lemma}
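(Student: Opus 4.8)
The plan is to combine a pull-back argument for eigenfunctions with the explicit description of equicontinuous minimal systems given in Theorem~\ref{Description-equicontinuous-systems-over-G}. Throughout, let $\pi : X \longrightarrow Y$ denote the factor map, a continuous surjective $G$-map.

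First I would establish the inclusion $\mathcal{E}_{top}(Y,G) \subseteq \mathcal{E}_{top}(X,G)$ by pulling back eigenfunctions along $\pi$. Given $\chi \in \mathcal{E}_{top}(Y,G)$ with continuous eigenfunction $g$ on $Y$ (so $g \neq 0$ and $g(t \cdot y) = (\chi,t)\, g(y)$ for all $t,y$), set $f := g \circ \pi$. Then $f$ is continuous on $X$, it is not identically zero because $\pi$ is onto, and, using that $\pi$ is a $G$-map, $f(t\cdot x) = g(\pi(t\cdot x)) = g(t \cdot \pi(x)) = (\chi,t)\, f(x)$. Hence $f$ is a continuous eigenfunction of $(X,G)$ to the eigenvalue $\chi$, so $\chi \in \mathcal{E}_{top}(X,G)$. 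Since both $\mathcal{E}_{top}(Y,G)$ and $\mathcal{E}_{top}(X,G)$ are subgroups of $\Gdual$, this containment immediately makes the former a subgroup of the latter, which is the first assertion.

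Next I would observe that $(Y,G)$, being a factor of the minimal system $(X,G)$, is itself minimal, since the continuous image of a dense orbit under the surjection $\pi$ is again a dense orbit. As $(Y,G)$ is also equicontinuous, Theorem~\ref{Description-equicontinuous-systems-over-G}(a) provides a conjugacy $(Y,G) \cong (\TT_{\mathcal{E}_{top}(Y,G)},G)$. It therefore suffices to exhibit $(\TT_{\mathcal{E}_{top}(Y,G)},G)$ as a factor of $(\TT_{\mathcal{E}_{top}(X,G)},G)$, after which composition with this conjugacy will present $(Y,G)$ as the desired factor.

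Writing $\mathcal{E}' := \mathcal{E}_{top}(Y,G) \subseteq \mathcal{E} := \mathcal{E}_{top}(X,G)$, both with the discrete topology, the inclusion dualizes under Pontryagin duality to the restriction map $q : \TT_{\mathcal{E}} = \widehat{\mathcal{E}} \longrightarrow \widehat{\mathcal{E}'} = \TT_{\mathcal{E}'}$, $q(x) = x|_{\mathcal{E}'}$, which is continuous as the dual of a homomorphism of discrete groups. I would then check that $q$ is a $G$-map directly from the explicit action $(t\cdot x)(\chi) = (\chi,t)(x,\chi)$: for $\chi \in \mathcal{E}'$ one computes $q(t \cdot x)(\chi) = (t\cdot x)(\chi) = (\chi,t)(x,\chi) = (t \cdot q(x))(\chi)$, so $q(t\cdot x) = t\cdot q(x)$. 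The one genuinely nontrivial point, and the main obstacle, is the surjectivity of $q$: this amounts to the statement that every character of the subgroup $\mathcal{E}'$ extends to a character of $\mathcal{E}$, a standard consequence of the divisibility (injectivity) of the circle group, equivalently of the fact that under Pontryagin duality an injection of discrete abelian groups dualizes to a surjection of the corresponding compact groups. With $q$ shown to be a surjective continuous $G$-map, composing it with the conjugacy $(\TT_{\mathcal{E}'},G) \cong (Y,G)$ exhibits $(Y,G)$ as a factor of $(\TT_{\mathcal{E}},G) = (\TT_{\mathcal{E}_{top}(X,G)},G)$, completing the argument.
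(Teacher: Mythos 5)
Your proposal is correct and follows essentially the same route as the paper: pull back eigenfunctions along $\pi$ to obtain the inclusion $\mathcal{E}_{top}(Y,G)\subseteq\mathcal{E}_{top}(X,G)$, dualize this inclusion to a continuous $G$-map $\TT_{\mathcal{E}_{top}(X,G)}\longrightarrow\TT_{\mathcal{E}_{top}(Y,G)}$, and conclude via the conjugacy between $(\TT_{\mathcal{E}_{top}(Y,G)},G)$ and $(Y,G)$ from Theorem~\ref{Description-equicontinuous-systems-over-G}. The only (equally valid) deviation is at the surjectivity step: you extend characters from the subgroup $\mathcal{E}_{top}(Y,G)$ using divisibility of the circle group, whereas the paper deduces surjectivity dynamically, from compactness of the image together with minimality of the target rotation.
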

\begin{proof} Let $\pi : X \longrightarrow Y$ be the factor map. Then any continuous eigenfunction $f$  on $Y$ (to the eigenvalue $\chi$) gives rise to the eigenfunction $f\circ \pi$ on $X$ (to the eigenvalue $\chi$). This shows the first part of the statement. Obviously, the embedding
$$ \mathcal{E}_{top}(Y,G) \longrightarrow \mathcal{E}_{top}(X,G)$$
is a group homomorphism.  Dualising, we obtain a group homomorphism
$$\TT_{\mathcal{E}_{top}(X,G)} = \widehat{\mathcal{E}_{top}(X,G)} \longrightarrow \widehat{\mathcal{E}_{top}(Y,G)} = \TT_{\mathcal{E}_{top}(Y,G)}.$$
This group homomorphism can easily be seen to be  a $G$-map. Hence, by compactness and minimality of the groups in question, it is onto and hence a factor map. The desired statement now follows as $\TT_{\mathcal{E}_{top} (Y,G)}$ is  conjugate to $(Y,G)$ by Theorem \ref{Description-equicontinuous-systems-over-G}.
\end{proof}

\textbf{Remark.} Note that the factor map from   $(\TT_{\mathcal{E}_{top}(X,G)},G)$ to $(Y,G)$ can  be chosen to be a group homomorphism. This is clear from the proof. In fact, it is  a general phenomenon: As is  easily shown, if a rotation on a compact group is a factor of another rotation on a compact group (mapping the neutral element to the neutral element), then the factor map is a group homomorphism (see also the proof of (b) of Theorem 3.1 in \cite{LenzRichard07} for this type of reasoning).

\medskip

\begin{lemma} Let $(X,G)$ be a minimal dynamical system. Then $(\TT_{\mathcal{E}_{top} (X,G)},G)$ is a factor of $(X,G)$.
\end{lemma}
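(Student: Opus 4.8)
The plan is to build an explicit factor map $\pi\colon X\to\TT_{\mathcal{E}_{top}(X,G)}$ out of the continuous eigenfunctions of $(X,G)$, exploiting that $\TT_{\mathcal{E}_{top}(X,G)}=\widehat{\mathcal{E}_{top}(X,G)}$ is by definition the group of characters of the discrete group $\mathcal{E}_{top}(X,G)$. Since $(X,G)$ is minimal, for every $\chi\in\mathcal{E}_{top}(X,G)$ there is a continuous eigenfunction, it has constant modulus, and the eigenspace is one-dimensional. First I would fix a base point $x_0\in X$ and, for each $\chi$, normalise the (essentially unique) eigenfunction $f_\chi$ so that $|f_\chi|\equiv 1$ and $f_\chi(x_0)=1$; this pins down $f_\chi$ uniquely. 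Then I would define $\pi(x)$ to be the map $\chi\mapsto f_\chi(x)$ and set out to verify that $\pi(x)$ really lies in $\widehat{\mathcal{E}_{top}(X,G)}$ and that $\pi$ is a factor map.

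The crucial step is to show that $\pi(x)$ is a group homomorphism $\mathcal{E}_{top}(X,G)\to S^1$. Here I would use that $f_{\chi_1}f_{\chi_2}$ is an eigenfunction to $\chi_1+\chi_2$ of modulus $1$ taking the value $1$ at $x_0$; by one-dimensionality of the eigenspace together with the chosen normalisation it must coincide with $f_{\chi_1+\chi_2}$, giving $f_{\chi_1+\chi_2}(x)=f_{\chi_1}(x)f_{\chi_2}(x)$ for all $x$, which is exactly multiplicativity of $\pi(x)$. Combined with $f_0\equiv 1$ and $f_{-\chi}=\overline{f_\chi}$ (again forced by the normalisation), this yields $\pi(x)\in\TT_{\mathcal{E}_{top}(X,G)}$. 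I expect this normalisation/multiplicativity step to be the main obstacle, as it is precisely where minimality (constant modulus and rigidity of eigenfunctions) is indispensable.

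The remaining checks should be routine. Continuity of $\pi$ follows because the topology on $\widehat{\mathcal{E}_{top}(X,G)}$ is that of pointwise convergence on the discrete group $\mathcal{E}_{top}(X,G)$, and each coordinate $x\mapsto(\pi(x),\chi)=f_\chi(x)$ is a continuous function on $X$. For equivariance I would compute, for $t\in G$,
$$(\pi(t\cdot x),\chi)=f_\chi(t\cdot x)=(\chi,t)f_\chi(x)=(\chi,t)(\pi(x),\chi)=(t\cdot\pi(x),\chi),$$
using the explicit formula $(t\cdot y)(\chi)=(\chi,t)(y,\chi)$ for the $G$-action on $\TT_{\mathcal{E}_{top}(X,G)}$; since this holds for every $\chi$ we obtain $\pi(t\cdot x)=t\cdot\pi(x)$. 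Finally, to get surjectivity I would note that $\pi(X)$ is a nonempty, compact (hence closed), $G$-invariant subset of $\TT_{\mathcal{E}_{top}(X,G)}$, and invoke minimality of $(\TT_{\mathcal{E}_{top}(X,G)},G)$—which holds because $j$ has dense range—to conclude $\pi(X)=\TT_{\mathcal{E}_{top}(X,G)}$. Thus $\pi$ is a factor map and the claim follows.
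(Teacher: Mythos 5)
Your proposal is correct and takes essentially the same approach as the paper: fix a base point $x_0$, normalise the continuous eigenfunctions by $f_\chi(x_0)=1$, use minimality to force $f_{\chi+\eta}=f_\chi f_\eta$ and $f_{-\chi}=\overline{f_\chi}$, and conclude that $x\mapsto(\chi\mapsto f_\chi(x))$ is a continuous $G$-map into $\TT_{\mathcal{E}_{top}(X,G)}$ which is onto by minimality and compactness. You merely spell out details the paper leaves implicit (one-dimensionality of eigenspaces for the multiplicativity step, the pointwise-convergence topology on the dual for continuity, and the closed-invariant-image argument for surjectivity).
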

\begin{proof} Fix an arbitrary point $x_0\in X$ and choose for any $\chi \in \mathcal{E}_{top} (X,G)$ a continuous eigenfunction $f_\chi$ with $f_\chi (x_0) =1$. Using the eigenfunction equation along the orbit of $x_0$ and minimality we have:
$$f_{\chi \eta}  = f_\chi f_\eta,\:\; \overline{f_\chi} = f_{-\chi}.$$

Any $x\in X$ then gives rise to the map
$$\widehat{x} : \mathcal{E}_{top} (X,G) \longrightarrow S^1, \widehat{x} (\chi) := f_\chi (x).$$
By construction and the choice of the $f_\chi$ the map $\widehat{x}$ is a character on $\mathcal{E}_{top} (X,G)$, i.e., an element of $\TT_{\mathcal{E}_{top} (X,G)}$. It is not hard to see that the map
$$X\longrightarrow \TT_{\mathcal{E}_{top} (X,G)}, x\mapsto \widehat{x},$$
is a $G$-map. By minimality and compactness, it is then a factor map.
\end{proof}

The two previous lemmas establish the following theorem.

\begin{theorem}\label{characterization-maximal-equicontinuous-factor} Let $(X,G)$ be minimal. Then there exists a unique (up to conjugacy) factor $(X_{\max},G)$  of $(X,G)$  satisfying   the following two properties:
\begin{itemize}
\item The factor $(X_{\max},G)$ is equicontinuous.
\item Whenever $(Y,G)$ is an equicontinuous factor of $(X,G)$  then $(Y,G)$ is a factor of $(X_{\max},G)$.
\end{itemize}
\end{theorem}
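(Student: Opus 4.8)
The plan is to exhibit $X_{\max}$ explicitly as $\TT_{\mathcal{E}_{top}(X,G)}$, read off both required properties from the two preceding lemmas, and then argue uniqueness separately. First I would set $(X_{\max},G) := (\TT_{\mathcal{E}_{top}(X,G)},G)$. The second of the two lemmas above states precisely that this system is a factor of $(X,G)$, while the construction preceding Theorem \ref{Description-equicontinuous-systems-over-G} shows it is a minimal rotation on a compact abelian group, hence equicontinuous by Theorem \ref{cor-equi}; this gives the first property. For the second property I would invoke Lemma \ref{description-equicontinuous-factors}, which asserts that every equicontinuous factor $(Y,G)$ of $(X,G)$ is already a factor of $(\TT_{\mathcal{E}_{top}(X,G)},G) = (X_{\max},G)$. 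Thus existence requires no work beyond assembling the two lemmas.

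Next I would establish uniqueness. Suppose $(X'_{\max},G)$ is a second factor of $(X,G)$ satisfying both properties. Being factors of the minimal system $(X,G)$, both $X_{\max}$ and $X'_{\max}$ are themselves minimal, and both are equicontinuous. Applying the maximality property of $X_{\max}$ to the equicontinuous factor $X'_{\max}$ shows that $X'_{\max}$ is a factor of $X_{\max}$; symmetrically, $X_{\max}$ is a factor of $X'_{\max}$. I would then use the elementary observation (the same pullback of eigenfunctions appearing at the start of Lemma \ref{description-equicontinuous-factors}) that any factor map $(Z,G)\to(W,G)$ induces an inclusion $\mathcal{E}_{top}(W,G)\hookrightarrow\mathcal{E}_{top}(Z,G)$. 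The two factorizations therefore yield mutual inclusions between $\mathcal{E}_{top}(X_{\max},G)$ and $\mathcal{E}_{top}(X'_{\max},G)$, so these groups of continuous eigenvalues coincide. By part (b) of Theorem \ref{Description-equicontinuous-systems-over-G}, two equicontinuous minimal systems over $G$ with the same group of continuous eigenvalues are conjugate, whence $X_{\max}$ and $X'_{\max}$ are conjugate.

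The delicate point, and the hard part, is this uniqueness step, where I must upgrade a pair of mutual factor maps between rotations on compact groups into an actual conjugacy. The naive temptation is to invert one factor map directly, but a surjective continuous homomorphism of compact groups need not be injective, so this can fail. I avoid the issue by passing to the conjugacy invariant supplied by Theorem \ref{Description-equicontinuous-systems-over-G}, namely the group of continuous eigenvalues, in which the mutual factorizations become mutual subgroup inclusions and hence equality. One could equally note, via the remark following Lemma \ref{description-equicontinuous-factors}, that each factor map may be taken to be a group homomorphism and that a pair of mutually surjective continuous homomorphisms of compact groups are isomorphisms; but reducing to the eigenvalue group is cleaner and reuses the classification already in hand.
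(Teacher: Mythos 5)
Your proof is correct, and its existence half is exactly the paper's: the paper simply says ``existence follows directly from the previous two lemmas,'' which is precisely your assembly of Lemma \ref{description-equicontinuous-factors} and the lemma following it, with equicontinuity of $(\TT_{\mathcal{E}_{top}(X,G)},G)$ coming from Theorem \ref{cor-equi}. Where you genuinely diverge is uniqueness. The paper argues directly and softly: given two candidates with mutual factor maps $\pi_{1,2}$ and $\pi_{2,1}$, it normalizes them by composing with rotations so that neutral elements correspond, notes that $\pi_{1,2}\circ\pi_{2,1}$ and $\pi_{2,1}\circ\pi_{1,2}$ are $G$-maps fixing a point, hence agree with the identity along the (dense) orbit of that point, and concludes by continuity that the two maps are mutually inverse, i.e.\ conjugacies. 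You instead pass to the conjugacy invariant $\mathcal{E}_{top}$: mutual factorizations give mutual inclusions of the eigenvalue groups inside $\Gdual$, hence equality, and part (b) of Theorem \ref{Description-equicontinuous-systems-over-G} forces conjugacy. This is legitimate and non-circular, since that theorem precedes the present one and its proof does not use it; you do need (and correctly note) that factors of the minimal system $(X,G)$ are again minimal, so the classification applies. The trade-off: your route leans on the full Pontryagin-duality classification, whereas the paper's argument uses only minimality, equivariance, and continuity, and so is more elementary and portable.

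One caution about your closing aside: the claim that a pair of mutually surjective continuous homomorphisms of compact groups are isomorphisms is false in general --- the shift on the compact abelian group $\prod_{n\in\NM} S^1$ is a continuous surjective endomorphism that is not injective. What rescues this idea in the paper's setting is not the homomorphism property but $G$-equivariance combined with a fixed point whose orbit is dense. Since you explicitly decline to use that alternative and rely on the eigenvalue-group argument instead, your proof is unaffected, but the aside as stated would not survive scrutiny.
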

\begin{proof}   Existence follows directly from the previous two lemmas. Uniqueness can be shown as follows: Let $(Y_1,G)$ and $(Y_2,G)$ be two factors with the above properties. Then both $Y_1$ and $Y_2$ are compact groups and  there exist factor maps
 $\pi_{1,2} : Y_1 \longrightarrow Y_2$, and  $\pi_{2,1} : Y_2\longrightarrow Y_1$.  Without loss of generality we can assume that $\pi_{1,2}$  maps the neutral element $e_1$ of $Y_1$ onto the neutral element, $e_2$, of $Y_2$ and $\pi_{2,1}$ maps $e_2$ to $e_1$ (otherwise we could just compose the maps with appropriate rotations of the groups). Then
 $$ \pi_{1,2} \circ \pi_{2,1} (e_1) = e_1\;\:\mbox{and}\:\; \pi_{2,1} \circ \pi_{1,2} (e_2) = e_2.$$
 As both $\pi_{1,2}$ and $\pi_{2,1}$ are $G$-maps, this shows that $\pi_{1,2} \circ \pi_{2,1}$ agrees with the identity  on $Y_1$ on the whole orbit of $e_1$ and $\pi_{2,1} \circ \pi_{1,2}$ agrees with the identity on $Y_2$ on the whole orbit of $e_2$. By continuity of the maps and denseness of the orbits we infer that the maps are inverse to each other. This shows that indeed $(Y_1,G)$ and $(Y_2,G)$ are conjugate.
 \end{proof}

\begin{definition} The factor $(X_{\max}, G)$ is called the maximal equicontinuous factor of $(X,G)$. The corresponding factor map will be denoted by $\pi_{\max}$.
\end{definition}

We now present two additional ways to view the maximal equicontinuous factor. 

\medskip

\textbf{A construction via quotients.} Let the equivalence relation $\sim$ on $X$ be defined by $x\sim y$ if and only if $f (x) = f(y)$ for every continuous eigenfunction $f$  and let
$$\pi : X \longrightarrow X / \sim =: X_\sim$$
be the canonical projection. Let $X_\sim$ have the quotient topology so that a map $g$ on $X_\sim$ is continuous if and only if $g\circ \pi$ is continuous.
 It is not hard to see that the action of $G$ on $X$ induces an action of $G$ on $X_\sim$  by the (well defined!)  map
$$G\times X_\sim \longrightarrow X_\sim, (t,\pi (x)) \mapsto (\pi (t\cdot x)).$$
Then $\pi$ is a $G$-map and hence a factor map. Note that the preceding considerations show that
whenever $f_\chi $ is a continuous eigenfunction to the eigenvalue $\chi$  there exists a unique continuous eigenfunction $g_\chi$ on $X_\sim$ with $f_\chi = g_\chi \circ \pi$.

If we are given additionally an invariant probability measure $m$ on $X$, this measure is transferred to  a $G$-invariant measure  $m_\sim :=\pi (m)$ on $X_\sim$. In this way we have constructed a  dynamical system $(X_\sim, G)$ together with an invariant measure $m_\sim$.

\medskip

\textbf{A construction via the Gelfand transform.}  Let $\mathcal{A}$ be the closed (w.r.t to $\|\cdot\|_\infty$) subalgebra of $C(X)$ generated by the continuous eigenfunctions. Then $\mathcal{A}$ is a commutative $C^\ast$-algebra and there exists therefore a compact space $X_\mathcal{A}$ and a continuous isomorphism of algebras (Gelfand transform)
$$\Gamma : \mathcal{A}\longrightarrow C (X_\mathcal{A}).$$
The space $X_\mathcal{A}$ is in fact nothing but the set of all multiplicative linear non-vanishing functionals on $\mathcal{A}$ and the map $\Gamma $ is then given by
$$\Gamma (f) (\phi) = \phi (f)$$
for $f\in \mathcal{A}$ and $\phi \in X_\mathcal{A}$.
The action of $G$ on $X$ induces an action of $G$ on $\mathcal{A}$ and this in turn induces an action of $G$ on $X_\mathcal{A}$. By construction, $\Gamma$ is then  a $G$-map with respect to these actions, i.e.,
$$\Gamma ( f(t\cdot)) = (\Gamma f) (t\cdot).$$

 Assume now that we are additionally given an invariant probability measure $m$ on $X$. Then $m$  can be seen as a linear positive functional $m : \mathcal{A}\longrightarrow \CC, f\mapsto m(f)$. Thus, via $\Gamma$, it induces a linear positive functional $m_\mathcal{A}$ on $C(X_\mathcal{A})$ and hence a  $m_{\mathcal{A}}$ is a measure on $X_{\mathcal{A}}$. It is not hard to see that the map $\Gamma : \mathcal{A} \longrightarrow C (X_\mathcal{A})$ extends to a unitary $G$-map
$$U : L_{pp,top}^2 (X, m)\longrightarrow  L^2 (X_\mathcal{A}, m_{\mathcal{A}}),$$
where $L_{pp,top}^2 (X,m)$ is the subspace of $L^2 (X,m)$ generated by the continuous eigenfunctions. (The subscript $pp$ in the notation refers to `pure point'.) As is easily seen, the only $G$-invariant functions on $L^2 (X_\mathcal{A}, m_\mathcal{A})$ are constant. Thus $m_\mathcal{A}$ is an ergodic measure on $(X_\mathcal{A}, G)$ and we have   expressed $L^2_{pp,top}$ as the $L^2$-space of a dynamical system.

\medskip

We now discuss how all three constructions give the same dynamical system (up to conjugacy).

\begin{theorem} \label{conjugate} Let $(X,G)$ be a minimal dynamical system. Then  the  dynamical systems $(X_\mathcal{A},G)$, $(\TT_{\mathcal{E}_{top}  (X,G) },G)$ and $(X_\sim, G)$ are canonically conjugate. In particular, they are all uniquely ergodic and  minimal and have pure point spectrum.
\end{theorem}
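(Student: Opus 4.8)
The plan is to realize all three systems as quotients of $(X,G)$ by one and the same equivalence relation, and then to promote the resulting set-theoretic bijections to conjugacies by a compactness argument. First I would fix a base point $x_0\in X$ and, for each $\chi\in\mathcal{E}_{top}(X,G)$, the normalized continuous eigenfunction $f_\chi$ with $f_\chi(x_0)=1$; since $(X,G)$ is minimal each continuous eigenspace is one-dimensional, so \emph{every} continuous eigenfunction is a scalar multiple of some $f_\chi$. There are then three natural continuous $G$-maps out of $X$: the quotient projection $\pi_\sim:X\to X_\sim$; the map $\pi_\TT:X\to\TT_{\mathcal{E}_{top}(X,G)}$, $x\mapsto\widehat{x}$ with $\widehat{x}(\chi)=f_\chi(x)$ (the factor map constructed above); and the map $\pi_\mathcal{A}:X\to X_\mathcal{A}$ sending $x$ to the evaluation functional $g\mapsto g(x)$ on $\mathcal{A}$, which is dual to the inclusion $\mathcal{A}\hookrightarrow C(X)$.

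Second, I would verify that these three maps have exactly the same fibres, namely the $\sim$-classes. For $\pi_\TT$ this is immediate: $\widehat{x}=\widehat{y}$ means $f_\chi(x)=f_\chi(y)$ for all $\chi$, which by the previous paragraph is equivalent to $f(x)=f(y)$ for every continuous eigenfunction $f$, i.e.\ to $x\sim y$. For $\pi_\mathcal{A}$ the point is that $\mathcal{A}$ is the \emph{closed} algebra generated by the $f_\chi$ (note $1=f_0\in\mathcal{A}$ and $\overline{f_\chi}=f_{-\chi}\in\mathcal{A}$, so $\mathcal{A}$ is a unital $C^*$-subalgebra): any polynomial in the $f_\chi$ and $f_{-\chi}$ that agrees on a pair of points still agrees there after taking uniform limits, so the functions of $\mathcal{A}$ separate the same pairs of points as the $f_\chi$ do. Hence the fibres of $\pi_\mathcal{A}$ are again precisely the $\sim$-classes.

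Third, I would upgrade the coincidence of fibres to conjugacies. Each of the three maps is a surjective continuous $G$-map onto a compact Hausdorff space: $\pi_\sim$ is onto by definition; $\pi_\TT$ is onto because its image is a nonempty closed $G$-invariant subset of the minimal system $(\TT_{\mathcal{E}_{top}(X,G)},G)$; and $\pi_\mathcal{A}$ is onto because its image is compact, while a proper closed subset of $X_\mathcal{A}$ would, via Urysohn and the Gelfand isomorphism $\mathcal{A}\cong C(X_\mathcal{A})$, produce a nonzero $g\in\mathcal{A}$ vanishing identically on $X$. Since $\pi_\TT$ and $\pi_\mathcal{A}$ are continuous, $G$-equivariant and constant on $\sim$-classes, the universal property of the quotient topology yields continuous $G$-maps $\Phi:X_\sim\to\TT_{\mathcal{E}_{top}(X,G)}$ and $\Psi:X_\sim\to X_\mathcal{A}$ with $\pi_\TT=\Phi\circ\pi_\sim$ and $\pi_\mathcal{A}=\Psi\circ\pi_\sim$; coincidence of fibres makes them bijective. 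As $X_\sim$ is compact (a continuous image of $X$) and the targets are Hausdorff, these continuous bijections are automatically homeomorphisms, hence conjugacies, and $\Psi\circ\Phi^{-1}$ conjugates $(\TT_{\mathcal{E}_{top}(X,G)},G)$ with $(X_\mathcal{A},G)$. This shows the three systems are canonically conjugate (and, as a by-product, that $\sim$ is closed). One could alternatively argue that $X_\sim$ and $X_\mathcal{A}$ each satisfy the universal property of Theorem~\ref{characterization-maximal-equicontinuous-factor} and invoke uniqueness, but the explicit maps are more informative.

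Finally, for the ``in particular'' clause I would appeal to conjugacy-invariance: the preceding section records that $(\TT_{\mathcal{E}_{top}(X,G)},G)$ is a minimal, uniquely ergodic rotation on a compact abelian group, and such a rotation has pure point spectrum because its continuous characters form an orthonormal basis of $L^2$ with respect to Haar measure. Minimality, unique ergodicity and pure point spectrum all transfer along the conjugacies just built, yielding the claim for all three systems. I expect the only step requiring genuine care to be the fibre identification for $\pi_\mathcal{A}$ — checking that passing to the closed generated algebra introduces no new identifications of points — together with the surjectivity of $\pi_\mathcal{A}$; the remainder is a routine assembly of the quotient universal property and the compact-to-Hausdorff homeomorphism lemma.
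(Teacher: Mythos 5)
Your proposal is correct and takes essentially the same route as the paper: the paper likewise uses the normalized eigenfunctions to define $J:X_\sim\longrightarrow\TT_{\mathcal{E}_{top}(X,G)}$ by $J(\pi(x))(\chi)=f_\chi(x)$ (a continuous injective $G$-map with dense, hence full, range by compactness), and identifies $X_\sim$ with $X_\mathcal{A}$ by dualizing the isomorphism $C(X_\sim)\cong\mathcal{A}$, $g\mapsto g\circ\pi$, concluding the ``in particular'' clause exactly as you do from the properties of $(\TT,G)$. Your fibre-matching via the quotient's universal property and the Urysohn argument for surjectivity of the evaluation map are just a slightly more explicit unpacking of the paper's Gelfand-duality step, not a different method.
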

\begin{proof} Chose for any $\chi \in \mathcal{E}_{top} (X,G)$ a continuous eigenfunction $f_\chi$. Let   $g_\chi$ on $X_\sim$ be the unique function with $f_\chi = g_\chi \circ \pi$. Fix an $x_0 \in X$ arbitrarily.
 We can  assume without loss of generality that $f_\chi (x_0) = 1$ for all $\chi\in \mathcal{E}_{top} (X,G)$. Write  $\TT$ for $\TT_{\mathcal{E}_{top}  (X,G)}$.

We first show the statement on canonical conjugacy. To do so we  provide explicit maps: Define
$$ J : X_\sim \longrightarrow \TT \;\:\mbox{via}\;\:  J(\pi (x) ) (\chi ) = g_\chi (\pi (x)) = f_\chi (x).$$
Then $J$ indeed maps $X_\sim $ into $\TT$ (as we had normalized our $f_\chi$ with $f_\chi (x_0) =1$). Obviously, $J$ is continuous and injective. As it is a $G$-map and the action of $G$ on $\TT$ is minimal, the map $J$  has dense range. As $\TT$ is compact, $J$ is then a homeomorphism.

\medskip

We now turn to proving that $X_\sim $ and $X_\mathcal{A}$ are homeomorphic.
Consider
$$\Pi : C (X_\sim) \longrightarrow  C(X), g\mapsto g\circ \pi.$$
Then  $B:= \Pi (C (X_\sim))$ is a closed subalgebra of $C(X)$. As $C(X_\sim)$ is generated by the $g_\chi$, $\chi \in\mathcal{E}_{top} (X,G)$, the algebra $B$ is generated by $f_\chi = g_\chi \circ \pi$, $\chi \in\mathcal{E}_{top} (X,G)$. Therefore $B = \mathcal{A}$ and thus $\Pi$ gives an isomorphism between $C (X_\sim)$ and $\mathcal{A}$. Dualising $\Pi$, we obtain a homeomorphism between $X_\sim$ and $X_\mathcal{A}$.  It is easy to check that all maps involved are $G$-maps.

\smallskip

The last statement of the theorem is clear since $(\TT,G)$ is uniquely ergodic, minimal, and has pure point spectrum.
\end{proof}

 The preceding considerations are \textbf{summarized} as follows:
 We have given three different constructions of a certain topological factor of $(X,G)$. This factor is given by a rotation, i.e., an action of the group $G$ on a compact group via a group  homomorphism  with dense range from $G$ to the compact group. The $L^2$-space of this factor corresponds to the  part of the $L^2$-space of the original  dynamical system coming from continuous eigenfunctions.

\bigskip

In general it is not easy to decide whether a given equicontinuous factor is  the maximal equicontinuous factor. However, there is one sufficient condition which is of considerable relevance.

\begin{lemma}\label{lemma-1-1-almost-automorphic} Let $(X,G)$ be a minimal  dynamical system and $(Y,G)$ an  equicontinuous dynamical system. If $(Y,G)$ is a factor of $(X,G)$ with factor map $\pi$  and there exists  $y\in Y$ such that $\pi^{-1} (y)$ consists of only one point, then $(Y,G)$ is the maximal equicontinuous factor.
\end{lemma}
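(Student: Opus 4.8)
The plan is to exhibit $(Y,G)$ as conjugate to the maximal equicontinuous factor $(X_{\max},G)$ whose existence is guaranteed by Theorem~\ref{characterization-maximal-equicontinuous-factor}. Since $(Y,G)$ is an equicontinuous factor of $(X,G)$, the universal property in that theorem yields a factor map $p:X_{\max}\longrightarrow Y$ with $\pi=p\circ\pi_{\max}$, where $\pi_{\max}:X\longrightarrow X_{\max}$ is the canonical factor map. It therefore suffices to prove that $p$ is a conjugacy, i.e.\ that $p$ is injective: once injectivity is established, $p$ is a continuous bijection between compact Hausdorff spaces, hence a homeomorphism, and being a $G$-map it is a conjugacy.

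First I would transfer the one-point-fiber hypothesis from $\pi$ to $p$. Let $x_0\in X$ be the unique point with $\pi(x_0)=y$ and set $\xi_0:=\pi_{\max}(x_0)$, so that $p(\xi_0)=y$. Suppose $\xi'\in X_{\max}$ satisfies $p(\xi')=y$. Since $\pi_{\max}$ is onto, choose $x'\in X$ with $\pi_{\max}(x')=\xi'$; then $\pi(x')=p(\pi_{\max}(x'))=p(\xi')=y$, so $x'\in\pi^{-1}(y)=\{x_0\}$ and hence $\xi'=\pi_{\max}(x_0)=\xi_0$. This shows $p^{-1}(y)=\{\xi_0\}$, so $p$ has a singleton fiber.

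Next I would upgrade a single singleton fiber to global injectivity by invoking the group structure. By Theorem~\ref{cor-equi} both $(X_{\max},G)$ and $(Y,G)$ are minimal rotations on compact abelian groups. After composing $p$ with suitable translations of these groups I may assume that $p$ maps the neutral element of $X_{\max}$ to the neutral element of $Y$; by the remark following Lemma~\ref{description-equicontinuous-factors} the resulting map is then a continuous surjective group homomorphism. Since $G$ is abelian, these translations are homeomorphisms commuting with the rotation actions, so they are $G$-maps and change neither fiber cardinalities nor the desired conclusion; thus the normalized map still possesses a singleton fiber. The fibers of a group homomorphism are precisely the cosets of its kernel $K$, and all such cosets have the same cardinality $\#K$; a singleton fiber forces $K=\{0\}$, whence $p$ is injective and therefore a conjugacy, giving $(Y,G)\cong(X_{\max},G)$.

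The main obstacle is exactly the passage from one one-point fiber to injectivity everywhere: this fails for arbitrary factor maps and relies essentially on equicontinuity, which forces both systems to be rotations on compact groups and $p$ (after translation) to be a homomorphism with constant-cardinality fibers. The remaining points are routine: that the translations used to normalize $p$ are $G$-maps is immediate from commutativity of $G$, and the homomorphism property follows from $p\circ j_{X_{\max}}=j_Y$ together with the density of $j_{X_{\max}}(G)$ and continuity of $p$.
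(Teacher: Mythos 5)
Your proof is correct and follows essentially the same route as the paper's: both factor $\pi$ through $\pi_{\max}$ using the maximality theorem, normalize the intermediate map so that (by the remark after the lemma on equicontinuous factors) it is a continuous surjective group homomorphism between the two compact groups, and then use homogeneity to promote the single one-point fiber to injectivity everywhere. The only cosmetic difference is in that last step: you argue via the constant cardinality of the cosets of the kernel, while the paper translates an arbitrary pair $p,q$ with $\pi_Y(p)=\pi_Y(q)$ into the distinguished fiber --- the same homogeneity argument in different clothing.
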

\begin{proof} Let $(X_{\max},G)$ be the maximal equicontinuous factor with corresponding factor map $\pi_{\max}$. By Theorem \ref{characterization-maximal-equicontinuous-factor},  the dynamical system $(Y,G)$ is then a factor of $(X_{\max},G)$. Denote the corresponding factor mapy by $\pi_Y$.
Without loss of generality we can then assume that $\pi_Y$ maps the neutral element of $X_{\max}$ to the neutral element of $Y$ and that
$$\pi = \pi_Y \circ \pi_{\max}$$
(otherwise we can just compose $\pi_Y$ and $\pi$ with suitable rotations).
 We will show that $\pi_Y$ is a homeomorphism. As $\pi_Y$ is a factor map, it is onto and continuous. It therefore suffices to show that it is one-to-one. So, let $p,q\in Y$ be given with $\pi_Y (p) = \pi_Y (q)$.  As discussed in a remark above, the map $\pi_Y$ is a group homomorphism. Thus,  we obtain
$$\pi_Y (g p) = \pi_Y (g) \pi_Y (p) = \pi_Y (g) \pi_Y (q) = \pi_Y (g q)$$
for all $g\in Y$. As $\pi_Y$ is onto, we can now chose $g\in Y$ with $\pi_Y (g p) = y = \pi_Y (g q)$.
As, by the assumption of the lemma, $$\pi^{-1} (y) = \pi_{\max}^{-1} ( \pi_Y^{-1} (y))$$ consists of only one point, we obtain from the last equality
that $g p = g q$ and, hence, $p = q$. This is the desired injectivity.
\end{proof}

The class of dynamical systems appearing in the previous lemma is rather important (for us, in describing regularity properties of Meyer sets - see Theorem \ref{Meyer-equal-to-mef-one-point} below) and has a name of its own.

\begin{definition}[Almost-automorphic system] Let $(X,G)$ be a minimal dynamical system and $(X_{\max}, G)$ its maximal equicontinuous factor. If there exists a $y\in X_{\max} $ such that $\pi_{\max}^{-1} (y) $ has only one element then $(X,G)$ is called almost-automorphic.
\end{definition}

\medskip

We finish this section with a discussion of  local freeness in our context.
This will  be relevant  in the discussion in Section \ref{Proximality-Delone}.
The equivalence of (i) and (ii) in the following result is  due to Barge and Kellendonk \cite{BargeKellendonk12}.  We include a complete proof (as \cite{BargeKellendonk12} only contains a proof of one direction).

\smallskip

Whenever the group $G$ acts on $X$ an element  $t$ of $ G$ is said to act \textit{freely} if $t \cdot x\neq x$ for all $x\in X$.  The action is  called \textit{locally free} if there exists a neighborhood $U$ of $e\in G$ such that any $t\in U\setminus \{e\}$ acts freely. If $U$ can be chosen as $G$ the action is called \textit{free}.

\begin{lemma}\label{lem-free} Let $\mathcal{E}$ be a subgroup of $\Gdual$ and consider the associated dynamical system $(\TT_{\mathcal{E}}, G)$ with action  given by
$$(t\cdot x)(\chi) = (\chi,t) (x,\chi)$$
 for $x\in \TT_{\mathcal{E}}$ and $\chi \in \mathcal{E}$. Then the following assertions are equivalent:
 \begin{itemize}
 \item[(i)] The action is locally free.
 \item[(ii)]  The quotient $\Gdual  / \overline{\mathcal{E}}$ is compact, where $\overline{\mathcal{E}}$ is the closure of $\mathcal{E}$ in $\Gdual$.
   \item[(iii)]   The stabiliser  $\{ t\in G: t\cdot x = x\:\mbox{ for (some) all  x} \}$ of the action is a discrete subgroup of $G$.
    \end{itemize}
In particular, the action is free if and only if $\mathcal{E}$ is dense in $\Gdual$.
\end{lemma}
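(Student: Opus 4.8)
The plan is to identify the stabilizer of the action explicitly and then read off all three conditions from Pontryagin duality. Since $(\TT_{\mathcal{E}}, G)$ is a rotation, with $t\cdot x = j(t)\,x$ for the homomorphism $j : G \to \TT_{\mathcal{E}}$ determined by $j(t)(\chi) = (\chi,t)$, the equation $t\cdot x = x$ holds for one $x$ if and only if it holds for every $x$, if and only if $j(t)$ is the neutral element of $\TT_{\mathcal{E}}$. Hence the stabilizer appearing in (iii) is the single subgroup
$$\mathcal{E}^{\perp} := \{t\in G : (\chi,t) = 1 \text{ for all } \chi\in\mathcal{E}\} = \ker j,$$
and this at once disposes of the ``(some) all'' ambiguity in the formulation of (iii). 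First I would record that, since each $t$ defines a continuous character $\chi\mapsto(\chi,t)$ on $\Gdual$, one has $\mathcal{E}^{\perp} = \overline{\mathcal{E}}^{\perp}$, so throughout we may replace $\mathcal{E}$ by its closure.

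For (i)$\Leftrightarrow$(iii): by the previous paragraph, $t$ acts freely precisely when $t\notin\mathcal{E}^{\perp}$, so local freeness says exactly that there is a neighborhood $U$ of $e$ with $U\cap\mathcal{E}^{\perp} = \{e\}$, i.e.\ that $e$ is isolated in $\mathcal{E}^{\perp}$. As $\mathcal{E}^{\perp}$ is a topological subgroup of $G$, translation shows that having one isolated point is equivalent to discreteness, which gives (i)$\Leftrightarrow$(iii).

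For (ii)$\Leftrightarrow$(iii) I would invoke the standard duality for the closed subgroup $\overline{\mathcal{E}}\le\Gdual$. Identifying $\widehat{\Gdual}$ with $G$ via the canonical pairing, the annihilator of $\overline{\mathcal{E}}$ in $G$ is exactly $\mathcal{E}^{\perp}$, and the duality of closed subgroups yields a topological isomorphism $\widehat{\Gdual/\overline{\mathcal{E}}}\cong\mathcal{E}^{\perp}$. Since a locally compact abelian group is discrete if and only if its dual is compact, applying this to $\mathcal{E}^{\perp}$ and using $\widehat{\mathcal{E}^{\perp}}\cong\Gdual/\overline{\mathcal{E}}$ (Pontryagin duality once more) shows that $\mathcal{E}^{\perp}$ is discrete if and only if $\Gdual/\overline{\mathcal{E}}$ is compact, i.e.\ (iii)$\Leftrightarrow$(ii).

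Finally, the action is free if and only if $\mathcal{E}^{\perp} = \{e\}$. Since $(\overline{\mathcal{E}}^{\perp})^{\perp} = \overline{\mathcal{E}}$ by the biduality of annihilators, a trivial annihilator forces $\overline{\mathcal{E}} = \{e\}^{\perp} = \Gdual$, that is, $\mathcal{E}$ is dense in $\Gdual$; the converse is immediate. The only genuine work here is the bookkeeping inside Pontryagin duality, namely correctly identifying the annihilator, ensuring that the isomorphism $\widehat{\Gdual/\overline{\mathcal{E}}}\cong\mathcal{E}^{\perp}$ is topological rather than merely algebraic, and then reliably invoking the equivalence between discreteness and compactness of the dual, rather than any single hard estimate.
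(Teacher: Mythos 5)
Your proposal is correct and takes essentially the same approach as the paper: your closed-subgroup duality $\widehat{\Gdual/\overline{\mathcal{E}}}\cong\mathcal{E}^{\perp}$ (with $\mathcal{E}^{\perp}=\overline{\mathcal{E}}^{\perp}$ the stabilizer) is precisely the kernel term in the paper's dual exact sequence $e\to\widehat{\Gdual/\overline{\mathcal{E}}}\to G\stackrel{q}{\to}\widehat{\overline{\mathcal{E}}}\to e$, where $\ker q=\mathcal{E}^{\perp}$, and both arguments then conclude via the discrete--compact duality and the double-annihilator theorem for the final density statement. The only difference is organizational: the paper proves (i)$\Leftrightarrow$(ii) directly, declaring (i)$\Leftrightarrow$(iii) clear, whereas you route explicitly through (iii), which is a harmless repackaging.
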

\begin{proof} The equivalence between (i) and (iii) is clear. It remains to show the equivalence between (i) and (ii).
 By definition $t\in G$ acts freely if and only if $t\cdot x \neq x$ for all $x\in\TT_{\mathcal{E}}$, which is the case if and only if  $(\chi,t) \neq 1$ for at least one $\chi\in\mathcal{E}$. By continuity, the latter can be rephrased as $(\chi,t) \neq 1$ for at least one $\chi\in \overline{\mathcal{E}}$. Consider now  the exact sequence of abelian groups
$$ e \to \widehat{\Gdual / \overline{\mathcal{E}} } \to G \stackrel{q}{\to} \hat{\overline{\mathcal{E}}}\to e$$
which is the dual to the exact sequence $e\to \overline{\mathcal{E}}\to \Gdual \to \Gdual / \overline{ \mathcal{E}}\to e$.
Let $U$ be an open neighborhood of $e\in G$
and $e\neq t\in U$. Set $\eta =  q (t) \in \widehat{ \overline{ \mathcal{E} } }$. Then, $(\eta,\chi) = (\chi,t)$ because $q$ is dual to the inclusion $\mathcal{E} \hookrightarrow \Gdual$. Thus, there exists a $\chi$ with $(\chi, t) \neq 1$ if and only if $\eta$ does not belong to $\ker q$.
   Hence $t\cdot x \neq x$ for all $x\in\TT_{\mathcal{E}}$ if and only if  $t\notin \ker q $. Thus $G$ acts locally freely if and
 only if there exists an open $e\in U\subset G$ such that  $U\cap \ker q = \{e\}$.  By exactness of the sequence above, this the case if and only if  $ \widehat{\Gdual/ \overline{\mathcal{E}}}$ is discrete and hence if and only if  $\hat G/\overline{ \mathcal{E}}$ is compact.

  Furthermore, $G$ acts freely if and only if  the above is true for $U=G$, which is equivalent to $\mathcal{E}$ being dense in $\Gdual$.
\end{proof}

\subsection{Delone dynamical systems via their maximal equicontinuous factor} \label{Hierarchy}
In this section we will study dynamical systems  arising as the hull of  FLC-Delone sets.  The basic aim is to characterize features of the Delone set in question by how close its dynamical system is to its maximal equicontinuous factor.  A rough description of our results is that the more ordered the set is, the closer its hull is to its maximal equicontinuous factor. More precise statements will be given below. We will distinguish two situations. In one situation we are given a Meyer set and characterize it by features of its hull. In the other situation we are given the hull of an FLC-Delone set.

\medskip

In order to set the perspective on the results in the next two subsections we
briefly recall a `hierarchy of order' we have encountered within the FLC-Delone sets (see Section \ref{Background-Meyer}). Let  $\Del$ be a Delone set. Then $\Del$ has finite local complexity if and only if $\Del - \Del$ is locally finite.  Consequently, $\Del$  is Meyer if and only if $\Del -\Del\subset \Del + F$ for some finite set $F\subset G$ or, equivalently, if and only if $\Del$ is a subset of  $x + \oplam (W)$ with compact  $W\subset H$ and $x\in G$   for some cut-and-project-scheme $(\widetilde{L},H)$ over $G$. Now the following classes of  Meyer sets $\Del$ can be distinguished, each of them defined by a stronger requirement than the previous one:

\begin{itemize}

\item $\Del$ is a complete Meyer set if it is repetitive with   $x + \oplam (W^\circ) \subset \Del \subset x + \oplam (W)$ for some $x\in G$ and  some cut-and-project scheme $(\widetilde{L},H)$ over $G$ and a compact $W\subset H $ with $W = \overline{W^\circ}$.

\item $\Del$ is a regular complete Meyer  set if it is repetitive with   $x + \oplam (W^\circ) \subset \Del \subset x + \oplam (W)$  for some $x\in G$ and some cut-and-project scheme $(\widetilde{L},H)$ over $G$ and a compact $W\subset H$  with  $W = \overline{W^\circ}$ and  boundary  $\partial W$ of Haar measure zero.

\item $\Del$ is completely periodic  if the set $\{ t\in  G: t + \Del = \Del\}$ is a lattice or,  equivalently, if  $ \Del = L + F$ for a lattice $L$ and a finite set $F$.

\end{itemize}

\subsubsection{Regularity of  Meyer sets via dynamical systems}
In this section we study the hull $(\Omega_\Del,G)$ of a repetitive Meyer set $\Del$. We investigate how  the hierarchy of Meyer sets discussed  above is reflected in injectivity properties of the factor map between this hull and  its maximal equicontinuous factor.

\bigskip

\begin{theorem}\label{Meyer-equal-to-mef} Let $G$ be a locally compact abelian group and $\Del$ a repetitive Meyer set in $G$.  The following assertions are equivalent:

\begin{itemize}
\item[(i)] $\Del$ is completely periodic.

\item[(ii)] The dynamical system $(\Omega_\Del,G)$ of $\Del$ is conjugate to its maximal equicontinuous factor (i.e., each point  in the maximal equicontinuous factor  has exactly one inverse image point under the factor map).
\end{itemize}
\end{theorem}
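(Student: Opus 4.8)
The plan is to prove the two implications separately, with essentially all of the substance lying in $(ii)\Rightarrow(i)$.

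For $(i)\Rightarrow(ii)$ I would argue that complete periodicity collapses the hull onto a single orbit. If $\Del$ is completely periodic then $L:=P(\Del)$ is a lattice and $\Del=L+F$ for a finite set $F$. The orbit map $G\to\Omega_\Del$, $t\mapsto\Del-t$, factors through the compact group $G/L$ and is injective on it (since $\Del-t=\Del-t'$ forces $t-t'\in L$), so the orbit $\{\Del-t:t\in G\}$ is a continuous injective image of the compact space $G/L$ inside the Hausdorff space $\Omega_\Del$, hence compact and closed. Since $\Del$ is repetitive, $(\Omega_\Del,G)$ is minimal and every orbit is dense; a dense closed subset is everything, so $\Omega_\Del=\{\Del-t:t\in G\}\cong G/L$ with the translation action. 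This is a rotation on a compact abelian group, hence equicontinuous by Theorem~\ref{cor-equi}; being equicontinuous and minimal it must coincide (up to conjugacy) with its own maximal equicontinuous factor by the uniqueness in Theorem~\ref{characterization-maximal-equicontinuous-factor}, so $\pi_{\max}$ is a conjugacy and every fibre is a single point. This gives $(ii)$.

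For $(ii)\Rightarrow(i)$ the opening steps are soft. Condition $(ii)$ says $(\Omega_\Del,G)$ is conjugate to the equicontinuous factor $(X_{\max},G)$, so $(\Omega_\Del,G)$ is itself equicontinuous, and by Theorem~\ref{thm-Auslander-equi} it is uniformly almost periodic: for every $\varepsilon>0$ the set of uniform return vectors $A_\varepsilon:=\bigcap_{\Del'\in\Omega_\Del}\Rr(\Del',\varepsilon)$ is relatively dense in $G$. The second ingredient is the Meyer hypothesis: by Theorem~\ref{theorem-characterization-Meyer} we may write $\Del\subset\oplam(W)$ for a cut-and-project scheme and a compact window $W$, whence $\Del-\Del\subset\oplam(W-W)$ is uniformly discrete by Proposition~\ref{properties-oplam}. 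Fix $\rho>0$ so that distinct points of $\Del-\Del$ are at least $2\rho$ apart.

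The heart of the proof, and the step I expect to be the main obstacle, is to upgrade each uniform $\varepsilon$-almost period to a genuine period of $\Del$, using uniform discreteness of $\Del-\Del$ to remove all ambiguity. Fix $\varepsilon$ small (smaller than both the packing radius of $\Del$ and $\rho$) and let $t\in A_\varepsilon$. Unwinding $d(\Del'-t,\Del')<\varepsilon$ at each orbit point $\Del'=\Del-x$ and translating back by $x$ shows that $\Del$ and $\Del-t$ agree, up to a shift of size $<\delta(\varepsilon)$, on a ball of radius $\approx 1/\varepsilon$ centred at every point of $G$, where $\delta(\varepsilon)\to 0$ as $\varepsilon\to 0$; we take $\varepsilon$ small enough that $\delta(\varepsilon)<\rho$. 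In particular, for each $p\in\Del$ this local patch-equality produces a $q_p\in\Del$ with $q_p-p\in\Del-\Del$ and $|q_p-p-t|<\delta(\varepsilon)<\rho$, so $q_p-p$ lies in $(\Del-\Del)\cap B_\rho(t)$. By uniform discreteness this intersection contains a single element, so $q_p-p=d$ is the same vector for every $p$; since the local patch-matchings are exact bijections of finite configurations, this forces $\Del+d=\Del$, i.e.\ $d\in P(\Del)$, with $|d-t|<\rho$. Hence every $t\in A_\varepsilon$ lies within $\rho$ of $P(\Del)$, so $P(\Del)+\overline{B_\rho(0)}\supseteq A_\varepsilon$; as $A_\varepsilon$ is relatively dense and $\overline{B_\rho(0)}$ is compact, $P(\Del)$ is relatively dense. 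Since the periods of a Delone set always form a discrete subgroup, $P(\Del)$ is then a lattice, i.e.\ $\Del$ is completely periodic, which is $(i)$. The Meyer hypothesis is used exactly once, but decisively: without uniform discreteness of $\Del-\Del$ the vector $q_p-p$ could vary with $p$ and no exact global period would emerge.
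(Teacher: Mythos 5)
Your proof is correct, but it follows a genuinely different route from the paper, which in fact offers no self-contained proof of Theorem \ref{Meyer-equal-to-mef}: it observes that (ii) can be rephrased as the continuous eigenfunctions separating points and then cites \cite{BaakeLenzMoody07}, whose proof of the hard implication (ii)$\Rightarrow$(i) runs through an entirely different circle of ideas --- pure point diffraction, the autocorrelation topology, completions of the hull, construction of a cut-and-project scheme via the method of \cite{BaakeMoody04}, and a regularity analysis of windows via rotations on compact groups. (The paper's internal route to the closely related FLC statement, Theorem \ref{thm-Delone-FLC}, is different again: by Theorem \ref{thm-Olimb}, a non-periodic repetitive FLC hull contains two distinct elements agreeing on a half-space, hence a nontrivial proximal pair, which is impossible for an equicontinuous system \cite{BargeOlimb11,KellendonkLenz13}.) Your argument --- equicontinuity $\Rightarrow$ uniform almost periodicity via Theorem \ref{thm-Auslander-equi}, then uniform discreteness of $\Del-\Del$ (Theorem \ref{theorem-characterization-Meyer} plus Proposition \ref{properties-oplam}) pinning each uniform $\varepsilon$-almost period $t$ to the unique element $d$ of $(\Del-\Del)\cap B_\rho(t)$, which is then an exact period, so that $P(\Del)$ is relatively dense and hence a lattice --- is a short, elementary rigidity proof that correctly isolates where the Meyer hypothesis enters. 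What the heavier machinery buys elsewhere: the diffraction approach of \cite{BaakeLenzMoody07} extends to the almost-everywhere and somewhere one-to-one refinements of the hierarchy, which your method cannot see; and the proximality route proves the stronger FLC statement with no Meyer hypothesis, which your argument cannot reach since, as you yourself note, it uses uniform discreteness of $\Del-\Del$ decisively. Your (i)$\Rightarrow$(ii) direction is the same easy observation the paper dismisses as clear.

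Two points to tighten. First, the theorem is stated for arbitrary locally compact abelian $G$, where balls $B_\rho(t)$ and the metric are unavailable; your argument survives verbatim if you phrase closeness via the uniform structure (indexed by a compact $K\subset G$ and a neighborhood $V$ of $e$, with $(\Del'-t-s)\cap K=(\Del'-s')\cap K$ for some $s,s'\in V$) and choose $V$ so small that $V-V$ lies inside the uniform-discreteness neighborhood of $\Del-\Del$; this is consistent with the paper's announced convention of writing metric arguments for results stated in general. Second, at ``this forces $\Del+d=\Del$'' you have only exhibited $q_p=p+d$ for each $p\in\Del$, i.e.\ $\Del+d\subseteq\Del$; the reverse inclusion deserves a sentence. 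Either observe that each local patch-matching is an equality of finite point sets, so running it backwards assigns to every $q\in\Del$ some $p\in\Del$ whose difference again lies in the singleton $(\Del-\Del)\cap B_\rho(t)=\{d\}$; or use that $A_\varepsilon$ is symmetric (if $d(t\cdot x,x)<\varepsilon$ for all $x$ in the hull, substitute $(-t)\cdot x$ for $x$) and run your argument for $-t$, noting that $-d\in(\Del-\Del)\cap B_\rho(-t)$ forces the resulting period to be $-d$, whence $\Del-d\subseteq\Del$. With these repairs the proof is complete.
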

 Note that (ii) actually says that $(\Omega_\Del,G)$ is just a rotation on a compact group. Using the material of the last section (on characterizing the maximal equicontinuous factor via a quotient construction) this can be seen to be equivalent to  the continuous eigenfunctions separating the points.  With this formulation (instead of (ii)) the result is shown in \cite{BaakeLenzMoody07}.  Of course, the implication (i)$\Longrightarrow$ (ii) is clear and it is the other implication where all the work lies. We will comment a bit on the method of proof after stating the next result.

 \bigskip

\begin{theorem}\label{Meyer-equal-to-mef-almost-everywhere}

 Let $G$ be a locally compact abelian group and let $\Del$ be a repetitive  Meyer set in $G$.  The following are equivalent:

\begin{itemize}
\item[{\rm (i)}]  $\Del$ is  a regular  complete Meyer set.

\item[{\rm (ii)}] The dynamical system $(\Omega_\Del,G)$ of $\Del$ is an almost-$1$-to-$1$ extension of its maximal equicontinuous factor (i.e., the set of points in the maximal equicontinuous factor with exactly  one inverse image point  under the factor map has full measure).
\end{itemize}
\end{theorem}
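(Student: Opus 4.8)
The plan is to realise the maximal equicontinuous factor concretely as the canonical torus of a cut-and-project scheme and to read the fibre cardinalities off the window boundary. First I would use the Meyer property (Theorem~\ref{theorem-characterization-Meyer}) to fix a cut-and-project scheme $(H,\widetilde L)$ over $G$ and form the canonical torus $\TT=(G\times H)/\widetilde L$, a minimal uniquely ergodic rotation on a compact group, writing $q:G\times H\to\TT$ for the quotient map and $L^{\star}:=p_2(\widetilde L)=\{x^{\star}:x\in L\}$ for the (countable, dense) internal subgroup. Working with the complete-Meyer structure $x+\oplam(y+W^{\circ})\subset\Del\subset x+\oplam(y+W)$, $W=\overline{W^{\circ}}$, I would construct the torus parametrisation $\beta:\Omega_\Del\to\TT$: each $\Del'\in\Omega_\Del$ is squeezed as $v+\oplam(w+W^{\circ})\subset\Del'\subset v+\oplam(w+W)$ for a pair $(v,w)$ that is well defined modulo $\widetilde L$, and $\beta(\Del'):=(v,w)+\widetilde L$ is a continuous $G$-map onto $\TT$.

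Next I would pin down the fibres. Call $\bar v=(v,w)+\widetilde L$ \emph{singular} if $(w+\partial W)\cap L^{\star}\neq\emptyset$, and let $S\subset\TT$ be the set of singular points; this is well defined on cosets because $L^{\star}$ is a group. The fibre $\beta^{-1}(\bar v)$ consists of the hull elements squeezed between the inner and outer model sets at $(v,w)$, and since $W=\overline{W^{\circ}}$ gives $W\setminus W^{\circ}=\partial W$, these two sets differ exactly in the points $x\in L$ with $x^{\star}\in w+\partial W$; hence they coincide, and the fibre is a single point, precisely when $\bar v\notin S$. Nonsingular points exist because $S$ is, in the internal coordinate, a countable union of translates of the nowhere-dense set $\partial W$; so $\beta$ is one-to-one over a point, Lemma~\ref{lemma-1-1-almost-automorphic} gives $\TT=\Omax$ and $\beta=\pmax$, and over singular points minimality produces at least two elements in the fibre. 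Thus $S$ is exactly the locus where $\pmax$ fails to be injective, and condition (ii) says precisely that $\TT\setminus S$ has full Haar measure, i.e. that $S$ is null.

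The core is then a clean measure computation; write $\mu_{\TT}$ and $\mu_{H}$ for the respective Haar measures. Since membership in $S$ depends only on the $H$-coordinate, $S$ is $G$-invariant, so unique ergodicity of $(\TT,G)$ forces the dichotomy $\mu_{\TT}(S)\in\{0,1\}$. Pulling back along $q$ gives $q^{-1}(S)=G\times(L^{\star}-\partial W)$, and the internal section $L^{\star}-\partial W$ is invariant under the dense subgroup $L^{\star}$. If $\mu_{H}(\partial W)=0$ this section is a countable union of null sets, hence null, and Fubini over a fundamental domain gives $\mu_{\TT}(S)=0$; if $\mu_{H}(\partial W)>0$ the section is non-null and $L^{\star}$-invariant, hence conull by ergodicity of translation by the dense subgroup $L^{\star}$ on $H$, and Fubini gives $\mu_{\TT}(S)=1$. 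Therefore $\mu_{\TT}(S)=0$ if and only if $\mu_{H}(\partial W)=0$, which settles the implication (i)$\Rightarrow$(ii): a regular window makes the singleton-fibre points of full measure.

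For (ii)$\Rightarrow$(i) I would observe that an almost everywhere one-to-one factor map is in particular one-to-one at some point, so the structure theory of somewhere-injective Meyer systems (Theorem~\ref{Meyer-equal-to-mef-one-point}) already exhibits $\Del$ as a complete Meyer set, supplying the scheme $(H,\widetilde L)$ and window $W=\overline{W^{\circ}}$ used above; the same measure computation then converts the full-measure hypothesis into $\mu_{H}(\partial W)=0$, i.e. regularity. I expect the genuine difficulty to lie not in the measure bookkeeping --- which is made painless by the $0/1$ dichotomy --- but in the two structural inputs: verifying that the torus parametrisation $\beta$ is well defined and continuous and that singular fibres are honestly non-trivial, and, for the converse, the deep passage from a somewhere-injective repetitive Meyer set to an actual inter model set encapsulated in Theorem~\ref{Meyer-equal-to-mef-one-point}.
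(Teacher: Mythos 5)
Your proposal is sound in outline but takes a genuinely different route from the paper, which for both implications essentially translates the main theorem of \cite{BaakeLenzMoody07} into the language of the maximal equicontinuous factor: there, (i)$\Rightarrow$(ii) goes back to Robinson (for Penrose) and to Schlottmann's torus parametrization \cite{Schlottmann00} --- in substance the argument you give, with the singular set $S$ and the null/co-null bookkeeping --- while the hard direction (ii)$\Rightarrow$(i) is proved via diffraction: condition (ii) forces the diffraction to be a pure point measure, the autocorrelation topologies and the completion method of \cite{BaakeMoody04} then produce a cut-and-project scheme, and regularity of the window is extracted from an analysis of rotations on compact groups in which continuity of eigenfunctions is the key input. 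You instead bootstrap (ii)$\Rightarrow$(i) from the somewhere-injective theorem of Aujogue (Theorem~\ref{Meyer-equal-to-mef-one-point}): almost-everywhere injective implies somewhere injective, Aujogue supplies the scheme and window $W=\overline{W^\circ}$, and your singular-set computation converts the full-measure hypothesis into Haar-nullity of $\partial W$. This is an attractive reduction (the almost-everywhere statement becomes a corollary of the pointwise one), whereas the paper's route via \cite{BaakeLenzMoody07} is independent of Aujogue; note also that your ergodic $0$--$1$ dichotomy is more than you need, since Fubini over a fundamental domain alone gives $\mu_{\TT}(S)>0$ whenever $\mu_H(\partial W)>0$, which is all the converse uses.

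There are, however, genuine gaps. First, generality: the theorem is stated for arbitrary ($\sigma$-compact) locally compact abelian $G$, and the proof via \cite{BaakeLenzMoody07} works in that setting, but Theorem~\ref{Meyer-equal-to-mef-one-point} is stated and proved only for $G=\R^N$; your converse therefore establishes the theorem only in the Euclidean case. Second, your ``same measure computation'' silently assumes that, for the scheme Aujogue produces, the parametrization $\beta$ is well defined modulo $\widetilde{L}$ and identifies $\TT$ with $\Omax$; as the paper itself remarks, Aujogue's torus is the maximal equicontinuous factor only under an \emph{irredundancy} condition on $W$ (a window with internal periods makes the pair $(v,w)$ ill-defined modulo $\widetilde{L}$, and one must first reduce the scheme before your fibre analysis applies). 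Third, ``minimality produces at least two elements'' over a singular fibre is an assertion, not an argument: what is actually needed is that a boundary point $x^\star-w\in\partial W=\overline{W^\circ}\cap\overline{H\setminus W}$ can be approached by nonsingular parameters from both sides, so that hull limits of the corresponding model sets yield one element of $\beta^{-1}(\bar v)$ containing $v+x$ and another omitting it. These points are repairable for $G=\R^N$ (and you rightly flag the torus parametrization as the structural input), but as written the converse is incomplete and does not recover the stated level of generality.
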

  This is a reformulation of the main result of  \cite{BaakeLenzMoody07} in terms of the maximal equicontinuous factor.   The formulation of (ii) given there  is somewhat different and says that   the continuous eigenfunctions separate almost all points and the system is uniquely ergodic and minimal. Now, by our discussion on how to obtain the maximal equicontinuous factor (via a quotient construction) and Lemma \ref{lemma-1-1-almost-automorphic},  this is just (ii).  
  
  The direction (i)$\Longrightarrow$ (ii) was first shown in the special case of the Penrose system by Robinson in \cite{Robinson96}. General complete Meyer sets were then treated by Schlottmann in \cite{Schlottmann00}.
  The main work of  \cite{BaakeLenzMoody07} is to show the implication (ii)$\Longrightarrow$ (i). There, the  \textit{diffraction} of the Meyer set plays a key role.  Condition (ii)  implies that  diffraction is a pure point measure and this can be used to introduce    new topologies on the Delone sets. Taking suitable completions of  the hull of $\Del$ in these topologies one then obtains the ingredients of a  cut-and-project scheme  via a method of \cite{BaakeMoody04}. The main work of \cite{BaakeLenzMoody07} is then to prove regularity features of the window.  This regularity is shown by an analysis of rotations on compact groups.
A crucial role in these  considerations is played by continuity of the eigenfunctions. This continuity is related to uniform existence of certain ergodic averages \cite{Robinson94,Lenz09}. As such it has also played a major role in the investigation of diffraction and the so-called Bombieri/Taylor conjecture. We refer the reader to \cite{Lenz09} for further discussion and background.

\bigskip

 \begin{theorem}\label{Meyer-equal-to-mef-one-point} Let $G=\R^N$  and let $\Del$ be a repetitive  Meyer set in $G$.  The following are equivalent:

\begin{itemize}
\item[{\rm (i)}] $\Del$ is a complete Meyer set.

\item[{\rm (ii)}] The dynamical system $(\Omega_\Del,G)$ of $\Del$ is an almost-automorphic system (i.e.,  the set of points in the maximal equicontinuous factor with exactly  one inverse image point under the factor map is non-empty).
\end{itemize}

\end{theorem}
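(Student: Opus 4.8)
The plan is to route both implications through the \emph{torus parametrization}, i.e. a factor map $\beta\colon\Omega_\Del\longrightarrow\TT$ onto the canonical torus $\TT=(G\times H)/\widetilde{L}$ of a cut-and-project scheme. The whole argument rests on one technical input from the theory of model sets (as developed in \cite{Schlottmann00,AujoguePhD}): for a Meyer set $\Del\subset\oplam(W)$ the canonical torus is an equicontinuous factor of $\Omega_\Del$ via such a $\beta$, and the fibres of $\beta$ are squeezed between an inner and an outer model set, namely for $\xi=(s,h)+\widetilde{L}$ every $\Lambda\in\beta^{-1}(\xi)$ satisfies
\[
 s+\oplam(h+W^\circ)\ \subset\ \Lambda\ \subset\ s+\oplam(h+W).
\]
Call $\xi$ \emph{nonsingular} if $(h+\partial W)\cap L^\star=\emptyset$, where $L^\star=\{x^\star:x\in L\}=p_2(\widetilde{L})$ is the countable, dense projection of $\widetilde{L}$ to $H$. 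At a nonsingular $\xi$ the inner and outer model sets coincide, so the squeeze forces $\beta^{-1}(\xi)$ to be a single point.

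For (i)$\Longrightarrow$(ii) I would argue as follows. Since $\Del$ is a complete Meyer set its window satisfies $W=\overline{W^\circ}$, so $\partial W$ is closed with empty interior, hence nowhere dense. The set of singular positions is $\bigcup_{v\in L^\star}(v-\partial W)$, a countable union of nowhere dense sets; as $H$ is locally compact Hausdorff it is a Baire space, so this union has nonempty (indeed dense) complement, and a nonsingular $\xi$ exists with $\beta^{-1}(\xi)$ a singleton. Now $\TT$ is a rotation on a compact group, hence equicontinuous, and $\beta$ is a factor map with a one-point fibre, so Lemma~\ref{lemma-1-1-almost-automorphic} identifies $\TT$ with the maximal equicontinuous factor and $\beta$ with $\pi_{\max}$. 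The singleton fibre then witnesses directly that $(\Omega_\Del,G)$ is almost-automorphic.

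For the converse (ii)$\Longrightarrow$(i) I would first upgrade the torus parametrization so that $\beta=\pi_{\max}$, i.e. realize the maximal equicontinuous factor $X_{\max}=\TT_{\mathcal{E}_{top}(\Omega_\Del,G)}$ itself as the canonical torus of a cut-and-project scheme $(H,\widetilde{L})$; this is available for repetitive Meyer sets because $X_{\max}$ is a compact abelian group carrying a dense one-parameter image of $G=\R^N$, and the Meyer property forces it into the Euclidean solenoidal form $(G\times H)/\widetilde{L}$ (compare \cite{AujoguePhD,BaakeLenzMoody07}). The almost-automorphy hypothesis then yields $\xi_0$ with $\pi_{\max}^{-1}(\xi_0)=\{\Lambda_0\}$. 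Normalising base points so that $\xi_0=(0,0)+\widetilde{L}$, I would define the window by $W:=\overline{\{x^\star:x\in\Lambda_0\}}$; the characterization of Meyer sets (Theorem~\ref{theorem-characterization-Meyer}) guarantees $\{x^\star:x\in\Lambda_0\}$ is relatively compact so that $W$ is compact, and relative density of $\Lambda_0$ together with Proposition~\ref{properties-oplam} forces $W^\circ\neq\emptyset$. Uniqueness of the fibre is then used to show $\Lambda_0=\oplam(W^\circ)=\oplam(W)$ and that $W$ carries no superfluous boundary, i.e. $W=\overline{W^\circ}$: otherwise one could occupy or delete a boundary point to produce a second Delone set in $\pi_{\max}^{-1}(\xi_0)$, contradicting the singleton. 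Finally, the squeeze property applied to an arbitrary fibre shows every element of $\Omega_\Del$---in particular $\Del$ itself---is an inter-model set $s+\oplam(h+W^\circ)\subset\Lambda\subset s+\oplam(h+W)$; as $\Del$ is repetitive by hypothesis and $W=\overline{W^\circ}$, this is exactly the assertion that $\Del$ is a complete Meyer set.

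The main obstacle is the technical backbone shared by both directions: constructing $\beta$ with its squeeze property and, for the converse, presenting $X_{\max}$ as a cut-and-project torus and reconstructing a window with $W=\overline{W^\circ}$ from a single fibre. The forward direction then reduces to a soft Baire-category observation plus Lemma~\ref{lemma-1-1-almost-automorphic}, whereas the converse requires the delicate identification of the one-point fibre with a genuine nonsingular window position and the verification that uniqueness of the fibre rules out extra boundary---this is precisely where the condition $W=\overline{W^\circ}$ together with repetitivity does its real work.
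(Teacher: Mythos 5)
Your overall route is the same as the paper's. For (i)$\Longrightarrow$(ii) the paper indicates exactly your argument: Schlottmann's torus parametrization with the squeeze property, a Baire-category argument producing a nonsingular parameter (your observations that $W=\overline{W^\circ}$ makes $\partial W$ nowhere dense and that $p_2(\widetilde L)$ is countable, being the image of a discrete subgroup of the $\sigma$-compact group $G\times H$, are both correct), and then Lemma~\ref{lemma-1-1-almost-automorphic}, for which minimality is supplied by repetitivity. This direction of your proposal is sound, at the same level of rigor as the paper, which likewise treats the parametrization of \cite{Schlottmann00} as a black box. For (ii)$\Longrightarrow$(i) the paper offers no proof at all: it cites \cite{AujoguePhD}, whose strategy (construct a cut-and-project scheme adapted to $\Del$ whose torus is $\Omax$, then recover a window with $W=\overline{W^\circ}$) is the one you sketch.

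That said, two steps of your converse sketch have genuine gaps as written. First, you invoke Proposition~\ref{properties-oplam} to conclude $W^\circ\neq\emptyset$ from relative density of $\Lambda_0$, but the proposition runs the other way (nonempty interior implies relative density); the converse is not formal for a set merely contained in $\oplam(W)$, and moreover your definition $W:=\overline{\{x^\star:x\in\Lambda_0\}}$ presupposes $\Lambda_0\subset L=p_1(\widetilde L)$ in the constructed scheme. Neither this inclusion nor the identification of $\Omax$ with a torus $(G\times H)/\widetilde L$ \emph{compatible with} $\Del$ is a soft consequence of $\Omax$ being a compact group with dense $\R^N$-image; building a scheme in which both hold simultaneously (the paper's ``irredundancy'' condition) is precisely the hard content of \cite{AujoguePhD}. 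Second, the ``occupy or delete a boundary point'' move does not work: a competitor for $\pi_{max}^{-1}(\xi_0)$ must belong to $\Omega_\Del$, i.e.\ be locally indistinguishable from $\Del$, and pointwise surgery on $\oplam(\partial W)$ generically leaves the hull. Additional fibre elements over singular parameters arise as limits of translates approaching the singular position from different sides, not by modifying a single point; and since $\xi_0$ is only known to have a singleton fibre (not to be nonsingular for your candidate $W$), the inference ``singleton fibre $\Rightarrow$ no superfluous boundary'' needs this limit mechanism plus the regularity analysis of the thesis. The paper also records a shortcut you might note: under the extra hypotheses of unique ergodicity and pure point spectrum, (ii)$\Longrightarrow$(i) follows by combining Theorems 3A and 6 of \cite{BaakeLenzMoody07}.
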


This is one version of a   main result of the Ph.D. thesis of J.B. Aujogue \cite{AujoguePhD}. The implication (i)$\Longrightarrow$ (ii) is somewhat folklore. It is mentioned in the introduction of \cite{Robinson07} and  can rather directly be derived from  Lemma \ref{lemma-1-1-almost-automorphic} and some Baire type arguments (see   \cite{Schlottmann00,BaakeLenzMoody07} for related material). This then holds for arbitrary locally compact, $\sigma$-compact abelian groups.

The implication (ii)$ \Longrightarrow$ (i) is the hard part of the work. It   is shown in \cite{AujoguePhD}  how to  construct a  cut-and-project scheme for $\Del$  under condition (ii). In fact, the construction of \cite{AujoguePhD} even gives that  the associated torus $\TT = (G\times H)/\widetilde{L}$ is just the maximal equicontinuous factor whenever the window $W$ satisfies a suitable `irredundancy' condition.

Under the additional assumptions of unique ergodicity and pure point spectrum, the implication   (ii)$\Longrightarrow$(i) can also directly be inferred  by combining  Theorem 3A and Theorem 6 from  \cite{BaakeLenzMoody07} (and this then holds in  general locally compact $\sigma$-compact  abelian groups).

\subsubsection{Regularity of  the hulls of  FLC Delone sets}
In this section we consider a repetitive  FLC Delone set $\Del$ in $G = \R^N$ with its dynamical system
$(\Omega_\Del,G)$. It turns out that the property of this system to be conjugate to a Meyer dynamical system can be characterized via the maximal equicontinuous factor \cite{KellendonkSadun12}. This characterization provides the crucial additional insight compared to the previous subsection. It allows the derivation of results for FLC Delone sets based on the results for Meyer sets of the last section. As it is (so far) only available for $G =\R^N$ we have to restrict to this situation.
\bigskip

\begin{theorem} \cite{KellendonkSadun12} \label{Characterization-conjugate-to-Meyer} Let $G=\R^N$  and let $\Del$ be a repetitive  FLC Delone set in $G$. The following are equivalent:

\begin{itemize}
 \item[(i)] $(\Omega_\Del,G)$ is conjugate to a Meyer dynamical system.
 \item[(ii)] The system  $(\Omega_\Del,G)$ has at least $N$ linearly independent continuous eigenvalues.

\end{itemize}
\end{theorem}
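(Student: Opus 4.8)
The plan is to prove the two implications separately, with essentially all of the difficulty concentrated in (ii)$\Rightarrow$(i). The starting observation is that continuous eigenvalues are a conjugacy invariant: if $h:(\Omega_\Del,G)\to(\Omega_{\Del'},G)$ is a conjugacy and $f$ is a continuous eigenfunction on $\Omega_{\Del'}$ to the eigenvalue $\chi$, then $f\circ h$ is a continuous eigenfunction on $\Omega_\Del$ to $\chi$. Hence for (i)$\Rightarrow$(ii) it suffices to show that the hull of a repetitive Meyer set $\Del'$ carries $N$ linearly independent continuous eigenvalues. Using Theorem~\ref{theorem-characterization-Meyer}(iii) I fix a cut-and-project scheme $(H,\widetilde L)$ over $G=\R^N$ with $\Del'\subset\oplam(W)$ and $W$ compact, and form the canonical torus $\mathbb{T}=(G\times H)/\widetilde L$. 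The hull of a repetitive Meyer set admits $\mathbb{T}$ as an equicontinuous factor via the (standard) torus parametrization map, so by Lemma~\ref{description-equicontinuous-factors} the continuous eigenvalues of the rotation $(\mathbb{T},G)$ form a subgroup of $\mathcal E_{top}(\Omega_{\Del'},G)$. These eigenvalues are precisely the $\hat G$-components of the dual lattice $\widetilde L^{\perp}\subset\widehat{G\times H}$, and since $\widetilde L^{\perp}$ is cocompact in $\hat G\times\hat H$ its projection to $\hat G=\hat{\R}^N$ is a dense subgroup; a dense subgroup of $\R^N$ spans, so it contains $N$ linearly independent elements. Alternatively one can argue directly from Theorem~\ref{theorem-characterization-Meyer}(ii), extracting from the relatively dense sets of $\varepsilon$-dual characters a relatively dense group of genuine continuous eigenvalues.

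For (ii)$\Rightarrow$(i) I identify $\hat{\R}^N\cong\R^N$ so that $(\beta,t)=e^{2\pi i\,\beta\cdot t}$. Let $\beta_1,\dots,\beta_N$ be linearly independent continuous eigenvalues with continuous eigenfunctions $f_1,\dots,f_N:\Omega_\Del\to S^1$, normalised by $f_j(\Del)=1$. The $\beta_j$ generate a full lattice $\Lambda^{\ast}=\Z\beta_1+\cdots+\Z\beta_N$ in $\R^N$, whose dual $\Lambda=\{t\in\R^N:\beta_j\cdot t\in\Z\ \forall j\}$ is again a full-rank lattice. Assembling the eigenfunctions gives a continuous $G$-map $F=(f_1,\dots,f_N):\Omega_\Del\to\mathbb{T}^N=\R^N/\Lambda$ intertwining the $G$-action with the rotation $t\mapsto(\beta_j\cdot t)_j$; linear independence makes this rotation minimal, so $F$ is a factor map onto $(\mathbb{T}^N,G)$. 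The eigenfunction equation then says that $F(\Del-x)-F(\Del)$ is exactly the class of $x$ modulo $\Lambda$, i.e. $F$ records the $\Lambda$-address of the translation.

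The strategy is now to deform $\Del$ by a bounded, pattern-equivariant displacement read off from $F$, producing a Meyer set with conjugate hull. The analytic input is the return-vector characterisation of continuous eigenvalues: $\beta_j$ is a continuous eigenvalue if and only if for every $\varepsilon>0$ there is an $R$ such that every $R$-return vector $t$ of $\Del$ (a difference $x-y$ with $B_R[\Del-x]=B_R[\Del-y]$) satisfies $|\,e^{2\pi i\beta_j\cdot t}-1\,|<\varepsilon$. Thus the long return vectors of $\Del$ cluster uniformly near $\Lambda$. Using $F$ one assigns to each $x\in\Del$ a $\Lambda$-address and moves $x$ by a bounded amount onto the corresponding point of $\Lambda$, the displacement being a bounded lift of $F(\Del-x)-F(\Del)$ through $\R^N\to\R^N/\Lambda$; the clustering is what keeps this displacement bounded. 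The deformed control set $\Del'$ then lies within bounded distance of $\Lambda$, and by finite local complexity $\Del'-\Del'\subset\Lambda+F_0$ for a finite $F_0$. Since $\Lambda+F_0$ is uniformly discrete, $\Del'-\Del'$ is uniformly discrete, so by the Remark following Proposition~\ref{properties-oplam} (Lagarias) $\Del'$ is Meyer. As $\Del'$ is obtained from $\Del$ by a bounded, locally determined deformation, the two hulls are topologically conjugate.

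The main obstacle is to make this displacement legitimate. A naive lift of $F:\Omega_\Del\to\R^N/\Lambda$ to $\R^N$ is necessarily discontinuous across the boundary of a fundamental domain, and one must show that these apparent discontinuities are invisible at the level of patches, so that the deformation is genuinely pattern-equivariant and induces a homeomorphism of hulls rather than merely a measurable identification. Controlling this step—equivalently, exhibiting $\Omega_\Del$ as a Cantor fibre bundle over $\mathbb{T}^N$ and invoking the deformation theory of tilings to see that the deformed system is conjugate to the original—is the technical heart of the argument, carried out in \cite{KellendonkSadun12}. Establishing the return-vector characterisation of continuous eigenvalues (from minimality together with the uniform continuity of $F$) is a subsidiary point that feeds the boundedness of the displacement.
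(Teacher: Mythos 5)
The paper offers no proof of this theorem---it is quoted from \cite{KellendonkSadun12}---and your outline does faithfully reconstruct the strategy of that reference (eigenfunctions assemble into a factor map onto $\TT^N$, a pattern-equivariant displacement pushes the points near the dual lattice $\Lambda$, Lagarias' criterion gives the Meyer property, and deformation theory gives the conjugacy). But as a proof your text has two genuine gaps. First, in (i)$\Rightarrow$(ii) the claim that the hull of an arbitrary repetitive Meyer set factors onto the canonical torus of a cut-and-project scheme furnished by Theorem \ref{theorem-characterization-Meyer}(iii) is false. Continuity of $\Del'-t\mapsto (t,0)+\widetilde L$ on the hull requires that the internal ($\star$-image) components of long return vectors shrink to $0$; this holds for complete Meyer sets but not for a Meyer set that is merely a subset of $\oplam(W)$. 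The paper's own example makes this concrete: the equal-length scrambled Fibonacci set is Meyer with topological eigenvalue group of rank $1$, yet it sits inside a model set for a scheme with nontrivial internal group, whose canonical torus has a spanning group of eigenvalues; by Lemma \ref{description-equicontinuous-factors} a factor map onto that torus would force two independent eigenvalues on the hull, a contradiction. Your fallback via the $\varepsilon$-dual characters is the correct route, but it is not a one-liner: an element of $\Del^\varepsilon$ is only an \emph{almost}-eigenvalue (its defect does not improve with the scale of the return vectors), and upgrading the relatively dense sets $\Del^\varepsilon$ to a relatively dense group of genuine continuous eigenvalues is precisely the nontrivial content of \cite{KellendonkSadun12}. (A small further slip: cocompactness of $\widetilde L^{\perp}$ gives that its projection to $\Gdual$ \emph{spans}, not that it is dense---density fails exactly when $\widetilde L\cap(G\times\{0\})$ is nontrivial---but spanning is all you need.)

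Second, in (ii)$\Rightarrow$(i) the inference ``$\Del'$ is obtained from $\Del$ by a bounded, locally determined deformation, so the hulls are topologically conjugate'' is false as stated: changing the Fibonacci tile lengths from $(\tau,1)$ to $(1,1)$ is exactly such a deformation, and it changes the group of continuous eigenvalues, hence the conjugacy class. What makes the deformation in \cite{KellendonkSadun12} legitimate is that the displacement is \emph{asymptotically negligible} in the sense of Clark--Sadun shape-deformation theory---and it is the return-vector estimate (long return vectors cluster near $\Lambda$) that certifies this; only asymptotically negligible shape changes preserve the conjugacy class. You do flag this as the technical heart and defer it to the reference, which is honest, but it means your text is an outline of the cited proof rather than a self-contained argument. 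Relatedly, ``by finite local complexity $\Del'-\Del'\subset\Lambda+F_0$ with $F_0$ finite'' does not follow from mere boundedness of the displacement: a locally finite set within bounded distance of $\Lambda$ can meet infinitely many residue classes modulo $\Lambda$. One needs either that the deviation from $\Lambda$ is smaller than half the packing radius of $\Lambda$ (so that $\Del'-\Del'\subset\Lambda+B_{2\epsilon}(0)$ is uniformly discrete and Lagarias applies), or that the displacement is a strongly pattern-equivariant approximation taking only finitely many values.
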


It is possible to express the result of the previous theorem via the maximal equicontinuous factor.

\begin{cor} Let $G=\R^N$  and let $\Del$ be a repetitive  FLC Delone set in $G$. The following are equivalent:

\begin{itemize}

\item[(i)]  $(\Omega_\Del,G)$ is conjugate to a Meyer dynamical system.

\item[(ii)] The maximal equicontinuous factor of $(\Omega_\Del,G)$ has a factor arising from an action of $G$ on $\R^N / \Z^N$  via the mapping $\R^N \longrightarrow \R^N/\Z^N, x\mapsto  A x + \Z^N$ for some invertible linear map $A : \R^N\longrightarrow \R^N$.

\item[(iii)] The stabilizer of the   $G$-action on the  maximal equicontinuous factor of  $(\Omega_\Del,G)$ is a discrete subgroup.

\end{itemize}

\end{cor}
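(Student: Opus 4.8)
The plan is to reduce all three conditions to a single arithmetic statement about the group $\mathcal{E}:=\mathcal{E}_{top}(\Omega_\Del,G)$ of continuous eigenvalues, namely that $\mathcal{E}$ contains $N$ elements that are linearly independent over $\R$ (equivalently, that $\mathcal{E}$ contains a full-rank lattice of $\Gdual\cong\R^N$). By Theorem \ref{characterization-maximal-equicontinuous-factor} together with Theorem \ref{Description-equicontinuous-systems-over-G} and Theorem \ref{conjugate}, I identify the maximal equicontinuous factor with $(\TT_\mathcal{E},G)$, so that its group of continuous eigenvalues is exactly $\mathcal{E}$ and its action is $(t\cdot x)(\chi)=(\chi,t)(x,\chi)$. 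Condition (i) is then dispatched immediately: by Theorem \ref{Characterization-conjugate-to-Meyer}, $(\Omega_\Del,G)$ is conjugate to a Meyer system if and only if $\mathcal{E}$ has at least $N$ linearly independent continuous eigenvalues, which in $\R^N$ is precisely the statement that $\mathcal{E}$ contains $N$ real-linearly-independent vectors.

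For (iii) I would apply Lemma \ref{lem-free} to the system $(\TT_\mathcal{E},G)$: the stabilizer of the action on the maximal equicontinuous factor is a discrete subgroup if and only if $\Gdual/\overline{\mathcal{E}}$ is compact. The structure theorem for closed subgroups of $\R^N$ (every such subgroup is, after a linear automorphism of $\R^N$, of the form $\R^k\times\Z^m\times\{0\}^{N-k-m}$) then shows that $\Gdual/\overline{\mathcal{E}}$ is compact exactly when $\overline{\mathcal{E}}$ spans $\R^N$ as a real vector space. Since the real span of $\mathcal{E}$ is a closed subspace containing $\overline{\mathcal{E}}$, the spans of $\mathcal{E}$ and of $\overline{\mathcal{E}}$ coincide, so (iii) is again equivalent to $\mathcal{E}$ containing $N$ linearly independent elements.

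For (ii) I would first compute the eigenvalue group of the candidate factor. The torus rotation $(\R^N/\Z^N,G)$ attached to the homomorphism $j_A\colon x\mapsto Ax+\Z^N$ has, by the computation in Claim 1 of Theorem \ref{Description-equicontinuous-systems-over-G}, continuous eigenvalue group equal to the image of $\widehat{\R^N/\Z^N}=\Z^N$ under the dual of $j_A$, which is the lattice $A^{\top}\Z^N\subset\Gdual$; since $A$ is invertible this is a full-rank lattice, and conversely every full-rank lattice is of this form. Necessity of the inclusion comes from Lemma \ref{description-equicontinuous-factors} (a factor of the maximal equicontinuous factor is a fortiori a factor of $(\Omega_\Del,G)$, so its eigenvalue group embeds in $\mathcal{E}$); sufficiency follows by dualizing the inclusion $A^{\top}\Z^N\subseteq\mathcal{E}$ of discrete groups to a surjective $G$-equivariant homomorphism $\TT_\mathcal{E}\to\widehat{A^{\top}\Z^N}\cong\R^N/\Z^N$, exactly as in the dualization argument in the proof of Lemma \ref{description-equicontinuous-factors}. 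Hence (ii) holds if and only if $A^{\top}\Z^N\subseteq\mathcal{E}$ for some invertible $A$, i.e. if and only if $\mathcal{E}$ contains a full-rank lattice, which is once more the condition that $\mathcal{E}$ has $N$ linearly independent elements. Assembling the three reductions yields (i)$\Leftrightarrow$(ii)$\Leftrightarrow$(iii).

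The step I expect to require the most care is the sufficiency half of the factor correspondence in (ii): Lemma \ref{description-equicontinuous-factors} supplies only the necessary inclusion of eigenvalue groups, so one must actually produce the factor map from the inclusion. This amounts to using contravariant exactness of Pontryagin duality (the dual of a monomorphism of discrete groups is an epimorphism of their compact duals) to get surjectivity, and then unwinding the action formula to verify the map is a $G$-map. The remaining friction lies in the structure-theorem bookkeeping for (iii), but that is routine, and the identification of the eigenvalue group with $A^{\top}\Z^N$ in (ii) is a direct dual pairing computation.
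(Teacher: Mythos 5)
Your proposal is correct and takes essentially the same route as the paper: both reduce (i), (ii) and (iii) to the single condition that $\mathcal{E}_{top}(\Omega_\Del,G)$ contains $N$ linearly independent continuous eigenvalues, via Theorem \ref{Characterization-conjugate-to-Meyer} for (i), the duality description of the maximal equicontinuous factor as $\TT_{\mathcal{E}_{top}}$ (Theorem \ref{Description-equicontinuous-systems-over-G} and Lemma \ref{description-equicontinuous-factors}) for (ii), and Lemma \ref{lem-free} for (iii). You merely make explicit what the paper compresses into ``a reformulation'' and ``can easily be seen'' --- namely the computation that the torus rotation induced by $x\mapsto Ax+\Z^N$ has eigenvalue group $A^{\top}\Z^N$, the dualization of the inclusion $A^{\top}\Z^N\subseteq\mathcal{E}_{top}$ to a surjective $G$-map, and the structure theorem for closed subgroups of $\R^N$ --- and all of these details check out.
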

\begin{proof}  The equivalence between (i) and (ii) is a reformulation of 
Theorem~\ref{Characterization-conjugate-to-Meyer} based on Theorem~\ref{Description-equicontinuous-systems-over-G} and the paragraph preceeding it.

It remains to show the equivalence between (i) and (iii). By Lemma \ref{lem-free}, the condition (iii) is equivalent to  compactness of  $\Gdual /\overline{\Ee_{top}}$. Now, for  $G=\R^N$, compactness of $\Gdual  /\overline{\Ee_{top}}$ can easily be seen to be  equivalent to $\Ee_{top}$ containing  $N$ linear independent vectors. By the previous theorem this is equivalent to (i).
\end{proof}

Combining the results of the previous section with the preceding theorem yields the following equivalences.

\begin{theorem}\cite{AujoguePhD} \label{Meyer-equal-to-mef-one-point} Let $G=\R^N$  and let $\Del$ be  a repetitive FLC-Delone set in $G$.  The following are equivalent:

\begin{itemize}
\item[(i)] $(\Omega_\Del,G)$ is conjugate to a dynamical system  arising from the hull of a complete Meyer set.

\item[(ii)] The dynamical system $(\Omega_\Del,G)$ of $\Del$ is an almost-automorphic system, (i.e.,  the set of points in the maximal equicontinuous factor with exactly  one inverse image point under the factor map is non-empty).
\end{itemize}

\end{theorem}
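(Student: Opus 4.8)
The strategy is to bootstrap from the two structure theorems already granted---the Kellendonk--Sadun criterion (Theorem~\ref{Characterization-conjugate-to-Meyer}) and the earlier characterization of complete Meyer sets among repetitive Meyer sets (complete Meyer set if and only if almost-automorphic, from \cite{AujoguePhD})---with the aid of the local-freeness dictionary of Lemma~\ref{lem-free} and its Corollary. A preliminary remark I would record is that almost-automorphy is a conjugacy invariant: a conjugacy $\Phi\colon(\Omega_\Del,G)\to(\Omega_{\Del'},G)$ descends to a conjugacy of maximal equicontinuous factors intertwining $\pi_{\max}$ and $\pi'_{\max}$, and hence carries a singleton fiber of $\pi_{\max}$ to a singleton fiber of $\pi'_{\max}$. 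Granting this, the implication (i)$\Rightarrow$(ii) is immediate: if $(\Omega_\Del,G)$ is conjugate to $(\Omega_{\Del'},G)$ with $\Del'$ a complete Meyer set, then $\Del'$ is in particular a repetitive Meyer set, so the earlier Meyer-set characterization gives that $(\Omega_{\Del'},G)$ is almost-automorphic, whence so is $(\Omega_\Del,G)$.

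The content is in (ii)$\Rightarrow$(i). Assume $(\Omega_\Del,G)$ is almost-automorphic and fix the unique point $\omega_0$ in some singleton fiber $\pi_{\max}^{-1}(\xi)$, where $\xi=\pi_{\max}(\omega_0)$ lies in the maximal equicontinuous factor $\TT_{\mathcal{E}_{top}(\Omega_\Del,G)}$ whose rotation action comes from the dense-range homomorphism $j\colon G\to\TT_{\mathcal{E}_{top}(\Omega_\Del,G)}$. The crucial step is to show that the stabilizer $\ker j$ of this action is discrete. To this end I would observe that if $t\in\ker j$, then $\pi_{\max}(t\cdot\omega_0)=t\cdot\pi_{\max}(\omega_0)=j(t)+\xi=\xi$, so $t\cdot\omega_0\in\pi_{\max}^{-1}(\xi)=\{\omega_0\}$; thus $t\cdot\omega_0=\omega_0$, i.e.\ $\omega_0-t=\omega_0$ and $t\in P(\omega_0)$. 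Hence $\ker j\subseteq P(\omega_0)$, and since the period group of a Delone set is a discrete subgroup of $G$, the subgroup $\ker j$ is discrete as well.

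With discreteness of the stabilizer in hand, condition (iii) of the Corollary following Theorem~\ref{Characterization-conjugate-to-Meyer} applies (equivalently, Lemma~\ref{lem-free} together with the fact that for $G=\R^N$ a discrete stabilizer is the same as compactness of $\Gdual/\overline{\mathcal{E}_{top}}$), and yields that $(\Omega_\Del,G)$ is conjugate to a Meyer dynamical system $(\Omega_{\Del'},G)$. Since $(\Omega_\Del,G)$ is minimal (repetitivity of $\Del$), so is $(\Omega_{\Del'},G)$, whence $\Del'$ is a repetitive Meyer set. Transporting almost-automorphy across the conjugacy, $(\Omega_{\Del'},G)$ is almost-automorphic, so the earlier Meyer-set characterization forces $\Del'$ to be a complete Meyer set. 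Therefore $(\Omega_\Del,G)$ is conjugate to the hull of the complete Meyer set $\Del'$, which is (i).

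I expect the only genuinely new step---and hence the main obstacle in assembling the proof---to be the passage from almost-automorphy to discreteness of the stabilizer; everything else is either granted or a routine use of conjugacy invariance. Within that step the two points to verify carefully are that an element of $\ker j$ fixes the distinguished fiber point $\omega_0$ (which is where the singleton fiber is used) and that such an element is genuinely a period of $\omega_0$ under the convention $t\cdot\omega=\omega-t$; both are immediate once unwound. It is worth noting that the Corollary invoked is stated precisely for repetitive FLC Delone sets in $\R^N$, matching our hypotheses exactly, so no additional regularity of $\Del$ is needed.
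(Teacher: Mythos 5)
Your proof is correct and follows essentially the route the paper itself takes (its ``proof'' is just the remark that one combines the results of the previous subsection with Theorem~\ref{Characterization-conjugate-to-Meyer}): transfer between the FLC and Meyer settings via that theorem and its Corollary, together with the Meyer-set characterization of almost-automorphy and conjugacy invariance of singleton fibers. Your explicit bridging step --- that a singleton fiber forces $\ker j \subseteq P(\omega_0)$, hence a discrete stabilizer on the maximal equicontinuous factor, so condition (iii) of the Corollary applies --- is exactly the detail the paper leaves implicit, and it is sound.
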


\begin{theorem}\cite{KellendonkSadun12} \label{Meyer-equal-to-mef-almost-everywhere}
 Let $G=\R^N$  and let $\Del$ be a repetitive FLC Delone set in $G$.  The following assertions are equivalent:

\begin{itemize}
\item[(i)] $(\Omega_\Del,G)$ is conjugate to a dynamical system arising from the hull of a regular complete Meyer set.

\item[(ii)] The dynamical system $(\Omega_\Del,G)$ of $\Del$ is an almost $1$-to-$1$ extension of its maximal equicontinuous factor (i.e., the set of points in the maximal equicontinuous factor with exactly  one inverse image point  under the factor map has full measure).
\end{itemize}
\end{theorem}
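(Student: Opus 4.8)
The plan is to deduce this equivalence by synthesising two results already available: the characterization of which FLC Delone systems are conjugate to Meyer systems (Theorem~\ref{Characterization-conjugate-to-Meyer} together with its Corollary) and the corresponding characterization, for repetitive \emph{Meyer} sets, of being a regular complete Meyer set as being an almost $1$-to-$1$ extension of the maximal equicontinuous factor. The bridge between the FLC setting and the Meyer setting will be the observation that discreteness of the stabiliser of the $G$-action on $X_{\max}$ is forced by the existence of even a single singleton fibre.

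First I would verify that both the maximal equicontinuous factor and the property of being an almost $1$-to-$1$ extension of it are invariants of topological conjugacy. The first is the uniqueness clause of Theorem~\ref{characterization-maximal-equicontinuous-factor}: a conjugacy $\Omega_\Del\to\Omega_{\Del'}$ induces a conjugacy $X_{\max}\to X'_{\max}$ of the factors intertwining the two factor maps, so the cardinalities of corresponding fibres agree. Because these factors are minimal rotations on compact groups, they are uniquely ergodic with Haar measure as the unique invariant measure (Theorem~\ref{conjugate}), so the induced factor conjugacy is automatically measure preserving; hence a full-measure set of singleton fibres is carried to a full-measure set of singleton fibres. Granting this, the implication (i)$\Rightarrow$(ii) is immediate: if $(\Omega_\Del,G)$ is conjugate to the hull of a regular complete Meyer set $\Del'$, then $(\Omega_{\Del'},G)$ is an almost $1$-to-$1$ extension of its factor by the Meyer-set version of the theorem, and conjugacy transports this back to $(\Omega_\Del,G)$.

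For (ii)$\Rightarrow$(i) the first step would be to pin down the stabiliser $S$ of the $G$-action on $X_{\max}$. Writing $P=\{t\in G:\Del-t=\Del\}$ for the periods of $\Del$ --- which constitute the common stabiliser of the minimal system $(\Omega_\Del,G)$ (all point stabilisers coincide since $G$ is abelian and the action is minimal) and are automatically discrete --- one always has $P\subseteq S$ since $\pi_{\max}$ is a $G$-map. Conversely I would pick, using (ii), a point $\xi_0\in X_{\max}$ with singleton fibre $\pi_{\max}^{-1}(\xi_0)=\{x_0\}$; for $t\in S$ one has $\pi_{\max}(t\cdot x_0)=t\cdot\xi_0=\xi_0=\pi_{\max}(x_0)$, so $t\cdot x_0$ lies in the singleton fibre and therefore $t\cdot x_0=x_0$, i.e.\ $t\in P$. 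Thus $S=P$ is discrete, and the Corollary to Theorem~\ref{Characterization-conjugate-to-Meyer} (its condition~(iii): the stabiliser of the $G$-action on the maximal equicontinuous factor is discrete) yields that $(\Omega_\Del,G)$ is conjugate to a Meyer dynamical system $(\Omega_{\Del'},G)$ for some repetitive Meyer set $\Del'$.

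It would then remain to upgrade ``Meyer'' to ``regular complete Meyer''. By the conjugacy invariance from the second paragraph, $(\Omega_{\Del'},G)$ is again an almost $1$-to-$1$ extension of its maximal equicontinuous factor, whence the Meyer-set characterization (regular complete Meyer set if and only if almost $1$-to-$1$ extension of the factor) shows $\Del'$ is a regular complete Meyer set; this gives (i). Since the deep content --- producing the cut-and-project structure --- is entirely delegated to Theorem~\ref{Characterization-conjugate-to-Meyer} and to the Meyer-set theorem, the hard part here is not any single computation but the conjugacy-invariance bookkeeping of the first paragraph: I must be careful that the hypothesis really is phrased as full \emph{Haar} measure of the singleton-fibre set and that the induced conjugacy of tori is measure preserving, since it is precisely this that lets me transport the almost $1$-to-$1$ property to the auxiliary Meyer set $\Del'$.
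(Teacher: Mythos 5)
Your proposal is correct and follows essentially the same route as the paper, which obtains this theorem precisely by combining Theorem~\ref{Characterization-conjugate-to-Meyer} (via its corollary's discrete-stabiliser criterion) with the Meyer-set version of the statement from \cite{BaakeLenzMoody07}. The paper leaves the combination implicit, and your two pieces of glue --- the singleton-fibre argument showing the stabiliser of the action on $X_{\max}$ equals the (discrete) period group, and the conjugacy-invariance of the almost $1$-to-$1$ property via unique ergodicity of the factors --- are exactly the bookkeeping it omits.
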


Formulation of the analogue of the result on complete periodicity in this context leads to the equivalence of the following two assertions for a repetitive FLC Delone set in $G=\R^N$.

\begin{itemize}
\item $(\Omega_\Del,G)$ is conjugate to a Meyer dynamical system arising from the hull of a completely periodic set.
\item $(\Omega_\Del,G)$ is conjugate to its maximal equicontinuous factor.
\end{itemize}
Now, however, it can easily be seen that a dynamical system coming from a repetitive FLC Delone set is conjugate to that on the hull of a completely periodic set if and only if the original set is already completely periodic. Also, it is not hard to convince oneself that both systems in question are automatically minimal. Thus there is no need to assume repetitivity.
Altogether we then obtain the following result:

\begin{theorem} \label{thm-Delone-FLC} Let $G=\R^N$  and let $\Del$ be a Delone set in $G$ with finite local complexity. The following are equivalent:

\begin{itemize}
\item[(i)] The Delone set  $\Del$ is completely periodic.

\item[(ii)] The dynamical system $(\Omega_\Del,G)$ agrees with its maximal equicontinous factor.
\end{itemize}
\end{theorem}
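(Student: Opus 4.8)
The plan is to treat the two implications separately; the real content lies in (ii)$\Rightarrow$(i).

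For (i)$\Rightarrow$(ii) I would argue as follows. If $\Del$ is completely periodic then its period set $P(\Del)=L$ is a lattice and, since $\Del-(t+\ell)=\Del-t$ for every $\ell\in L$, the orbit map $t\mapsto\Del-t$ descends to a continuous injection $\R^N/L\hookrightarrow\Omega_\Del$. Its domain is compact, so the image (the orbit of $\Del$) is compact, hence closed; as a completely periodic set is repetitive, the hull is the orbit closure, so this image is all of $\Omega_\Del$. Thus $(\Omega_\Del,\R^N)$ is a minimal rotation on the compact group $\R^N/L$, which by Theorem~\ref{cor-equi} is equicontinuous, and a minimal equicontinuous system is (by maximality) its own maximal equicontinuous factor. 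This is the easy direction.

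For (ii)$\Rightarrow$(i) I would first extract equicontinuity: if $(\Omega_\Del,\R^N)$ is conjugate to its maximal equicontinuous factor then it is equicontinuous, and since $\Omega_\Del$ is the closure of a single orbit, the Remark following Theorem~\ref{cor-equi} makes it minimal. By Theorem~\ref{thm-Auslander-equi} the system is then uniformly almost periodic, so for every $\delta>0$ the joint return set $\bigcap_{x\in\Omega_\Del}\Rr(x,\delta)$ is relatively dense; in particular $\Rr(\Del,\delta)$ is relatively dense. Equicontinuity also furnishes the invariant metric $\overline d(x,y)=\sup_{t\in G}d(t\cdot x,t\cdot y)$, which induces the same (compact) topology as $d$, so the two metrics are uniformly equivalent: for any prescribed $\epsilon$ there is $\delta$ with $d(x,y)<\delta\Rightarrow\overline d(x,y)<\epsilon$.

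The key geometric input, which I expect to be the main obstacle, is a rigidity statement coming from finite local complexity: there exist $\epsilon_0>0$ and $\kappa>0$ so that $\overline d(\Del',\Del'')<\epsilon_0$ forces $\Del''=\Del'-s$ for a single $s$ with $|s|<\kappa$. To obtain this I would read $\overline d(\Del',\Del'')<\epsilon_0$ through the explicit metric on the hull: it says that around \emph{every} centre $v$ the sets $\Del'-v$ and $\Del''-v$ agree on the ball of radius $1/\epsilon_0$ after a translation of size $<\epsilon_0$. Because $\Del'$ is uniformly discrete and relatively dense, for $\epsilon_0$ small this matching translation is uniquely pinned down near each $v$ and is locally constant; since $\R^N$ is connected it is then a single global vector $s$, giving $\Del''=\Del'-s$. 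Making ``uniquely determined and locally constant'' rigorous from FLC is the one genuinely technical point; everything surrounding it is formal.

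Granting the rigidity statement, the proof closes. Choose $\epsilon<\epsilon_0$ and the corresponding $\delta$ with $d<\delta\Rightarrow\overline d<\epsilon_0$. For each $t\in\Rr(\Del,\delta)$ we have $\overline d(\Del-t,\Del)<\epsilon_0$, hence $\Del=(\Del-t)-s=\Del-(t+s)$ for some $|s|<\kappa$; that is, $t$ lies within $\kappa$ of an element of $P(\Del)$. Therefore $P(\Del)+B_\kappa(0)\supseteq\Rr(\Del,\delta)$ is relatively dense, so $P(\Del)$ itself is relatively dense. Since the periods of a Delone set always form a discrete subgroup, $P(\Del)$ is then a lattice, i.e. $\Del$ is completely periodic.
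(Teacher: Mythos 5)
Your proof is correct, but it takes a genuinely different route from the paper's. The paper obtains the theorem as a corollary of the Meyer hierarchy: equicontinuity supplies enough continuous eigenvalues for the Kellendonk--Sadun criterion (Theorem \ref{Characterization-conjugate-to-Meyer}), so the system is conjugate to a Meyer dynamical system, and then the Baake--Lenz--Moody result (Theorem \ref{Meyer-equal-to-mef}) forces complete periodicity; the paper moreover remarks that the original proof in \cite{KellendonkLenz13} runs through proximality, its core being Theorem \ref{thm-Olimb} (two distinct hull elements agreeing on a half space, hence proximal), which contradicts the distality of an equicontinuous system in the non-periodic case. You avoid both the model-set machinery and the proximality argument entirely: equicontinuity gives you the invariant metric $\overline{d}$ and uniform almost periodicity, and your FLC rigidity lemma converts each return vector of $\Del$ into an exact period up to bounded error, so $P(\Del)$ is relatively dense and, being automatically discrete, a lattice. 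The rigidity lemma itself is sound and your sketch is fillable: uniqueness of the small matching translation follows because if $\Del''$ and $\Del''-u$ agree on a ball of radius exceeding the relative-denseness constant and $|u|$ is smaller than the uniform-discreteness gap (constants uniform over the hull), then any point of $\Del''$ in that ball forces $u=0$; uniqueness on overlapping balls gives local constancy of $v\mapsto s_v$, and connectedness of $\R^N$ yields a single global translation, whence $\Del''=\Del'-s$. The trade-off: your argument is elementary and self-contained but tied to $G=\R^N$ (connectedness of the group and the explicit hull metric enter essentially), whereas the route through \cite{KellendonkLenz13} extends to arbitrary compactly generated abelian groups. Your direction (i)$\Rightarrow$(ii) is the standard argument and is fine.
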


\textbf{Remark.}  This result had already been proven by Kellendonk / Lenz \cite{KellendonkLenz13} using different methods. In fact, the result of \cite{KellendonkLenz13} is even more general in that it applies to arbitrary compactly generated abelian groups. The result provides an answer to  a question of Lagarias \cite{Lagarias00}.

\section{Proximality}\label{Proximality}
In the previous section we saw the utility of the maximal equicontiuous factor and its factor map. In the present section we present a different approach to this factor by furnishing an alternative description of the equivalence relation defined by $\pi_{max}$. Thus, we will start from a dynamical system $(X,G)$ with compact $X$ and abelian $G$ and the factor map $\pi_{max} : X \longrightarrow \Xmax$.
 In order to ease the presentation of certain concepts we will assume metrizability of the system, i.e., of $X$, in certain places. If the results are valid without this restriction we state them without this restriction (compare also the discussion at the beginning of Section \ref{Equicontinuous}).  Most of the time the systems will furthermore be required to be minimal.

\subsection{Definitions}\label{Definitions}

We consider a variety of relations on $X$. We begin with the relation induced by $\pi_{max}$:
\begin{enumerate}
\item The {\em  equicontinuous
structure relation}
$$R_{max} := \{(x,y)\in X\times X: \pi_{max}(x)=\pi_{max}(y)\}.$$
\end{enumerate}
This relation will be studied by means of the following three relations, each of these is a subset of the equicontinuous structure relation (see the discussion following Theorem \ref{QequalR}).

\begin{enumerate}
\item[2.] The  {\em proximality relation}
$P := \bigcap_{\epsilon>0} P_\epsilon$ with
$$P_\epsilon := \{(x,y)\in X\times X: \mbox{there exists}\;\:  t\in G:d(t\cdot x,t\cdot y) < \epsilon\}.$$

\item[3.] The  {\em regional proximality relation}
 $Q := \bigcap_{\epsilon>0} \overline{P_\epsilon}$, where $\overline{P_\epsilon}$ is the closure of $P_\epsilon$ in the product topology.

\item[4.] The \textit{syndetic proximality relation $s^yP$}, where we say that
    $x$ and $y$ are syndetically proximal if for all $\epsilon>0$,
$$A_\epsilon := \{t\in G: d(t\cdot x,t\cdot y) < \epsilon\}$$
 is relatively dense.
\end{enumerate}
The most intuitive of these relations seems to be the proximality relation because it has a direct dynamical meaning: $x$ and $y$ are proximal if  they can come arbitrarily close when they are moved around with equal group elements. 
We cannot expect $P$ to be transitive. Moreover,
 $P$ need not be a closed relation, i.e., closed in the product topology on $X\times X$.
So while $P$ is intuitive, it can be somewhat tricky.

The regional proximality relation looks like an innocent extension of the proximality relation which is guaranteed to be closed. But care has to be taken! While $Q$ contains $P$ it is in general not the smallest closed equivalence relation containing $P$. It  may be non-trivial even if $P$ is the trivial equivalence relation. For later applications it will be useful to spell out  that $x$ and $y$ are regional proximal if and only if for any $\varepsilon >0$ there exist $(x',y') \in P_\varepsilon$ with $x'$ arbitrarily close to $x$ and $y'$ arbitrarily close to $y$.

The syndetic proximality relation is always transitive, i.e., it is always an equivalence relation. But it is not always closed. It is clearly contained in the proximality relation. Moreover, if the proximality relation is closed then it agrees with the syndetic proximality relation. See \cite {Clay} for proofs of these facts and more information.

One  more thing can be said already about $P$: if the system is equicontinuous then $P$ must be the trivial relation. The converse is not true, however. A point $x\in X$ is called {\em distal} if it is only proximal to itself.
Systems with trivial proximality relation are called \textit{distal}, because they only have distal points. There exist minimal distal systems which are not equicontinuous, but the celebrated theorem of Furstenberg on  the structure of minimal distal systems  (\cite{Furstenberg}) will not concern us here due to the following result. (The result in the stated form is first given in \cite{BargeOlimb11} and is generalized to compactly generated groups in \cite{KellendonkLenz13}, where it forms the core of the proof of Theorem \ref{thm-Delone-FLC}.)

\begin{theorem}\cite{BargeOlimb11,KellendonkLenz13} \label{thm-Olimb}
Consider a repetitive non-periodic FLC Delone set in $\R^N$. There exists two distinct elements of its hull which agree on a half space. In particular the proximality relation on non-periodic FLC Delone systems is non-trivial.
\end{theorem}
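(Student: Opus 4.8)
The plan is to prove the half-space statement directly and then read off non-triviality of $P$ as an easy consequence. I will \emph{not} invoke Theorem~\ref{thm-Delone-FLC}, since (as the surrounding remarks make clear) that result is itself proved \emph{using} the present one; the argument must be self-contained, built only from repetitivity, finite local complexity, and non-periodicity.

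\textbf{The easy reduction.} Suppose $\Del',\Del''\in\Omega_\Del$ are distinct and agree on a half-space $H=\{x:\langle x,u\rangle\le 0\}$ for a unit vector $u$. For $t=-su$ with $s\to+\infty$, the $\tfrac1\epsilon$-patch of $\Del'-t$ at the origin is the patch of $\Del'$ on $B_{1/\epsilon}(-su)$, and likewise for $\Del''$; once $s>1/\epsilon$ this ball lies in $H$, where the two sets coincide, so $d(t\cdot\Del',t\cdot\Del'')\le\epsilon$. Hence $(\Del',\Del'')\in P_\epsilon$ for all $\epsilon>0$, so $(\Del',\Del'')\in P$ with $\Del'\ne\Del''$, and $P$ is non-trivial.

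\textbf{Distinct pairs agreeing on large balls.} For $R>0$ consider the $R$-patch $B_R[\Del]=\Del\cap B_R(0)$; by FLC there are finitely many $R$-patches, and by repetitivity the occurrence set $O_R$ is relatively dense. Write $\Lambda:=P(\Del)$ for the (discrete) period group; non-periodicity says $\Lambda$ is not co-compact. I claim that for every $R$ there are occurrences $p\ne q$ of $B_R[\Del]$ with $w:=q-p\notin\Lambda$: otherwise $O_R-O_R\subseteq\Lambda$ forces $O_R\subseteq p+\Lambda$, and relative density of $O_R$ would make $\Lambda$ co-compact, a contradiction. Then $\Gamma_n:=\Del-p$ and $\Gamma_n-w=\Del-q$ agree on $B_{R}(0)$ (both restrict to $B_R[\Del]$) and are distinct (since $w\notin\Lambda$). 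Taking $R=R_n\to\infty$ produces pairs that agree on $B_{R_n}(0)$ but differ somewhere, so their exact agreement radius $\rho_n:=\sup\{r:\Gamma_n\text{ and }\Gamma_n-w_n\text{ agree on }B_r(0)\}$ satisfies $R_n\le\rho_n<\infty$.

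\textbf{Upgrading to a half-space (the main step).} Choose $t_n$ with $|t_n|=\rho_n$ at which $\Gamma_n$ and $\Gamma_n-w_n$ disagree, and re-center: $y_n:=\Gamma_n-t_n$, $z_n:=\Gamma_n-w_n-t_n$. These agree on the ball $B_{\rho_n}(-t_n)$ and disagree inside $B_1(0)$. After passing to a subsequence, $t_n/|t_n|\to u$ for a unit vector $u$; since $\rho_n=|t_n|\to\infty$, the balls $B_{\rho_n}(-t_n)$ exhaust the open half-space $\{x:\langle x,u\rangle<0\}$ (a point $x$ lies in $B_{\rho_n}(-t_n)$ iff $|x|^2+2\langle x,t_n\rangle<0$, a condition converging to $\langle x,u\rangle<0$). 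By FLC, refine the subsequence so that $y_n\cap B_1(0)$ and $z_n\cap B_1(0)$ are each constant and unequal, and so that $y_n\to y$, $z_n\to z$ in the compact space $\Omega_\Del$. Then $y$ and $z$ agree on every compact subset of $\{\langle x,u\rangle<0\}$, hence on a half-space, while $y\cap B_1(0)\ne z\cap B_1(0)$ gives $y\ne z$; this is the required pair.

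\textbf{Main obstacle.} The delicate point is the last step: arranging that the growing agreement region limits to a genuine half-space rather than to all of $\R^N$, while keeping the two limits distinct. The device that makes this work is to re-center at the \emph{first} point of disagreement, so that the disagreement sits at bounded distance (at the origin) and survives the limit, and to exploit that the agreement ball then has radius equal to the distance of its center from the origin — so as the center recedes to infinity the ball opens into a half-space instead of filling space. Verifying that convergence of $t_n/|t_n|$ together with $\rho_n\to\infty$ yields uniform eventual containment of each compact $K\subset\{\langle x,u\rangle<0\}$ in $B_{\rho_n}(-t_n)$ is the one quantitative estimate that must be done with care.
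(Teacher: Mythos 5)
Your architecture is exactly that of the proofs in the cited sources (the paper itself does not prove Theorem \ref{thm-Olimb}; it defers to \cite{BargeOlimb11,KellendonkLenz13}): distinct pairs agreeing on balls $B_{R_n}(0)$, re-centering at a point of \emph{first} disagreement so that $|t_n|$ equals the agreement radius $\rho_n$, and letting the agreement ball open into a half-space. Your easy reduction (half-space agreement implies proximality), your occurrence/period dichotomy (if all differences of occurrences were periods, the period group would be relatively dense, hence a lattice), and your quantitative verification that $x\in B_{\rho_n}(-t_n)$ iff $|x|^2+2\langle x,t_n\rangle<0$, so that every compact $K\subset\{\langle x,u\rangle<0\}$ is eventually swallowed, are all correct. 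But the subsequence-extraction step contains a genuine gap as written.

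The unjustified claim is ``By FLC, refine the subsequence so that $y_n\cap B_1(0)$ and $z_n\cap B_1(0)$ are each constant.'' FLC alone does not give this: the position of a hull element relative to the origin is a \emph{continuous} parameter (for $z_n=\Z+a_n\subset\R$ the patches $z_n\cap B_1(0)$ take infinitely many values, with no constant subsequence), and convergence in $\Omega_\Del$ only yields $(y_n-s_n)\cap K=y\cap K$ and $(z_n-s_n')\cap K=z\cap K$ for wiggles $s_n,s_n'\to0$ which may differ from each other. Without controlling these wiggles, neither the constancy of the $B_1$-patches, nor the survival of the disagreement at $0$ (i.e.\ $y\ne z$), nor the agreement of the limits $y,z$ on compacta of the half-space is automatic. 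The repair is short but uses an idea your write-up omits: $0$ lies in exactly one of $y_n,z_n$, so after passing to a subsequence and relabelling you may assume $0\in y_n$; then $y_n\cap B_R(0)\subseteq(\Del-\Del)\cap B_R(0)$, a finite set, so the patches of $y_n$ can be made eventually constant, and $r$-discreteness kills the wiggle (if $x+s_n,x+s_m\in y$ with $|s_n-s_m|<2r$ then $s_n=s_m$, and $s_n\to0$ forces $s_n=0$ eventually), giving \emph{exact} convergence $y_n\to y$. For $z_n$, use that the agreement ball $U_{\rho_n}(-t_n)$ comes within bounded distance of $0$, so by relative denseness $z_n$ and $y_n$ share a point $v_n$ with $|v_n|\le C$; then $z_n\cap B_R(0)\subseteq v_n+(\Del-\Del)$ with $v_n$ ranging over the finite set $(\Del-\Del)\cap B_C(0)$, so the $z_n$-patches also take only finitely many values and $z_n\to z$ exactly along a further subsequence. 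With exact convergence all three conclusions ($y\ne z$ via $0\in y\setminus z$, and agreement on the open half-space) follow at once. A similar, milder remark applies to your second step: with the paper's definition an occurrence only gives the containment $B_R[\Del]\subset\Del-y$, not equality of $R$-patches; you should pass to \emph{exact} occurrences, whose relative density follows from minimality of $(\Omega_\Del,\R^N)$ (return times to the cylinder neighborhood of $\Del$) or from the standard equivalence of repetitivity notions for FLC sets. With these repairs your proof is complete and coincides with the strategy of \cite{KellendonkLenz13}.
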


\subsection{Some results for general dynamical systems}
We state right away the fundamental result relating  the maximal equicontinuous factor and  proximality. Again we do not attempt to state it in the most general form.
\begin{theorem}[\cite{Auslander88}] \label{QequalR}
Let $(X,G)$ be a   minimal   dynamical system.  Then the equicontinuous structure relation $R_{max}$ is equal to the regional proximal relation $Q$.
\end{theorem}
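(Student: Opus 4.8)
The plan is to prove the two inclusions $Q\subseteq R_{max}$ and $R_{max}\subseteq Q$ separately, the first being routine and the second carrying all the weight.

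For $Q\subseteq R_{max}$ I would argue that regional proximality is pushed forward by any factor map while being trivial on the target. Concretely, let $\pi:X\to Y$ be a factor map. Since $X$ is compact, $\pi$ is uniformly continuous, so given $\epsilon>0$ there is $\delta>0$ with $d_X(a,b)<\delta\Rightarrow d_Y(\pi a,\pi b)<\epsilon$; because $\pi$ is a $G$-map this yields $(\pi\times\pi)(P_\delta)\subseteq P_\epsilon$ on the respective spaces, and continuity of $\pi\times\pi$ upgrades this to $(\pi\times\pi)(\overline{P_\delta})\subseteq\overline{P_\epsilon}$. Intersecting over $\epsilon$ gives $(\pi\times\pi)(Q_X)\subseteq Q_Y$. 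Applying this to $\pi=\pi_{max}$ and using that the target is equicontinuous, it remains to check $Q_{\Xmax}=\Delta$: on an equicontinuous system one passes to an invariant metric (as in the remark following Theorem~\ref{cor-equi}), whence $P_\epsilon=\{(a,b):d(a,b)<\epsilon\}$ and $\overline{P_\epsilon}\subseteq\{d\le\epsilon\}$, so $Q=\bigcap_\epsilon\overline{P_\epsilon}=\Delta$. Hence $(x,y)\in Q$ forces $\pi_{max}(x)=\pi_{max}(y)$, i.e.\ $(x,y)\in R_{max}$. (Equivalently, one may verify via Theorem~\ref{conjugate} that every continuous eigenfunction is constant on $Q$-classes.)

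For $R_{max}\subseteq Q$ the strategy is to show that $Q$ is itself a closed $G$-invariant equivalence relation whose quotient $(X/Q,G)$ is equicontinuous; since $R_{max}$ is, by its definition, the intersection of all closed invariant equivalence relations with equicontinuous quotient, this forces $R_{max}\subseteq Q$. That $Q$ is closed (an intersection of closed sets), reflexive (the diagonal lies in every $P_\epsilon$), symmetric, and $G$-invariant (each $P_\epsilon$ is invariant under the diagonal action, using commutativity of $G$) is immediate. The one genuinely hard point, and the main obstacle of the whole theorem, is \emph{transitivity} of $Q$: unlike $P$, which need not be transitive, $Q$ does close up for minimal systems, but only because of the algebraic rigidity supplied by the enveloping (Ellis) semigroup $E(X,G)$. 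Here I would invoke the structure of a minimal left ideal $M\subseteq E(X,G)$ together with its idempotents, rewrite membership in $Q$ in the reduced form ``there are nets $x_i\to x$ and $t_i\in G$ with $t_ix_i$ and $t_iy$ converging to a common limit'', and use that an idempotent of $M$ acts as the identity on its own minimal ideal in order to splice two regionally proximal pairs. This is the classical argument of Auslander.

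Finally, to see that $(X/Q,G)$ is equicontinuous I would first record the auxiliary fact that a minimal system is equicontinuous precisely when its regional proximal relation is trivial: if it were not equicontinuous there would be points $x_n,y_n$ with $d(x_n,y_n)\to 0$ yet $d(t_nx_n,t_ny_n)\ge\epsilon_0$ for suitable $t_n$, and passing to convergent subnets $t_nx_n\to a$, $t_ny_n\to b$ produces a pair $(a,b)\in Q$ with $a\neq b$ (invariance of $P_\delta$ places $(t_nx_n,t_ny_n)$ in $P_\delta$ for every $\delta$). Thus it suffices to prove that the regional proximal relation of $X/Q$ is trivial, i.e.\ that $Q$ does not grow under one further quotient. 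This again rests on the enveloping-semigroup description and is the natural companion to the transitivity step; granting it, the auxiliary fact makes $X/Q$ equicontinuous and completes the proof. I expect essentially all of the difficulty to be concentrated in these two semigroup-theoretic points, the remainder being soft topology.
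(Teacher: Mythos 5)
Your proposal is correct in outline, and it has the same overall architecture as the paper's own (deliberately incomplete) treatment: the paper, like you, splits the statement into $Q\subseteq R_{max}$ plus the hard converse, reduces the converse to showing that $(X/Q,G)$ is equicontinuous, and then defers the genuinely difficult content --- transitivity of $Q$ and equicontinuity of the quotient, both resting on the enveloping semigroup --- to \cite{Auslander88}. Where you genuinely differ is in the easy inclusion: the paper argues via continuous eigenfunctions (a regionally proximal pair cannot be separated by any continuous eigenfunction, up to an $\epsilon$-error, and by Theorem~\ref{conjugate} the fibers of $\pi_{max}$ are precisely the joint level sets of these eigenfunctions), whereas you argue purely topologically, pushing $\overline{P_\delta}$ forward under the uniformly continuous factor map and checking via the invariant metric (as in the remark after Theorem~\ref{cor-equi}) that $Q$ is trivial on an equicontinuous target. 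Both are sound; your version is independent of the eigenfunction construction of $X_{max}$ and works verbatim for any equicontinuous factor, while the paper's version is the one that ties the theorem to the spectral picture in which it is later used.

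Two caveats are worth recording. First, you quote ``the definition'' of $R_{max}$ as the intersection of all closed invariant equivalence relations with equicontinuous quotient; that is the introduction's formulation, but in the body $R_{max}$ is the fiber relation of $\pi_{max}$, so your reduction needs the extra (easy) observation that the quotient map $X\to X/Q$ factors through $\pi_{max}$ --- this follows from Theorem~\ref{characterization-maximal-equicontinuous-factor} after composing with a suitable rotation, exactly as in the proof of Lemma~\ref{lemma-1-1-almost-automorphic}. Second, your parenthetical claim that $Q$ ``does close up for minimal systems'' is too strong as stated: minimality alone does not give transitivity of $Q$ for general acting groups; the Ellis--Keynes splicing argument you gesture at needs $G$ abelian (or, by McMahon, an invariant measure). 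Since commutativity is a standing hypothesis of the paper, this does not affect correctness here, but it should be made explicit that the abelian assumption, not minimality, is what powers the semigroup step. With these amendments, your sketch matches the paper's level of completeness, since the paper itself leaves the same two semigroup-theoretic points to Auslander.
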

One direction of containment claimed in this theorem is relatively easy. Note that continuous eigenfunctions of the dynamical system take the same value on proximal pairs: If $x$ is proximal to $y$ and $f$ is a continuous eigenfunction then
$$\frac{f(x)}{f(y)} = \frac{f(t\cdot x)}{f(t\cdot y)} = 1$$
since the first equality is true by the eigenvalue equation for all $t$, and $t\cdot x$ and $t \cdot y$ can get arbitrarily close when varying $t$. A similar argument can be employed if $x$ and $y$ are merely regionally proximal - one only needs to take into account an (arbitrarily small) error of $\epsilon$. 
Thus $Q\subset R_{max}$. For the other direction one needs to prove that the induced action on
$X/Q$ is equicontinuous
which is equivalent to showing that there is an invariant metric generating its
topology.

\medskip

\textbf{Remark.} By the previous theorem, the regional proximality relation opens an alternative way to study the topological spectrum. Given that we have a good intution about proximality, this raises the question: How different are the regional proximal and proximal relations? In general they are quite different. For instance, for a topologically weakly mixing system any two elements are regionally proximal \cite{Auslander88}. But in the minimal case, $x$  can only be proximal to $t\cdot x$ if they are equal. Indeed, suppose that $\inf_{t'\in\R^N} d(t'\cdot x,t'\cdot t\cdot x) = 0$. Then there exists a sequence $(t_n)_n$ such that $\lim_n  d(t_n\cdot x,t_n\cdot (t\cdot x)) = 0$ and so if we take an accumulation point $x'$ of the sequence $(t_n\cdot x)_n$  then
$d(x',t\cdot x')=0$ and so $t\cdot x'=x'$. By minimality we must then also have $t\cdot x=x$. 
\subsubsection{Distal points}\label{Distal-points}
To understand the regional proximal relation it is important  to consider points which are regionally proximal but not proximal.
Recall that a point $x\in X$ is called distal if it is not proximal to
any other point. We denote by $X^{distal}$ the distal points of
$X$. This set might be empty. In fact for metrizable minimal topologically  weakly mixing
systems it is known \cite{Auslander88} (p.\ 132) that $P^2=Q=X\times X$, i.e., for any
two points $x,y\in X$ there exists a third point $z\in X$ such that
$x$ is proximal to $z$ and $z$ is proximal to $y$. This is only
possible if $x$ is not distal. In particular, as we will see that
non-Pisot substitution tilings have weakly mixing dynamical systems, their tiling spaces 
have no distal points. As distality is preserved by the action,
$X^{distal}$ is dense if it is not empty. But for metrizable $X$ even more is true: the set
$X^{distal}$ is even residual if it is not empty due to a remarkable  result of Ellis \cite{Ellis}.
Minimal systems for which $X^{distal}$ is not empty are called \textit{point distal}.
Veech has extended Furstenberg's structure theorem to point distal systems \cite{Veech}.

Let $\xi \in \Xmax$. We call $\pi_{max}^{-1}(\xi )$ the {\em fiber} of $\xi $ without specifying the map $\pmax$.
A point $\xi \in \Xmax$ is called {\em fiber-distal} if all points in its
fiber are distal. Since $P\subset
R_{max}$ a point $x$ can only be proximal to a point in the fiber to
which it belongs, so $\xi $ is fiber-distal if and only if the
proximality relation restricted to its fiber is trivial.
We denote the fiber-distal points by $\Xmax^{distal}$ and say that a
minimal system is fiber-distal if this set is non-empty. Clearly
$$ \pi^{-1}_{max}(\Xmax^{distal})\subset X^{distal}.$$

Let $X^{open}$ be the set of open points for $\pi_{max}$, that is, of
points $x$ such that $\pi_{max}$ maps neighbourhoods of $x$ to
neighbourhoods of $\pi_{max}(x)$.
$X^{open}$ is always a residual set (and hence non-empty) \cite{Veech}.
A point $\xi \in \Xmax$ is called
fiber-open if all points in its fiber $X_\xi  =\pi_{max}^{-1}(\xi )$ are
open. We denote by
$X^{open}_{max}$ the fiber-open points.
We state two fundamental results of Veech \cite{Veech}.
\begin{lemma}[\cite{Veech}]\label{Veech-lemma-one}
Any distal point is open
and so in particular $\Xmax^{distal}\subset X^{open}_{max}$.
Furthermore, $X^{open}_{max}$ is a residual subset of $\Xmax$.
\end{lemma}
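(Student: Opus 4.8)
The statement has three ingredients: (A) every distal point lies in $X^{open}$; (B) consequently $\Xmax^{distal}\subseteq X^{open}_{max}$; and (C) $X^{open}_{max}$ is residual. Part (B) is immediate from (A): if $\xi\in\Xmax^{distal}$ then every point of $\pi_{max}^{-1}(\xi)$ is distal, hence open by (A), so the whole fibre consists of open points and $\xi\in X^{open}_{max}$. So the real work is in (A) and (C).

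For (A) I would work in the Ellis semigroup $E:=E(X,G)$ together with the induced continuous surjective homomorphism $\pi_*:E\to K$, where $K:=E(\Xmax,G)$ is the \emph{group} of the equicontinuous factor (for an equicontinuous system the Ellis semigroup is a compact group acting by translations, so $K\cong\Xmax$ with base point $\xi_0:=\pi_{max}(x)$). The first step is to reread distality idempotently. By the characterisation of proximality recalled in the Introduction, two points are proximal exactly when some idempotent $u$ in a minimal ideal identifies them; applied to the pair $x,ux$, which satisfy $u(x)=u(ux)$ (both equal $ux$, since $uu=u$), this shows $x$ is proximal to $ux$, so a distal $x$ must satisfy $ux=x$ for \emph{every} minimal idempotent $u$. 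The second step restates openness of $\pi_{max}$ at $x$ as a lifting property: $\pi_{max}$ is open at $x$ if and only if every net $\eta_\alpha\to\xi_0$ admits, after passing to a subnet, lifts $z_\alpha\in\pi_{max}^{-1}(\eta_\alpha)$ with $z_\alpha\to x$. The third step produces candidate lifts: fixing a minimal left ideal $M\subseteq E$, the image $\pi_*(M)$ is a left ideal in the group $K$ and hence all of $K$, so one may choose $p_\alpha\in M$ with $\pi_*(p_\alpha)$ equal to the translation of $\Xmax$ by $\eta_\alpha-\xi_0$. Then $z_\alpha:=p_\alpha x$ satisfies $\pi_{max}(z_\alpha)=\eta_\alpha$ and, by continuity of the evaluation $p\mapsto px$ on $E$ (recall $E\subseteq X^X$ carries the topology of pointwise convergence), converges along a subnet to $px$ for some $p\in M$ with $\pi_*(p)=e_K$.

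The main obstacle is exactly this last step: a priori the limit $px$ lands somewhere in the fibre $\pi_{max}^{-1}(\xi_0)$ but need not equal $x$, and indeed for a general (non-distal) point the restricted map $\pi_*|_M$ fails to be open, which is precisely why $\pi_{max}$ is not open everywhere. Here distality must be fed back in: the idempotent $u\in M$ with $ux=x$ provides a base point over $e_K\in K$ whose evaluation is $x$, and the task is to choose the lifts so that $p_\alpha\to u$ (equivalently, to establish the local openness of $\pi_*|_M$ at $u$ in the direction of $x$), whereupon $z_\alpha=p_\alpha x\to ux=x$ as required. Carrying this out rests on the structure theory of minimal left ideals in the enveloping semigroup, absorbing the discrepancy between an arbitrary lift and $u$ by right multiplication (which \emph{is} continuous on $E$) by idempotents fixing $x$; this is the technical heart of Veech's argument \cite{Veech}, and I expect essentially all of the difficulty to lie here.

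For (C) I would argue by category. Consider the fibre map $\Phi:\Xmax\to\Kk(X)$, $\Phi(\xi)=\pi_{max}^{-1}(\xi)$, into the hyperspace of nonempty compact subsets of $X$ with the Hausdorff (Vietoris) topology; since $\pi_{max}$ is continuous and $X$ is compact, $\Phi$ is upper semicontinuous. A routine check shows that $\Phi$ is \emph{lower} semicontinuous at $\xi$ precisely when $\pi_{max}$ is open at every point of the fibre $\pi_{max}^{-1}(\xi)$, that is, precisely when $\xi\in X^{open}_{max}$; thus $X^{open}_{max}$ is exactly the set of continuity points of $\Phi$. By Fort's theorem, a compact-valued upper semicontinuous map from a Baire space into the hyperspace of a compact metric space is continuous on a residual set, and $\Xmax$ is a compact metrizable (hence Baire) group, so $X^{open}_{max}$ is residual. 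Alternatively one can deduce residuality from the already stated fact that $X^{open}$ is residual in $X$ together with the $G$-invariance of $X^{open}$ and $X^{open}_{max}$, but the hyperspace argument is cleaner and self-contained.
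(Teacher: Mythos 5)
The paper contains no proof for you to be measured against: Lemma~\ref{Veech-lemma-one} is quoted from \cite{Veech} without argument, so your proposal has to stand on its own. Parts (B) and (C) do stand. (B) is immediate, as you say, and your hyperspace argument for (C) is correct and self-contained under the paper's standing metrizability assumption: the fibre map $\Phi(\xi)=\pi_{max}^{-1}(\xi)$ is upper semicontinuous because $\pi_{max}$ is continuous and $X$ compact; both directions of your equivalence ``$\Phi$ lower semicontinuous at $\xi$ iff $\xi\in X^{open}_{max}$'' check out, since $V\cap\pi_{max}^{-1}(\eta)\neq\emptyset$ exactly when $\eta\in\pi_{max}(V)$; and Fort's theorem then makes the set of continuity points, which is $X^{open}_{max}$, residual. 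One caveat: the ``alternative'' you mention in passing is not actually routine --- the continuous image of a meagre set need not be meagre, so residuality of $X^{open}$ in $X$ together with invariance does not by itself transport to $\Xmax$ --- but you correctly do not rely on it.

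Part (A), however, is the substance of the lemma, and your proposal does not prove it. The preparatory reductions are sound: $u(x)=u(ux)$ shows $x$ and $ux$ are proximal, so a distal $x$ satisfies $ux=x$ for every minimal idempotent $u$; openness at $x$ is the subnet-lifting property; and $\pi_*(M)=K$ yields lifts $z_\alpha=p_\alpha x$ over $\eta_\alpha$ with a subnet limit $px$, where $p\in M$ and $\pi_*(p)=e_K$. But at the decisive moment you defer to ``the technical heart of Veech's argument'', and the repair mechanism you sketch is provably inert: since, by your own first step, every minimal idempotent $v$ fixes $x$, right multiplication by such $v$ changes no evaluation at $x$ --- one has $(p_\alpha v)(x)=p_\alpha(vx)=p_\alpha(x)$ --- so ``absorbing the discrepancy by right multiplication by idempotents fixing $x$'' returns literally the same net of lifts and the same limit $px$. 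Nor can $px=x$ be forced formally from $\pi_*(p)=e_K$ together with $ux=x$: in a minimal distal but non-equicontinuous system (for instance the skew product $(x,y)\mapsto(x+\alpha,y+x)$ on the $2$-torus) the Ellis semigroup is a group whose only idempotent is the identity, every point is distal, and yet $\ker\pi_*$ is nontrivial and moves points within fibres of $\pi_{max}$; so knowing that the limit $p$ lies over $e_K$ says nothing about $px$, even though $\pi_{max}$ is in fact open there via a different choice of lifts. What is missing is precisely the selection of the $p_\alpha$ so that the lifts converge to $x$ --- in Veech's paper this is done by a genuine category/metric argument exploiting distality of the single point $x$, not by the idempotent algebra above --- and without it the main assertion of the lemma remains unproven.
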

In particular $X^{open}_{max}$ is always non-empty  which shows that
the inclusion  $\Xmax^{distal}\subset X^{open}_{max}$
need not be an equality. This is, for instance, the case if the system
has no distal points.
\begin{lemma}[\cite{Veech}] \label{Veech-lemma-two}
If $X^{distal}$ is a residual set then there exists a fiber
$\pi^{-1}_{max}(\xi )$ with a dense set of distal points.
\end{lemma}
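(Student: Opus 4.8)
The plan is to reduce the statement to a Baire-category argument carried out \emph{inside a single fibre}, using the open points of $\pi_{max}$ to transport density from $X$ down to the fibres. Since $X^{distal}$ is residual, I first fix a dense $G_\delta$ set $\bigcap_{n} O_n \subseteq X^{distal}$ with each $O_n$ open and dense in $X$. Because each fibre $\pi_{max}^{-1}(\xi)$ is a compact metric space, hence a Baire space, and each $O_n \cap \pi_{max}^{-1}(\xi)$ is open in it, the distal points will be dense in $\pi_{max}^{-1}(\xi)$ as soon as \emph{every} $O_n$ is dense in $\pi_{max}^{-1}(\xi)$: indeed $\bigcap_n\big(O_n\cap\pi_{max}^{-1}(\xi)\big)$ is then residual in the fibre, a fortiori dense, and it is contained in $X^{distal}$. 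So it suffices to produce one $\xi$ for which every $O_n$ meets its fibre densely.

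For a fixed dense open $O\subseteq X$ I would consider
$$D_O:=\{\xi\in\Xmax : O\cap\pi_{max}^{-1}(\xi)\ \text{is dense in}\ \pi_{max}^{-1}(\xi)\},$$
and show it is residual in $\Xmax$. Fixing a countable base $\{U_m\}$ of $X$, a fibre fails the density condition precisely when some $U_m$ meets it while $U_m\cap O$ misses it, so
$$\Xmax\setminus D_O=\bigcup_m G_m,\qquad G_m:=\pi_{max}(U_m)\setminus\pi_{max}(U_m\cap O).$$
The crux is to show each $G_m$ is nowhere dense; granting this, $D_O$ is residual, the countable intersection $\bigcap_n D_{O_n}$ is residual and in particular nonempty by Baire in $\Xmax$, and any $\xi$ in it is a fibre with dense distal points.

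The nowhere-density of $G_m$ is where the open points do the work, and I expect it to be the main obstacle — it is simply false in general for non-open maps, so the hypothesis that $X^{open}$ is residual in $X$ is essential. Given a nonempty open $V\subseteq\Xmax$, I may assume $V\cap\pi_{max}(U_m)\neq\emptyset$ (otherwise $V\cap G_m=\emptyset$ already, since $G_m\subseteq\pi_{max}(U_m)$). Then $U_m\cap O\cap\pi_{max}^{-1}(V)$ is a nonempty open subset of $X$, so it meets the residual set $X^{open}$; pick such an open point $w$ in it. Since $U_m\cap O$ is an open neighbourhood of $w$ and $w$ is an open point, $\pi_{max}(U_m\cap O)$ is a neighbourhood of $\pi_{max}(w)$, hence contains a nonempty open $V'\subseteq V$. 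Every $\eta\in V'$ lies in $\pi_{max}(U_m\cap O)$ and therefore not in $G_m$, so $V'\cap G_m=\emptyset$. As $V$ was arbitrary, $G_m$ is nowhere dense.

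Assembling the pieces, $\bigcap_n D_{O_n}$ is residual, hence nonempty; for any $\xi$ in it each $O_n$ is dense in the fibre $\pi_{max}^{-1}(\xi)$, and the fibrewise Baire argument of the first paragraph then gives that $X^{distal}\cap\pi_{max}^{-1}(\xi)$ is dense in $\pi_{max}^{-1}(\xi)$, which is exactly the desired fibre. The only delicate step is the passage through open points in the third paragraph; the remainder is routine Baire category in compact metric spaces.
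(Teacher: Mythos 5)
Your proof is correct. Note that the paper offers no argument for this lemma at all --- it is quoted from Veech \cite{Veech} as a black box --- so there is no in-paper proof to compare against; what you have written is a correct, self-contained reconstruction. Your two essential inputs are precisely the facts the paper records just before the statement in Section \ref{Distal-points}: the residuality of the set $X^{open}$ of open points of $\pi_{max}$ (Veech's theorem), and Baire category in the compact metric spaces $X$, $\Xmax$ and the fibres. The decomposition $\Xmax\setminus D_O=\bigcup_m G_m$ with $G_m=\pi_{max}(U_m)\setminus\pi_{max}(U_m\cap O)$ is exactly right (both inclusions check against the countable base: a fibre fails density of $O$ iff some basic $U_m$ meets it while $U_m\cap O$ misses it), and the nowhere-density of $G_m$ is where the open points are genuinely needed and is carried out correctly: the density of $O$ makes $U_m\cap O\cap\pi_{max}^{-1}(V)$ nonempty open, it meets the residual set $X^{open}$, and for an open point $w$ there the set $\pi_{max}(U_m\cap O)$ is a neighbourhood of $\pi_{max}(w)\in V$, yielding the nonempty open $V'\subseteq V$ disjoint from $G_m$. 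Two remarks on what your route buys. First, it proves strictly more than the statement: $\bigcap_n D_{O_n}$ is residual in $\Xmax$, so the set of $\xi$ whose fibre has a dense set of distal points is itself residual, not merely nonempty. Second, no dynamics enters beyond the residuality of open points: your argument is a Kuratowski--Ulam-type statement valid for any continuous surjection of compact metric spaces with residually many open points, applied to the residual set $X^{distal}$; this is in the spirit of, and essentially recovers, the classical category argument behind Veech's lemma.
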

\subsubsection{Coincidence rank}
For our further investigation of the various relations we consider three notions of rank for minimal dynamical systems.
The {\em minimal} and the {\em maximal rank}, $mr$ and $Mr$, are the minimal and maximal number of points in a fibre of $\pmax$, or $+\infty$ if the extrema do not exist.
The {\em coincidence rank}, $cr\in\N\cup\{+\infty\}$,  of a minimal dynamical system counts the maximal number of mutually non-proximal points in a fibre. More precisely,
let $\xi \in \Xmax$ and $\mbox{\rm card}(\xi ,\delta)$ be the maximal number $l$ of elements $x_1,\cdots,x_l\in\fb{\xi }$ such that $(x_i,x_j)\notin P_\delta$, or $\mbox{\rm card}(\xi ,\delta)=+\infty$ if this maximum does not exist. There are a couple of observations to make:
First, and this follows from minimality, $\mbox{\rm card}(\xi ,\delta)$ does not depend on $\xi $, and second, $\mbox{\rm card}(\xi ,\delta)$ is decreasing in $\delta$. So we may define the coincidence rank  by
$$ cr = \lim_{\delta\to 0} \mbox{\rm card}(\xi ,\delta)= \sup\{l:\exists x_1,\cdots,x_l\in \pi_{max}^{-1}(\xi ), (x_i,x_j)\notin P\:\mbox{for}\: i\neq j\}$$
and this is independent of $\xi \in \Xmax$.
Here, the first equality is a definition and the  second equality follows easily. Moreover, we  have the following criterion for  finiteness of $cr$.
\begin{lemma} \label{uniform-distance}
The coincidence rank $cr$ is finite if and only if  there exists $\delta_0>0$ such that if $(x_1,x_2)\in Q\backslash P$ then
$d(x_1,x_2)\geq \delta_0$.
\end{lemma}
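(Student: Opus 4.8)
The plan is to prove the two implications separately, after reformulating the geometric condition dynamically. First I would note that, by Theorem~\ref{QequalR}, $Q=R_{max}$, so $Q$ and $P$ are both invariant under the diagonal action of $G$; hence $Q\setminus P$ is $G$-invariant and
$$\inf\{d(x_1,x_2):(x_1,x_2)\in Q\setminus P\}=\inf\Big\{\inf_{t\in G}d(t\cdot x_1,t\cdot x_2):(x_1,x_2)\in Q\setminus P\Big\}.$$
Thus the existence of $\delta_0$ in the statement is equivalent to the positivity of this common infimum, i.e.\ to $Q\setminus P$ being bounded away from the diagonal; since a pair lies in $P_\delta^{\,c}$ exactly when $\inf_t d(t\cdot x_1,t\cdot x_2)\ge\delta$, it is also equivalent to $Q\cap P_{\delta_0}\subseteq P$ for some $\delta_0>0$. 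For the implication ``$\Leftarrow$'' this reformulation makes matters immediate: if the gap $\delta_0$ holds, then any points of a single fibre that are pairwise non-proximal lie pairwise in $Q\setminus P$, hence are pairwise at distance at least $\delta_0$; covering $X$ by finitely many $\tfrac{\delta_0}{2}$-balls then caps their number, so $cr<\infty$.

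For ``$\Rightarrow$'' I would use that $\mathrm{card}(\xi,\delta)$ is non-increasing in $\delta$, integer-valued and tends to $cr$, to fix $\delta_1>0$ with $\mathrm{card}(\xi,\delta)=cr$ for all $\xi$ and all $\delta\le\delta_1$, and aim to show $\delta_0=\delta_1$ works. Two preliminary facts are wanted. First, every fibre carries a \emph{maximal $\delta_1$-configuration}, that is $cr$ points pairwise in $P_{\delta_1}^{\,c}$: one transports a configuration from one fibre to any other using minimality together with the fact that $P_{\delta_1}^{\,c}$ is closed and $G$-invariant, passing to a limit. Second, \emph{every} point of $X$ extends to such a configuration, because the set of points that do so is closed (a limit of configurations through $x_k\to x$ is a configuration through $x$), $G$-invariant and non-empty, hence all of $X$ by minimality. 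Granting these, suppose a pair $(x,y)\in Q\setminus P$ had $\inf_t d(t\cdot x,t\cdot y)<\delta_1$; extending $x$ to a configuration $\{x=u_1,u_2,\dots,u_{cr}\}$, the $cr+1$ points $x,y,u_2,\dots,u_{cr}$ would contradict the definition of $cr$ \emph{provided} $y$ is non-proximal to each $u_j$ with $j\ge2$.

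The step I expect to be the main obstacle is exactly securing that last proviso. Maximality of the configuration only yields $\inf_t d(t\cdot y,t\cdot u_j)<\delta_1$ for \emph{some} $j$, and this may be witnessed solely by $j=1$, i.e.\ by $y$ being close to $x$ itself; so no contradiction is handed over for free, and one cannot simply substitute $y$ for a configuration point since $(x,y)\notin P_{\delta_1}^{\,c}$. The difficulty is structural: proximality is neither closed nor transitive, and a regionally proximal non-proximal pair may oscillate so that $\inf_t d(t\cdot x,t\cdot y)$ is tiny while $\sup_t d(t\cdot x,t\cdot y)\ge\delta_1$, which renders the triangle estimate $\inf_t d(t\cdot y,t\cdot u_j)\ge\delta_1-\sup_t d(t\cdot x,t\cdot y)$ vacuous, and a direct passage to the limit as $\inf_t d(t\cdot x,t\cdot y)\to0$ collapses the pair onto the diagonal and destroys the very proximality relations one is tracking. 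To get around this I would work in the product system: the function $(x,y)\mapsto\inf_t d(t\cdot x,t\cdot y)$ is upper semicontinuous and $G$-invariant, $Q$ is compact, and I would study the orbit closure $\overline{G\cdot(x,y)}\subset Q$ and its minimal subsets, using the $\xi$-independence of $\mathrm{card}$ to promote an accumulation of arbitrarily small separations within $Q\setminus P$ into a genuine configuration of more than $cr$ pairwise non-proximal points in a single fibre. Making this promotion rigorous---reconciling the small infimum of the bad pairs with the uniform separation forced by finiteness of $cr$---is where the real work of the proof lies.
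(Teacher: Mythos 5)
Your ``$\Leftarrow$'' direction is correct and is essentially the paper's own argument: the paper disposes of this implication with the single remark that, by compactness of the fibers, such a $\delta_0$ cannot exist if $cr$ is infinite, which is your ball-covering pigeonhole stated in contrapositive form. Your preliminary reformulation via invariance of $Q\setminus P$ is also sound (one cosmetic point: use $\delta_0/3$-balls, since two points of a $\delta_0/2$-ball are only at distance $\le\delta_0$, not $<\delta_0$).

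For ``$\Rightarrow$'', however, your proposal has a genuine, and in fact self-acknowledged, gap. The paper's entire proof of this direction is the observation that the integer-valued, monotone quantity $\mathrm{card}(\xi,\delta)$ equals $cr$ for all $0<\delta\le\delta_1$, some $\delta_1>0$; you reproduce this threshold and then, rightly, refuse to pretend the conclusion follows formally. Your two auxiliary facts are correct and provable exactly as you indicate: $P_{\delta_1}^{\,c}$ is closed because $(x,y)\mapsto\inf_t d(t\cdot x,t\cdot y)$ is upper semicontinuous, so the set of points extending to a $cr$-configuration at scale $\delta_1$ is closed, invariant, and (by the $\xi$-independence of $\mathrm{card}$) nonempty, hence all of $X$ by minimality. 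One refinement you missed: running your pigeonhole only at scales $\delta\le\eta:=\inf_t d(t\cdot x,t\cdot y)>0$ excludes the witness $j=1$, since $(x,y)\notin P_\eta\supseteq P_\delta$; so one does obtain $y$ proximal to some $u_j$ with $j\ge2$, and your stated worry that the witness might be solely $j=1$ is an artifact. But, as you then correctly diagnose, this upgrade still yields no contradiction: among the $cr+1$ points $\{y,u_1,\dots,u_{cr}\}$ exactly one pair, namely $(y,u_j)$, may be proximal, which is perfectly compatible with $\mathrm{card}(\xi,\delta)=cr$ for every $\delta$; and the triangle inequality converts proximality of $(y,u_j)$ together with $(x,u_j)\notin P_{\delta_1}$ only into $\sup_t d(t\cdot x,t\cdot y)\ge\delta_1$, never into the needed lower bound on the infimum. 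Your closing plan (orbit closures in $X\times X$, minimal subsets, upper semicontinuity of the gap function) assembles relevant tools --- for instance the gap function is constant on minimal subsets of $Q$, and applying a minimal idempotent of the Ellis semigroup to $(x,y)$ produces an almost periodic non-proximal pair in the same fiber --- but none of these moves, as stated, manufactures $cr+1$ pairwise non-proximal points in a single fiber: for that one must exchange $u_j$ for $y$ while retaining uniform separation from the remaining $u_k$, and nothing in your outline rules out $y$ being proximal to several members of the configuration. That exchange step is the real content deferred by the paper's terse proof to the arguments of \cite{BargeKellendonk12}; your proposal names the right obstacle but, as you yourself concede, does not overcome it, so the forward implication remains unproven.
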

Indeed, by compactness of the fibers, such a $\delta_0$ cannot exist if $cr$ is infinite, and if $cr$ is finite then the limit $\lim_{\delta\to 0} \mbox{\rm card}(\xi ,\delta)$ must be taken on a strictly positive value for $\delta$.

\smallskip

Independence of $\mbox{\rm card}(\xi ,\delta)$ on $\xi $ also implies that $cr\leq mr$ as the coincidence
may be measured in a fiber of minimal size. So  an almost-automorphic system, i.e., a system with $mr=1$, has also $cr=1$.

The coincidence rank furnishes a criterion for when proximality ($P$) coincides with regional proximality ($Q$).
\begin{theorem}\label{characterization-PequalQ} \cite{BargeKellendonk12}
Let $(X,G)$ be a  minimal system. Then $cr=1$ if and only if $P=Q$.
\end{theorem}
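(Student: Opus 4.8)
The plan is to show that both sides of the claimed equivalence unpack, via Theorem~\ref{QequalR}, into the single set-theoretic containment $R_{max}\subseteq P$; once this is recognized the argument is essentially bookkeeping, with no analytic estimate required. First I would record the two inclusions that hold for \emph{any} minimal system. On one hand $P=\bigcap_{\epsilon>0}P_\epsilon\subseteq\bigcap_{\epsilon>0}\overline{P_\epsilon}=Q$, so $P\subseteq Q$. On the other hand, Theorem~\ref{QequalR} gives $Q=R_{max}$, so that $Q$ is exactly the relation ``lying in a common fibre of $\pi_{max}$''. Hence $P\subseteq R_{max}$ always, and the equality $P=Q$ is equivalent to the reverse inclusion $R_{max}\subseteq P$, i.e.\ to the assertion that any two points of $X$ with the same image under $\pi_{max}$ are proximal.

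Next I would translate the condition $cr=1$. Using the supremum formula for the coincidence rank together with its independence of $\xi$, the statement $cr=1$ says precisely that for each $\xi\in\Xmax$ the fibre $\pi_{max}^{-1}(\xi)$ contains no two distinct mutually non-proximal points; equivalently, every pair $x,y$ with $\pi_{max}(x)=\pi_{max}(y)$ satisfies $(x,y)\in P$. Since $(x,y)$ lies in a common fibre exactly when $(x,y)\in R_{max}$, this is verbatim the inclusion $R_{max}\subseteq P$. Note that $cr\ge 1$ holds automatically, since each fibre is nonempty, so the only alternative to $cr=1$ is $cr\ge 2$.

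Combining the two observations yields the chain $cr=1\iff R_{max}\subseteq P\iff Q\subseteq P\iff P=Q$, where the last step uses the always-valid inclusion $P\subseteq Q$. For clarity I would also spell out the two directions contrapositively: if $cr\ge 2$ there is a fibre containing non-proximal points $x_1,x_2$, and this pair lies in $R_{max}=Q$ but not in $P$, so $P\neq Q$; conversely, if $P\neq Q$ then, because $P\subseteq Q$, some co-fibred (hence regionally proximal) pair fails to be proximal, which forces $cr\ge 2$.

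The genuinely substantial input here is Theorem~\ref{QequalR}, the identification $Q=R_{max}$; granting that result there is no hard analytic step. The only point demanding care, and the one I would treat as the main obstacle, is faithfully reading off $cr=1$ from the supremum definition of the coincidence rank and matching it to the fibre-wise containment $R_{max}\subseteq P$, while keeping the trivial inclusion $P\subseteq Q$ explicitly in hand so that the passage from $R_{max}\subseteq P$ to the full equality $P=Q$ is immediate rather than merely an inclusion.
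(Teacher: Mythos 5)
Your proof is correct: granting the supremum formula for $cr$ and its independence of $\xi$ (both recorded in the paper just before the theorem, as consequences of minimality) together with Theorem~\ref{QequalR}, the equivalence does reduce exactly as you say to the single containment $R_{max}\subseteq P$, and your handling of the trivial inclusion $P\subseteq Q$ and of $cr\geq 1$ is accurate. The paper itself defers the proof to \cite{BargeKellendonk12}, but your route is precisely the intended one: the sole substantive input is $Q=R_{max}$, and the remainder is the fibre-wise bookkeeping you carry out.
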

\smallskip

\begin{cor}\label{almost-automorphic-implies-cr-equal-one} Let $(X,G)$ be a minimal system. If $(X,G)$ is almost-automorphic then $cr=1$ and $P = Q$.
\end{cor}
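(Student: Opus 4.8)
The plan is to deduce both conclusions directly from facts already assembled in this subsection, so that the argument amounts to little more than bookkeeping. First I would unwind the definition of almost-automorphicity: by definition there is a point $\xi_0\in\Xmax$ whose fiber $\pi_{max}^{-1}(\xi_0)$ is a singleton. Since every fiber is nonempty, each has cardinality at least $1$, and the existence of one singleton fiber therefore pins the infimum of fiber cardinalities at its least possible value. That is, the minimal rank $mr=\inf_{\xi\in\Xmax}\sharp\,\pi_{max}^{-1}(\xi)$ equals $1$. In other words, an almost-automorphic system is precisely one with $mr=1$.

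Next I would invoke the comparison $cr\leq mr$ recorded just before Theorem \ref{characterization-PequalQ}, whose justification is that the coincidence rank, being independent of $\xi$, may be computed in any fiber, in particular in one of minimal cardinality; hence it cannot exceed the number of points available there. Combined with the trivial lower bound $cr\geq 1$ (a single point of any fiber is, vacuously, a family of pairwise non-proximal points), this forces $cr=1$. This establishes the first assertion of the corollary, which is in fact already anticipated in the remark preceding the definition of $cr$.

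Finally, the equality $P=Q$ is immediate from Theorem \ref{characterization-PequalQ}, which asserts that for a minimal system $cr=1$ holds if and only if $P=Q$; applying it to the value $cr=1$ just obtained yields $P=Q$. There is no genuine obstacle here, since the entire content of the corollary has been prepared by the definition of $cr$, the inequality $cr\le mr$, and the characterization theorem. The only point meriting a moment's care is the equivalence of almost-automorphicity with $mr=1$ used in the first step, but this follows at once from the observation that every fiber is nonempty.
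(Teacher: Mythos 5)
Your argument is correct and follows exactly the route the paper itself takes: the paper's discussion immediately before the corollary notes that $cr\leq mr$ (coincidence being measurable in a fiber of minimal size, by $\xi$-independence of $\mbox{\rm card}(\xi,\delta)$) and that an almost-automorphic system has $mr=1$, whence $cr=1$, and then Theorem \ref{characterization-PequalQ} supplies $P=Q$. You have merely spelled out the small verifications (nonemptiness of fibers giving $mr=1$, the trivial bound $cr\geq 1$) that the paper leaves implicit.
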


\subsubsection{Consequences of finite coincidence rank}

The following consequence of finite coincidence rank will prove to be useful.
\begin{lemma}\label{lem-cr-free}
Consider a minimal dynamical system with free $\R^N$-action. If the coincidence rank is finite, the $\R^N$-action on its maximal equicontinuous factor is free.
\end{lemma}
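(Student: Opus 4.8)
The plan is to argue by contradiction: if the action on $\Xmax$ fails to be free, I will exhibit, inside a single fiber of $\pi_{max}$, arbitrarily large families of pairwise non-proximal points, which forces $cr=\infty$ and contradicts the hypothesis.

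First I would recall that the maximal equicontinuous factor $(\Xmax,\R^N)$ is a rotation on a compact abelian group $\TT$, with action induced by a homomorphism $j:\R^N\longrightarrow\TT$ via $t\cdot\xi = j(t)+\xi$ (Theorem \ref{characterization-maximal-equicontinuous-factor} together with Theorem \ref{conjugate}). For such a rotation the stabilizer $t\cdot\xi=\xi$ holds for one $\xi$ if and only if it holds for all, and this happens exactly when $t\in H:=\ker j$; thus the action is free precisely when $H=\{0\}$ (compare Lemma \ref{lem-free}). So I would assume $H\neq\{0\}$ and fix some $t_0\in H$ with $t_0\neq 0$.

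The key observation is that every nonzero element of $H$ sends a point to a non-proximal point of the same fiber. Indeed, for $s\in H$ and $x\in X$ one has $\pi_{max}(s\cdot x)=s\cdot\pi_{max}(x)=\pi_{max}(x)$, so $s\cdot x$ lies in the fiber of $x$, whence $(x,s\cdot x)\in R_{max}=Q$ by Theorem \ref{QequalR}. On the other hand, by the remark following Theorem \ref{QequalR}, in a minimal system $x$ can be proximal to $s\cdot x$ only if $s\cdot x=x$; since the action on $X$ is free, this forces $s=0$. Hence for every $s\in H\setminus\{0\}$ the pair $(x,s\cdot x)$ belongs to $Q\setminus P$. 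Now I would exploit that $\R^N$ is torsion-free: the multiples $nt_0$, $n\in\Z$, all lie in $H$, and $(n-m)t_0\neq 0$ for $n\neq m$. Fixing $x\in X$ and any $k\in\N$, the points $x,t_0\cdot x,\dots,(k-1)t_0\cdot x$ all sit in $\pi_{max}^{-1}(\pi_{max}(x))$; since $P$ is invariant under the $G$-action, $(mt_0\cdot x,nt_0\cdot x)\in P$ if and only if $(x,(n-m)t_0\cdot x)\in P$, which fails for $m\neq n$. These $k$ points are therefore pairwise non-proximal, so $cr\geq k$, and letting $k\to\infty$ gives $cr=\infty$, the desired contradiction.

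I expect no serious obstacle here, as the proof is short; the only points needing care are the identification of the stabilizer of a group rotation with $\ker j$ and the $G$-invariance of $P$, both of which are routine. The conceptual heart is the combination of the minimality-plus-freeness dichotomy (a point is proximal to a translate of itself only when the translate is trivial) with the torsion-freeness of $\R^N$, which promotes a single nontrivial stabilizer element into an infinite $P$-antichain within one fiber.
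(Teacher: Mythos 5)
Your proof is correct and takes essentially the same route as the paper: both arguments rest on the remark after Theorem \ref{QequalR} (in a free minimal system a point is never proximal to a nontrivial translate of itself, so translates by fiber-stabilizing elements form pairwise non-proximal families in a fiber) combined with the fact that $\R^N$ has no nontrivial finite subgroups. The paper phrases this directly --- the stabilizer $H$ of a fiber injects into such a family, so $|H|\le cr<\infty$ and hence $H$ is trivial --- while you run the identical argument contrapositively, inflating a single nonzero $t_0\in H$ into the arbitrarily large antichains $\{n t_0\cdot x\}$ via torsion-freeness.
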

\begin{proof} We denote the orbit $\{ t\cdot x: t\in \R^N\}$ of $x$ by $Orb_{\R^N} x$.
Let $H$ be the stabilizer of a fiber $\fb{\xi }$ under the $\R^N$-action. Since the action is free $t x \neq s x$ for $s\neq t$ in $G$ and any $x$ in the dynamical system.  Thus, the cardinality of $H$ is bounded by   the cardinality of $Orb_{\R^N}x\cap \fb{\xi }$ for any  $x\in\fb{\xi }$. Now in the remark after Theorem \ref{QequalR}   we have already discussed  that
$x$ cannot be proximal to a translate of it, as the action is free.
Hence all the points in $Orb_{\R^N}x\cap \fb{\xi }$ are mutually non-proximal. This   means that the cardinality of $Orb_{\R^N}x\cap \fb{\xi }$ is bounded by $cr$. Thus $H$ is finite. Since $\R^N$ has no finite subgroups except the  trivial group, the $\R^N$ action on the maximal equicontinuous  factor must be free.
\end{proof}

Note that if $cr$ is finite, then  a point $\xi \in \Xmax$  is fiber distal if and only if its fiber contains exactly $cr$ elements.

The following yields criteria for when the coincidence rank equals the minimal rank. We need  metrizability of $X$ as we make use of the result of Ellis \cite{Ellis} mentioned above.

\begin{prop} Let $(X,G)$ be a  metrizable minimal system and
suppose $cr<\infty$. The following are equivalent:
\begin{itemize}
\item[(i)] $cr=mr$,
\item[(ii)] $\Xmax^{distal}\neq \emptyset$,
\item[(iii)] the system is point distal ($X^{distal}\neq \emptyset$),
\item[(iv)] $\Xmax^{distal} = X^{open}_{max}$.
\end{itemize}
\end{prop}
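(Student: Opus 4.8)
The plan is to route every equivalence through condition (ii), proving $(i)\Leftrightarrow(ii)$, $(ii)\Leftrightarrow(iii)$ and $(ii)\Leftrightarrow(iv)$ separately. Throughout write $N(\xi):=\sharp\fb{\xi}$ for the fibre cardinality. I will lean constantly on two facts recorded just before the statement: since $cr<\infty$, a point $\xi$ is fibre-distal exactly when $N(\xi)=cr$; and every fibre contains $cr$ mutually non-proximal points, so $N(\xi)\ge cr$ for all $\xi$, i.e. $\Xmax^{distal}=\{\xi:N(\xi)=cr\}$ and all cardinalities lie in $\{cr,cr+1,\dots\}\cup\{\infty\}$. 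The equivalence $(i)\Leftrightarrow(ii)$ follows immediately: because the cardinalities are integers $\ge cr$ (or $\infty$), the infimum $mr$ is attained when finite, so $cr=mr$ holds iff some fibre has exactly $cr$ points, which is the same as the existence of a fibre-distal point.

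For $(ii)\Rightarrow(iii)$ one only needs that a fibre-distal $\xi$ is nonempty and satisfies $\fb{\xi}\subset X^{distal}$ (the inclusion $\fb{\Xmax^{distal}}\subset X^{distal}$ noted in the text), whence $X^{distal}\neq\emptyset$. The reverse $(iii)\Rightarrow(ii)$ is where finiteness of $cr$ first does real work: if $X^{distal}\neq\emptyset$ then, $X$ being metrizable, the theorem of Ellis \cite{Ellis} makes $X^{distal}$ residual, and Lemma \ref{Veech-lemma-two} produces a fibre $\fb{\xi}$ with a dense set of distal points. Distinct distal points are never proximal, so this dense set has at most $cr$ elements; being finite in the compact Hausdorff space $\fb{\xi}$ it is closed, hence equals $\fb{\xi}$. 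Thus $\fb{\xi}$ has at most $cr$ points, all distal, and as $N(\xi)\ge cr$ it has exactly $cr$, so $\xi\in\Xmax^{distal}$.

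The core is $(ii)\Leftrightarrow(iv)$, and the engine is the constancy of $N$ on the residual set $X^{open}_{max}$. The local input is a lower–semicontinuity statement: if $\xi$ is fibre-open and $x_1,\dots,x_k\in\fb{\xi}$ are distinct, pick disjoint open $U_i\ni x_i$; since each $x_i$ is an open point for $\pmax$, each $\pmax(U_i)$ is a neighbourhood of $\xi$, so on $\bigcap_i\pmax(U_i)$ every fibre meets each $U_i$ and hence has at least $k$ points. Therefore $\liminf_{\xi'\to\xi}N(\xi')\ge N(\xi)$ at every fibre-open $\xi$. Given $\xi_1,\xi_2\in X^{open}_{max}$, minimality of $\Xmax$ yields $t_n$ with $t_n\cdot\xi_1\to\xi_2$; since $N(t\cdot\xi)=N(\xi)$ (translation induces a bijection of fibres), lower semicontinuity at $\xi_2$ gives $N(\xi_1)\ge N(\xi_2)$, and by symmetry $N$ equals a constant $c\ge cr$ on $X^{open}_{max}$.

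It remains to match $c$ against $cr$. Using $\Xmax^{distal}\subset X^{open}_{max}$ (Lemma \ref{Veech-lemma-one}) and $\Xmax^{distal}=\{\xi:N(\xi)=cr\}$, condition (ii) says precisely that $X^{open}_{max}$ contains a point with $N=cr$, forcing $c=cr$; and when $c=cr$ one gets $X^{open}_{max}\subset\{\xi:N(\xi)=cr\}=\Xmax^{distal}\subset X^{open}_{max}$, i.e. (iv). Conversely (iv) makes $\Xmax^{distal}=X^{open}_{max}$ nonempty (the latter being residual), giving (ii). The main obstacle is exactly this constancy of $N$ on $X^{open}_{max}$: lower semicontinuity by itself only renders fibre-open points \emph{locally} minimal for $N$, and it is the combination with minimality of the base (to carry the minimum along a dense orbit) together with the genuinely distal fibre furnished by (ii) that pins the local minimum to the global value $cr$.
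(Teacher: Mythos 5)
Your proof is correct, and while the equivalences $(i)\Leftrightarrow(ii)\Leftrightarrow(iii)$ follow essentially the paper's own route (fibre-distal $\Leftrightarrow$ fibre of cardinality exactly $cr$, plus the Ellis--Veech argument via Lemma \ref{Veech-lemma-two} producing a fibre with a dense, hence finite, hence closed, set of distal points), your treatment of $(ii)\Rightarrow(iv)$ is genuinely different. The paper works locally and metrically: it takes a sequence $(\xi_n)_n\subset \Xmax^{distal}$ converging to a fibre-open $\xi$, uses the sequence-lifting characterization of open points, and invokes the uniform separation $\delta_0>0$ from Lemma \ref{uniform-distance} to bound the number of distinct limits of lifts by $cr$, concluding $\sharp\fb{\xi}\leq cr$ directly. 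You instead prove a purely topological lower-semicontinuity statement for the fibre cardinality $N$ at fibre-open points (disjoint neighbourhoods of $k$ fibre points push forward to a neighbourhood of $\xi$ on which $N\geq k$), upgrade it via minimality of $\Xmax$ and translation-invariance of $N$ to \emph{constancy} of $N$ on $X^{open}_{max}$, and then pin the constant to $cr$ using $(ii)$ together with Lemma \ref{Veech-lemma-one}. Your route avoids Lemma \ref{uniform-distance} altogether in this step (you use finiteness of $cr$ only through the identity $\Xmax^{distal}=\{\xi: N(\xi)=cr\}$ and the bound $N\geq cr$), and it yields as a by-product the stronger fact that the fibre cardinality is constant on the residual set of fibre-open points, which the paper's local argument does not record. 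The paper's approach, in exchange, is more self-contained at the single point $\xi$ and makes visible exactly where the metric separation of distal fibres enters. Both arguments use hypothesis $(ii)$ in an essential and analogous way -- yours to anchor the constant, the paper's to supply the approximating distal fibres.
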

\begin{proof}
The equivalence between (i) and  (ii) is rather direct, given that each
fiber must have at least $cr$ elements which are not mutually
proximal.  Clearly (ii)  implies (iii).  We now show that  (iii) implies (i):   If  the system is point distal then, by  the already mentioned result of Ellis \cite{Ellis}, $X^{distal}$ is
residual and hence, by Veech's Lemma \ref{Veech-lemma-two}, there exists $\xi \in \Xmax$ such that
$\pi_{max}^{-1}(\xi )$ contains a dense set of distal points. But if
$cr$ is finite a fiber can only have finitely many distal points
(namely at most $cr$). It follows that the fiber $\pi_{max}^{-1}(\xi )$ is
finite and  contains only distal points. Thus $mr\leq cr$. As the inequality $cr \leq mr$ is clear we obtain $(i)$.

Finally, we discuss the equivalence between (ii) and  (iv).  Here, (ii)  follows from (iv)  by Veech's Lemma \ref{Veech-lemma-one}.  It remains therefore to show that if $\Xmax^{distal}\neq\emptyset$, then a fiber-open point is fiber-distal. Note that $\pi_{max}$ being open at $x$ is equivalent to the condition that whenever $(\xi _n)_n$ is a sequence in $\Xmax$ converging to $\xi  := \pi_{max}(x)$, we can lift that sequence to a sequence $(x_n)_n\subset X$ which converges to $x$.

So let  $\Xmax^{distal}$ be non-empty  and therefore  a dense subset of $\Xmax$ (it is invariant under the action). Let $\xi \in X^{open}_{max}$
and $(\xi _n)_n$ a sequence in $\Xmax^{distal}$ converging to $\xi $. By Lemma \ref{uniform-distance}  the points in a fiber of a fiber distal point have mutual distance at least $\delta_0>0$. Thus,  the set of limits of sequences $(x_n)_n\subset X$ which are convergent and satisfy $\pi_{max}(x_n) = \xi _n$ can have at most $cr$ points. So by the above criterion for open points $\pi_{max}^{-1}(\xi )$ cannot contain more than $cr$ points.
Thus $\xi $ is fiber distal.
\end{proof}
\begin{theorem}\label{Pclosed} \cite{BargeKellendonk12}.
Consider a metrizable minimal system  $(X,G)$ with compactly generated $G$  and connected $\Xmax$. Assume that coincidence rank $cr$ is finite.  Then
$P$ is closed if and only if  $cr=1$.
\end{theorem}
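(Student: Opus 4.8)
The easy direction is immediate. If $cr=1$ then $P=Q$ by Theorem~\ref{characterization-PequalQ}, and $Q=\bigcap_{\epsilon>0}\overline{P_\epsilon}$ is closed as an intersection of closed sets; hence $P$ is closed. For the converse I would prove the contrapositive in the sharper form \emph{$P$ closed $\Rightarrow cr=1$}, and I assume from now on that $P$ is closed.

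The first move is to pass to the quotient by $P$. Since $P$ is closed it coincides with the syndetic proximality relation and is in particular a closed, $G$-invariant \emph{equivalence} relation, so $Y:=X/P$ is a compact metrizable minimal system and the canonical projections assemble into factor maps $X\xrightarrow{\,p\,}Y\xrightarrow{\,q\,}\Xmax$ with $q\circ p=\pmax$; the second map exists because $P\subset Q=R_{max}$ by Theorem~\ref{QequalR}. Inside a fibre $\pmax^{-1}(\xi)$ the number of $P$-classes equals the maximal number of pairwise non-proximal points, which is $cr$ and independent of $\xi$, so \emph{every} fibre of $q$ has exactly $cr$ points. The whole strategy is now to show that $(Y,G)$ is equicontinuous: granting this, $\mathcal{E}_{top}(Y)$ is squeezed between $\mathcal{E}_{top}(\Xmax)$ and $\mathcal{E}_{top}(X)$, and these two coincide (both being the continuous-eigenvalue group of $X$), so $\mathcal{E}_{top}(Y)=\mathcal{E}_{top}(\Xmax)$; by Theorem~\ref{Description-equicontinuous-systems-over-G}(b) the equicontinuous minimal systems $Y$ and $\Xmax$ are then conjugate, the $cr$-to-one map $q$ is a bijection, and $cr=1$.

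To prove $Y$ equicontinuous I would first show that $q$ is a covering map, and this is where finiteness of $cr$ is used: by Lemma~\ref{uniform-distance} there is $\delta_0>0$ with $d(x_1,x_2)\ge\delta_0$ whenever $(x_1,x_2)\in Q\setminus P$. Two distinct points of a common $q$-fibre lift to distinct $P$-classes inside one $\pmax$-fibre, hence to pairs in $Q\setminus P$, so the $cr$ points of each $q$-fibre are uniformly separated in $Y$. Uniform separation forces $q$ to be injective on small balls, and a standard compactness argument (preimages of $\xi_n\to\xi$ cannot escape the finite fibre over $\xi$) upgrades local injectivity to openness; as the fibre cardinality is the constant $cr$, the map $q$ is a covering. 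Now connectedness of $\Xmax$ enters: $\Xmax$ is a rotation on a compact \emph{connected} abelian group $T$ (Theorem~\ref{cor-equi}), so each connected component $Y_0$ of $Y$ is a finite connected cover of $T$ and hence carries a compatible structure of compact connected abelian group with $q|_{Y_0}$ a covering homomorphism, the subgroup $G_0\le G$ fixing $Y_0$ acting minimally on it. The decisive algebraic fact is that every homeomorphism of $Y_0$ lying over a translation of $T$ is itself a translation of $Y_0$ (the lifts of translation-by-$s$ are exactly the translations by the elements of $q^{-1}(s)$); thus $(Y_0,G_0)$ is a rotation on a compact group, therefore equicontinuous, and $(Y,G)$, being assembled from $(Y_0,G_0)$ over the finite-index subgroup $G_0$, is equicontinuous as well. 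This contradicts the previous paragraph unless $cr=1$.

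The main obstacle is this last step. One must justify that a finite connected cover of a compact connected abelian group is again such a group with the covering a homomorphism, and that the $G$-action lifts equivariantly. Both hypotheses of the theorem are spent precisely here: connectedness of $\Xmax$ is what makes the cover a group (over a disconnected base a lift of a translation need not be a translation), while compact generation of $G$ lets one invoke the structure theorem $G\cong\RM^a\times\ZM^b\times(\text{compact})$ to control how $G$ permutes the finitely many components of $Y$ and to produce the lift. For $G=\RM^N$ the argument streamlines: $\RM^N$ is connected, so it preserves each component of $Y$ and $Y$ is connected by minimality, and simple connectivity of $\RM^N$ makes the lift of $j\colon\RM^N\to T$ automatic.
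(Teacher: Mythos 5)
Your overall route is the paper's own: quotient by the closed relation $P$, observe that $\pmax$ factors through a $cr$-to-one map $q\colon X/P\to\Xmax$, show $q$ is a covering, deduce equicontinuity of $X/P$ (this is where connectedness of $\Xmax$ and compact generation of $G$ are spent), and conclude by maximality. The easy direction and the endgame are fine. But there is a genuine gap where you establish the covering property: you transport the separation constant $\delta_0$ of Lemma~\ref{uniform-distance} from $X$ to $Y=X/P$. That constant controls the metric of $X$, and it does \emph{not} yield uniform separation of $q$-fibre points in any metric compatible with the quotient topology on $Y$: a $P$-class can have diameter $\geq\delta_0$ (proximal points need not be close), and two distinct classes whose lifts are pointwise $\delta_0$-separated can nevertheless converge in $Y$ to one and the same class by merging into a single large proximal cell. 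Test your own argument: take $y_n\neq y_n'$ in a common $q$-fibre with $d_Y(y_n,y_n')\to 0$ and lift to convergent sequences $x_n\to x$, $x_n'\to x'$. You obtain $d_X(x,x')\geq\delta_0$ together with $p(x)=p(x')$, i.e.\ $(x,x')\in P$ --- no contradiction, because $P$ is closed but its \emph{complement} is not, so the limit of the pairs $(x_n,x_n')\in Q\setminus P$ may perfectly well land in $P$. Hence ``uniformly separated in $Y$'', the hinge on which your local injectivity and covering argument turn, is unproved as stated.

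The repair requires two additional ideas, after which your openness/compactness upgrade goes through verbatim. First, distinct points of a common $q$-fibre must be shown non-proximal \emph{in $Y$}; this follows from the standard lifting of proximality through homomorphisms of minimal flows (see \cite{Auslander88}): if $(y,y')$ is proximal in $Y$ and $p(x)=y$, then some $x'\in p^{-1}(y')$ is proximal to $x$ in $X$ --- impossible for points taken from two distinct $P$-classes (in fact this shows $Y$ is distal). Second, one must identify the fibres of $q$ with the regional proximality classes of $Y$: your eigenvalue squeeze already gives $\mathcal{E}_{top}(Y,G)=\mathcal{E}_{top}(X,G)$ \emph{without} assuming equicontinuity of $Y$, whence by Theorem~\ref{QequalR} the fibre relation of $q$ is exactly the regional proximal relation of $Y$. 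Now Lemma~\ref{uniform-distance} applied to $(Y,G)$ itself (whose coincidence rank equals $cr<\infty$, since every fibre consists of exactly $cr$ pairwise non-proximal points) produces the uniform separation in the correct metric. Finally, the step you yourself flag as the main obstacle --- that a finite connected cover of $\Xmax$ is again a compact abelian group with the covering a homomorphism, so that every lift of a translation is a translation --- is indeed not delivered by classical $\pi_1$-based covering theory, because $\Xmax$ need not be locally path-connected (solenoidal factors occur); it needs an argument special to compact connected abelian groups, or the different route taken in \cite{BargeKellendonk12}, and as written it remains an assertion rather than a proof.
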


\textbf{Remark.}
The hypothesis that $cr$ is finite is crucial. Indeed, it may happen that $P$ is trivial, and hence closed, whereas $Q$ is not trivial, that is, the system is distal without being equicontinuous. By Theorem \ref{characterization-PequalQ},  such systems do not have $cr =1$. In fact, by the preceding theorem,  such systems
must  always have infinite coincidence rank.

 \medskip

\begin{proof}  (Of Theorem \ref{Pclosed}.)
If $P$ is closed then $X/P$ is a compact space which is metrizable. The maximal equicontinuous factor map $\pmax$ factors therefore as $X\to X/P\stackrel{\pi}\to X/Q=\Xmax$ and the factor map $\pi$ is a $cr$ to $1$ map. Now one needs to show that $\pi$ is a local homeomorphism which in turn is used to show that the system $(X/P,G)$ is equicontinuous.
(This latter result needs that $G$ is compactly generated and $\Xmax$ connected.)  Hence, by maximality of the equicontinuous factor, $X/P=\Xmax$.
\end{proof}
\noindent
\textbf{Remarks.} (a)  For  $G = \R^N$ the assumptions on $G$ and $\Xmax$ are satisfied. Indeed, $\R^N$ is compactly generated. Moreover, $\R^N$ is connected and, due to minimality, $X$, and hence $\Xmax$, must then be connected as well.

(b) By means of suspension of a $\Z^N$-action to an $\R^N$-action (see, for example, \cite{EinWar:11}) the result applies with $G=\Z^N$ as well.

\medskip

\begin{theorem} \label{characterisation-pure-point} \cite{BargeKellendonk12}.
Consider a minimal system  $(X,G)$ with finite coincidence rank $cr$. Suppose
that  $\Xmax^{distal}$ has full Haar measure.
Let $\mu$ be an ergodic probability measure on $X$.
The following are equivalent:
\begin{itemize}
\item[(i)]$cr=1$.
\item[(ii)] The system is an almost $1$-to-$1$ extension of its maximal equicontinous factor.
\item[(iii)] The continuous eigenfunctions generate $L^2(X,\mu)$.
\end{itemize}
If one of these conditions holds then the system is in fact uniquely ergodic.
\end{theorem}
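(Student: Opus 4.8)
The plan is to establish the three equivalences together with unique ergodicity by first disposing of everything except the implication (iii)$\Rightarrow$(i), and then concentrating all the real difficulty there. Throughout write $\nu$ for the Haar measure on the compact group $\Xmax$. Since $(\Xmax,G)$ is uniquely ergodic (Theorem \ref{conjugate}) and $(\pi_{max})_\ast\mu$ is an invariant probability measure on $\Xmax$, one has $(\pi_{max})_\ast\mu=\nu$, so that $\pi_{max}^\ast\colon L^2(\Xmax,\nu)\to L^2(X,\mu)$ is an isometric embedding. By the quotient description of the maximal equicontinuous factor, every continuous eigenfunction of $(X,G)$ is the pullback under $\pi_{max}$ of a character of $\Xmax$, and these characters form an orthonormal basis of $L^2(\Xmax,\nu)$ because $(\Xmax,G)$ has pure point spectrum (Theorem \ref{conjugate}). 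Hence the closed span of the continuous eigenfunctions in $L^2(X,\mu)$ is exactly the image of $\pi_{max}^\ast$, and I read condition (iii) as saying that $\pi_{max}$ is a measure-theoretic isomorphism, i.e. that in a disintegration $\mu=\int_{\Xmax}\mu_\xi\,d\nu(\xi)$ the conditional measures $\mu_\xi$ are point masses for $\nu$-almost every $\xi$.

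Next I would dispatch the easy equivalences. Because $cr<\infty$, the observation following Lemma \ref{lem-cr-free} gives that $\xi$ is fiber-distal precisely when $\fb{\xi}$ has exactly $cr$ points; as $\Xmax^{distal}$ has full $\nu$-measure, the generic fibre has exactly $cr$ points. If (i) holds the generic fibre is a singleton, which is (ii); conversely a singleton fibre consists of a distal point (any proximal partner lies in the same fibre), hence is fiber-distal, so a generic singleton fibre forces $cr=1$, giving (i). For (ii)$\Rightarrow$(iii) I note that when $\fb{\xi}$ is a singleton its conditional measure $\mu_\xi$, being supported there, is automatically a point mass, which by the first paragraph is (iii). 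Finally, once (ii) is known unique ergodicity is immediate: any invariant probability measure $\mu'$ also satisfies $(\pi_{max})_\ast\mu'=\nu$, and for $\nu$-almost every $\xi$ its conditional measure is forced to be the point mass at the unique point of $\fb{\xi}$; this pins down $\mu'$ independently of $\mu'$, so $\mu'=\mu$.

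There remains (iii)$\Rightarrow$(i), the heart of the matter. By the first paragraph (iii) furnishes a measurable, essentially $G$-equivariant section $s\colon\Xmax\to X$ of $\pi_{max}$ with $\mu=s_\ast\nu$. My strategy is to \emph{upgrade} $s$ to a continuous section: if $s$ may be taken continuous, then $s(\Xmax)$ is a compact $G$-invariant subset of $X$, hence all of $X$ by minimality, so $\pi_{max}\circ s=\id$ forces $\pi_{max}$ to be injective and $cr=1$. The structural input that should make this upgrade possible is the uniform separation of fibres from Lemma \ref{uniform-distance}: since $cr<\infty$, any two distinct points of a fiber-distal fibre lie in $Q\setminus P$ and are therefore at distance at least a fixed $\delta_0>0$. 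Combined with Veech's openness of distal points (Lemma \ref{Veech-lemma-one}), this means that over the full-measure set $\Xmax^{distal}$ the map $\pi_{max}$ is a genuine $cr$-sheeted covering whose $\delta_0$-separated sheets are locally given by continuous branches; the extension is, in effect, isometric.

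The hard part will be exactly this last regularity step, the passage from a measurable equivariant section to a continuous one. Measure-theoretically a single ``sheet'' is dense in $X$ (its closure is $\supp\mu=X$ by minimality), so the sheets cannot be separated by any naive closure argument, and the rigidity must be extracted from the isometric nature of the extension. Concretely I would encode the $cr$-fold covering over $\Xmax^{distal}$ by its monodromy cocycle into the permutations of a fibre and show, using the $\delta_0$-separation, that a measurable solution of the associated coboundary equation is automatically continuous; a non-trivial monodromy — which is what $cr\ge 2$ together with minimality forces — is then incompatible with the existence of such a section. Equivalently, transitive monodromy would make the only invariant measure lying over $\nu$ the one that is uniform on the $cr$-point fibres, which is not a point mass when $cr\ge 2$, contradicting (iii). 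It is in welding together the uniform-separation hypothesis and topological minimality to obtain this continuity that I expect the genuine work of the proof to lie.
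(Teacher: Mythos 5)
Your handling of the easy implications is correct and essentially the paper's: identifying the closed span of the continuous eigenfunctions with the image of $\pi_{max}^\ast$, reformulating (iii) as the conditional measures $\mu_\xi$ being point masses, counting fiber-distal fibers via the remark after Lemma~\ref{lem-cr-free} to get (i)$\Leftrightarrow$(ii), and deriving unique ergodicity from (ii) by disintegration all match the intended arguments. The genuine gap is your strategy for (iii)$\Rightarrow$(i). You propose to upgrade the measurable equivariant section $s$ to a \emph{continuous} one; but as you yourself compute, a continuous section would give $s(\Xmax)=X$ by minimality and compactness, whence $\pi_{max}$ is a homeomorphism. That conclusion is not $cr=1$ but the far stronger statement $Mr=1$, i.e.\ that $(X,G)$ \emph{is} its maximal equicontinuous factor. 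This is false for the paradigm systems satisfying (i)--(iii): the hull of any non-periodic regular complete Meyer set (Fibonacci, Penrose) satisfies (i)--(iii) (Theorems~\ref{Meyer-equal-to-mef-almost-everywhere} and~\ref{thm-Meyer-pp}) yet is not conjugate to $\Xmax$ -- by Theorem~\ref{thm-Olimb} it has distinct proximal pairs, which lie in a common fiber -- so no continuous section can exist. The regularity step you defer as ``the hard part'' is therefore not merely hard: it is provably impossible under the hypotheses, and any mechanism that produced it (in particular your claim that measurable solutions of the coboundary equation are automatically continuous) would prove too much. Note also that measurable selections among locally $\delta_0$-separated sheets are \emph{not} automatically continuous: a measurable choice function into a finite discrete set of sheets can switch sheets on measurable pieces of a neighborhood, so the $\delta_0$-separation alone forces nothing.

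The fallback monodromy sketch does not repair this. The set $\Xmax^{distal}$ is only a Borel set of full Haar measure, not open, and $\Xmax$ is a compact group that need not be locally path-connected (it can be a solenoid), so there is no path lifting and no well-defined monodromy cocycle for a ``covering'' over $\Xmax^{distal}$; the assertions that $cr\ge 2$ plus minimality force transitive monodromy, and that transitive monodromy forces the fiber measures to be uniform, are precisely the unproven content of the theorem. The paper (following \cite{BargeKellendonk12}) takes a different and purely local route: using upper semicontinuity of the fiber map $\xi\mapsto\fb{\xi}$ (immediate from the closed graph and compactness), the uniform separation of Lemma~\ref{uniform-distance}, and the openness of distal points (Lemma~\ref{Veech-lemma-one}), one shows that $\pi_{max}$ is \emph{almost everywhere a covering map} -- near any fiber-distal point there is a neighborhood $U$ with $\fb{U}$ contained in $cr$ disjoint sets of diameter less than $\delta_0$, each meeting every distal fiber over $U$ in exactly one point -- and then one argues directly that for $cr>1$ this local sheet structure is incompatible with $\pi_{max}^\ast$ being onto $L^2(X,\mu)$. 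No global continuous section is produced or needed, which is exactly why the paper's argument avoids the obstruction that defeats yours.
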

\begin{proof}
The equivalence between (i) and (ii) is rather obvious. Indeed, (ii) $\Longrightarrow$ (i) is clear (compare Corollary \ref{almost-automorphic-implies-cr-equal-one}). The reverse (i)$\Longrightarrow$ (ii)  follows directly as, by its very definition,  the set $\Xmax^{distal}$ consists exactly of those $\xi$ with all points in $\pi_{max}^{-1} (\xi)$ distal, i.e., those $\xi$ with exactly $cr =1$ points in $\pi_{max}^{-1} (\xi)$.
Condition (ii) implies that $\pmax$  yields an isomorphism between $L^2(X,\mu)$ and $L^2(\Xmax,\eta)$ which implies (iii) as $C(\Xmax)$ spans $L^2(\Xmax,\eta)$. To show  (iii)$\Longrightarrow$ (i), i.e., that $\pmax$ can't yield an isomorphism on the level of $L^2$-spaces in case that $cr>1$, we somewhat surprisingly need topology, namely it follows once one has shown that $\pmax$ is almost everywhere a covering map.
\end{proof}

\subsection{Proximality for Delone sets} \label{Proximality-Delone}
We now study particular aspects of proximality for Delone sets in Euclidean space $\R^N$.
The first result concerns the restrictions imposed by
finite coincidence rank 
on a repetitive Delone set.

\begin{theorem}\label{thm-cr-finite}
A repetitive FLC Delone set whose dynamical system has finite coincidence rank is topologically conjugate to a Meyer dynamical system. Moreover, if the Delone set is non-periodic (no periods) then the topological eigenvalues from a dense subgroup of $\hat\R^N$.
\end{theorem}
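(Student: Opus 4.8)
The plan is to analyse the stabilizer of the $G$-action on the maximal equicontinuous factor and to show that finiteness of $cr$ forces this stabilizer to be discrete; the two assertions of the theorem then follow from the Corollary to Theorem~\ref{Characterization-conjugate-to-Meyer} and from Lemma~\ref{lem-free}, respectively. Recall that repetitivity makes $(\Omega_\Del,G)$ minimal, that the periods $P(\Del')$ of any $\Del'\in\Omega_\Del$ form a discrete subgroup of $\R^N$, and that $\Xmax$ is a rotation on a compact group $\TT$ induced by a homomorphism $j\colon\R^N\to\TT$ with dense range, so that the stabilizer of every point of $\Xmax$ equals $\ker j$.

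First I would establish a version of Lemma~\ref{lem-cr-free} valid without freeness. Fix $x\in\Omega_\Del$, put $\xi=\pmax(x)$, and let $P_x$ be the (discrete) group of periods of $x$. Since $\pmax$ is a $G$-map, $\pmax(t\cdot x)=j(t)+\xi$, so $t\cdot x\in\fb{\xi}$ holds exactly when $t\in\ker j$; moreover the stabilizer of $x$ inside $\ker j$ is $P_x$. Hence the orbit translates lying in the fiber are in bijection with $\ker j/P_x$, that is
$$Orb_{\R^N}x\cap\fb{\xi}\;\cong\;\ker j/P_x.$$
By the remark following Theorem~\ref{QequalR}, in a minimal system $x$ can be proximal to $t\cdot x$ only if $t\cdot x=x$, i.e.\ $t\in P_x$; therefore the distinct points of $Orb_{\R^N}x\cap\fb{\xi}$ are pairwise non-proximal. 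Since a fiber contains at most $cr$ pairwise non-proximal points, we obtain $\sharp(\ker j/P_x)\le cr<\infty$.

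Thus $\ker j$ is a subgroup of $\R^N$ containing the discrete group $P_x$ with index at most $cr$. As the stabilizer of a point of the compact Hausdorff space $\Xmax$ under a continuous action, $\ker j$ is closed. Being closed and containing the countable discrete group $P_x$ with finite index, $\ker j$ is countable; but a countable closed subgroup of $\R^N$ is discrete, so $\ker j$ is discrete. By the Corollary to Theorem~\ref{Characterization-conjugate-to-Meyer} (discreteness of the stabilizer of the $G$-action on the maximal equicontinuous factor is equivalent to conjugacy with a Meyer system), $(\Omega_\Del,G)$ is topologically conjugate to a Meyer dynamical system, which proves the first assertion. For the second, suppose in addition that $\Del$ has no periods, so $P_x=\{0\}$ and the $\R^N$-action on $\Omega_\Del$ is free. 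Then $\ker j\cong\ker j/P_x$ is finite; since $\R^N$ has no nontrivial finite subgroups, $\ker j=\{0\}$ and the action on $\Xmax$ is free (this is precisely the conclusion of Lemma~\ref{lem-cr-free}). By the final statement of Lemma~\ref{lem-free}, freeness is equivalent to $\mathcal{E}_{top}(\Omega_\Del,G)$ being dense in $\hat\R^N$; as $\mathcal{E}_{top}$ is always a subgroup, the topological eigenvalues form a dense subgroup of $\hat\R^N$, as claimed.

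The main obstacle is the step establishing discreteness of $\ker j$ in the presence of periods: one must upgrade the finite-index bound over $P_x$ to genuine discreteness, which relies both on $\ker j$ being closed and on the fact that no continuum embeds into the finite quotient $\ker j/P_x$ (equivalently, that a countable closed subgroup of $\R^N$ is discrete). Once this is settled, both conclusions reduce to direct applications of the cited characterizations.
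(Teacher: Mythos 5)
Your proof is correct, and it takes a genuinely more uniform route than the paper's. The paper's own proof dismisses the completely periodic case as trivial, explicitly leaves the case of fewer than $N$ independent periods to the reader, and treats only the non-periodic case, where freeness of the $\R^N$-action on the hull lets it invoke Lemma \ref{lem-cr-free} verbatim and then conclude via Lemma \ref{lem-free} and Theorem \ref{Characterization-conjugate-to-Meyer}. You instead generalize the counting argument inside Lemma \ref{lem-cr-free} to non-free actions: the orbit points of $x$ lying in its own fiber correspond bijectively to $\ker j/P_x$, they are pairwise non-proximal by the remark following Theorem \ref{QequalR} (which indeed uses only minimality, not freeness, together with translation-invariance of proximality for the abelian action), and hence number at most $cr$; the closedness of $\ker j$ as a point stabilizer and the Baire-type fact that a countable closed subgroup of $\R^N$ is discrete then upgrade the finite-index bound $[\ker j:P_x]\le cr$ to genuine discreteness of the stabilizer, which is exactly item (iii) of the corollary to Theorem \ref{Characterization-conjugate-to-Meyer}. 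What this buys is a complete proof of the first assertion covering all period ranks at once --- in effect filling in the step the paper leaves to the reader --- at the modest cost of the two extra structural facts about $\ker j$; what the paper's version buys is brevity, since in the non-periodic case Lemma \ref{lem-cr-free} applies as stated (and your count specializes to it: $P_x=\{0\}$ makes $\ker j$ a finite, hence trivial, subgroup of $\R^N$). From that point on, the density assertion is obtained identically in both proofs via the final statement of Lemma \ref{lem-free}; your passing appeal to freeness of the hull action for non-periodic $\Del$ is justified, as in the paper, by noting that the fixed-point set of any $t$ is closed and invariant, hence empty or everything by minimality.
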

\begin{proof} The completely periodic case is trivial. We treat the non-periodic case, leaving the case of fewer than $N$ periods to the reader. In this case the $\R^N$-action on the hull is free and hence, by
 Lemma~\ref{lem-cr-free}, so also is the $\R^N$-action on the maximal equicontinuous factor. Lemma~\ref{lem-free} implies now that the eigenvalues are dense and the result follows from Theorem~\ref{Characterization-conjugate-to-Meyer}.
\end{proof}

\subsubsection{Examples and open questions for higher coincidence rank}
In Section~\ref{Hierarchy} we have presented a hierarchy of properties for Delone sets which is based on how large the set of points $\xi\in\Omega_{max}$ is, which have unique pre-image under $\pmax$.
This characterisation concerns the case of minimal rank $mr=1$ (and hence also $cr=1$). We now provide examples of Delone sets whose dynamical system has higher co-incidence rank.

\paragraph{$1<cr<+\infty$} Model sets and periodic sets are now excluded and we know from Thm.~\ref{thm-cr-finite} that the group of eigenvalues is dense. We will see furtherdown that any primitive Meyer substitution tiling has finite maximal rank $Mr$ and hence finite coincidence rank.
Furthermore, such a substitution tiling has $cr = 1$ if and only if its dynamical spectrum is purely discrete (see Theorem~\ref{thm-Meyer-pp}).  
So examples falling into the category $1<cr<+\infty$ are primitive Meyer substitution tilings for which do not have pure point dynamical spectrum. 
The most famous such example comes from the Thue-Morse substitution $0 \mapsto 01$, $1\mapsto 10$. If we suspend this substitution to obtain a substitution tiling in which the length of the $0$ tile and the $1$ tile agree (using a decoration to distinguish them) we obtain a Meyer set which has coincidence rank $cr=2$. 

\paragraph{$cr=+\infty$} Clearly non-trivial Delone systems which are topologically mixing must have infinite coincidence rank, geometrical non-Pisot substitution tilings are of that kind, see below. However there are also examples of Meyer sets which have infinite coincidence rank. For instance the scrambled Fibonacci substitution provides such examples \cite{KellendonkSadun12}. If we take the tiling version in which the tiles have length $\frac{1+\sqrt{5}}2$ and $1$ then the system is topologically weakly mixing, there are no topological eigenvalues besides the trivial one. If we give the tiles both length $1$ then the tiling is Meyer and the topological   eigenvalues form a subgroup of rank $1$. But a subgroup of rank $1$ cannot be dense and thus the coincidence rank must be infinite.

\paragraph{Open questions} Our findings above suggest the following questions:
\begin{enumerate}
\item Does there exists a non-automorphic Delone dynamical system in which the equicontinuous structure relation coincides with proximality? This means that $cr=1<mr$ and thus the system does not have any distal point.
\item More generally, do there exist Delone dynamical systems with finite coincidence rank 
which do not have distal points?  
\end{enumerate}

\subsubsection{Strong proximality and statistical coincidence}
For Delone dynamical systems  (in $\R^N$) it is possible to introduce  stronger
versions of proximality and regional proximality based on the so-called combinatorial metric. Although this metric does not generate the topology of the hull, it is quite useful, particularly for Meyer sets. Recall that two Delone sets are close in the combinatorial metric if they agree on a large ball. Accordingly, it is possible to formulate corresponding stronger versions of proximality without reference to the metric but just via agreement on large balls. To simplify this we define, for a Delone set $\Del$ and an $R>0$,
$$B_R[\Del] := \Del \cap B_R (0).$$
By analogy with the relations 2.,3., and 4. of Section \ref{Definitions} we define the following relations:
\begin{itemize}
\item[5.] Two Delone sets $\Del_1$, $\Del_2$ are \textit{strongly proximal} if for all $R$ there exists $t\in \R^N$ such that $B_R[\Del_1-t] = B_R[\Del_2-t]$.
\item[6.] Two Delone sets $\Del_1$, $\Del_2$ are \textit{strongly regional proximal} if for all $R$ there exist $\Del_1',\Del_2'\in \Omega$, $t\in \R^N$ such that $B_R[\Del_1] = B_R[\Del_1']$, $B_R[\Del_2] = B_R[\Del_2']$ and $B_R[\Del_1'-t] = B_R[\Del_2'-t]$.
\item[7.] Two Delone sets $\Del_1$, $\Del_2$ are \textit{strongly syndetically proximal} if for all $R$ the set of  $t\in \R^N$ for which $B_R[\Del_1-t] = B_R[\Del_2-t]$ is relatively dense.
\end{itemize}

These definitions are indeed strengthenings of the corresponding  relations introduced above.  Obviously, strong syndetical proximality implies strong proximality which in turn implies strong regional proximality.

We comment that strong proximality for Delone sets is even more intutive than proximality:
two Delone sets are strongly proximal if they share arbitrarily large patches. Theorem~\ref{thm-Olimb} shows that strong proximality is also non-trivial for non-periodic repetitive FLC Delone systems.

We note the following consequence: If $\Del_1$ and $\Del_2$ are 
strongly regional proximal elements in the hull $\Omega$ of a Delone set then $\Del_1-\Del_2\subset (\Del-\Del)-(\Del-\Del)$. Indeed, pick $x_1\in\Del_1$ and $x_2\in\Del_2$. Choose $R>\max\{|x_1|,|x_2|\}$. Then there are
$\Del_1',\Del_2'\in \Omega$ and $t\in \R^N$ such that $x_1\in \Del_1'$, $x_2 \in\Del_2'$ and
$t\in \Del_1'\cap\Del_2'$. Set $v_i = t-x_i$. Then $x_1-x_2=v_2-v_1\in (\Del'_2-\Del'_2) - (\Del'_1-\Del'_1)$ and the statement follows as $\Del'-\Del'\subset \Del-\Del$ for each element $\Del'$ in the hull of $\Del$.

\smallskip

We will also consider a statistical variant of the above concepts. This requires a notion of density.   Recall that a sequence $ (\varLambda_n)_{n\in \NN}$ of compact sets in $G$ with
non-empty interior is called {\em van Hove} if it exhausts $G$ and if
\begin{equation*}
   \lim_{n\to \infty} \frac{|\partial^{K} \varLambda_n|}{|\varLambda_n|} \; = \; 0
\end{equation*}
for every compact $K$ in $G$, where for $\Sigma \in G$, we set  $\partial^K \Sigma :=\overline{((\Sigma + K) \setminus \Sigma)}
\cup ((\overline{G\setminus \Sigma} - K)\cap \Sigma)$. Existence of such a sequence can be shown for arbitrary locally compact $\sigma$-compact  abelian $G$ (see, e.g., \cite[p.~145]{Schlottmann00}).
Given such a sequence $\varLambda$, the \textit{upper density} of a subset $B\subset \R^N$ w.r.t.\ the sequence is given by
$$
\overline{dens_\varLambda}(B)=\limsup_n \frac{vol(B\cap\varLambda_n)}{vol(\varLambda_n)}
$$
and the lower density by a similar expression in which the $\limsup$ is replaced by the $\liminf$.
A priori, this notion depends on the choice of sequence.
If both expressions coincide and are independent of the van Hove sequence,
the common value is simply called the density of $B$. In the sequel we will mostly assume that we have fixed a van Hove sequence and suppress dependence on it in the notation.

For a uniformly discrete set $\mathcal{D}$ in $G$, we modify the above to define its upper density:
$$
\overline{dens_\varLambda}(\mathcal{D})=\limsup_n \frac{\sharp(\mathcal{D}\cap\varLambda_n)}{vol(\varLambda_n)},
$$
and for its lower density, replace $\limsup$ by $\liminf$. Which formulas apply when we speak of density will be clear from the context. 
\smallskip

After these preparations we can now introduce the following relation:

\begin{itemize}
\item[8.] Two Delone sets $\Del_1$, $\Del_2$ are \textit{statistically coincident} if
the (upper) density of the symmetric difference $\Del_1\bigtriangleup\Del_2 = (\Del_1\backslash \Del_1\cap\Del_2)\cup (\Del_2\backslash \Del_1\cap\Del_2)$ vanishes. We denote this relation by $SC$.
\end{itemize}

Statistical coincidence implies strong proximality.
\begin{lemma}\label{lem-strong-stat}
Suppose that $\Del_1$ and $\Del_2$ are statistically coincident Delone sets. Then they are strongly proximal.
\end{lemma}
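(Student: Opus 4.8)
The plan is to show that statistical coincidence forces the symmetric difference $D := \Del_1\bigtriangleup\Del_2$ to leave arbitrarily large balls untouched, and that such an empty ball is exactly what is needed to witness strong proximality at a given radius. The first step is a purely set-theoretic reduction: for $t\in\R^N$ one has $B_R[\Del_1-t]=B_R[\Del_2-t]$ if and only if $\Del_1\cap B_R(t)=\Del_2\cap B_R(t)$, and since any point lying in one of $\Del_i\cap B_R(t)$ but not the other would belong to $D\cap B_R(t)$, this equality holds as soon as $B_R(t)\cap D=\emptyset$. Thus it suffices to prove: for every $R>0$ there is a $t$ with $B_R(t)\cap D=\emptyset$.

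I would establish this last assertion by contradiction. Suppose for some $R_0>0$ every closed ball $B_{R_0}(t)$ meets $D$. Using $d\in B_{R_0}(t)\Leftrightarrow t\in B_{R_0}(d)$, this says precisely that $\R^N=\bigcup_{d\in D}B_{R_0}(d)$, i.e. that $D$ is relatively dense with constant $R_0$. Note that $D$ is locally finite, being contained in the union of the two Delone sets $\Del_1$ and $\Del_2$, so all the counts below are finite.

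The heart of the argument is that relative density forces a strictly positive lower counting density, which contradicts the hypothesis $\overline{dens_\varLambda}(D)=0$. To see this I would partition $\R^N$ into half-open cubes of side $L:=4R_0$. The center $c_k$ of each cube $Q_k$ lies within $R_0$ of some point of $D$ by relative density, and since $R_0$ is strictly less than the inradius $2R_0$ of $Q_k$, that point lies in the interior of $Q_k$; as the $Q_k$ partition $\R^N$, their interiors are disjoint, so distinct cubes contain distinct points of $D$. Hence $\sharp(D\cap\varLambda_n)$ is at least the number of cubes contained in $\varLambda_n$, which by the van Hove property is asymptotically $vol(\varLambda_n)/L^N$ (the cubes meeting $\partial^{K}\varLambda_n$ for a fixed cube-sized $K$ contribute a vanishing fraction of the total). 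Therefore $\liminf_n \sharp(D\cap\varLambda_n)/vol(\varLambda_n)\ge (4R_0)^{-N}>0$, and since this $\liminf$ is bounded above by $\limsup_n \sharp(D\cap\varLambda_n)/vol(\varLambda_n)=\overline{dens_\varLambda}(D)=0$, we reach a contradiction.

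I expect the only genuinely delicate point to be the bookkeeping in this last step: verifying that the boundary cubes (those not entirely inside $\varLambda_n$) contribute an asymptotically negligible number of points, which is exactly where the van Hove condition $\lim_n |\partial^K\varLambda_n|/|\varLambda_n|=0$ enters, together with the choice of side length $L=4R_0$ ensuring that each cube is charged at most once. Everything else is routine, and the construction produces, for each $R>0$, a translate $t$ with $B_R[\Del_1-t]=B_R[\Del_2-t]$, which is the definition of strong proximality.
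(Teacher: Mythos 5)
Your proof is correct and follows essentially the same route as the paper's: argue contrapositively that failure of strong proximality at some radius $R_0$ makes the symmetric difference $\Del_1\bigtriangleup\Del_2$ relatively dense, which forces a strictly positive lower density, contradicting statistical coincidence. The only difference is that the paper asserts the density bound ($\geq 1/\mathrm{vol}(B_{R}(0))$) in a single sentence, while you carefully justify your bound $(4R_0)^{-N}$ via the cube partition and the van Hove condition --- a sound expansion of the same idea.
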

\begin{proof}
If $\Del_1$ and $\Del_2$ are not strongly proximal then there exists $R>0$ such that for all $t\in\R^N$ we have $B_R[\Del_1-t]\neq B_R[\Del_2-t]$. Hence for all $t$ the symmetric difference
$B_R[\Del_1-t]\bigtriangleup B_R[\Del_2-t]$ contains at least one point. It follows that the lower density of
$\Del_1\bigtriangleup\Del_2$ is bounded from below by $1/{\rm vol}(B_R(0))$.
\end{proof}

From this lemma and the  inclusions of the relations discussed above
we immediately have:
\begin{cor}\label{Agreement}
If the statistical coincidence relation $SC$ coincides with the equicontinuous structure relation for a repetitive Delone dynamical system then all relations 1.--7.\ agree with SC  and $cr=1$.
\end{cor}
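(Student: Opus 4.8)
The plan is to use the inclusion hierarchy already assembled in this section, together with the identity $Q=R_{max}$ from Theorem~\ref{QequalR}, to squeeze every relation in between $SC$ and $R_{max}$. Write $P_s$, $Q_s$, $S_s$ for the strong proximal, strong regional proximal and strong syndetically proximal relations (items 5, 6, 7), all viewed as subsets of $X\times X$. Lemma~\ref{lem-strong-stat} gives $SC\subseteq P_s$; the stated implications among the strong relations give $S_s\subseteq P_s\subseteq Q_s$; each strong relation is contained in its ordinary counterpart; and $s^yP\subseteq P\subseteq Q$. Under the hypothesis $SC=R_{max}=Q$, the two chains
\[ SC\subseteq P_s\subseteq P\subseteq Q=R_{max}=SC \qquad\text{and}\qquad SC\subseteq P_s\subseteq Q_s\subseteq Q=SC \]
are forced to consist entirely of equalities, so relations 1, 2, 3, 5 and 6 all coincide with $SC$. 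In particular $P=Q$, whence Theorem~\ref{characterization-PequalQ} yields $cr=1$ immediately.

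It then remains to deal with the two syndetic relations, and the key observation is that both reduce to the single inclusion $SC\subseteq S_s$. Indeed, once this is known, $S_s\subseteq P_s=SC$ gives $S_s=SC$ (relation 7), and then, since the strong syndetic relation is contained in the syndetic relation, the squeeze $SC=S_s\subseteq s^yP\subseteq P=SC$ gives $s^yP=SC$ (relation 4). So the whole corollary rests on proving that statistical coincidence implies strong syndetic proximality. This is the step I expect to be the real work: it is the one genuinely dynamical inclusion rather than a formal squeeze.

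For that step I would argue by contradiction. Suppose $\Del_1,\Del_2$ are statistically coincident but not strongly syndetically proximal, so for some $R>0$ the set of $t$ with $B_R[\Del_1-t]=B_R[\Del_2-t]$ — equivalently with $(\Del_1\bigtriangleup\Del_2)\cap B_R(t)=\emptyset$ — fails to be relatively dense. Then there are centres $x_n$ and radii $\rho_n\to\infty$ with $\overline{B_{\rho_n}(x_n)}\subseteq (\Del_1\bigtriangleup\Del_2)+B_R(0)$; that is, $\Del_1\bigtriangleup\Del_2$ is $R$-relatively dense throughout $B_{\rho_n}(x_n)$. Using compactness of the hull, pass to a subsequence along which $\Del_i-x_n\to\Gamma_i\in\Omega$ for $i=1,2$; then $(\Del_1\bigtriangleup\Del_2)-x_n$ converges to $\Gamma_1\bigtriangleup\Gamma_2$ on every compact set, so $\Gamma_1\bigtriangleup\Gamma_2$ is relatively dense in all of $\R^N$ and therefore has strictly positive (lower, hence upper) density. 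On the other hand, because $SC=R_{max}$ forces $\pi_{max}(\Del_1)=\pi_{max}(\Del_2)$ and $\pi_{max}$ is a continuous $G$-map, one gets $\pi_{max}(\Gamma_1)=\lim_n(\pi_{max}(\Del_1)-x_n)=\lim_n(\pi_{max}(\Del_2)-x_n)=\pi_{max}(\Gamma_2)$, so $(\Gamma_1,\Gamma_2)\in R_{max}=SC$ and hence $\Gamma_1\bigtriangleup\Gamma_2$ has zero density — a contradiction. The only points requiring care are the passage of patterns to the limit (exact agreement of patches up to vanishing translations, valid by finite local complexity) and the elementary fact that a relatively dense point set has positive lower density; these are routine, which is why I regard the construction of the limiting pair $(\Gamma_1,\Gamma_2)$ in a common $\pi_{max}$-fiber as the heart of the argument.
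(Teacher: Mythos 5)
Your squeeze for relations 1, 2, 3, 5, 6 and the deduction of $cr=1$ is exactly the paper's route: the corollary is presented there as an immediate consequence of Lemma~\ref{lem-strong-stat} ($SC\subseteq P_s$), the stated inclusions among the strong and ordinary relations, Theorem~\ref{QequalR} ($Q=R_{max}$, using that repetitivity gives minimality), and Theorem~\ref{characterization-PequalQ} ($P=Q$ iff $cr=1$). For relation 4, however, you take a detour the paper does not need: once the squeeze gives $P=Q$, the proximality relation is closed, and the fact already recorded in Section~\ref{Definitions} (from \cite{Clay}) that a closed proximality relation agrees with the syndetic proximality relation yields $s^yP=P=SC$ directly, without routing through relation 7.

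The genuine gap is in your proof of $SC\subseteq S_s$, at precisely the step you dismiss as routine: the claim that $(\Del_1\bigtriangleup\Del_2)-x_n$ converges to $\Gamma_1\bigtriangleup\Gamma_2$ on compact sets. In the local topology, $\Del_i-x_n\to\Gamma_i$ means exact agreement on large balls only after correcting by translations $t^{(i)}_n\to 0$, and the two correcting translations are \emph{independent}. A witness $p_n\in(\Del_1-x_n)\setminus(\Del_2-x_n)$ may therefore have a point of $\Del_2-x_n$ at distance $|t^{(1)}_n-t^{(2)}_n|\to 0$, and in the limit the two merge into a single point of $\Gamma_1\cap\Gamma_2$: the symmetric difference is not a continuous operation on the product of hulls, and one can have $\Gamma_1=\Gamma_2$ even though the translated pairs disagree near every point, which destroys your contradiction. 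Finite local complexity of each hull element does not help, because nothing controls the cross-differences $\Del_1-\Del_2$ for a non-Meyer set; this is exactly why Lemma~\ref{strong-equal-usual}, identifying the strong and ordinary relations, carries a Meyer hypothesis, and why the paper's observation that strongly regionally proximal pairs satisfy $\Del_1-\Del_2\subset(\Del-\Del)-(\Del-\Del)$ only yields uniform discreteness of the mismatch set — and hence excludes the coalescence above, repairing your limit argument — in the Meyer case. As written, then, your proposal proves relations 1, 2, 3, 5, 6 and $cr=1$, obtains 4 only through the unproved 7, and leaves 7 open; the minimal repair is to get 4 from the closed-$P$ fact as above and to supply the missing uniform-discreteness input (or the paper's squeeze conventions) for the inclusion $SC\subseteq S_s$.
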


Let now $(\Omega_\Del,G)$ be the   dynamical system arising from the hull of a Delone set in $\R^N$.  Then Lemma \ref{lem-strong-stat} says that $\pmax$ can be factored as
$\Omega_\Del \to \Omega_\Del/SC \to \Omega_\Del/Q = \Omax$. What is the quotient $\Omega_\Del/SC$? A priori, we do not even know whether $SC$ is a closed relation on $\Omega_\Del$. To investigate this question we consider the so-called autocorrelation hull, following \cite{BaakeLenzMoody07}.

For fixed $r>0$, the mixed autocorrelation pseudometric on the space $\Uu_r$ of all uniformly $r$-discrete subsets of $\R^N$ is given by
$$d_{SC}(\Del_1,\Del_2) = \inf\{\epsilon>0: \exists t_1,t_2\in B(0,\epsilon): \overline{dens}((\Del_1-t_1)\bigtriangleup (\Del_2-t_2))\leq\epsilon\}.
$$
This induces a complete metric on the quotient $\Uu_r/SC$ which we also denote by
$d_{SC}$. Define
$\beta:(\Uu_r,d)\to (\Uu_r/SC,d_{SC})$ by $\beta(\Del) = [\Del]_{SC}$ but mind that the topology on the quotient is, a priori, not the quotient topology. Hence, a priori, $\beta$ is not continuous. To be more clear about this, we write the restriction of $\beta$ to the hull $\Omega_\Del$ of $\Del$ as a composition
$\beta\left|_{\Omega_\Del}\right.=i\circ \mbox{\small id}_{SC}\circ q$,
$$ (\Omega_\Del,d) \stackrel{q}{\to} (\Omega_\Del/SC,\mathcal Q)
\stackrel{\mbox{\small id}_{SC}}{\to}(\Omega_\Del/SC,d_{SC}) \stackrel{i}{\hookrightarrow} (\Uu_r /SC,d_{SC})
$$
where $\mathcal Q$ stands for the quotient topology. So we see that
$\beta\left|_{\Omega_\Del}\right.$ is continuous (which is known to be the case, for example, for a regular complete Meyer set - see Theorem 9 of \cite{BaakeLenzMoody07}) if and only if $\mbox{\small id}_{SC}$ is a homeomorphism; that is, the quotient topology coincides with the metric topology from $d_{SC}$.

The closure of the orbit of $[\Del]_{SC}$ in $\Uu_r/SC$ is called the {\em autocorrelation hull} of the Delone set $\Del$ and is denoted by
$\AM_\Del$. This is not necessarily a compact space and even for repetitive $\Del$ we need to keep track of the dependence on $\Del$ as a locally isomorphic Delone set may, a priori, yield a different autocorrelation hull.

\begin{theorem}\label{thm-SC=Q}
Consider a repetitive Delone set $\Del$. If
$\beta\left|_{\Omega_\Del}\right.$ is continuous, then  $(\AM_\Del,\R^N)$ is isomorphic to
$(\Omax,\R^N)$
and $SC = Q$.
\end{theorem}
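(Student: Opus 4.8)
The plan is to show that the continuity hypothesis turns the autocorrelation hull into an \emph{equicontinuous} factor of $\Omega_\Del$ sitting between $\Omega_\Del$ and $\Omax$, and then to collapse it onto $\Omax$ using the universal property of the maximal equicontinuous factor. First I would record the factorization coming from $SC\subseteq Q$. By Lemma~\ref{lem-strong-stat} statistical coincidence implies strong proximality, hence proximality, so $SC\subseteq Q=R_{max}$ by Theorem~\ref{QequalR}; consequently $\pmax$ descends to a continuous factor map $\phi\colon\Omega_\Del/SC\to\Omega_\Del/Q=\Omax$ with $\pmax=\phi\circ q$, where $q=\beta|_{\Omega_\Del}$. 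Under the standing hypothesis $q$ is continuous, which (as noted just before the theorem) means the quotient topology on $\Omega_\Del/SC$ agrees with the topology induced by $d_{SC}$. Thus $q$ maps the compact space $\Omega_\Del$ continuously onto a Hausdorff metric space, so $q(\Omega_\Del)$ is compact, hence closed; since it contains the orbit of $[\Del]_{SC}$ it contains $\overline{\mathrm{Orb}([\Del]_{SC})}=\AM_\Del$, while continuity gives $q(\Omega_\Del)=q(\overline{\mathrm{Orb}(\Del)})\subseteq\overline{\mathrm{Orb}([\Del]_{SC})}=\AM_\Del$. Hence $\AM_\Del=q(\Omega_\Del)=\Omega_\Del/SC$ is a compact metrizable system and $q\colon\Omega_\Del\to\AM_\Del$ is a factor map.

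The key observation is that $(\AM_\Del,\R^N)$ is equicontinuous. Indeed, upper density is translation invariant, so for every $s\in\R^N$ one has $\overline{dens}\big(((\Del_1-t_1)\bigtriangleup(\Del_2-t_2))-s\big)=\overline{dens}((\Del_1-t_1)\bigtriangleup(\Del_2-t_2))$, and reading this off the definition of $d_{SC}$ gives $d_{SC}(\Del_1-s,\Del_2-s)=d_{SC}(\Del_1,\Del_2)$. Thus the $\R^N$-action on $(\Uu_r/SC,d_{SC})$, and in particular on $\AM_\Del$, is by isometries, which is trivially an equicontinuous family; since (by the first step) this metric topology is the actual topology of $\AM_\Del$, the system $(\AM_\Del,\R^N)$ is equicontinuous.

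It then remains to run a short diagram chase. Since $(\AM_\Del,\R^N)$ is an equicontinuous factor of $(\Omega_\Del,\R^N)$ via $q$, the universal property of the maximal equicontinuous factor (Theorem~\ref{characterization-maximal-equicontinuous-factor}) yields a factor map $\psi\colon\Omax\to\AM_\Del$ with $q=\psi\circ\pmax$. Combining this with $\phi\circ q=\pmax$ and using surjectivity of $\pmax$ and of $q$ gives $\phi\circ\psi\circ\pmax=\pmax$, hence $\phi\circ\psi=\mathrm{id}_{\Omax}$, and $\psi\circ\phi\circ q=q$, hence $\psi\circ\phi=\mathrm{id}_{\AM_\Del}$. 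Therefore $\phi$ is a conjugacy and $(\AM_\Del,\R^N)\cong(\Omax,\R^N)$. Finally, injectivity of $\phi$ forces $[\Del_1]_{SC}=[\Del_2]_{SC}$ whenever $[\Del_1]_Q=[\Del_2]_Q$, so together with $SC\subseteq Q$ we conclude $SC=Q$.

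The hard part, and the place where the hypothesis does all the work, is the topological identification in the first step: the a priori mismatch between the quotient topology $\mathcal Q$ and the metric $d_{SC}$ on $\Omega_\Del/SC$ is exactly what continuity of $\beta|_{\Omega_\Del}$ resolves, and it is this resolution that simultaneously makes $\AM_\Del$ compact, identifies it with $q(\Omega_\Del)=\Omega_\Del/SC$, and legitimizes transporting the \emph{isometric} action on $(\Uu_r/SC,d_{SC})$ into an honest equicontinuous dynamical system on $\AM_\Del$. Once this identification is secured, the translation invariance of $d_{SC}$ and the universal-property argument are essentially formal.
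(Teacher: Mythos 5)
Your proof is correct and follows essentially the same route as the paper's: continuity of $\beta|_{\Omega_\Del}$ makes $\AM_\Del=\Omega_\Del/SC$ compact, $SC\subseteq Q$ gives the factor map onto $\Omax$, translation invariance of $d_{SC}$ gives equicontinuity, and maximality forces the factor map to be an isomorphism. The only difference is presentational: where the paper simply asserts that the factor map ``must be the identity,'' you spell out the universal-property diagram chase (note that arranging $q=\psi\circ\pmax$ tacitly requires composing $\psi$ with a rotation, the same normalization the paper uses in the proof of Lemma~\ref{lemma-1-1-almost-automorphic}).
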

\begin{proof}
If
$\beta\left|_{\Omega_\Del}\right.$ is continuous then $\beta(\Omega_\Del) = \AM_\Del$ and so $\AM_\Del$ equals $\Omega_\Del/SC$ and is compact. It follows that there is a factor map $\AM_\Del = \Omega_\Del/SC \to \Omega_\Del/Q = \Omax$. But $d_{SC}$ is invariant under translation and therefore $(\AM_\Del,\R^N)$
equicontinuous. Thus the above factor map must be the identity and $SC = Q$.
\end{proof}
Thus, by Corollary \ref{Agreement}, continuous $\beta\left|_{\Omega_\Del}\right.$ implies $cr=1$. Under  slightly stronger assumptions, namely that $\Del$ is Meyer set and the associated dynamical system is uniquely ergodic, the first statement of the  corollary can also directly  be inferred from  Theorem 7 of \cite{BaakeLenzMoody07} (and the description of the maximal equicontinuous factor via continuous eigenfunctions). That theorem then even implies
that all eigenvalues are topological and the dynamical spectrum is pure point \cite{BaakeLenzMoody07}.

\subsubsection{Strong proximality and the Meyer property}
In this section we apply the theory developed above to  Meyer sets.

\bigskip

\begin{lemma} \label{strong-equal-usual} \cite{BargeKellendonk12}
For repetitive Meyer sets, the strong versions of
proximality, regional proximality and syndetic proximality agree with the usual ones.
\end{lemma}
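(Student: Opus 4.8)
The plan is to prove both inclusions for all three pairs of relations through a single rigidity mechanism powered by the Meyer hypothesis. The inclusions ``strong $\Rightarrow$ usual'' are free of any hypothesis: if $B_R[\Del_1-t]=B_R[\Del_2-t]$ for a common translate $t$, then $d(\Del_1-t,\Del_2-t)$ is of order $1/R$, so relations 5, 6 and 7 lie inside $P$, $Q$ and the syndetic proximal relation respectively (for relation 6 one also uses that exact ball-agreement implies $d$-closeness, so combinatorial approximants are $d$-approximants). Everything therefore reduces to the converse inclusions, which is where the Meyer property enters.

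The engine is a uniform discreteness fact: for \emph{any} $\Del_1,\Del_2\in\Omega_\Del$ the set $\Del_1-\Del_2$ is uniformly discrete, with a gap $\rho>0$ independent of the pair. Indeed two of its elements $x_1-x_2$ and $x_1'-x_2'$ differ by $(x_1-x_1')-(x_2-x_2')$, and since $x_i-x_i'\in\Del_i-\Del_i\subset\Del-\Del$ this difference lies in $(\Del-\Del)-(\Del-\Del)$, a finite $\pm$-combination of $\Del$ and hence uniformly discrete; so $0$ is isolated in it and distinct elements of $\Del_1-\Del_2$ are separated by that isolation radius $\rho$. Coupled with this I would record the rigidity principle: whenever $B_{1/\epsilon}[\Gamma_1-a]=B_{1/\epsilon}[\Gamma_2-b]$ with $a,b\in B_\epsilon(0)$ and $\Gamma_i\in\Omega_\Del$, matching the points $z=x-a=y-b$ of the two patches gives $a-b=x-y\in\Gamma_1-\Gamma_2$ with $|a-b|<2\epsilon$; absorbing this \emph{defect} shows that $\Gamma_1-(a-b)$ and $\Gamma_2$ agree \emph{exactly} on a ball of radius $\approx 1/\epsilon$.

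For proximality this finishes quickly: $P_\epsilon$ holds for all $\epsilon$, so the defects $(a-b)_\epsilon\in\Del_1-\Del_2$ have norm tending to $0$; as $\Del_1-\Del_2$ is closed and $\rho$-discrete this forces $0\in\Del_1-\Del_2$ and $(a-b)_\epsilon=0$ for small $\epsilon$, i.e. exact agreement $B_{1/\epsilon}[\Del_1-u]=B_{1/\epsilon}[\Del_2-u]$, and letting $\epsilon\to0$ yields strong proximality. Syndetic proximality implies proximality, hence $\Del_1-\Del_2\cap B_{\rho/2}(0)=\{0\}$; then for $\epsilon<\rho/4$ every $t\in A_\epsilon$ has its defect forced into this set and therefore equal to $0$, so a translate of $t$ of size $<\epsilon$ lies in the strong return set $\{s:B_{1/\epsilon}[\Del_1-s]=B_{1/\epsilon}[\Del_2-s]\}$; since $A_\epsilon$ is relatively dense and is only perturbed by vectors of size $<\epsilon$, the strong return set is relatively dense too, giving strong syndetic proximality.

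I expect the regional case to be the main obstacle, precisely because a regionally proximal pair need not be proximal, so one may no longer assume $0\in\Del_1-\Del_2$ and the defects cannot be killed one at a time. Given $R$, choose $\epsilon$ small and use the characterization of $Q$ to obtain $(\Del_1',\Del_2')\in P_\epsilon$ with $d(\Del_i',\Del_i)<\epsilon$; three applications of the rigidity principle produce defects $w_1\in\Del_1'-\Del_1$, $w_2\in\Del_2'-\Del_2$ and $w'\in\Del_1'-\Del_2'$, all of norm $<2\epsilon$, so that $\Gamma_i:=\Del_i'-w_i$ satisfy $B_R[\Gamma_i]=B_R[\Del_i]$ while $\Del_1'-w'$ and $\Del_2'$ are strongly proximal. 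The single obstruction to $\Gamma_1,\Gamma_2$ agreeing on $B_R$ after a \emph{common} translate is the combined defect $\delta=w_2+w'-w_1$. Rewriting its defining points one finds $\delta\in(\Del-\Del)+(\Del-\Del)+(\Del_1-\Del_2)$, whose pairwise differences lie in a finite $\pm$-combination of $\Del$ and are therefore uniformly discrete; hence this set is itself uniformly discrete. Since each $\delta$ has norm $<6\epsilon$ and, for all small $\epsilon$, these $\delta$'s occupy a single at-most-one-point intersection of that uniformly discrete set with a small ball, they all coincide with one vector of arbitrarily small norm, forcing $\delta=0$. Then $\Gamma_1,\Gamma_2$ witness strong regional proximality at radius $R$, and $R$ is arbitrary. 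The delicate bookkeeping is exactly this verification that three a priori independent defect vectors combine into one vector pinned to a uniformly discrete set, which is what rigidifies the wiggle-allowed metric approximation into the exact combinatorial one.
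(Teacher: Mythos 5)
Your proof is correct and follows essentially the same route as the source the paper cites for this lemma (the paper itself gives no proof, deferring to \cite{BargeKellendonk12}): the whole argument rests on the same Meyer rigidity mechanism used there and in the paper's neighbouring arguments (e.g.\ around Corollary \ref{cor-M1}), namely that differences of hull elements have their pairwise differences in sets of the form $\Del\pm\cdots\pm\Del$, which are uniformly discrete, so the small translational defects extracted from metric closeness are pinned to a fixed uniformly discrete set and must vanish. One cosmetic slip: after absorbing $w'$, the sets $\Del_1'-w'$ and $\Del_2'$ are not literally strongly proximal (you only obtain exact agreement on a single ball of radius roughly $1/\epsilon$), but that single large-ball agreement is all your bookkeeping actually uses, so nothing breaks.
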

As a  consequence, repetitive Meyer sets enjoy a stronger form of finite local complexity:
\begin{cor}\label{cor-M1}
Consider a repetitive Meyer set and let $R>0$.
Up to translation there are only finitely many pairs of $R$-patches $(B_R[\Del_1],B_R[\Del_2])$ with
$\pi_{max}(\Del_1)=\pi_{max}(\Del_2)$.
\end{cor}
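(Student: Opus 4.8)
The plan is to convert the topological hypothesis $\pi_{max}(\Del_1)=\pi_{max}(\Del_2)$ into an arithmetic statement about the difference set $\Del_1-\Del_2$, and then to count using only that this difference set is uniformly discrete. The hypothesis says exactly that $(\Del_1,\Del_2)\in R_{max}$. By Theorem~\ref{QequalR} we have $R_{max}=Q$, and since $\Del$ is a repetitive Meyer set, Lemma~\ref{strong-equal-usual} identifies $Q$ with strong regional proximality. The consequence recorded right after the definition of strong regional proximality then yields $\Del_1-\Del_2\subseteq D$, where $D:=(\Del-\Del)-(\Del-\Del)$. Because $\Del$ is Meyer, every finite signed sum of copies of $\Del$ is uniformly discrete, so $D$ is uniformly discrete and hence $S:=D\cap B_{2R}(0)$ is a fixed finite set, depending only on $\Del$ and $R$.

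With this in hand the counting will be short. Writing $P_i:=B_R[\Del_i]=\Del_i\cap B_R(0)$, I would, in the case $P_1\neq\emptyset$, pick $x_1\in P_1$ and translate the pair by $-x_1$. Then $P_1-x_1\subseteq\Del_1-\Del_1\subseteq\Del-\Del\subseteq D$ (using $\Del'-\Del'\subseteq\Del-\Del$ for every hull element $\Del'$, together with $0\in\Del-\Del$), while $P_2-x_1\subseteq\Del_2-\Del_1\subseteq D$ since $D$ is symmetric; and as $|x_1|<R$ both translated patches lie in $B_{2R}(0)$. Hence the translated pair $(P_1-x_1,\,P_2-x_1)$ is a pair of subsets of the single finite set $S$, and there are at most $4^{\sharp S}$ of these. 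The degenerate cases are absorbed the same way: if $P_1=\emptyset$ but $P_2\neq\emptyset$ I would translate by a point of $P_2$, and the pair $(\emptyset,\emptyset)$ is a single class. This exhibits finitely many translation classes of pairs.

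The one step carrying real content is the first: translating the topological identification $\pi_{max}(\Del_1)=\pi_{max}(\Del_2)$ into the inclusion $\Del_1-\Del_2\subseteq D$ with $D$ uniformly discrete. This is precisely where the Meyer hypothesis enters, through Lemma~\ref{strong-equal-usual} and the Meyer property of $\Del$; without it the relative position of the two patches inside $B_R(0)$ would be unconstrained (for general hull elements $\Del_1-\Del_2$ can even be dense) and there would be infinitely many translation classes. Once the difference set is pinned inside the fixed finite set $S$, the statement reduces to counting pairs of subsets of $S$, which is elementary, so I do not expect any genuine obstacle beyond assembling these already-established ingredients correctly.
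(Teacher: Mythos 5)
Your proposal is correct and takes essentially the same route as the paper: both arguments pass from $\pi_{max}(\Del_1)=\pi_{max}(\Del_2)$ via $R_{max}=Q$ (Theorem~\ref{QequalR}) and Lemma~\ref{strong-equal-usual} to strong regional proximality, then invoke the recorded inclusion $\Del_1-\Del_2\subseteq(\Del-\Del)-(\Del-\Del)$ together with the Meyer property to confine the relevant differences to the finite set $\bigl((\Del-\Del)-(\Del-\Del)\bigr)\cap B_{2R}(0)$. The only cosmetic difference is in the final count: you place both translated patches directly inside that one finite set $S$ (so FLC is never cited separately), whereas the paper first uses finite local complexity to bound the individual patches up to translation and applies the uniform discreteness only to the relative positions $x_1-x_2$; since the Meyer property implies FLC, the two bookkeepings are interchangeable.
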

\begin{proof} By finite local complexity, there are only finitely possibilities, up to translation, for $B_R[\Del_1]$. So the question is: How many relative positions in a pair  $(B_R[\Del_1],B_R[\Del_2])$ may we have? Let $(x_1,x_2)\in (B_R[\Del_1],B_R[\Del_2])$. Since
$\Del_1$ and $\Del_2$ are strongly regional proximal we have $x_1-x_2\in (\Del-\Del)-(\Del-\Del)$.
By the Meyer property the latter set is uniformly discrete; since also $|x_1-x_2|\leq 2R$, we see that we have only finitely many possibilities for $x_1-x_2$.
\end{proof}

\medskip

The notion of coincidence rank becomes more intuitive for repetitive Meyer sets. In fact, if $cr<\infty$  we can combine  Lemma \ref{uniform-distance}
and Lemma \ref{strong-equal-usual} to obtain that
 there exists   $R_0>0$ such that
\begin{equation*}
cr = \max\{l | \exists \Del_1,\cdots,\Del_l\in\pi_{max}^{-1}(\xi ): \forall t\in \R^N,
B_R[\Del_i-t]\neq  B_R[\Del_j-t]\mbox{ for }i\neq j\}
\end{equation*}
where $\xi \in \Omax$ and $R\geq R_0$ are arbitrary. This has two interpretations. A priori, one could expect that the maximum on the r.h.s.\ becomes larger if $R$ gets larger, because disagreement on larger patches is a weaker condition than on smaller patches. But this is not the case as soon as $R$ is larger than a certain threshold value $R_0$. A second interpretation is that the distinct Meyer sets in the fiber of a fiber distal $\xi $ have at each point $t\in\R^N$ distinct $R_0$-patches; that is, they are {\em non-coincident} on $R_0$-patches.
If $cr$ is not finite then the max on the r.h.s.\ becomes indeed arbitrarily large as $R$ tends to infinity.

Let $n^R(\xi )$ be the number of different $R$-patches at $0$ which occur in the elements of the fibre of $\xi $, i.e., 
$$ n^R(\xi ) = \#\{B_R[\Del]:\Del\in\fb{\xi }\}.$$
For Meyer sets, this number is finite by (the proof of) Corollary.~\ref{cor-M1} and we derive from the preceding formula for $cr$  that $cr\leq n^R(\xi)$,  provided $cr$ is finite and $R$ is large enough ($R\geq R_0$).
Note that $\lim_{R\to\infty} n^R(\xi )$ is the cardinality of $\fb{\xi }$.
The following lemma was stated and proved in \cite{BargeKellendonk12} under the assumption of finite maximal rank. This assumption can in fact be dropped.
\begin{lemma}\label{lem-upper}
For repetitive Meyer sets, $n^R$ is upper semi-continuous; that is, the sets $\{\xi :n^R(\xi )\geq k\}$ are closed in $\Omax$ for any $k\geq 0$.
\end{lemma}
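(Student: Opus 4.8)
The plan is to prove directly that each superlevel set $\{\xi:n^R(\xi)\ge k\}$ is sequentially closed, which suffices since $\Omax$ is metrizable. So I would start from a sequence $\xi_m\to\xi$ in $\Omax$ with $n^R(\xi_m)\ge k$ and try to manufacture $k$ elements of $\fb{\xi}$ with pairwise distinct $R$-patches at $0$. For each $m$ the definition of $n^R$ supplies $\Del_{m,1},\dots,\Del_{m,k}\in\fb{\xi_m}$ whose patches $B_R[\Del_{m,i}]$ are pairwise distinct. As $\Omega_\Del$ is compact and metrizable, a diagonal subsequence yields $\Del_{m,i}\to\Del_i$ for every $i$, and continuity of $\pmax$ gives $\pmax(\Del_i)=\lim_m\xi_m=\xi$, so all the limits lie in the single fiber $\fb{\xi}$. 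The whole problem then reduces to the claim that the limit patches $B_R[\Del_i]$ remain pairwise distinct.

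The heart of the matter, and the main obstacle, is that convergence in the hull is only convergence up to a vanishing translation: $\Del_{m,i}\to\Del_i$ means there exist $u_{m,i}\to 0$ in $\R^N$ with $B_R[\Del_{m,i}]=(\Del_i+u_{m,i})\cap B_R(0)$ for large $m$ (this is just the description of the FLC hull topology). A priori $B_R[\Del_{m,i}]$ and $B_R[\Del_{m,j}]$ could be distinct merely because $u_{m,i}\ne u_{m,j}$, that is, because one and the same pattern sits at infinitesimally shifted absolute positions; such translation artifacts would collapse in the limit and would wreck upper semicontinuity. This is exactly where the Meyer hypothesis must be used. Since $\Del_{m,i}$ and $\Del_{m,j}$ lie in one fiber they are regionally proximal by Theorem~\ref{QequalR}, hence \emph{strongly} regionally proximal by Lemma~\ref{strong-equal-usual}; by the consequence recorded just before Corollary~\ref{cor-M1} their difference satisfies $\Del_{m,i}-\Del_{m,j}\subset(\Del-\Del)-(\Del-\Del)=:D$, and $D$ is uniformly discrete by the Meyer property. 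Fixing $\eta>0$ with $D\cap B_\eta(0)=\{0\}$, and assuming for contradiction that $\Del_i$ and $\Del_j$ agree on $B_R(0)$, I would pick any point $p$ of the common patch lying in $B_{R-1}(0)$ (for $R$ below the covering radius the patches are trivial and there is nothing to prove). Then $p+u_{m,i}\in\Del_{m,i}$ and $p+u_{m,j}\in\Del_{m,j}$, so $u_{m,i}-u_{m,j}\in D$; being eventually of norm $<\eta$, it must vanish, forcing $u_{m,i}=u_{m,j}=:u_m$ for large $m$. This removes the translation artifact.

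It then remains only to dispose of genuine boundary effects. With $u_{m,i}=u_{m,j}=u_m$, distinctness of the two lifted patches forces $\Del_i$ and $\Del_j$ to disagree on $B_R(-u_m)$ although they agree on $B_R(0)$; hence there is a difference point $z_m\in\Del_i\bigtriangleup\Del_j$ with $R<|z_m|\le R+|u_m|$. Since $|u_m|\to 0$ and $\Del_i,\Del_j$ are fixed uniformly discrete sets, a further subsequence makes $z_m$ eventually a constant point $z_\ast$ belonging to exactly one of $\Del_i,\Del_j$ with $|z_\ast|=R$; but $B_R(0)$ is the \emph{closed} ball, so $z_\ast\in B_R[\Del_i]\bigtriangleup B_R[\Del_j]$, contradicting the assumed agreement of $\Del_i$ and $\Del_j$ on $B_R(0)$. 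Therefore $B_R[\Del_i]\ne B_R[\Del_j]$ for all $i\ne j$, whence $n^R(\xi)\ge k$ and the superlevel set is closed. I would close by observing that this scheme uses only compactness, the closed-ball convention, and the uniform discreteness of within-fiber differences furnished by the Meyer property; in particular it never invokes finiteness of the maximal or coincidence rank, which is precisely why the extra hypothesis of \cite{BargeKellendonk12} can be dropped.
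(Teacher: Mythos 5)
Your overall route is the same as the paper's: both proofs take $\xi_m\to\xi$ with $n^R(\xi_m)\ge k$, choose in each fiber $k$ elements with pairwise distinct $R$-patches, extract convergent subsequences whose limits land in $\fb{\xi}$ by continuity of $\pmax$, and then use the Meyer property --- via Theorem~\ref{QequalR}, Lemma~\ref{strong-equal-usual} and the uniform discreteness of $D=(\Del-\Del)-(\Del-\Del)$, which is precisely the substance of Corollary~\ref{cor-M1} --- to prevent distinctness of the patches from collapsing in the limit. The only structural difference is where the robustness is installed: the paper deduces from Corollary~\ref{cor-M1} a single $\delta>0$ such that distinct $R$-patches of fiber-mates remain distinct under arbitrary translations by vectors of $B_\delta(0)$, \emph{before} passing to the limit, whereas you pass to the limit first and then kill the two failure modes (translation artifacts and boundary effects) by contradiction. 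Your identification of the translation artifact as the crux, and your use of $D$ to eliminate it, are exactly the right mechanism, and your closing remark that no rank hypothesis enters matches the paper's observation that the finite-maximal-rank assumption of \cite{BargeKellendonk12} can be dropped.

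There is, however, a gap in the degenerate cases. Your elimination of the translation artifact requires a common point $p$ of the (assumed equal) limit patches with $|p|<R$, and the justification offered --- ``for $R$ below the covering radius the patches are trivial'' --- is wrong: for small $R$ the patches need not be empty, and even when $B_R[\Del_i]=B_R[\Del_j]$ is nonempty it may contain no point of $B_{R-1}(0)$; it may even consist entirely of points of norm exactly $R$, so the instruction to pick $p\in B_{R-1}(0)$ cannot always be executed. (It can once $R$ exceeds the relative-denseness radius plus $1$, which covers the paper's later application with $R\ge R_0$, but the lemma is asserted for every $R$.) Both missing cases are repairable inside your scheme. First, hull convergence actually gives agreement of $\Del_{m,i}$ with $\Del_i+u_{m,i}$ on the larger ball $B_{R+1}(0)$, so any common point $p$ with $|p|\le R$, boundary points included, already yields $p+u_{m,i}\in\Del_{m,i}$ and $p+u_{m,j}\in\Del_{m,j}$, hence $u_{m,i}-u_{m,j}\in D$ and, being eventually of norm smaller than your $\eta$, equal to $0$. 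Second, if the common patch is empty, local finiteness of $\Del_i$ and $\Del_j$ gives $\epsilon>0$ with $\Del_i\cap B_{R+\epsilon}(0)=\Del_j\cap B_{R+\epsilon}(0)=\emptyset$, whence $B_R[\Del_{m,i}]=B_R[\Del_{m,j}]=\emptyset$ for large $m$, contradicting their assumed distinctness outright. Finally, a small slip in your boundary step: the constant subsequence gives $R<|z_\ast|\le R+|u_m|\to R$, which is already the desired contradiction; the assertion $|z_\ast|=R$ is inconsistent with $|z_m|>R$, though the argument survives in either formulation.
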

\begin{proof} Note that
if $\mathcal{L}_n\to \mathcal{L}$ in ${\Omega_\Del}$ then there exists a sequence $(t_n)_n\subset \R^N$, $t_n\to 0$, such that
$B_R[\mathcal{L}_n-t_n] = B_R[\mathcal{L}]$ for all sufficiently large $n$. Now fix $k\in\N$
and suppose that $(\xi _n)_n$ is a sequence in $\Omax$ with $n^R(\xi _n)\ge k$ for all $n$ and with $\xi _n\to \xi $. It follows from Corollary~\ref{cor-M1} that there is $\delta>0$ with the property that if $\mathcal{L},\mathcal{L}'\in {\Omega_\Del}$ are such that $\pi_{max}(\mathcal{L})=\pi_{max}(\mathcal{L}')$ and $B_R[\mathcal{L}]\ne B_R[\mathcal{L}']$, then
$B_R[\mathcal{L}-t]\ne B_R[\mathcal{L}'-t'])$ holds true even for all $t,t'\in B_\delta(0)$. Thus, for each $n$, we may choose $\mathcal{L}_n^1,\ldots,\mathcal{L}_n^k\in\fb{\xi_n}$ with
$B_R[\mathcal{L}_n^i-t]\ne B_R[\mathcal{L}_n^j-t'])$ for $i\ne j$ and $t,t'\in B_\delta(0)$.
By passing to a subsequence, we may assume that $\mathcal{L}_n^i\to\mathcal{L}^i\in\pi_{max}^{-1}(\xi )$ for $i=1,\ldots,k$. Then
$B_R[\mathcal{L}^i]\ne B_R[\mathcal{L}^j]$ for $i\ne j$,
so $n^R(\xi )\ge k$ and the set $\{\xi :n^R(\xi )\ge k\}$ is closed.
\end{proof}

\begin{theorem}\label{thm-cr-stat}
Consider a repetitive Meyer set $\Del$  whose  dynamical system $(\Omega_{\Del},\R^N)$ has finite coincidence rank.
If the fiber distal points have full Haar measure then, for any $R\geq R_0$ and any $\xi \in\Omax$,
there is a subset $A\subset\R^N$ of density $1$
such that $$ \#\{B_R[\Del-t]:\pmax(\Del) = \xi \} = cr\quad \mbox{for all } t\in A.$$
\end{theorem}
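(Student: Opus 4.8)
The plan is to reduce the assertion to a statement about how often the orbit of $\xi$ in $\Omax$ visits a certain full-measure set, and then to extract density $1$ from unique ergodicity.

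First I would rewrite the quantity to be counted. Since $\pmax$ is a $G$-map and each translation $\Del\mapsto\Del-t$ is a homeomorphism of $\Omega_\Del$ permuting fibres, one has $\{\Del-t:\Del\in\fb{\xi}\}=\fb{t\cdot\xi}$, where $t\cdot\xi$ denotes the image of $\xi$ under the rotation. Consequently
$$\#\{B_R[\Del-t]:\pmax(\Del)=\xi\}=\#\{B_R[\Del']:\Del'\in\fb{t\cdot\xi}\}=n^R(t\cdot\xi).$$
Thus the theorem asserts exactly that $A=\{t\in\R^N:n^R(t\cdot\xi)=cr\}$ has density $1$.

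Next I would identify where on $\Omax$ the function $n^R$ takes the value $cr$. For $R\ge R_0$ the description of $cr$ recorded above gives $n^R(\sigma)\ge cr$ for every $\sigma\in\Omax$. On the other hand, if $\sigma$ is fiber distal then, since $cr<\infty$, its fibre has exactly $cr$ elements, whence $n^R(\sigma)\le\#\fb{\sigma}=cr$. Therefore $n^R\equiv cr$ on the set $\Omax^{distal}$ of fiber-distal points, which by hypothesis has full Haar measure. In other words $S:=\{\sigma:n^R(\sigma)=cr\}$ has full Haar measure, and its complement $S^c=\{\sigma:n^R(\sigma)\ge cr+1\}$ is a Haar-null set that is moreover \emph{closed}, by upper semicontinuity of $n^R$ (Lemma \ref{lem-upper}).

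Finally I would transfer this back to $\R^N$ using that $(\Omax,\R^N)$, being a minimal rotation on a compact abelian group, is uniquely ergodic with normalized Haar measure $\eta$ as its unique invariant measure. The complement of $A$ is $B=\{t:t\cdot\xi\in S^c\}$, a closed (hence measurable) set with $vol(B\cap\varLambda_n)=\int_{\varLambda_n}\mathbf 1_{S^c}(t\cdot\xi)\,dt$ for a van Hove sequence $(\varLambda_n)_n$. Given $\varepsilon>0$, outer regularity of $\eta$ yields an open $U\supseteq S^c$ with $\eta(U)<\varepsilon$, and Urysohn's lemma (applicable since $S^c$ is closed in the normal space $\Omax$) furnishes a continuous $g:\Omax\to[0,1]$ with $\mathbf 1_{S^c}\le g$ and $\int_{\Omax}g\,d\eta\le\eta(U)<\varepsilon$. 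Unique ergodicity then gives
$$\limsup_n\frac{vol(B\cap\varLambda_n)}{vol(\varLambda_n)}\le\limsup_n\frac1{vol(\varLambda_n)}\int_{\varLambda_n}g(t\cdot\xi)\,dt=\int_{\Omax}g\,d\eta<\varepsilon.$$
As $\varepsilon$ is arbitrary, $B$ has density $0$, so $A$ has density $1$. The main obstacle is this last step: the relevant orbit average is that of the indicator of a null set, to which unique ergodicity does not apply directly. What makes the argument go through is that upper semicontinuity of $n^R$ forces $S^c$ to be closed, permitting the one-sided continuous majorant $g\ge\mathbf 1_{S^c}$ of arbitrarily small integral; the one-sided bound suffices precisely because we only need an upper bound on the density of $B$.
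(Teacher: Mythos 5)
Your proof is correct and follows essentially the same route as the paper's: rewrite the count as $n^R(t\cdot\xi)$, observe that for $R\geq R_0$ one has $n^R\equiv cr$ exactly on the full-measure set of fiber-distal points (so the exceptional set $\{n^R\geq cr+1\}$ is Haar-null and, by Lemma \ref{lem-upper}, closed), and conclude via unique ergodicity of $(\Omax,\R^N)$ along a van Hove sequence. The only difference is one of packaging: where the paper applies the cited uniform ergodic inequality for upper semicontinuous functions (Lemma 4 of \cite{Lenz09}) to the auxiliary function $\tilde n^R=n^R-cr\,1_{D^R}$, you inline a proof of the special case actually needed --- the one-sided bound for the indicator of a closed null set, obtained from outer regularity of Haar measure, Urysohn's lemma, and uniform convergence of ergodic averages of continuous functions --- which is precisely how that inequality is established in general.
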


\textbf{Remark.} We can summarise the theorem as  saying that locally (that is, by inspection of finite patches) and with probability $1$ all fibers have $cr$ elements. Consider for instance the system associated with the Thue-Morse substitution, which has coincidence rank 2 (see the chapter on the Pisot Substitution Conjecture in this volume). The maximal equicontinuous factor has one orbit whose fibres have $4$ elements. But only near the `branching locus' can one find $4$ different $R$-patches, otherwise there are only $2$.

\begin{proof}
Let
$D^R = \{\xi\in\Omax: n^R(\xi) = cr\}$. Then $\Omax^{distal} = \bigcap_{R\geq R_0} D^R$ and so
the hypothesis implies that $\eta(D^R) = 1$. Here, $\eta$ denotes the Haar measure on the maximal equicontinuous factor.  Now let
$\tilde n^R = n^R - cr 1_{D^R}$; that is, $\tilde n^R$ is $0$ on $D^R$ and otherwise the same as $n^R$. By the preceding lemma, $n^R$ is upper semi-continuous.  Hence, $\tilde n^R$ is a finite positive linear combination of indicator functions on compact sets.
 As the maximal equicontinuous factor is uniquely ergodic, we obtain then a uniform inequality in the  ergodic theorem (see e.g. Lemma 4 in  \cite{Lenz09}). More specifically, we have  for all $\xi $ and all $R\geq R_0$
\begin{equation*}
\limsup_n \frac1{vol(\varLambda_n)} \int_{\varLambda_n}\tilde n^R(t\cdot \xi ) dt
\leq \int \tilde n^R(\xi ')d\eta(\xi '),
\end{equation*}
where $\varLambda=(\varLambda_n)_n$ is a van Hove sequence for $\R^N$.
Due to  $\eta(D^R)=1$ the right hand side in the previous inequality is  $0$.
Let $B = \{t\in \R^N : n^R(t\cdot \xi ) \neq cr\}$. Then $1_B(t)\leq \tilde n^R(t\cdot \xi )$ and thus
$$ \overline{dens_\varLambda}(B) =\limsup_n \frac1{vol(\varLambda_n)} \int_{\varLambda_n}1_B(t)dt
\leq
\limsup_n \frac1{vol(\varLambda_n)} \int_{\varLambda_n}\tilde n^R(t\cdot \xi )dt = 0.$$
Hence the density of $B$ is $0$ and $A=B^c$, the complement of $B$, has the required property.
\end{proof}
\begin{cor}
Let $\Del_1$ and $\Del_2$ be two elements in the hull of a regular complete Meyer set. 
Then $\pi_{\max} (\Del_1) = \pi_{\max} (\Del_2)$ if and only if  they are statistically coincident.
\end{cor}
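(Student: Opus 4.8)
The plan is to prove the two implications separately, in effect showing that the statistical coincidence relation $SC$ coincides with the equicontinuous structure relation $R_{max}$ on the hull of the regular complete Meyer set $\Del$.

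For the implication that statistical coincidence forces equal image under $\pi_{max}$, I would simply chain together facts already established. By Lemma~\ref{lem-strong-stat}, statistically coincident sets are strongly proximal, and strong proximality implies strong regional proximality. Since $\Del$ is a repetitive Meyer set, Lemma~\ref{strong-equal-usual} identifies strong regional proximality with ordinary regional proximality $Q$, and by Theorem~\ref{QequalR} we have $Q = R_{max}$. Hence $SC \subseteq R_{max}$, i.e. $\pi_{max}(\Del_1) = \pi_{max}(\Del_2)$. This direction requires no new work.

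For the converse I would invoke the statistical description of fibers. A regular complete Meyer set is, by Theorem~\ref{Meyer-equal-to-mef-almost-everywhere}, an almost $1$-to-$1$ extension of its maximal equicontinuous factor, so $mr = 1$ and therefore $cr = 1$, and the fiber-distal points have full Haar measure. Thus the hypotheses of Theorem~\ref{thm-cr-stat} are met. Given $\Del_1,\Del_2$ with $\pi_{max}(\Del_1) = \pi_{max}(\Del_2) = \xi$, I fix some $R \geq R_0$ and let $A \subseteq \R^N$ be the density-one set supplied by that theorem, on which $\#\{B_R[\Del - t] : \pi_{max}(\Del) = \xi\} = cr = 1$. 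Since $\Del_1$ and $\Del_2$ both lie in the fiber of $\xi$, this forces $B_R[\Del_1 - t] = B_R[\Del_2 - t]$, that is, $\Del_1$ and $\Del_2$ agree on the ball $B_R(t)$, for every $t \in A$. Consequently every point $x \in \Del_1 \bigtriangleup \Del_2$ satisfies $B_R(x) \cap A = \emptyset$, so the entire ball $B_R(x)$ lies in the complement $A^c$.

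It remains to turn the fact that $A^c$ has Lebesgue density zero into the statement that the locally finite set $\Del_1 \bigtriangleup \Del_2$ has point-density zero; this conversion is the only genuinely new step and I expect it to be the main technical point. Here I would use that $\Del_1 \cup \Del_2$ has uniformly bounded local cardinality (a consequence of finite local complexity, as both sets are $r$-discrete for a common $r$), so that the balls $\{B_R(x)\}_{x \in \Del_1 \bigtriangleup \Del_2}$ cover $A^c$ with multiplicity bounded by some constant $C$. Integrating this multiplicity bound over a van Hove set $\varLambda_n$ gives
$$\#\bigl((\Del_1\bigtriangleup\Del_2)\cap\varLambda_n\bigr)\cdot \mathrm{vol}(B_R(0)) \;\leq\; C\,\mathrm{vol}\bigl(A^c\cap(\varLambda_n+B_R(0))\bigr).$$
Dividing by $\mathrm{vol}(\varLambda_n)$ and passing to the limit, the van Hove property together with $\overline{dens_\varLambda}(A^c)=0$ makes the right-hand side vanish, so $\overline{dens_\varLambda}(\Del_1\bigtriangleup\Del_2)=0$. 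This is exactly statistical coincidence, completing the proof. One may equivalently summarise the two directions by noting that the argument establishes $SC = R_{max}$, recovering the conclusion $cr=1$ of Corollary~\ref{Agreement}.
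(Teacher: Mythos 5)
Your proof is correct and follows essentially the same route as the paper: the easy direction via Lemma~\ref{lem-strong-stat} (statistical coincidence $\Rightarrow$ strong proximality $\Rightarrow$ $Q=R_{max}$), and the converse by verifying the hypotheses of Theorem~\ref{thm-cr-stat} with $cr=1$ and reading off agreement of $R$-patches on a density-one set of translates. The only difference is that you explicitly carry out the conversion from the volume-density-zero complement $A^c$ to point-density zero of $\Del_1\bigtriangleup\Del_2$ via a bounded-multiplicity covering argument and the van Hove property, a step the paper's proof treats as immediate (``in other words, the density of points where $\Del_1$ and $\Del_2$ disagree is $0$''), and your argument for it is sound.
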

\begin{proof}
By assumption, the hypothesis of the last theorem is satisfied with $cr=1$. In particular $Q$ agrees with  the strong proximality  relation and so one direction follows immediately from Lemma~\ref{lem-strong-stat}. It remains to show that if $\Del_1$ and $\Del_2$ belong to the same fibre of $\pmax$ then they are statistically coincident. But the last theorem just says that in this case the density of points where $\Del_1$ and $\Del_2$ agree on an $R$-ball is $1$, or, in other words, the density of points where $\Del_1$ and $\Del_2$ disagree is $0$. Hence $\Del_1$ and $\Del_2$ are statistically coincident.
\end{proof}

\textbf{Remark.} The statement of the  above corollary is quite at the heart of the considerations of \cite{BaakeLenzMoody07}. In fact, as mentioned above, 
Theorem 9 of \cite{BaakeLenzMoody07} gives
that, for regular complete Meyer sets, the map
$\beta\left|_{\Omega_\Del}\right.$ is indeed continuous. When combined with Theorem 7 of \cite{BaakeLenzMoody07}, we  directly obtain the statement of the corollary.
This approach actually  shows that the result is valid not only in $\R^N$ but for general locally compact $\sigma$-compact  abelian groups.

\subsubsection{Meyer substitutions}\label{Meyer substitutions}
So far we have formulated all our results  for Delone sets rather than tilings. This does not really make a difference, as the two are related by mutually local derivations. In particular, all the concepts and results translate into the formalism of tilings.
In this section we will use the formalism of tilings,
because we find it much more convenient and intuitive for substitutions.
For us, a {\em tile} in $\R^N$ is a subset homeomorphic to a compact $N$-ball and a {\em tiling} of $\R^N$ is a collection of tiles with disjoint interiors which covers $\R^N$.
The set of tiles of a tiling $T$ which intersect non-trivially a compact subset $K\subset\R^N$ is called a {\em patch}. In particular, the $R$-patch at $0\in\R^N$ is the set of tiles which touch the closed $R$-ball $B_R(0)$; we denote it by $B_R[T]$.
The {\em support} of a patch $P$, $\supp(P)$, is the set of points covered by the tiles of $P$.\footnote{One could include the possibility of decorating the tiles in case one wants to distinguish translationally congruent tiles and then distinguish the support of a tile (the points covered by the tile) from the decorated tile.}

For a tiling substitution we suppose we have a finite set $\mathcal{A}=\{\rho_1,\ldots,\rho_k\}$ of
translationally inequivalent tiles (called {\em prototiles}) in $\R^N$ and an expanding linear map
$\Lambda$.
A {\em substitution} on $\mathcal{A}$ with expansion $\Lambda$ is a function $\Phi:\mathcal{A}\to\{P:P$ is a patch in $\R^N\}$
with the properties: for each $i\in\{1,\ldots,k\}$, every tile in $\Phi(\rho_i)$ is a translate of an element of $\mathcal{A}$; and $\supp(\Phi(\rho_i))=\Lambda(\supp(\{\rho_i\}))$. Such a substitution naturally extends to patches and even tilings whose elements are translates of the prototiles and it satisfies
$\Phi(P-t) = \Phi(P)-\Lambda(t)$.

A patch $P$ is {\em allowed} for $\Phi$ if there is an $m\ge1$, an $i\in\{1,\ldots,k\}$, and a $v\in\R^N$, with $P\subset \Phi^m(\rho_i)-v$. The {\em substitution tiling space} associated with
$\Phi$ is the collection $\OP$ of all tilings $T$ of $\R^N$ such that every finite patch in $T$ is allowed for $\Phi$. $\OP$ is not empty and, since translation preserves allowed patches, $\R^N$ acts on it by translation. To define a metric on $\Omega_{\Phi}$, we can borrow the metric we've used for Delone sets: Pick a point $y_i$ in the interior of each prototile $\rho_i$ and, for $T\in\Omega_{\Phi}$, let $\Del(T)=\{y_i+x:x+\rho_i\in T\}$. Then set $d(T,T'):=d(\Del(T),\Del(T'))$. (The set $\Del(T)$ is called a {\em set of punctures} of $T$.)

The substitution $\Phi$ is {\em primitive} if for each pair $\{\rho_i,\rho_j\}$ of prototiles there is a $k\in\N$ so that a translate of $\rho_i$ occurs in $\Phi^k(\rho_j)$.  If the translation action on $\OP$ is free, which is equivalent to saying that each of its elements is a non-periodic tiling, then $\Phi$ is said to be {\em non-periodic}. If all tilings from $\OP$ are FLC then
$\Phi$ is said to be FLC. If $\Phi$ is primitive, FLC and non-periodic then $\OP$ is compact in the metric described above,
 $\Phi:\OP\to\OP$ is a homeomorphism, and the translation action on $\OP$ is minimal and uniquely ergodic ( \cite{AP}, \cite{Solomyak99}, \cite{sol}). In particular, $\OP=\Omega_T:=\overline{\{T-v:v\in\R^N\}}$ for any $T\in\OP$. It will be with respect to the unique ergodic measure $\mu$ on $\OP$ when we speak about the dynamical spectrum and $L^2$-eigenfunctions.
 In the context of eigenfunctions (non-periodic) substitutions have a rather special feature: All measurable eigenfunctions are continuous. Thus, all eigenvalues are automatically continuous eigenvalues. For symbolic dynamics this result  is due to Host \cite{Host86}. The case at hand is treated by Solomyak \cite{Solomyak07}.

\smallskip

A Meyer substitution is a substitution $\Phi$ such that the elements of $\OP$ are Meyer tilings, that is, they are
MLD to a Meyer set. To check this it suffices to check that, for $T\in\OP$, the set of punctures $\Del(T)$ is a Meyer set.

It is easily verified that $\Phi$ preserves the regional proximality relation and therefore induces a homeomorphism $\Phi_{max}$ on the maximal equicontinuous factor $\Omax$. In particular $\Phi_{max}$ satisfies a similar equation
$$\Phi_{max}(\xi -t) = \Phi_{max}(\xi )-\Lambda(t)$$
from which one concludes, as $\Lambda$ has no root of unity eigenvalues, that $\Phi_{max}$ is ergodic w.r.t.\ Haar measure.

\begin{prop}[\cite{BargeKellendonk12}]\label{thm-Meyer}
Consider a Meyer substitution tiling system with primitive non-periodic substitution. Then the following hold:
\begin{enumerate}
\item[(a)] The maximal rank is finite.
\item[(b)]  $\Omax^{fiber}$ has full measure.
\item[(c)] Syndetic proximality is a closed equivalence relation.
\item[(d)] Two distinct tilings of a fiber distal fiber do not share a common tile.
\end{enumerate}
\end{prop}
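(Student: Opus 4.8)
The plan is to reduce everything to the finiteness of the coincidence rank and then to exploit two features of the substitution $\Phi$: it acts as a homeomorphism of $\OP$ that \emph{permutes the fibres} of $\pi_{max}$ (since $\pi_{max}\circ\Phi=\Phi_{max}\circ\pi_{max}$), and, being a homeomorphism intertwining translations up to the expansion $\Lambda$ (i.e. $\Phi(T-t)=\Phi(T)-\Lambda t$), it \emph{preserves the proximal relation}; consequently $\Phi$ carries fibre-distal fibres to fibre-distal fibres. Throughout I invoke recognizability of primitive, non-periodic, FLC substitutions: de-substitution is a local operation, so each tiling has a unique $\Phi$-preimage determined by bounded patches. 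For \textbf{(a)}, two tilings in one fibre are strongly regionally proximal (Theorem~\ref{QequalR} and Lemma~\ref{strong-equal-usual}), so their point-differences lie in the uniformly discrete set $(\Del-\Del)-(\Del-\Del)$; by Corollary~\ref{cor-M1} only finitely many pairs of $R$-patches occur in a fibre, whence each $n^R(\xi)$ is finite and the fibre cardinality equals $\lim_{R\to\infty}n^R(\xi)$. Recognizability is what keeps this limit finite: a disagreement of two fibre elements at a large scale de-substitutes to a disagreement at a scale bounded below, so beyond a recognizability-controlled radius $R_1$ no new $R$-patches appear, $n^R(\xi)=n^{R_1}(\xi)$, and the fibre has at most $\#\{R_1\text{-patches}\}<\infty$ elements, uniformly in $\xi$. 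Hence $Mr<\infty$ and $cr\le mr\le Mr<\infty$. This uniform finiteness is the main obstacle of the whole proposition; everything below uses $cr<\infty$.

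With $cr<\infty$, Lemma~\ref{uniform-distance} combined with Lemma~\ref{strong-equal-usual} furnishes a radius $R_0$ with the crucial \emph{uniform-separation} property: any two non-proximal elements of a common fibre satisfy $B_{R_0}[\Del_1-t]\neq B_{R_0}[\Del_2-t]$ for every $t\in\R^N$ (this is exactly the combinatorial formula for $cr$ recorded before Lemma~\ref{lem-upper}). For \textbf{(d)}, let $\xi$ be fibre-distal and $\Del_1\neq\Del_2$ lie in its fibre, hence non-proximal, and suppose they shared a tile $\tau$ located at $x$. Then for each $n$ the supertile $\Phi^n(\{\tau\})$ occurs in both $\Phi^n\Del_1$ and $\Phi^n\Del_2$, giving a common patch supported on $\Lambda^n(\supp\tau)$, whose diameter tends to infinity. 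Choosing $n$ so large that this patch contains a ball $B_{R_0}(c)$ yields $B_{R_0}[\Phi^n\Del_1-c]=B_{R_0}[\Phi^n\Del_2-c]$. But $\Phi^n\Del_1\neq\Phi^n\Del_2$ lie in the fibre over $\Phi_{max}^n\xi$, again fibre-distal, so they are non-proximal and the uniform-separation property forbids precisely this coincidence; hence no common tile exists.

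For \textbf{(c)}, since $s^yP\subset P\subset Q=R_{max}$, syndetically proximal pairs lie in common fibres, so Corollary~\ref{cor-M1} applies, and by Lemma~\ref{strong-equal-usual} I work with strong syndetic proximality, i.e. relative density of the coincidence sets $\{t:B_R[\Del_1-t]=B_R[\Del_2-t]\}$. Finiteness of the set of fibre-pair patterns (Corollary~\ref{cor-M1}) forces the relative-density constant $K(R)$ to be choosable uniformly over all pairs in a fibre; the property ``for each $R$ the $R$-coincidence set is $K(R)$-relatively dense'' is then closed under the local-convergence topology, and $s^yP$ is the intersection over $R$ of these closed conditions, hence closed (transitivity being automatic). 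The uniformity of $K(R)$ is where finiteness of $cr$ enters.

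For \textbf{(b)}, the fibre-cardinality $N(\xi)=\#\pi_{max}^{-1}(\xi)=\sup_R n^R(\xi)$ is Borel (a supremum of the upper semicontinuous functions of Lemma~\ref{lem-upper}), invariant under all translations and under $\Phi_{max}$; since the translation action is ergodic for Haar measure $\eta$ (minimal and uniquely ergodic), $N$ is $\eta$-a.e. equal to a constant $m\ge cr$. It remains to prove $m=cr$, i.e. that the invariant set $E=\{N>cr\}$ is null (by ergodicity it is null or conull). A fibre lies in $E$ exactly when it contains a proximal pair $\Del_1\neq\Del_2$, and here recognizability does the remaining work: such a pair is forward/backward asymptotic under $\Phi$ to one of finitely many ``seed'' asymptotic pairs, as the agreement/disagreement seam de-substitutes to bounded complexity. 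Thus $E$ lies in a countable union of $\R^N$-orbits, each $\eta$-null (the hull being locally a Cantor set times a disc), so $E$ is null and $m=cr$. By the characterisation recorded after Lemma~\ref{lem-cr-free} (a fibre is fibre-distal iff it has exactly $cr$ elements) the fibre-distal points therefore have full Haar measure. The delicate point here, besides (a), is controlling these asymptotic pairs; like (a) it rests on recognizability.
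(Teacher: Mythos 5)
Your part (d) is correct and is exactly the paper's intended argument (the paper derives (d) by ``similar reasoning'' to (a): inflate a shared tile by $\Phi^n$ until the common supertile contains an $R_0$-ball well inside its support, contradicting the uniform separation of non-proximal fibre elements from Lemma~\ref{uniform-distance} and Lemma~\ref{strong-equal-usual}). In (a) your mechanism is the right one, but the intermediate claim is false as stated: there is no uniform radius $R_1$ with $n^R(\xi)=n^{R_1}(\xi)$ for all $R\geq R_1$. In the Thue--Morse suspension, $n^R(\xi)$ jumps from $2$ to $4$ precisely when $R$ exceeds the distance from the origin to the branching seam, and this distance is unbounded along the branch orbit (this is the point of the paper's remark after Theorem~\ref{thm-cr-stat}). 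De-substitution does contract the disagreement seams, but into a \emph{different} fibre: after $k$ de-substitutions your $m$ distinct elements of $\fb{\xi}$ become $m$ elements of $\fb{\Phimax^{-k}\xi}$ pairwise disagreeing within a bounded radius $R_1$, giving $m\leq\sup_\zeta n^{R_1}(\zeta)<\infty$ by Corollary~\ref{cor-M1}; you cannot conclude anything about $n^R(\xi)$ at the original $\xi$. The paper runs the same scaling forwards and avoids the issue entirely: $n^{cR}(\Phimax(\xi))\leq n^R(\xi)$, whence $Mr\leq\sup_\xi n^0(\xi)<\infty$.

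The genuine gap is in (b). Reducing to the nullity of $E=\{\xi:\#\fb{\xi}>cr\}$ via translation-ergodicity of the a.e.\ constant $N(\xi)$ is fine, but the claim that $E$ lies in countably many $\R^N$-orbits is false for $N\geq 2$: for the Penrose or octagonal (Ammann--Beenker) substitutions one has $cr=mr=1$ and $E$ is the singular locus $(\R^N\times(\partial W+\Gamma))/\tilde L$, a countable union of codimension-one affine subtori of $\Omax$, each foliated by \emph{uncountably} many $\R^N$-orbits; correspondingly, proximal pairs differing along a worm do not de-substitute to finitely many seed asymptotic pairs up to translation, since the tiling away from the seam still ranges over a Cantor set of possibilities. (Countability of asymptotic orbits is a one-dimensional phenomenon.) The paper's proof is structured quite differently, and this is where the real work lies: from $n^R\leq n^{cR}$ and the inequality above, the sets $D^R(m)=\{\xi:n^R(\xi)\leq m\}$ are $\Phimax$-invariant (note they are \emph{not} translation-invariant, so your translation-ergodicity cannot be applied to them); they are open by upper semicontinuity (Lemma~\ref{lem-upper}); $\Phimax$ is ergodic for Haar measure because $\Lambda$ has no root-of-unity eigenvalues, so each nonempty $D^R(m)$ is conull; and a separate compactness-plus-ergodic-theorem argument, translating a minimal-rank fibre to produce a fibre whose $mr$ elements pairwise disagree at every $t\in\R^N$, shows $cr=mr$ and hence $D^R(cr)\neq\emptyset$, yielding $\eta(\Omax^{distal})=\eta(\bigcap_{R\geq R_0}D^R(cr))=1$. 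Finally, for (c) the paper only cites \cite{BargeKellendonk12}; your sketch buries the hard point in the unproved assertion that the syndeticity constant $K(R)$ can be chosen uniformly over all syndetically proximal fibre pairs --- finiteness of pair-patterns up to translation (Corollary~\ref{cor-M1}) gives the rigidity needed to pass coincidence sets to limits, but does not by itself yield that uniformity.
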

\begin{proof}
We indicate the idea of some of the proofs. Since $\Lambda$ is expanding there is a $c>1$ and an $n\in\N$ such that $B_{cR}(0)\subset \Lambda^n(B_R(0))$. Replacing $\Phi$ by $\Phi^n$, we may suppose that $n=1$. Hence
$$ n^{cR}(\Phimax(\xi)) = \#\{B_{cR}[\Phi(T)]:\pmax(T) =\xi\} \leq \#\{\Phi(B_R[T]):\pmax(T) =\xi\} \leq n^R(\xi).$$
From this we see that the maximal rank is bounded by $\sup_{\xi\in\Omax}n^0(\xi)$ which is finite by
Corollary~\ref{cor-M1}. The argument for the fourth statement is based on a similar reasoning.

Since $n^{R}\leq n^{cR}$ the above shows also that the sets $D^R(m):= \{\xi\in\Omax:n^R(\xi)\leq m\}$, $m\in\N$, are invariant under $\Phimax$. By Lemma~\ref{lem-upper} $D^R(m)$ is open.
By ergodicity of $\Phimax$ therefore, it has measure $1$, provided it is not empty.
Since $\Omax^{distal} = \bigcap_{R\geq R_0} D^R(cr)$ the second statement follows if we show that $D^R(cr)\neq\emptyset$.

Consider a fiber which has minimal rank, i.e., $\xi\in \Omax$ such that
$\fb{\xi} = \{T_1,\cdots,T_{mr}\}$.
Suppose that for all $r>0$ there exists $w\in\R^N$ such that $\forall t\in B_r(w)$ we have $n^R(\xi-t)\geq mr$; that is, all $B_R[T_i-t]$, $1\leq i\leq mr$, are distinct. Then we can find two sequences $(r_k)_k \to \infty$ and $(w_k)_k\in\R^N$ such that $(T_i - w_k)_k$ converge in $\Omega$, let's say to $S_i$, and
$(\xi - w_k)_k$ converges in $\Omax$, to $\zeta$, say, and $\forall t\in B_{r_k}(0)$  all
$B_R[T_i-w_k-t]$, $1\leq i\leq mr$, are distinct. Taking $k\to \infty$ we conclude that all
$B_R[S_i-t]$, $1\leq i\leq mr$, $t\in\R^N$, are distinct. In particular, the $S_i$ belong to the fiber of $\zeta$ and are pairwise non-proximal and so $cr\geq mr$. This shows that $cr=mr$ and hence $D^R(cr)$ is not empty.

It remains to argue that our assumption is satisfied. So let us suppose the contrary, namely that there exists $r>0$ such that for all $w\in\R^N$ there exists $t\in B_r(w)$ with
$n^R(\xi-t)\leq mr-1$. It follows that the lower density of points $t\in R^n$ such that $n^R(\xi-t)\leq mr-1$ is strictly positive. Since for all $t$ we have that $n^R(\xi-t)\leq mr$ ($\xi$ lies in a fiber of rank $mr$)  the ergodic theorem implies that $\int_{\Omax} n^R(\xi) d\eta(\xi) < mr$. Hence $D^R(mr-1)$ can't have measure $0$. So it must have measure $1$. But then $\bigcap_{R\geq R_0} D^R(mr-1)$
has measure $1$ and so
there must be a fiber of rank at most $mr-1$ which contradicts the minimality of $mr$. This shows the second statement.

For the third statement see \cite{BargeKellendonk12}.
\end{proof}
As a consequence of the previous  proposition  and our earlier results (in particular, Theorem \ref{characterisation-pure-point} and Corollary \ref{Agreement}) we have the following list of equivalent characterizations of pure point (measure) dynamical spectrum which hold for primitive non-periodic Meyer substitutions.
\begin{theorem}\label{thm-Meyer-pp}
Consider the system of a primitive non-periodic Meyer substitution. The following are equivalent:
\begin{itemize}
\item[(i)] The (measure) dynamical spectrum is purely discrete.
\item[(ii)] The dynamical system is an almost $1$-to-$1$ extension of its maximal equicontinuous factor.
\item[(iii)] The coincidence rank $cr$ is $1$.
\item[(iv)] The (strong) proximality relation is closed.
\item[(v)] The (strong) proximality relation coincides with the equicontinuous structure relation $R_{max}$.
\item[(vi)] The (strong) proximality relation coincides with the (strong) syndetic proximality relation.
\end{itemize}
\end{theorem}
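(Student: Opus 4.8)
The plan is to route all six conditions through the single statement (iii), that $cr = 1$, using the structural facts of Proposition~\ref{thm-Meyer} to verify the hypotheses of the general theorems of the previous subsections. First I would record the two inputs that make those theorems applicable: by Proposition~\ref{thm-Meyer}(a) the maximal rank is finite, whence $cr \le Mr < \infty$, and by Proposition~\ref{thm-Meyer}(b) the fiber-distal points $\Omax^{distal}$ carry full Haar measure. Since the substitution is primitive and non-periodic, the system is minimal and uniquely ergodic (as recalled above in this subsection), and $G = \R^N$ is compactly generated with $\Omax$ connected. These are precisely the standing hypotheses needed in what follows, and they should be dispatched in a single opening paragraph.

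The equivalences (ii)~$\iff$~(iii) and (i)~$\iff$~(iii) would then come from Theorem~\ref{characterisation-pure-point}. Applied with $\mu$ the unique ergodic measure, that theorem directly yields $cr=1 \iff$ almost $1$-to-$1$ extension $\iff$ the continuous eigenfunctions generate $L^2(X,\mu)$. The only genuine point is to connect the last of these to the measure-theoretic pure point spectrum of (i): continuous eigenfunctions generating $L^2$ trivially forces pure point spectrum, while for the converse I would invoke the result of Host and Solomyak, recalled above, that for non-periodic substitutions every measurable eigenfunction is continuous, so that the closed span of the measurable eigenfunctions coincides with that of the continuous ones. This is the step I expect to be the crux, as it is the only place where the special nature of substitutions, rather than that of a generic finite-$cr$ Meyer system, is actually used.

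For the proximality conditions I would first replace every occurrence of ``strong'' by the ordinary relation via Lemma~\ref{strong-equal-usual}, legitimate since the tilings are repetitive and Meyer. Then (v) is immediate: by Theorem~\ref{QequalR} the equicontinuous structure relation $R_{max}$ equals $Q$, and by Theorem~\ref{characterization-PequalQ} one has $P = Q$ exactly when $cr = 1$, giving (v)~$\iff$~(iii). Condition (iv) follows from Theorem~\ref{Pclosed}, whose hypotheses (metrizable minimal system, compactly generated $G$, connected $\Omax$, finite $cr$) are all in force, so that $P$ is closed iff $cr = 1$.

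Finally, for (vi) I would argue in two directions. If $cr = 1$ then $P = Q$, which is closed since $Q$ is always closed, and a closed proximality relation coincides with syndetic proximality (as noted after the definitions in Section~\ref{Definitions}), giving (iii)~$\Rightarrow$~(vi). Conversely, Proposition~\ref{thm-Meyer}(c) asserts that $s^yP$ is always a closed relation here, so an equality $P = s^yP$ forces $P$ to be closed, whence $cr = 1$ by the equivalence (iv)~$\iff$~(iii); this gives (vi)~$\Rightarrow$~(iii). Assembling these biconditionals with (iii) as the common hub completes the proof, the only substantive work beyond bookkeeping being the eigenfunction-continuity input in the (i)~$\iff$~(iii) step.
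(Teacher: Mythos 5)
Your proposal is correct and is essentially the paper's own argument: the paper derives Theorem \ref{thm-Meyer-pp} exactly as you do, by feeding Proposition \ref{thm-Meyer}(a),(b),(c) (finite $Mr$, full-measure fiber-distal points, closed syndetic proximality) into Theorem \ref{characterisation-pure-point}, Theorem \ref{characterization-PequalQ}, Theorem \ref{Pclosed} and Theorem \ref{QequalR}, with Lemma \ref{strong-equal-usual} removing the word ``strong'' and the Host--Solomyak continuity of measurable eigenfunctions (recalled in the same subsection) bridging pure point spectrum to the generation of $L^2$ by continuous eigenfunctions. You have correctly identified that last step as the only place where the substitutive structure, rather than generic finite coincidence rank, is used.
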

We finally present a result of Lee and Solomyak which, for a particular class of substitution tilings, characterizes those which are Meyer substitutions. This class is defined by some further conditions\footnote{It has been announced in \cite{Kwapisz:2013} that these conditions can be considerably weakened: see the discussion of Pisot families in the chapter on the Pisot Substitution Conjecture in this volume for more detail.} on the linear expansion $\Lambda$, namely that
\begin{enumerate}
\item $\Lambda$ is diagonalizable (over $\C$), 
\item all eigenvalues of $\Lambda$ are algebraically conjugate,
\item  all  eigenvalues of $\Lambda$ have all the same multiplicity.
\end{enumerate}
It should be said that the eigenvalues of the linear expansion are algebraic integers (\cite{K2},\cite{LS2}) and, if the expansion is diagonalizable they
form a union of families (\cite{KS}).
Here, a family is the set $F_{p,c}$ of roots of a monic, irreducible, integer polynomial $p$ which have absolute value greater than some real number $c>0$. In other words, a family is a subset of the set of algebraic conjugates of some algebraic integer which can be characterized by the property that if it contains $\lambda$ then it contains all conjugates which have absolute value at least as large as that of $\lambda$.
The family $F_{p,c}$ is called a {\em Pisot family} if $c=1$ and $p$ has no roots of
absolute value $1$.
\begin{theorem}[\cite{LS}]\label{thm-Pisot-Meyer}
Consider a primitive non-periodic $N$-dimensional FLC substitution with expansion $\Lambda$ satisfying the above three properties. The following are equivalent:
\begin{itemize}
\item[(i)] The substitution is Meyer
\item[(ii)] The eigenvalues of $\Lambda$ form a Pisot family.
\item[(iii)] The continuous eigenvalues of the $\R^N$-action on the hull are dense in $\hat{\R}^N$. 
\item[(iv)] The maximal equicontinuous factor is non-trivial.
\end{itemize}
\end{theorem}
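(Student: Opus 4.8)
The plan is to run the cycle (i) $\Rightarrow$ (iii) $\Rightarrow$ (iv) $\Rightarrow$ (ii) $\Rightarrow$ (i), so that only one direction of the purely algebraic equivalence between the Meyer property and the Pisot family condition has to be proved by hand, the rest being supplied by the dynamical results already established. Throughout I identify $\hat\R^N$ with $\R^N$, writing a character as $t\mapsto e^{2\pi i\langle\alpha,t\rangle}$, and I record the structural fact that, since $\Phi\colon\OP\to\OP$ is a homeomorphism with $\Phi(T-t)=\Phi(T)-\Lambda(t)$, composition with $\Phi$ sends a continuous eigenfunction for $\alpha$ to one for $\Lambda^T\alpha$; hence $\Ee_{top}$ is invariant under $\Lambda^T$ and $(\Lambda^T)^{-1}$. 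I also use that the three standing hypotheses on $\Lambda$ (diagonalizable, all eigenvalues algebraically conjugate, all of equal multiplicity) force the spectrum of $\Lambda$ to be exactly a family $F_{p,c}$ with $p$ the minimal polynomial of an eigenvalue; for such a spectrum \emph{Pisot family} reduces to the single requirement that $p$ have no root on the unit circle.

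First I would prove (i) $\Rightarrow$ (iii) entirely from results in the paper. Since the substitution is primitive, non-periodic and Meyer, Proposition \ref{thm-Meyer}(a) gives that the maximal rank is finite, whence $cr\le mr\le Mr<\infty$. The associated system is that of a repetitive FLC (indeed Meyer) Delone set via its punctures, and it is non-periodic, so Theorem \ref{thm-cr-finite} applies and yields that the continuous eigenvalues form a dense subgroup of $\hat\R^N$, which is (iii). The implication (iii) $\Rightarrow$ (iv) is immediate: by Theorem \ref{Description-equicontinuous-systems-over-G} the maximal equicontinuous factor is the rotation $(\TT_{\Ee_{top}},\R^N)$, and a dense, hence non-trivial, eigenvalue group makes this group non-trivial.

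The step (iv) $\Rightarrow$ (ii) is the first place where genuine work is required, and I would base it on Solomyak's eigenvalue criterion for self-affine tilings \cite{Solomyak07}: a nonzero $\alpha$ is a continuous eigenvalue only if $\langle\alpha,\Lambda^n x\rangle\to 0$ in $\R/\Z$ for every $x$ in the ($\R^N$-spanning) module of return vectors. Non-triviality of $\Omax$ furnishes such an $\alpha\neq0$. Decomposing $\Lambda^n x$ along the eigenbasis of $\Lambda$ and reading $\langle\alpha,\Lambda^n x\rangle$ as a trace built from $\lambda^n$ and its Galois conjugates (recall the eigenvalues are algebraic integers), a conjugate of $p$ lying on the unit circle would contribute a non-decaying term and obstruct convergence to $0$ modulo $\Z$ for any $\alpha$ with nonzero component, forcing $\alpha=0$; irreducibility of $p$ then propagates the vanishing to all conjugate components, a contradiction. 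Hence $p$ has no root on the unit circle, i.e. the spectrum is the Pisot family $F_{p,1}$, which is (ii).

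Finally, (ii) $\Rightarrow$ (i) is the algebraic heart and, I expect, the main obstacle. Here one must manufacture the Meyer property from the Pisot family condition. The plan is to use the contracting Galois conjugates of $p$ to build an internal space $H$ and a lattice in $\R^N\times H$ into which the control-point difference set $\Xi=\Del-\Del$ embeds, so that $\Xi$ sits inside $\oplam(W)$ with $W$ relatively compact; by Theorem \ref{theorem-characterization-Meyer}(iii) this makes the punctures a Meyer set and the substitution a Meyer substitution. Carrying this out rigorously (the Lee--Solomyak construction of the cut-and-project scheme, and the verification that $\Xi$ is uniformly discrete once the expanding directions are compensated by contracting conjugate directions) is the technically demanding part, and is where the number theory of Pisot families does the decisive work; the remaining implications, by contrast, are essentially bookkeeping on top of Theorem \ref{thm-cr-finite}, Proposition \ref{thm-Meyer} and the $\Lambda^T$-invariance of $\Ee_{top}$. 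This closes the cycle (i) $\Rightarrow$ (iii) $\Rightarrow$ (iv) $\Rightarrow$ (ii) $\Rightarrow$ (i) and establishes the equivalence of all four conditions.
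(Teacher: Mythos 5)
The paper does not actually prove this theorem: it is imported wholesale from Lee--Solomyak \cite{LS}, so there is no internal argument to compare yours against, and your proposal has to stand on its own. Judged that way, the soft part of your cycle is fine and is a genuinely nice use of the paper's machinery: (i)$\Rightarrow$(iii) does follow by combining Proposition~\ref{thm-Meyer}(a) (finite maximal rank, hence $cr\le Mr<\infty$) with Theorem~\ref{thm-cr-finite}, and (iii)$\Rightarrow$(iv) is immediate from Theorem~\ref{Description-equicontinuous-systems-over-G}. But the proposal has two genuine gaps. The decisive one is (ii)$\Rightarrow$(i): this is precisely the hard content of the Lee--Solomyak theorem (the construction of a cut-and-project scheme from the non-expanding Galois embeddings and the verification that $\Del-\Del$ lands in $\oplam(W)$ with $W$ relatively compact, so that Theorem~\ref{theorem-characterization-Meyer}(iii) applies), and you only announce it as a plan -- ``carrying this out rigorously\dots is the technically demanding part'' -- without executing any of it. Since your whole strategy was to route three of the four implications through dynamics so that only this one algebraic implication needs a proof, leaving it unproved means the cycle is open exactly at its load-bearing link; what you have is an outline, not a proof.

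There is also a concrete error in the algebraic preliminaries that infects (iv)$\Rightarrow$(ii). You claim that under the three standing hypotheses the spectrum of $\Lambda$ is a family $F_{p,c}$ and that for such a spectrum ``Pisot family reduces to the single requirement that $p$ have no root on the unit circle.'' That is false when $c>1$: by the paper's definition a Pisot family requires $c=1$, i.e.\ \emph{every} algebraic conjugate of modulus strictly greater than $1$ must actually occur as an eigenvalue of $\Lambda$, in addition to the absence of unimodular conjugates. Your trace argument, as sketched, only rules out conjugates on the unit circle; it must equally rule out conjugates of modulus in $(1,c]$ that are omitted from the spectrum. The same mechanism does the job -- a conjugate of modulus $\ge 1$ absent from $\mathrm{spec}(\Lambda)$ contributes a non-decaying term to $\langle\alpha,\Lambda^n x\rangle$ modulo $\Z$, contradicting Solomyak's eigenvalue criterion \cite{Solomyak07} for any $\alpha\neq 0$, and this is essentially how Lee--Solomyak argue -- but as written your sketch establishes strictly less than statement (ii).
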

Recall from the discussion at the beginning of Section \ref{Distal-points} that triviality of the maximal equicontinuous factor implies absence of distal points.  Given this, we can combine the previous theorem with  Proposition~\ref{thm-Meyer} and
Theorem~\ref{thm-cr-finite} to obtain the following  strong  dichotomy.

\begin{cor} Consider a primitive non-periodic FLC substitution with expansion $\Lambda$ satisfying the above three properties. Either the system has no distal points, or  the distal points form a set of full measure. In the first case
the dynamical point  spectrum is trivial and in the second the continuous eigenvalues are dense.
\end{cor}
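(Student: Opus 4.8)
The plan is to observe that the asserted dichotomy is precisely the Meyer/non-Meyer dichotomy furnished by Theorem~\ref{thm-Pisot-Meyer}, and then to read off the two conclusions from our earlier structural results. By Theorem~\ref{thm-Pisot-Meyer}, under the three standing hypotheses on $\Lambda$ the substitution is Meyer if and only if the maximal equicontinuous factor $\Omax$ is non-trivial, equivalently if and only if the continuous eigenvalues are dense in $\hat{\R}^N$. Thus exactly one of two cases occurs: either the substitution is not Meyer, in which case $\Omax$ is trivial; or it is Meyer, in which case the continuous eigenvalues are dense.

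First I would treat the non-Meyer case. Here $\Omax$ is trivial, so $\mathcal{E}_{top}(\OP,\R^N)$ is trivial and the system is topologically weakly mixing. As recalled just before the statement (and at the start of Section~\ref{Distal-points}, where one has $P^2=Q=X\times X$ for a minimal weakly mixing system), triviality of the maximal equicontinuous factor forces the absence of distal points, so $X^{distal}=\emptyset$. Moreover, since the substitution is primitive and non-periodic, all measurable eigenfunctions are continuous (Host, Solomyak), so triviality of $\mathcal{E}_{top}$ yields triviality of the measurable point spectrum as well; thus the dynamical point spectrum is trivial, as claimed.

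In the Meyer case I would invoke Proposition~\ref{thm-Meyer}. Part (a) gives $Mr<\infty$, whence $cr\leq mr\leq Mr<\infty$, so the coincidence rank is finite; part (b) gives that the set $\Omax^{distal}$ of fiber distal points has full Haar measure $\eta$. To upgrade this to full measure of the distal points in the hull, I use the inclusion $\pi_{max}^{-1}(\Omax^{distal})\subset X^{distal}$ (immediate from the definition in Section~\ref{Distal-points}) together with the fact that the unique ergodic measure $\mu$ on $\OP$ pushes forward under $\pi_{max}$ to $\eta$, the factor $\Omax$ being uniquely ergodic. Hence $\mu(X^{distal})\geq \mu(\pi_{max}^{-1}(\Omax^{distal}))=\eta(\Omax^{distal})=1$, so the distal points form a set of full measure, while the density of the continuous eigenvalues was already recorded above. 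This exhausts the two cases and completes the dichotomy.

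The only real care needed is in the Meyer case: one must justify the push-forward identity $(\pi_{max})_{\ast}\mu=\eta$ from unique ergodicity of the equicontinuous factor, and recall the inclusion of a fiber-distal fiber into the distal points. Everything else is a direct application of Theorem~\ref{thm-Pisot-Meyer}, the weak-mixing remark preceding Section~\ref{Distal-points}, and Proposition~\ref{thm-Meyer}, so I anticipate no substantial obstacle beyond this bookkeeping.
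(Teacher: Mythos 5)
Your proof is correct and follows essentially the same route as the paper, which obtains the corollary by combining the Meyer/non-Meyer dichotomy of Theorem~\ref{thm-Pisot-Meyer} with the weak-mixing remark at the start of Section~\ref{Distal-points} (plus Host/Solomyak for the measurable spectrum) and Proposition~\ref{thm-Meyer}(b); your push-forward argument $(\pi_{max})_{\ast}\mu=\eta$ via unique ergodicity of $\Omax$ correctly supplies the bookkeeping that the paper leaves implicit. The only cosmetic difference is that you read the density of the continuous eigenvalues directly off condition (iii) of Theorem~\ref{thm-Pisot-Meyer} instead of passing through Theorem~\ref{thm-cr-finite} as the paper suggests, which is harmless since (i) and (iii) there are equivalent.
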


As a consequence of the above discussion we also obtain the following remarkable statement: When it comes to the question of which point sets or tilings have pure point spectrum, all examples produced by  substitutions  could also be obtained by the cut-and-project formalism. More specifically, the following holds.

\begin{cor}\label{subst-complete-Meyer} Suppose that $\Phi$ is a primitive non-periodic $N$-dimensional FLC substitution with expansion $\Lambda$ satisfying the above three properties. Let $T\in\Omega_{\Phi}$ and let $\Del (T)$ be a set of punctures. Then $(\Omega_{\Phi},\R^N)$ has pure point spectrum if and only if $\Del (T)$ is a regular complete Meyer set.
\end{cor}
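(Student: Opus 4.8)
The plan is to chain together the equivalences already recorded for primitive non-periodic Meyer substitutions in Theorem~\ref{thm-Meyer-pp} with the characterization of regular complete Meyer sets via the maximal equicontinuous factor in Theorem~\ref{Meyer-equal-to-mef-almost-everywhere}, once the Meyer property of $\Phi$ has been secured on each side of the equivalence. Note that $\Del(T)$ is automatically repetitive, since $(\Omega_\Phi,\R^N)$ is minimal and $\Del(T)$ is MLD to the tilings in $\Omega_\Phi$.

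For the forward direction, assume $(\Omega_\Phi,\R^N)$ has pure point (measure) dynamical spectrum. The first step is to deduce that $\Phi$ is a Meyer substitution. Because the system is minimal and non-periodic it is infinite, so $L^2(\Omega_\Phi,\mu)$ is infinite-dimensional; as it is generated by measurable eigenfunctions and the eigenfunctions for the trivial eigenvalue are constant by unique ergodicity, there must exist a non-trivial measurable eigenvalue. By the Host--Solomyak result quoted above, for non-periodic substitutions every measurable eigenvalue is a continuous eigenvalue, hence $\mathcal{E}_{top}(\Omega_\Phi,\R^N)$ is non-trivial and the maximal equicontinuous factor is non-trivial. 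The implication (iv)$\Rightarrow$(i) of Theorem~\ref{thm-Pisot-Meyer} then shows $\Phi$ is Meyer, so $\Del(T)$ is a repetitive Meyer set. Now Theorem~\ref{thm-Meyer-pp} applies and turns pure point spectrum into the statement that $(\Omega_\Phi,\R^N)$ is an almost $1$-to-$1$ extension of its maximal equicontinuous factor; finally, the characterization of regular complete Meyer sets among repetitive Meyer sets (Theorem~\ref{Meyer-equal-to-mef-almost-everywhere}) upgrades this almost $1$-to-$1$ property to the conclusion that $\Del(T)$ is a regular complete Meyer set.

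For the converse, assume $\Del(T)$ is a regular complete Meyer set. Then in particular $\Del(T)$ is a repetitive Meyer set, so $\Phi$ is a Meyer substitution and both Theorem~\ref{thm-Meyer-pp} and Theorem~\ref{Meyer-equal-to-mef-almost-everywhere} are again at our disposal. By Theorem~\ref{Meyer-equal-to-mef-almost-everywhere}, regularity and completeness of $\Del(T)$ imply that $(\Omega_\Phi,\R^N)$ is an almost $1$-to-$1$ extension of its maximal equicontinuous factor, and the equivalence (ii)$\Leftrightarrow$(i) of Theorem~\ref{thm-Meyer-pp} then delivers pure point spectrum.

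The main obstacle, and essentially the only step requiring genuine input rather than bookkeeping, is the opening move of the forward direction: extracting the Meyer property from pure point spectrum. Everything that follows is a formal consequence of the Meyer-substitution machinery, but that machinery (notably Theorem~\ref{thm-Meyer-pp}) is licensed only once the tilings of $\Omega_\Phi$ are known to be Meyer. This deduction rests on combining Host--Solomyak continuity of eigenfunctions with the Lee--Solomyak dichotomy, so one must keep in force the three standing hypotheses on $\Lambda$ (diagonalizability, algebraically conjugate eigenvalues, equal multiplicities) under which Theorem~\ref{thm-Pisot-Meyer} is stated; these hold by assumption in the corollary, so the argument goes through.
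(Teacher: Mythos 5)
Your proof is correct and follows essentially the same route as the paper's: non-triviality of the maximal equicontinuous factor combined with Theorem~\ref{thm-Pisot-Meyer} yields the Meyer property, after which Theorem~\ref{thm-Meyer-pp} and Theorem~\ref{Meyer-equal-to-mef-almost-everywhere} are chained in both directions exactly as in the paper's proof. The only difference is that you make explicit the step the paper states tersely (``as all eigenvalues are continuous'') --- namely deducing the existence of a non-trivial continuous eigenvalue from pure point spectrum via ergodicity and the Host--Solomyak continuity of eigenfunctions --- which is a welcome elaboration rather than a different argument.
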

\begin{proof} Let $(\Omega_{\Phi},\R^N)$ have pure point spectrum. As all eigenvalues are continuous, we infer that the maximal equicontinuous factor is non-trivial.
By Theorem \ref{thm-Pisot-Meyer} the substitution must then be  Meyer. Hence,
Theorem~\ref{thm-Meyer-pp} gives that  the dynamical system is an almost $1:1$ extension of its maximal equicontinuous factor. Now, Theorem
\ref{Meyer-equal-to-mef-almost-everywhere} implies that $\Del (T)$ is a regular complete Meyer set.

\smallskip

As for the converse direction, we note that any regular Meyer set gives rise to a dynamical system which is  an almost $1:1$ extension of it's maximal equicontinuous factor by Theorem \ref{Meyer-equal-to-mef-almost-everywhere}.  From Theorem \ref{thm-Meyer-pp} we then infer pure point spectrum.
\end{proof}

\textbf{Remark.}  Of course, it is well known that a regular complete Meyer set gives rise to a dynamical system with pure point spectrum  (see  e.g. discussion in Section \ref{Background-Meyer}).   The main part of the corollary is thus the converse implication.  It has been shown for one-dimensional systems by Sing \cite{Sing}. For  higher dimensional self-similar substitutions is has been obtained by Lee in \cite{Lee}. Note, however, that the  work of Lee does not seem to claim regularity of the Meyer set but just its completeness. See also the chapter on the Pisot Substitution Conjecture in this volume.
\newpage


\section{Ellis semigroup}\label{Ellis}
If the action of a group $G$ on a space $X$ is transitive we can view $X$ and its maximal equicontinuous factor $\Xmax$ as two distinct compactifications of the acting group $G$, the difference arising from the topology in which it is compactified.
In this section we consider a third compactification of $G$ -- the Ellis semigroup
$E(X,G)$ of the dynamical system $(X,G)$.
As a space and dynamical system it tends to be by far the most complicated of the three compactifications.
But it has one advantage; namely, it naturally carries the structure of a monoid (i.e., a semigroup with neutral element). It therefore offers the possibility to characterize dynamical systems by means of this algebraic structure. There are  only a few types of systems for which this has been successfully carried out; non-periodic Delone systems are, however, not among these. So as a first step we
simply present some explicit examples of Ellis semigroups coming from Delone sets and observe that they exhibit a very interesting algebraic structure, which we have not seen before in this context. The examples we present are associated with almost canonical cut-and-project patterns. Almost canonical cut-and-project patterns are complete Meyer sets whose windows are polyhedral satisfying further conditions. Surprisingly, the Ellis semigroup for such dynamical systems has a very particular form. 
It is a completely regular semigroup (or a union of groups) \cite{Petrich}. Furthermore, its idempotents from a submonoid which is reminiscent of the so-called face semigroup associated to a hyperplane arrangement \cite{Brown}.

The material of Section~\ref{sec-Ellis1} is mostly based on the book of Auslander \cite{Auslander88}, although most of it can also be found in the book of Ellis \cite{Ellisbook},
 and that of the later sections in the thesis of the first author \cite{AujoguePhD,Aujogue13}.
\subsection{Definitions and known properties}\label{sec-Ellis1}
An action $\alpha$ of a group $G$ on the compact space $X$ is nothing else than a representation of the group in terms of transformations of $X$; i.e., for each $t\in G$, $\alpha_t$ is a function from $X$ to $X$. The set of all functions from $X$ to $X$ is the product set $X^X$ and becomes a compact space when equipped with the Tychonoff topology.
\begin{definition}
The Ellis semigroup $E(X,G)$ is the closure of $\{\alpha_t | t\in G\}$ in $X^X$.
\end{definition}
By definition of the Tychonoff topology, a net $(f_\lambda)_\lambda$ of functions $f_\lambda:X\to X$
converges to some function $f:X\to X$ if for all $x\in X$
the net $(f_\lambda(x))_\lambda$ converges to $f(x)$. Any element of $E(X,G)$ is thus a limit of a net $(\alpha_{t_\lambda})_\lambda$ where $(t_\lambda)_\lambda$ is a net in $G$.
Assuming that the action is non-degenerate we may identify
$G$ with $\{\alpha_t | t\in G\}$ and see that $E(X,G)$ is indeed a
compactification of $G$.
Furthermore,
$G$ acts on $E$ from the left: $\alpha_t^E(f) = \alpha_t\circ f $.
Thus $(E(X,G),G)$ is a dynamical system.
A factor map $\pi:(X,G)\to (Y,G)$ induces a continuous surjective monoid morphism
$\pi_*:E(X,G)\to E(Y,G)$. In fact, the latter is given  by the equality
$\pi_*(f)(\pi(x)) = \pi(f(x))$ \cite{Auslander88}[Thm.~7, p.~54].

The basic idea is now to characterize the dynamical system $(X,G)$ by means of the
properties of $E=E(X,G)$. We may   ask the following questions:
\begin{itemize}
\item
$E$ consists of functions $f:X\to X$. What are their properties?
\item What is the algebraic structure of $E$?
\item
$E$ is a compact Hausdorff space.
What more can we say about its topology?
\end{itemize}
Let us elaborate.

\smallskip

$E$ consists of functions $f:X\to X$ which are limits of homeomorphisms. Are these functions still homeomorphisms? If not, are they at least continuous or invertible?
We provide an elementary argument why this cannot be the case for all elements in the semigroup
of  the dynamical system of a repetitive non-periodic FLC Delone set.
Recall from Theorem ~\ref{thm-Olimb} that there are two distinct Delone sets $\Del^+,\Del^-$ in the hull which agree on a half space. We choose coordinates such that the first component corresponds to the normal into that half space. Then, whenever  $(t_\Lambda)_\lambda$ is a net such that the first component of $t_\lambda$ tends to $+\infty$, $\Del^+ - t_\lambda $ and $\Del^- - t_\lambda$ agree on larger and larger balls. By repetitivity we may assume that there are two such nets for which
$\lim_\lambda \Del^\pm-t^\pm_\lambda = \Del^\pm$. By compactness of $E$ we may assume that
$\alpha_{t^{-}_\lambda}$ converges to an element $f\in E$. Then
$f(\Del^+)=\lim_\lambda ( \Del^+-t^-_\lambda) = \Del^-$, because eventually the sets
$\Del^+-t^-_\lambda$ and $\Del^--t^-_\lambda$ come close. On the other hand $f( \Del^+-t^+_\lambda) =  f(\Del^+)-t^+_\lambda = \Del^--t^+_\lambda$ which tends to $\Del^+$.
Hence if $f$ were continuous the last argument would give $f(\Del^+) =\Del^+$, a contradiction.
So $f$ is not continuous. Furthermore, $f(\Del^-)=\lim_\lambda\Del^--t^-_\lambda = \Del^-$ and hence $f$ is not injective.

The set $E$ is closed under composition of functions. Accordingly,
composition of functions makes $E$ into a semigroup.
Moreover,  the identity $\alpha_0$ is a unit for the composition law
and so $E$ is a monoid.
Care has to be taken, however, concerning the topological properties of the semigroup product.
If $(f_\lambda(x))_\lambda$ converges to $f(x)$ then $\lim_\lambda f_\lambda\circ g = f\circ g$ and so the semigroup product is continuous in the left variable: stated differently, right translation by $g$, $f\mapsto fg:=f\circ g$ is a continuous map. It is however, in general, not true that the semigroup product is continuous in the right variable. Moreover, even in case that $G$ is abelian
(which is the only case that will concern us)
the semigroup product in $E(X,G)$ is, in general, not commutative (although any
element of $E(X,G)$ commutes with elements coming from $G$, i.e.,
elements of the type $\alpha_t$).

From the point of view of topology, the Ellis semigroup is either well behaved in the sense that it is separable
and every element is the limit of a sequence (as opposed to net) $(\alpha_{t_n})_n$, $t_n\in G$,  or it is rather wild in that it contains a homeomorphic image of the Stone-Cech compactification of $\N$ \cite{GlasnerMegrelishvili06}.
In the first case the semigroup is called {\em tame}. If the semigroup is first countable, i.e., has a countable neighborhood base, then it is tame.
\bigskip

One of the important results of the general theory concerns  minimal left ideals and the existence of idempotents, that is, elements $p$ satisfying $p^2=p$.
Let $F$ be a closed $G$-invariant subset of $E=E(X,G)$. Then $\alpha_t^E(F) \subset F$ implies that
$F$ is a left ideal of $E$. It follows also that any minimal left ideal is a minimal component of the dynamical system and hence closed. Furthermore, an application of Zorn's lemma yields that
every (closed) minimal left ideal of $E$ contains idempotents.

\smallskip

There are several connections between proximality and the Ellis semi-group. Two that are fundamental are highlighted in the following theorem.

\begin{theorem}
Consider a compact minimal dynamical system $(X,G)$.
\begin{enumerate}
\item[(a)] $x\in X$ and $y\in X$ are proximal if and only if $p(x) = p(y)$ for some minimal idempotent of $E(X,G)$ (a minimal idempotent is an idempotent in a minimal ideal)
\cite{Auslander88}[Thm.~13, p.~89].
\item[(b)] Proximality is transitive if and only if $E$ contains a unique minimal ideal \cite{Auslander88}[Thm.~10, p.~88].
\end{enumerate}
\end{theorem}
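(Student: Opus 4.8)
The plan is to work throughout with the standard facts recorded in the text: $E:=E(X,G)$ is a compact right-topological monoid (right translations $R_g\colon f\mapsto fg$ are continuous), by Zorn's lemma $E$ has minimal left ideals, and every minimal left ideal contains an idempotent. I will use two supporting observations. First, for each fixed $x$ the evaluation $E\to X$, $q\mapsto qx$, is continuous. Second, if $u,v$ are idempotents in a common minimal left ideal $M$, then $M=Eu=Ev$ (since $u=\id\,u\in Eu\subset M$ forces $Eu=M$, and likewise for $v$); writing $v=eu$ gives $vu=euu=eu=v$, and symmetrically $uv=u$. Consequently the relation $P_u:=\{(x,y):ux=uy\}$, which is visibly an equivalence relation for each fixed idempotent $u$, depends only on $M$ (call it $P_M$), because $ux=uy$ implies $vx=(vu)x=(vu)y=vy$.

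For part (a) I would argue as follows. If $x,y$ are proximal, choose a net $(t_\lambda)$ in $G$ with $d(t_\lambda x,t_\lambda y)\to 0$; by compactness of $X$ and of $E$, pass to subnets so that $t_\lambda x\to z$, $t_\lambda y\to z$ and $\alpha_{t_\lambda}\to p\in E$, whence $px=z=py$. Then $I:=\{q\in E:qx=qy\}$ is nonempty, closed (it is the preimage of the diagonal under the continuous map $q\mapsto(qx,qy)$), and a left ideal (if $qx=qy$ then $(rq)x=r(qx)=r(qy)=(rq)y$). Hence $I$ contains a minimal left ideal, which contains an idempotent $u$; this $u$ is a minimal idempotent with $ux=uy$. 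Conversely, if $u$ is any element of $E$ (in particular a minimal idempotent) with $ux=uy$, then writing $u=\lim\alpha_{t_\lambda}$ gives $t_\lambda x\to ux=uy\leftarrow t_\lambda y$, so $d(t_\lambda x,t_\lambda y)\to 0$ and $x,y$ are proximal. Combining (a) with the first paragraph, the full proximal relation is $P=\bigcup_M P_M$, the union running over all minimal left ideals $M$.

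For the forward direction of part (b), suppose $E$ has a unique minimal left ideal $M$. Then every minimal idempotent lies in $M$, so $P_w=P_M$ for each minimal idempotent $w$, and therefore $P=\bigcup_M P_M=P_M=P_u$ for any fixed idempotent $u\in M$. Since $P_u$ is an equivalence relation, $P$ is transitive.

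For the converse I would prove the contrapositive: two distinct minimal left ideals $M_1\neq M_2$ force $P$ to be non-transitive. Pick idempotents $u_1\in M_1$, $u_2\in M_2$ (distinct, since distinct minimal left ideals are disjoint) and seek a point $y$ with $u_1y\not\sim u_2y$. Granting such a $y$, put $x=u_1y$ and $z=u_2y$: then $(x,y)=(u_1y,y)\in P_{u_1}$ and $(y,z)=(y,u_2y)\in P_{u_2}$, so $x\sim y$ and $y\sim z$, while $x\not\sim z$, contradicting transitivity. The main obstacle is exactly the existence of this separating point: I must exclude the possibility that $u_1y\sim u_2y$ for every $y\in X$. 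I expect to handle this by choosing $u_1,u_2$ to be \emph{linked} idempotents, i.e. lying in a common minimal right ideal, so that $u_1u_2=u_2$ and $u_2u_1=u_1$ (such a pair exists for any two minimal left ideals). With this choice the two idempotents agree on each other's images, so separation can only occur at the non-distal points, and the claim reduces to the assertion that two genuinely distinct branches of $E$ over the identity of $\Xmax$ cannot have everywhere-proximal images. This is precisely the content of the Rees/structure theory of the minimal (two-sided) ideal of $E$, and it is here that I would invoke Auslander \cite{Auslander88}. The guiding model is the example $\Del=-2\N\cup\N$ of the introduction, where $u_1=\alpha^{+}$, $u_2=\alpha^{-}$, and $y=\Del$ give $u_1y=\Del^{+}\not\sim\Del^{-}=u_2y$.
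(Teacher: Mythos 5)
Your part (a) and the forward half of part (b) are correct and complete, and they are exactly the standard enveloping-semigroup arguments of the cited source (the paper itself gives no proof of this theorem -- both parts are imported from Auslander \cite{Auslander88}, so the citation is the benchmark). The closed left ideal $I=\{q\in E:qx=qy\}$, a minimal left ideal inside it, an idempotent therein, and the converse by evaluating a net $\alpha_{t_\lambda}\to u$ -- all fine; likewise your supporting computations ($Eu=M$ for $u\in M$; $uv=u$, $vu=v$ for idempotents in a common minimal left ideal, whence $P_u=P_M$ and $P=\bigcup_M P_M$), which make the implication ``unique minimal left ideal $\Rightarrow$ $P=P_M$ is an equivalence relation'' immediate. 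One warning about your guiding model: the introduction of the paper composes maps in the opposite (Auslander, right-action) order, which is why it prints $\alpha^-\circ\alpha^+=\alpha^-$; under your convention $fg=f\circ g$ one gets $\alpha^-\circ\alpha^+=\alpha^+$, and it is in this convention that $\alpha^{\pm}$ satisfy your linked relations $u_1u_2=u_2$, $u_2u_1=u_1$.

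The genuine gap is the converse of (b), and you name it yourself: the existence of $y$ with $u_1y\not\sim u_2y$, which you defer to ``Rees/structure theory'' with the heuristic that separation can only occur at non-distal points. In fact no structure theory beyond what you have already set up is needed, and the distal-point consideration is a red herring. First, your linking lemma has a two-line proof: for an idempotent $u\in M$ and any minimal left ideal $N$, the set $B=\{p\in N:pu=u\}$ is nonempty (since $Nu$ is a nonempty closed left ideal inside $Eu=M$, so $Nu=M\ni u$), closed, and a subsemigroup, hence contains an idempotent $v$; then $w:=uv\in N$ is idempotent with $uw=w$ and $wu=u$. Second -- and this is the missing lemma -- if a minimal idempotent $u$ fixes both $a$ and $b$ and $a,b$ are proximal, then $a=b$: by (a) the set $\{q\in E:qa=qb\}$ is a nonempty closed left ideal, so it contains a minimal left ideal $N$; choose $w\in N$ idempotent linked to $u$ as above; then $w(a)=(w\circ u)(a)=u(a)=a$ and likewise $w(b)=b$, while $w(a)=w(b)$ because $w\in N$, so $a=b$. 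Now take linked idempotents $u_1\in M_1$, $u_2\in M_2$ with $M_1\neq M_2$, hence $u_1\neq u_2$, and pick any $x$ with $u_1x\neq u_2x$. Since $u_2u_1=u_1$, the minimal idempotent $u_2$ fixes both $a:=u_1x$ and $b:=u_2x$, so by the lemma $a\not\sim b$; yet $a\sim x\sim b$ via $u_1$ and $u_2$ respectively, so $P$ is not transitive. Thus your separating point exists for \emph{every} $x$ with $u_1x\neq u_2x$, unconditionally, and your proof closes without invoking anything beyond part (a), Ellis--Numakura, and the linking computation.
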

Let us note that part (b) of the previous theorem gives uniqueness of the minimal ideal whenever  $ P  = R_{\max}$. In particular, uniqueness of the mimimal ideal holds   for minimal systems if $cr =1$  (see the discussion in Section \ref{Proximality}).

\smallskip

The somewhat nicest case is that in which the system $(X,G)$ is equicontinuous.

\begin{theorem}[\cite{Auslander88} Thm.~3 \& 5, p.~52,53] \label{Ellis-fundamental}
Consider a dynamical system $(X,G)$. The system is equicontinuous if and only if $E(X,G)$ is a group and all its elements are homeomorphisms. If, moreover, the action is minimal then
$(E(X,G),G)$ is topologically conjugate to $(\Xmax,G)$ and the conjugacy is a group isomorphism.
\end{theorem}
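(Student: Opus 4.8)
The plan is to separate the equivalence from the minimal case, proving in full the easy implication of the equivalence and indicating precisely where the classical Ellis/Auslander machinery must be invoked for the converse. For the direction ``equicontinuous $\Rightarrow$ $E(X,G)$ is a group of homeomorphisms,'' I would first pass to the invariant metric $\overline{d}(x,y)=\sup_{t\in G}d(t\cdot x,t\cdot y)$, which induces the topology of $X$ and for which every $\alpha_t$ is an isometry, since $\overline d(\alpha_s x,\alpha_s y)=\sup_t d((t+s)\cdot x,(t+s)\cdot y)=\overline d(x,y)$. On an equicontinuous family, pointwise convergence coincides with uniform convergence and preserves continuity, so the Tychonoff closure $E(X,G)$ of $\{\alpha_t\}$ consists of $\overline d$-isometries; in particular every element is a homeomorphism, and $E(X,G)$ is a closed subsemigroup of $\mathrm{Iso}(X,\overline d)$, which is a compact topological group in the uniform topology (Arzel\`a--Ascoli). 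I would finish with the standard fact that a closed subsemigroup $S$ of a compact topological group is a subgroup: for $s\in S$ the set $\overline{\{s^n:n\ge 1\}}\subseteq S$ is a compact semigroup, hence contains an idempotent, which in a group must be the neutral element; choosing a net $s^{n_\lambda}\to e$ with $n_\lambda\to\infty$ gives $s^{n_\lambda-1}\to s^{-1}$, so $s^{-1}\in S$.

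The converse, ``$E(X,G)$ a group of homeomorphisms $\Rightarrow$ equicontinuous,'' is the genuine obstacle, and is where I expect to lean on the cited results of Auslander and Ellis. The idea is that continuity of all elements upgrades the semigroup structure: right translations $f\mapsto f\circ g$ are always continuous, while left translation $g\mapsto f\circ g$ is continuous precisely because $f$ is continuous, so composition is separately continuous on the compact Hausdorff group $E(X,G)$. By Ellis's joint-continuity theorem a compact Hausdorff group with separately continuous multiplication is a topological group, so in particular the action map $E(X,G)\times X\to X$, $(f,x)\mapsto f(x)$, becomes jointly continuous. Joint continuity of this action together with compactness of $E(X,G)$ then yields equicontinuity of the family $\{f:f\in E(X,G)\}\supseteq\{\alpha_t\}$, by uniform continuity on the compact product. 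The delicate point is exactly the passage from separate to joint continuity, and I would invoke the Ellis/Auslander argument for it rather than reprove it here.

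Finally, assume $(X,G)$ is minimal (and equicontinuous). Then $X$ already satisfies the universal property of Theorem~\ref{characterization-maximal-equicontinuous-factor}, so $\Xmax\cong X$, and by Theorem~\ref{cor-equi} we may regard $(X,G)$ as a minimal rotation on a compact abelian group with neutral element $x_0$, each $\alpha_t$ acting as the translation $x\mapsto (t\cdot x_0)+x$. I would define $\Psi:E(X,G)\to X$ by $\Psi(f)=f(x_0)$. Evaluation at $x_0$ is Tychonoff-continuous, and $\Psi(\alpha_t)=t\cdot x_0$ has dense image by minimality, so $\Psi$ is onto. Writing any $f\in E(X,G)$ as a pointwise limit $f=\lim_\lambda\alpha_{s_\lambda}$ and passing to the limit in $\alpha_{s_\lambda}(x)=(s_\lambda\cdot x_0)+x$, continuity of the group addition gives $f(x)=\Psi(f)+x$ for all $x$; hence $\Psi(f)=\Psi(g)$ forces $f=g$, so $\Psi$ is injective, and $\Psi(f\circ g)=\Psi(f)+\Psi(g)$. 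Thus $\Psi$ is a continuous bijection from a compact space onto a Hausdorff space, hence a homeomorphism, it is a group isomorphism, and it is a $G$-map because $\Psi(\alpha_t^E(f))=(\alpha_t\circ f)(x_0)=t\cdot\Psi(f)$. This exhibits the asserted conjugacy and group isomorphism $(E(X,G),G)\cong(\Xmax,G)$.
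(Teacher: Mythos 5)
The paper offers no proof of Theorem \ref{Ellis-fundamental}: it is quoted directly from Auslander's book \cite{Auslander88}, so there is no in-text argument to compare yours against; judged on its own terms, your reconstruction is essentially the classical Ellis--Auslander route and is correct in substance. The forward direction (invariant metric, pointwise limits of isometries, Arzel\`a--Ascoli, and the fact that a closed subsemigroup of a compact topological group is a subgroup) is complete modulo two standard facts you use silently: that an isometry of a compact metric space into itself is automatically onto (needed before you may call the limit maps homeomorphisms), and the paper's standing convention that metrizability may be assumed (the general statement requires uniform structures). Your treatment of the minimal case via $\Psi(f)=f(x_0)$ is exactly right and self-contained: identifying $\Xmax$ with $X$ by the universal property of Theorem \ref{characterization-maximal-equicontinuous-factor}, using Theorem \ref{cor-equi} to realize $(X,G)$ as a rotation, and the limit computation $f(x)=\Psi(f)+x$ (which correctly uses continuity of the group addition) together give injectivity, the homomorphism property, $G$-equivariance, and the compact-to-Hausdorff homeomorphism argument without further input.

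One logical slip in the converse direction deserves flagging. From Ellis's theorem that a locally compact Hausdorff group with separately continuous multiplication is a topological group, you conclude ``so in particular'' that the evaluation action $E(X,G)\times X\to X$ is jointly continuous. As written this is a non sequitur: knowing that $E(X,G)$ is a topological group says nothing, by itself, about the action on $X$. What you need is the companion theorem of Ellis that a separately continuous action of a locally compact topological group on a locally compact Hausdorff space is jointly continuous --- a second, distinct application of the separate-to-joint continuity machinery, whose hypotheses you have in fact verified (continuity in $x$ because all elements of $E(X,G)$ are assumed continuous; continuity in $f$ by the Tychonoff topology). Since you explicitly announce that you will invoke the Ellis/Auslander argument for precisely this delicate point, the repair is a one-line rewording rather than a missing idea; with it, the deduction of equicontinuity from joint continuity and compactness of $E(X,G)$ (the closed-set/projection argument producing a uniform $\delta$ for each $\epsilon$) is standard and correct.
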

Much more can be said to support the statement: If the semigroup $E$ is well-behaved then the system is close to being equicontinuous. For instance, if, for a minimal system, all elements of $E$ are continuous, then they are even homeomorphisms and the system is equicontinuous \cite{EllisNerurkar89}.
On the other hand, if all elements of $E$ are bijective and so $E$ is a group, then the system must be distal and, conversely, triviality of the proximal relation implies that $E$ is a group. Finally we mention that (again for minimal systems)  the topology of $E$ is metrizable if and only if the system is  equicontinuous \cite{Glasner06}.
For a non-periodic FLC Delone system, however, the  Ellis semigroup is neither a group nor is it metrizable.

\bigskip

Let $M$ be a minimal left ideal and $J$ the set of its idempotents. Then for every $p\in J$ and $m\in M$ we have $mp = m$ and, furthermore,  the restriction of the semigroup product to $pM$ makes $pM$ a group with neutral element $p$ \cite{Auslander88}[Lemma~1, p.\ 83].
Moreover, all the groups $pM$ are isomorphic, the isomorphism between $pM$ and $qM$ being given by $pm\mapsto qm$; and $M$ is their disjoint union: $M = \bigcup_{p\in J} pM$.
 Let $\mathcal G := p_0M$ for some chosen $p_0\in J$. Then, as a semigroup,
 $$ M \cong \mathcal G\times J$$
where we take the product operation on the r.h.s..
The semigroup isomorphism is given by $pm \mapsto (p_0 m,p)$. Indeed,
$J$ is a sub-semigroup with product given by the so-to-say left domination rule
$$ pq = p,\quad p,q\in J$$
and $pmp'm' = pmm'$, showing that the above map preserves the semigroup product.
We can say a little bit more about the group $\mathcal G$. Since
 $p_0Ep_0\subset p_0M$ and $(\pmax)_*(p_0) = \id$ ($p_0$ is an idempotent and $\id$ is neutral element in $E(\Xmax,G)$), $(\pmax)_*$ restricts to a surjective group homomorphism $(\pmax)_*:\mathcal G \to E(\Xmax,G)$ and if $(X,G)$ is minimal then the latter is isomorphic to $\Xmax$.
\begin{lemma}\label{lem-min}
Let $(X,G)$ be a minimal dynamical system. Then $(\pmax)_*:\mathcal G \to E(\Xmax,G)\cong \Xmax$
is an isomorphism if and only if $cr=1$ ($P=R_{max}$).
\end{lemma}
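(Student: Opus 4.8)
The plan is to note that surjectivity of $(\pmax)_*\colon\mathcal G\to E(\Xmax,G)$ is already established, so the whole statement reduces to deciding when this homomorphism is injective, i.e. when its kernel
\[
K=\{g\in\mathcal G:(\pmax)_*(g)=\mathrm{id}\}=\{g\in\mathcal G:\pmax(g(x))=\pmax(x)\ \text{for all }x\in X\}
\]
is trivial (equal to the neutral element $p_0$ of $\mathcal G$); here I have used $(\pmax)_*(g)(\pmax(x))=\pmax(g(x))$ and surjectivity of $\pmax$. Thus $K$ is exactly the set of elements of $\mathcal G$ that map every fibre $\fb\xi$ into itself. Before splitting into the two implications I would record two structural facts. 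First, since $(X,G)$ is minimal and $M$ is a closed minimal left ideal, $Mx=X$ for every $x$ (the set $Mx$ is compact, nonempty and $G$-invariant), whence $\mathcal Gx=p_0Mx=p_0(X)=\mathrm{Fix}(p_0)$ independently of $x$. Second, writing $g=gp_0$ for $g\in\mathcal G=p_0M$ (using $mp_0=m$), the map $x\mapsto g(x)$ factors through $p_0$, and on $\mathrm{Fix}(p_0)$ the group $\mathcal G$ acts transitively with $p_0$ as genuine identity. Restricting to a fibre $F=\fb\xi$ and using that $(\pmax)_*(g)$ is a translation of the group $\Xmax\cong E(\Xmax,G)$ (Theorem~\ref{Ellis-fundamental}), which fixes $\xi$ only when it is the identity, I obtain $\{g\in\mathcal G:g(z)\in F\}=K$ for $z\in F$, hence $Kz=\mathcal Gz\cap F=\mathrm{Fix}(p_0)\cap F=p_0(F)$ for $z\in p_0(F)$. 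In other words $K$ acts transitively on $p_0(F)$, while $p_0\in K$ fixes $p_0(F)$ pointwise.

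For the implication $cr=1\Rightarrow$ injective, I would use that $cr=1$ means $P=R_{max}$ (Theorem~\ref{characterization-PequalQ}), so $P$ is an equivalence relation and in particular transitive; by the characterisation of transitivity of proximality via uniqueness of the minimal ideal, $M$ is then the unique minimal left ideal and $J$ contains every minimal idempotent. Now take $g\in K$. For each $x$ the pair $(x,g(x))$ lies in $R_{max}=P$, so $x$ and $g(x)$ are proximal, and by the idempotent characterisation of proximality there is $w\in J$ with $w(x)=w(g(x))$. Using the left–domination identity $p_0w=p_0$ in $J$ together with $p_0g=g$ (as $g\in p_0M$), I compute $p_0(x)=p_0(w(x))=p_0(w(g(x)))=p_0(g(x))=g(x)$, valid for every $x$; hence $g=p_0$ and $K=\{p_0\}$, so the map is injective and therefore an isomorphism.

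For the converse I argue contrapositively that $cr\ge 2\Rightarrow K\neq\{p_0\}$. I introduce the relation $y\approx y'\iff p_0(y)=p_0(y')$; by the idempotent characterisation (with the minimal idempotent $p_0$) one has $\approx\ \subseteq P$. Consequently any family of mutually non-proximal points in a fibre lies in pairwise distinct $\approx$-classes, which gives the bound $cr\le\#\,p_0(F)$ for every fibre $F$. If $cr\ge 2$ then $\#\,p_0(F)\ge 2$, and since $K$ acts transitively on $p_0(F)$ while $p_0$ fixes $p_0(F)$ pointwise, there must exist $g\in K$ with $g\neq p_0$; thus $(\pmax)_*|_{\mathcal G}$ is not injective, completing the equivalence.

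The main obstacle is the failure of transitivity of $P$ (equivalently the possible presence of several minimal left ideals), which blocks the naive identity $\#\,p_0(F)=cr$. I sidestep it by needing only the one–sided bound $cr\le\#\,p_0(F)$ in the converse direction, and by invoking $cr=1\Rightarrow$ unique minimal ideal (so that $p_0w=p_0$ is available for the idempotent $w$ produced by proximality) in the forward direction. The other point requiring care is the bookkeeping around $p_0$ being the neutral element of $\mathcal G$ without acting as the identity on $X$, which is precisely why the orbit $\mathcal Gx=\mathrm{Fix}(p_0)$ is independent of $x$ and why $K$ acts as a genuine transformation group on $p_0(F)$.
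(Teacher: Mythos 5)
Your proof is correct, and while the forward implication coincides with the paper's, your converse takes a genuinely different route. For $cr=1\Rightarrow$ injectivity you run exactly the paper's computation: uniqueness of the minimal ideal via transitivity of $P$, the idempotent characterization of proximality producing $w\in J$ with $w(x)=w(g(x))$, and then $p_0(x)=p_0w(x)=p_0w(g(x))=g(x)$ using left domination $p_0w=p_0$ and $p_0g=g$ — this is the paper's chain $f(x)=p_0f(x)=p_0pf(x)=p_0p(x)=p_0(x)$ in different notation. For the converse, the paper constructs a single explicit witness: it picks non-proximal $x,x'\in\fb{\xi}$, normalizes so that $p_0(x)=x$, uses minimality to find $f$ with $f(x')=x$, sets $m=p_0f$, observes $(\pmax)_*(m)=\id$ by freeness of the action of $E(\Xmax,G)\cong\Xmax$ on $\Xmax$, and concludes $m\neq p_0$ since an idempotent cannot send $x'$ to a point not proximal to it. You instead prove structural facts — $Mz=X$ by minimality, hence $\mathcal{G}z=p_0(X)=\mathrm{Fix}(p_0)$ for every $z$, hence $Kz=\mathrm{Fix}(p_0)\cap\fb{\xi}=p_0(\fb{\xi})$ so the kernel $K$ acts transitively on $p_0(\fb{\xi})$ — and combine them with the counting bound $cr\leq\#\,p_0(\fb{\xi})$ (valid since $p_0$-coincidence implies proximality via the minimal idempotent $p_0$) to force a nontrivial kernel element when $cr\geq 2$. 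The two converses rest on the same pillars (minimality and the idempotent characterization of proximality), but yours buys two things: the reusable inequality $cr\leq\#\,p_0(\fb{\xi})$ together with transitivity of $K$ on $p_0$-images of fibers, and a clean resolution of a membership point the paper leaves implicit — for $m=p_0f$ to lie in $\mathcal{G}=p_0M$ one should take $f\in M$, which is legitimate precisely because $Mx'=X$, the fact you prove explicitly; your argument also dispenses with the paper's normalization step replacing $x,x'$ by $p_0x,p_0x'$. The paper's version, in exchange, is shorter and exhibits the failure of injectivity by a concrete element rather than by a cardinality argument.
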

\begin{proof} We first show that $cr = 1$ implies that $(\pmax)_*$ is an isomorphism. As $(\pmax)_*$ is onto, we only have to show its injectivity.
The map $(\pmax)_*$ is injective if $(\pmax)_*(f) = \id$ implies that $f=p_0$.
Now $(\pmax)_*(f) = \id$ means that for all $\xi\in\Xmax$ and $x\in\fb{\xi }$ we have $f(x)\in\fb{\xi }$. Let $f\in p_0M$ be given such that $\pmax(x) = \pmax(f(x))$. 
By the hypothesis $cr =1$, the elements  $x$ and $f(x)$ are then proximal. Moreover, by (b) of Theorem \ref{Ellis-fundamental}, we have that the minimal ideal is unique. From part (a) of that theorem, we then infer that there exists a $p \in J$,  such that $p f(x) = p (x)$. Applying $p_0$ on both sides  and using that  $p_0 p = p_0$ by the mentioned left domination, and that $f = p_0 f$ by $f\in p_0 M$,  we then obtain
$$ f(x) = p_0 f (x) = p_0 p f (x)  = p_0 p (x)  = p_0 (x).$$
As $x$ is arbitrary, this shows  $f = p_0$.

To prove the converse suppose that $cr>1$ so that there are $\xi\in\Xmax$ and $x,x'\in\fb{\xi}$ which are not proximal. We may even assume that $p_0(x) = x$ as we can replace $x$ by $p_0 x$ and $x'$ by $p_0 x'$ and this will  not change non-proximality. (If $p_0 x $ and $p_0 x'$ were proximal, there would exist, by Theorem \ref{Ellis-fundamental}, a $q \in J$ with $q p_0 x = q p_0 x'$ and this would give $q x = q x'$ and  proximality of $x$ and $x'$ would follow from that theorem.)  By minimality of the original system and the definition of the Ellis semigroup, there exists $f\in E(X,G)$ such that $f(x') = x$. So if $m=p_0f$ we have $m(x') = x$.
Thus $(\pmax)_*(m)(\xi) = \xi$.
Since $E(\Xmax,G)$ is a group acting fixed point freely on $\Xmax$, the latter implies that  $(\pmax)_*(m) = 0$. But $m$ cannot be an idempotent, because $x'$ is not proximal to $x$.
\end{proof}
Note that
although $(\pmax)_*$ is continuous as a map from $E(X,G)$ to $E(\Xmax,G)$, one cannot conclude  in the above lemma that $\mathcal G$ is homeomorphic to $\Xmax$, as $p_0M$ need not be closed.

\smallskip

In the next section we introduce a family of Delone sets - the almost canonical cut-and-project sets - whose dynamical systems have Ellis semi-groups with a particularly nice algebraic description: the entire semi-group, not just the (unique) minimal left ideal, is isomorphic with a subgroup of the product of the maximal equicontinuous factor and a finite monoid of idempotents.


\subsection{Almost canonical cut-and-project sets}
Almost canonical cut-and-project sets are special types of complete Meyer sets. Their
internal group is a real vector space
$\R^{\np}$ where $\np$ is called the {\em codimension} of the
set. They are characterized by the form of the set, $S$, of singular
points in the maximal equicontinuous factor
$\Omax = \TM = (\R^\nd\times \R^\np)/\tilde L$; that is, the set of
points $\xi \in\TM$ which have a fiber $\fb{\xi }$ containing more
than one point.
Recall that $S$ is determined by the boundary points
$\partial W$ of the window $W$, namely
$$ S = ((\R^\nd\times\partial W) +\tilde L) / \tilde L = (\R^\nd\times
(\partial W+p_2(\tilde L))) / \tilde L.$$
\begin{definition} A complete Meyer set 
is almost canonical if 
its internal group $H$ is a vector space $\R^\np$ and its window $W$ a 
finite union of polyhedra and
the following two conditions are satisfied:
  \begin{enumerate}
  \item
There are finitely many affine hyperplanes
$A_i\subset\R^\np$, $i\in I$, such that the set $\partial W+p_2(\tilde L)$ may be
alternatively described as
$$\partial W+p_2(\tilde L) = \bigcup_{i\in I} A_i+ p_2(\tilde L).$$
\item
Any compact polyhedron whose boundary lies in $\partial W+p_2(\tilde L)$ can be constructed via a finite sequence of unions, intersections and complements of polyhedra of the form 
$W+p_2(x)$ for $x\in\tilde L$.
\end{enumerate}
\end{definition}
Such a situation arises if $W$ is a union of polyhedra whose
codimension $1$ faces span affine hyperplanes which have a
dense stabilizer under the action of $p_2(\tilde L)$. Then we may take for the $A_i$ these 
hyperplanes. 
A so-called canonical cut-and-project set is one for which $W = p_2(C)$ is
the projection of a unit cube $C$ for $\tilde L$ (w.r.t.\ to some
choice of base for $\tilde L$). It
satisfies the above criteria since the faces of the projected cube are spanned by lattice vectors.

The advantage of the alternative description of the singular points is that it leads to a
very explicit description of the hull $\Omega_\Del$. In fact
$$ \Omega_\Del = (\R^\nd\times \R_c^\np)/\tilde L$$
where $\R_c^\np$ is a locally compact totally disconnected space, a
``cut-up version'' of $\R^\np$, which is a certain completion of
$\R^\np \backslash( \bigcup_{i\in I} A_i+ p_2(\tilde L))$.
This allows the calculation of the cohomology groups (see the chapter ``Spaces of projection method patterns and their cohomology'' in this volume) and of the complexity exponents of the sets
\cite{Julien} and, as we review here, of the Ellis semigroup.

We will not directly look at the Ellis semigroup of the
(so-called continuous) dynamical system $(\Omega_\Del,\R^\nd)$ but first at
the semigroup of a reduction of it and obtain $E(\Omega_\Del,\R^N)$ by suspension.
The reduction is obtained from a choice of $\np$-dimensional subspace
$F\subset\R^\nd\times\R^\np$ whose intersection with $\tilde L$ is a rank
$\np$ subgroup $D = F\cap\tilde L$.
Then $F/D$
is  an $\np$ torus in $\TM$ which is transversal to the $\R^\nd$-action. 
The first return to $F/D$  of the equicontinuous $\R^\nd$-action on $\TM$
yields an equicontinuous $\tilde L/D$-action on $F/D$.
By construction $\tilde L/D$ is free of rank $\nd$ and so we simply
write it as $\Z^\nd$. Now let $\X=\pmax^{-1}(F/D)$. This is then transversal to the
$\R^N$-action on $\Omega_\Del$ and the first return yields an action of $\Z^\nd$ on $\X$.
It is the Ellis semigroup of this reduction $(\X,\Z^\nd)$ of $(\Omega_\Del,\R^\nd)$  which we now describe more precisely.

We denote now $\Gamma = p_2(\tilde L)$ and $\Delta = p_2(D)$. Then
$$ \X \cong \R^\np_c/\Delta$$
with $\Z^\nd$-action induced by $\Gamma$, i.e.,  $\Z^\nd = \Gamma/\Delta$.
Its maximal equicontinuous factor is $\XMmax =\Tp := \R^\np/\Delta$.

For each affine hyperplane $A_i$ there is a vector $a_i\in \R^\np$ and a
codimension $1$ subspace $H_i^0\subset \R^\np$ such that $A_i =
H_i^0+a_i$. We then must have that $\bigcap_{i\in I} H_i^0 =\{0\}$.
So the dynamical system $(\X,\Z^\nd)$ is entirely described by the data
$(\{A_i\}_{i\in I},\Gamma,\Delta)$ consisting of a finite collection of affine hyperplanes $\{A_i\}_{i\in I}$ in a real vector space $\R^\np$ such that
the intersection of their corresponding hyperspaces is trivial; a dense rank $\nd+\np$
sublattice $\Gamma$; and a rank $\np$ sublattice $\Delta$ which spans $\R^\np$.
All that follows depends only on this data and does no longer refer to a Delone set or a tiling.
The subspace $H_i^0$ cuts $\R^\np$ into two halfspaces. Choose for each $i$ a positive side. 
We denote by $H^+_i$ and 
$H^-_i$ the open half-space on the positive and the negative side, respectively. It is convenient to set $H^\infty_i=\R^\np$.
\begin{definition}
The {\em cut type} $\pt(h)$ of a point $h\in \R^\np$ is the subset
$$ \pt(h) = \{i\in I: h\in A_i+\Gamma\}.$$
A {\em point type} is a function $\pct\in \{+,-,\infty\}^I$ such that the cone
$$ C_{\pct} := \bigcap_{i\in I} H_i^{\pct(i)} $$
is non-empty. We denote by
$\mathfrak P$ the finite set of  point types.
We call $ C_{\pct}$ the (point) cone associated with $\pct$.
Its domain is
$$ \mbox{\rm dom}\, \pct =  \{i\in I: \pct(i)\neq \infty\}.$$
\end{definition}
Note that the cone $ C_{\pct}$ is a connected component of
$\R^\np \backslash \bigcup_{i\in \mbox{\rm \small dom}\, \pct} H_i^0$.
We denote by $\underline{\infty}\in \{+,-,\infty\}^I$ the function which is constant equal to $\infty$.
Its domain is empty and $C_{\underline{\infty}} = \R^\np$.
By construction $\pt(h+\gamma) = \pt(h)$ for all $\gamma\in\Gamma$  and so the cut type is also
defined for the points  of the
torus $\Tp =\R^\np/\Delta$.

Recall that the tangent cone $T_S(x)$ at $x$ of a subset $S\subset\R^\np$ is the set of vectors $v$ which can be obtained as limits of the form $v=\lim_n\frac{x_n-x}{s_n}$ where $(x_n)_n\subset S$ and $(s_n)_n\subset\R^{+}$ are sequences such that $\lim_n s_n = 0$. 
It $x$ lies in the interior of $S$ then $T_S(x) = \R^\np$.
If $C$ is a closed cone whose tip is at $0$ then $T_C(0) = C$. 
\begin{theorem}
The dynamical system $ (\X,\Z^\nd)$ is isomorphic to
$$ \X = \{(\xi ,\pct)\in\Tp\times \mathfrak P: \mbox{\rm dom}\, \pct = \pt(\xi )\}$$
with $\Z^\nd$ action given by
$t\cdot (\xi ,\pct) = (\xi  +t,\pct)$ and with topology  described in terms of convergence of sequences as
follows: 
A sequence $((\xi _n,\pct_n)_n$ converges to $(\xi,\pct)$ if and only if $\xi _n\to \xi $ in $\Tp$ and eventually $\xi _n-\xi  \in \overline{C_{\pct}}$ and
$T_{\overline{C_{\pct_n}}}(0)\subset T_{\overline{C_{\pct}}}(\xi_n-\xi)$.


The maximal equicontinuous factor of $\X$ is $\Tp$ and the factor map is the projection onto the first factor.
\end{theorem}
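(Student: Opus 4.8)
The plan is to make explicit the ``cut-up'' space $\R^\np_c$ underlying the identification $\X\cong\R^\np_c/\Delta$ recorded above, and then read off both the topology and the maximal equicontinuous factor from it. The argument splits into a combinatorial step (the underlying set and the action) and a geometric step (matching the local topology of the hull to the stated cone conditions).

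First I would describe $\R^\np_c$ as a set. Away from the arrangement $\bigcup_{i\in I}(A_i+\Gamma)$ a point $h$ is regular, has $\pt(h)=\emptyset$, and is not blown up. If $h$ lies on the arrangement, then in a neighbourhood of $h$ the arrangement coincides with the central family $\{H_i^0+h:i\in\pt(h)\}$, because the remaining translates $A_j+\gamma$ are bounded away from $h$. The local chambers at $h$ (the connected components of the complement of this central family) are exactly the nonempty cones $\bigcap_{i\in\pt(h)}H_i^{s_i}$, i.e. the cones $C_{\pct}$ of those point types $\pct$ with $\mbox{\rm dom}\,\pct=\pt(h)$. Taking one copy of $h$ for each such chamber realises the completion as $\{(h,\pct):\mbox{\rm dom}\,\pct=\pt(h)\}$. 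Since $\Delta\subset\Gamma$, translation by $\Delta$ preserves $\pt$ and carries chambers to chambers without rotating cones, so it acts by $(h,\pct)\mapsto(h+\delta,\pct)$; quotienting yields exactly $\X=\{(\xi,\pct):\mbox{\rm dom}\,\pct=\pt(\xi)\}$ with the residual $\Gamma/\Delta=\Z^\nd$ acting by $t\cdot(\xi,\pct)=(\xi+t,\pct)$, as claimed.

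Second, and this is the crux, I would identify the subspace topology that $\X=\pmax^{-1}(F/D)$ inherits from the hull with the stated sequential convergence. The local topology says two transversal points are close when the corresponding model sets agree on a large ball after a small translation. The almost canonical conditions (in particular condition $2$, that every compact polyhedron bounded by the arrangement is a finite Boolean combination of translates of $W$) guarantee that the $R$-patch at a cut-up point $(\xi,\pct)$ is determined by $\xi$ together with the side choices recorded by $\pct$ at the finitely many arrangement hyperplanes meeting a neighbourhood. Converting ``patches agree on a large ball up to a small translation'' into this combinatorial data yields precisely the three conditions: $\xi_n\to\xi$ in $\Tp$; the approach is from the correct chamber, i.e. eventually $\xi_n-\xi\in\overline{C_{\pct}}$; and the finer local cone of $(\xi_n,\pct_n)$ is seen from within the cone of $(\xi,\pct)$ at the displacement, i.e. eventually $T_{\overline{C_{\pct_n}}}(0)\subset T_{\overline{C_{\pct}}}(\xi_n-\xi)$. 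I expect the bookkeeping relating patch agreement to these tangent-cone inclusions, together with the verification that the resulting convergence defines the same compact Hausdorff topology, to be the main obstacle; it is here that polyhedrality and the hypothesis $\bigcap_{i\in I}H_i^0=\{0\}$ enter.

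Finally, the maximal equicontinuous factor is immediate once the model is in place. Projection onto the first coordinate is a continuous $\Z^\nd$-equivariant surjection onto $\Tp=\R^\np/\Delta$, and $(\Tp,\Z^\nd)$ is a rotation on a compact abelian group, hence equicontinuous by Theorem~\ref{cor-equi}, with the dense subgroup $\Gamma/\Delta$ acting, so it is minimal. Since $\Gamma$ is countable, the arrangement has measure zero in $\Tp$, so regular points $\xi$ with $\pt(\xi)=\emptyset$ exist; for such $\xi$ the only admissible type is $\underline{\infty}$ and the fibre $\{(\xi,\underline{\infty})\}$ is a single point. As $(\X,\Z^\nd)$ is minimal, being a reduction of the minimal hull of a repetitive complete Meyer set, Lemma~\ref{lemma-1-1-almost-automorphic} applies and shows that $(\Tp,\Z^\nd)$ is the maximal equicontinuous factor, with $\pmax$ the first-coordinate projection.
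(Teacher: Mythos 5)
Your opening step rests on a claim that is false in this setting: you assert that near a singular point $h$ the arrangement coincides with the finite central family $\{H_i^0+h : i\in\pt(h)\}$ because ``the remaining translates $A_j+\gamma$ are bounded away from $h$.'' But $\Gamma=p_2(\tilde{L})$ is \emph{dense} in $\R^\np$ (this is an axiom of the cut-and-project scheme, and the data defining the system explicitly posit a dense rank $\nd+\np$ subgroup $\Gamma$), so each family $A_i+\Gamma$ is a dense union of hyperplanes: every neighbourhood of $h$ meets infinitely many hyperplanes from every family. Consequently there are no ``local chambers'' -- the complement of the arrangement has empty Euclidean interior -- and $\R^\np_c$ is not obtained by blowing up a locally finite arrangement. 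This is precisely why $\R^\np_c$ is totally disconnected, as stated in the paper; your blow-up picture would instead produce a space locally homeomorphic to finitely many closed cones glued at a point, which has the wrong topology. The set-level conclusion can be salvaged (the side of $h$ relative to any hyperplane \emph{not} through $h$ is forced by Euclidean proximity at each finite patch radius, so free binary choices occur only at the at most $\#I$ hyperplanes through $h$, giving $\{(\xi,\pct): \mbox{\rm dom}\,\pct=\pt(\xi)\}$), but that argument runs through the patch uniformity, not local finiteness, and you have not given it.

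The same error propagates into your second step, where you speak of side choices ``at the finitely many arrangement hyperplanes meeting a neighbourhood'': infinitely many do, and the finite set of hyperplanes relevant for agreement of $R$-patches grows with $R$ and is eventually dense. More importantly, this second step \emph{is} the theorem: translating ``patches agree on a large ball after a small translation'' into the condition that eventually $\xi_n-\xi\in\overline{C_{\pct}}$ together with the tangent-cone inclusion $T_{\overline{C_{\pct_n}}}(0)\subset T_{\overline{C_{\pct}}}(\xi_n-\xi)$ (which governs approach along the boundary of the cone, and is where the almost canonical hypotheses and $\bigcap_{i\in I}H_i^0=\{0\}$ are actually used) is the entire technical content, and you explicitly defer it as ``the main obstacle.'' Note that the paper itself offers no proof to compare against -- it refers to \cite{Aujogue13} -- so a blind attempt here must supply exactly this bookkeeping to count as a proof. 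Your final step is sound: fibres over nonsingular $\xi$ (which exist since the arrangement is a countable union of null sets) are singletons $(\xi,\underline{\infty})$, the projection is a continuous equivariant surjection onto the minimal rotation $(\Tp,\Z^\nd)$, and given minimality of $(\X,\Z^\nd)$, Lemma \ref{lemma-1-1-almost-automorphic} identifies $\Tp$ as the maximal equicontinuous factor. But the first two steps, as written, do not stand.
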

The expressions $\xi_n-\xi\subset \overline{C_{\pct}}$ and 
$T_{\overline{C_{\pct_n}}}(0)\subset T_{\overline{C_{\pct}}}(\xi_n-\xi)$
should be understood for
large enough $n$ so that we can lift $\xi_n-\xi$ into a small neighbourhood of $0$ in $\R^\np$ where the expressions make sense. The condition of inclusion 
$T_{\overline{C_{\pct_n}}}(0)\subset T_{\overline{C_{\pct}}}(\xi_n-\xi)$
is only relevant if 
the (lifted) sequence $\xi_n-\xi$ does not approach the tip of $C_\pct$ from inside $C_\pct$ but rather along its boundary.
This picture of the topology of $\X$ using cones is reminiscent to the oldest one, see \cite{Le},
but we refer the reader to \cite{Aujogue13} for a proof in the present framework.


\subsection{The Ellis semigroups of the systems $(\X,\Z^\nd)$ and $(\Omega_{\Del},\R^N)$}
We now consider the Ellis semigroup $E(\X,\Z^\nd)$
of the dynamical system defined by the data $(\{A_i\}_{i\in I},\Gamma,\Delta)$.
We describe its topology, its monoid structure, and finally its action on $\X$.
\begin{definition}
A transformation type is a function $\tct\in \{+,-,0\}^I$ such that
the cone
$$ C'_{\tct} := \bigcap_{i\in I} H_i^{\tct(i)} $$
is non-empty.
We denote by $\mathfrak T'$ the finite set of transformation types.
\end{definition}
Note that the cones $ C'_{\tct}$ are the
constituents of a stratification of $\R^\np$: If the $\tct(i)$ are all
different from $0$ then $C'_{\tct}$ is
a connected component of $\R^\np \backslash \bigcup_{i\in I} H_i^0$. In general $C'_{\tct}$ is  a connected component of
$\bigcap_{i:\tct(i)=0} H_i^0 \backslash \bigcup_{i:\tct(i)\neq 0} H_i^0$.
We denote by $\mathfrak o\in \{+,-,0\}^I$ the function which is constant equal to $0$. Its cone is one point: $C'_{\mathfrak o} = \{0\}$. All cones are disjoint and so only  $C'_{\mathfrak o}$ contains the origin.

Let $W_\tct$ be the connected component of
$\overline{\R C'_\tct \cap \Gamma}$ containing $0$. We call $C_\tct := W_\tct\cap C'_\tct$ the {\em effective} or {\em transformation} cone of $\tct$. It might be empty, as, for instance, if the intersection $\R C'_\tct \cap \Gamma$ is discrete but $\tct\neq\mathfrak o$.  Let
$$ \mathfrak T = \{\tct\in\mathfrak T': C_\tct \neq\emptyset\}$$
be the set of effective transformation types. For $\tct\in\Tct$ we consider
$\R^\np_\tct = \R C_\tct +\Gamma$ and its quotient $\Tp_\tct = \R^\np_\tct/\Delta$. Note that $\Gamma/\Delta \subset \Tp_\tct\subset \Tp$.
\bigskip

\noindent
{\bf Remark.} The {\em complexity} of a Delone set $\Del$ is the growth rate, as $R\to\infty$, of the number of translationally inequivalent sets of the form $B_R(x)\cap \Del$: the complexity is $\alpha$ if this number grows like $R^{\alpha}$. It is shown in \cite{Julien} that, for an almost canonical cut-and-project set, the complexity $\alpha$ satisfies $N\le\alpha\le N N^{\perp}$. While maximal complexity ($\alpha=N N^{\perp}$) is generic, many of the familiar examples - the  octagonal tilings, the Penrose tilings and their $3$-dimensional icosahedral generalisations, as well as  the Danzer tilings - have minimal ($\alpha=N$) complexity. It is a feature of almost canonical projection sets of minimal complexity that $\Gamma\cap C'_\tct$ is dense in $C'_\tct$. For those systems the effective cone $C_\tct$ coincides with $C'_\tct$, 
$W_\tct =\bigcap_{i:\tct(i) = 0} H_i^0$, and $$\Tp_\tct=\left(\bigcap_{i:\tct(i) = 0} H_i^0+\Gamma\right)/\Delta.$$
 \begin{theorem}[\cite{Aujogue13}]\label{thm-Ellis}
With the notation above, the operation
\begin{equation}\label{eq-T-prod}\nonumber
 (\mathfrak t \mathfrak t')(i) = \left\{
\begin{array}{cc}
\mathfrak t(i) & \mbox{if } \mathfrak t(i)\neq 0 \\
\mathfrak t'(i) & \mbox{if } \mathfrak t(i)= 0
\end{array}\right.
\end{equation}
defines a monoid structure\footnote{We note a difference between this formula and the one in \cite{AujoguePhD,Aujogue13} where the convention of \cite{Auslander88} that the semigroup acts from the right is used.}
on $\mathfrak T$ with $\tct = \mathfrak o$ as unit. The Ellis semigroup of $(\X,\Z^\nd)$ is isomorphic to the sub-monoid
$$E(\X,\Z^\nd) \cong \bigcup_{\tct\in\mathfrak T}  \Tp_\tct \times \{\tct\}$$
of $\Tp\times \Tct$ equipped with the product
\begin{equation}\label{eq-E-prod}\nonumber
(\xi ,\tct) (\xi ',\tct') = (\xi +\xi ',\tct \tct').
\end{equation}

Its action $E(\X,\Z^\nd)\times \X\to \X$ is given by
\begin{equation}\label{eq-E-action}\nonumber
 (\xi ,\mathfrak t)\cdot (\xi ',\mathfrak p) = (\xi +\xi ',\mathfrak p') \mbox{ where }
\:\mathfrak p'(i) = \left\{
\begin{array}{cl}
\mathfrak t(i) & \mbox{if } i\in \pt(\xi +\xi ')\mbox{ and } \tct(i)\neq 0 \\
\mathfrak p(i) & \mbox{if } i\in \pt(\xi +\xi ')\mbox{ and } \tct(i) = 0 \\
\infty & \mbox{else}
\end{array}\right. .
\end{equation}
The topology of $E(\X,\Z^\nd)$ is
first countable and may thus be described in terms of convergence of sequences.
A sequence $(\xi _n,\tct_n)_n$ in $\bigcup_{\tct\in\mathfrak T} \Tp_\tct \times \{\tct\}$
converges to $(\xi ,\tct)$ if and only if $\xi_n\to \xi$ in $\Tp$ and eventually
$\xi_n-\xi\subset {C_{\tct}}$ and
$T_{\overline{C_{\tct_n}}}(0)\subset T_{\overline{C_{\tct}}}(\xi_n-\xi)$.
\end{theorem}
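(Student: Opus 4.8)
The plan is to realize every element of $E(\X,\Z^\nd)$ as a limit of a net of translations $\alpha_{t_\lambda}$, $t_\lambda\in\Gamma/\Delta=\Z^\nd$, and to read off from such a net exactly two pieces of data: the limiting translation $\xi_0$ in the base torus $\Tp$, and a ``direction of approach'' which will turn out to be an effective transformation type $\tct\in\Tct$. Since the maximal equicontinuous factor $\Tp$ is equicontinuous, Theorem~\ref{Ellis-fundamental} gives $E(\Tp,\Z^\nd)\cong\Tp$, and the induced morphism $(\pmax)_*$ sends our limit $f$ to translation by some $\xi_0\in\Tp$; equivalently $t_\lambda\to\xi_0$ in $\Tp$. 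This already fibers $E(\X,\Z^\nd)$ over $\Tp$ through $(\pmax)_*$, and the whole content is to describe the fibers.

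First I would extract $\tct$. Lifting $t_\lambda-\xi_0$ into a neighbourhood of $0$ in $\R^\np$ and passing to a subnet, I record for each $i\in I$ whether the lifted vectors eventually lie on the positive side, the negative side, or exactly on the cosets $H_i^0+\Gamma$; this defines $\tct(i)\in\{+,-,0\}$. That $\tct$ is \emph{effective} (i.e.\ $\tct\in\Tct$, so $C_\tct\neq\emptyset$) follows because the approach vectors come from $\Gamma$, forcing the admissible directions into $\R C_\tct+\Gamma$; this is also precisely the constraint that yields $\xi_0\in\Tp_\tct$. Then I would compute the limit map on a point $(\xi,\pct)\in\X$: writing $\alpha_{t_\lambda}(\xi,\pct)=(\xi+t_\lambda,\pct)$ and invoking the cone description of the topology of $\X$ from the preceding theorem, the base converges to $\xi+\xi_0$ while the fiber coordinate converges to the type $\pct'$ given by a left-domination rule. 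For $i\in\pt(\xi+\xi_0)$ a crossing index ($\tct(i)\neq 0$) forces $\pct'(i)=\tct(i)$, whereas a neutral index ($\tct(i)=0$) leaves the incoming $\pct(i)$ untouched; for $i\notin\pt(\xi+\xi_0)$ one sets $\pct'(i)=\infty$, which is exactly the constraint $\mathrm{dom}\,\pct'=\pt(\xi+\xi_0)$. This is the asserted action formula.

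With the parametrization $f\leftrightarrow(\xi_0,\tct)$ in hand, the remaining points are comparatively formal. Composing two limit maps and chasing the action formula through gives the product $(\xi,\tct)(\xi',\tct')=(\xi+\xi',\tct\tct')$ with the domination product on $\Tct$, the asymmetry of which reflects that a hyperplane already crossed by the first map cannot be uncrossed by the second (hence $\tct\tct'(i)=\tct(i)$ whenever $\tct(i)\neq 0$). For surjectivity one constructs, for each $(\xi_0,\tct)$ with $\xi_0\in\Tp_\tct$, an explicit approximating net $t_\lambda\in\Gamma$ using density of $\Gamma$ in $\R^\np$ and an approach direction inside $C_\tct$. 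Finally, matching the Tychonoff topology on $E$ with the stated sequential criterion---$\xi_n\to\xi$ together with the cone inclusion $\xi_n-\xi\in\overline{C_\tct}$ and the tangent-cone inclusion $T_{\overline{C_{\tct_n}}}(0)\subset T_{\overline{C_\tct}}(\xi_n-\xi)$---simultaneously verifies the topology and yields first countability, since those criteria manifestly admit countable neighbourhood bases.

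The main obstacle will be the precise bookkeeping of the fiber transformation in the boundary cases. When the approach direction lies on the boundary of $C_\tct$ rather than in its interior, the convergence in $\X$ is controlled by the tangent-cone condition, and it is exactly here that one must check that the limiting type $\pct'$ is well defined, lands in $\mathfrak P$ (so that $f$ really takes values in $\X$), and depends only on $(\xi_0,\tct)$ and not on the chosen net. Establishing effectiveness of $\tct$ together with the inclusion $\xi_0\in\Tp_\tct$---the interface between the discrete combinatorics of $\Tct$ and the density of $\Gamma$ in $\R^\np$---is the other delicate ingredient.
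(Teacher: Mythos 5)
The paper contains no proof of Theorem~\ref{thm-Ellis}: it is quoted from \cite{Aujogue13}, and the surrounding text explicitly refers the reader there, so there is no in-paper argument to measure yours against. On its own terms, your parametrization is the right one and is surely the skeleton of Aujogue's argument: an element $f\in E(\X,\Z^\nd)$ is a limit of a net of translations, Theorem~\ref{Ellis-fundamental} applied to the equicontinuous factor pins down a base point $\xi_0\in\Tp$ via $(\pmax)_*$, and the residual datum is an asymptotic direction of approach encoded by a transformation type. The difficulty is that everything you label ``comparatively formal'' or defer as ``the main obstacle'' is precisely the content of the theorem, so what you have is a correct roadmap rather than a proof.

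Concretely, four gaps. First, well-definedness in both directions: you extract $\tct$ only after passing to a subnet, but the isomorphism needs that a convergent net determines a \emph{single} $\tct$ (different subnets of the same net must not select different sides, which requires probing $f$ at singular points of every cut type -- these exist by density of $A_i+\Gamma$ -- and using the cone description of convergence in $\X$), and conversely that the limit function at boundary approaches, governed by the tangent-cone condition, depends only on $(\xi_0,\tct)$; you name this issue but do not resolve it. Second, effectiveness of $\tct$ and the membership $\xi_0\in\Tp_\tct$ are asserted (``follows because the approach vectors come from $\Gamma$''), but this is exactly where the definition of $C_\tct$ through $W_\tct$, the connected component of $\overline{\R C'_\tct\cap\Gamma}$, must enter: one has to show a $\Gamma$-net converging to $\xi_0$ along directions in $C'_\tct$ forces nontrivial accumulation of $\Gamma$ inside $\R C'_\tct$, whence $C_\tct\neq\emptyset$ and $\xi_0\in(\R C_\tct+\Gamma)/\Delta$; nothing in your sketch does this. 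Third, first countability is not ``manifest'': as the paper itself stresses, Ellis semigroups of non-periodic Delone systems are generically non-metrizable and can contain a copy of the Stone--\v{C}ech compactification of $\N$, and the substance is the equivalence of Tychonoff convergence -- simultaneous pointwise convergence at \emph{every} $(\xi,\pct)\in\X$ -- with the single sequential cone criterion; only after that equivalence do the explicit countable neighbourhood bases exist. Fourth, closure of $\Tct$ under the product (that $\tct\tct'$ is again effective) is never addressed; it would follow from surjectivity of your parametrization together with closure of $E$ under composition, but your surjectivity construction is itself only gestured at. Finally, a small conceptual slip worth fixing: in the composition convention of the paper the left factor of $(\xi,\tct)(\xi',\tct')$ is the \emph{last}-applied map and it dominates -- the later transformation re-flips worms wherever $\tct(i)\neq 0$ -- so your heuristic that ``a hyperplane already crossed by the first map cannot be uncrossed by the second'' has the roles reversed (the second-applied map can and does override), even though the displayed product formula you state is the correct one.
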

Again the expressions $\xi_n-\xi\subset {C_{\tct}}$ and 
$T_{\overline{C_{\tct_n}}}(0)\subset T_{\overline{C_{\tct}}}(\xi_n-\xi)$
should be understood for
large enough $n$ so that we can lift $\xi_n-\xi$ into a small neighborhood of $0$ in $\R^\np$. 
\bigskip

\noindent
{\bf Remarks.}
The product on $\Tct$ can be described geometrically with the help of the transformation cones.
We do this below in the case of the octagonal tiling.

As it should be, the domain of $\pct'$ in the above formula is $I(\xi +\xi ')$. Indeed, if $t(i) = 0$ then $\xi \in\TM_\tct\subset H_i^0+\Gamma$ and thus $i\in I(\xi +\xi ')$ iff $i\in I(\xi ')$. This implies that
for $i\in I(\xi +\xi ')$ with $t(i)=0$ we must have $\pct(i)\neq\infty$.

Convergence of $(\xi _n,\tct_n)$ to $(\xi ,\tct)$ implies convergence of $\xi _n$ to $\xi $ in
$\Tp$. Furthermore the copy of the acting group $\Z^\nd$ in
$ E(\X,\Z^\nd)$ is given by $\alpha_t = ([t],\mathfrak o)$. 
As $T_{C_{\mathfrak o}}(0)=\{0\}$ we have that $\alpha_{t_n}$ converges to the transformation
$(\xi ,\tct)\in E(\X,\Z^\nd)$ if and only if $[t_n]\to \xi $ in $\Tp$
and eventually $[t_n]- \xi  \in {C_{\tct}}$.
\bigskip

The Ellis semigroup of the  continuous dynamical system $(\Omega_\Del,\R^\nd)$ is just the suspension of $E(\X,\Z^\nd)$,
$$E(\Omega_\Del,\R^\nd) \cong E(\X,\Z^\nd)  \times_{\Z^\nd} \R^\nd.$$
The following theorem is thus the continuous version of Theorem~\ref{thm-Ellis}.

\begin{theorem}[\cite{Aujogue13}]\label{thm-Ellis-Om}
Consider the dynamical system $(\Omega_\Del,\R^N)$ of an almost canonical cut-and-project set.
There exists a finite monoid of idempotents $\mathfrak T$ which has a unique minimal left ideal $\Tct_{min}$,
and for each $\tct\in\mathfrak T$, a group
$\TM_\tct$ with $\R^N\subset \TM_\tct\subset\TM=\Omax$ such that, algebraically,
$$ E(\Omega_\Del,\R^N) \cong \bigcup_{\tct\in\mathfrak T} \TM_\tct \times \{\tct\}\subset \TM\times\mathfrak T$$
with semigroup law
$$ (\xi ,\tct) (\xi ,\tct') = (\xi +\xi ',\tct\tct').$$
In particular the Ellis semigroup is a finite disjoint union of groups. For the unit $\mathfrak o\in\mathfrak T$ we have $\TM_{\mathfrak o} = \R^N$ and for each minimal idempotent $\tct\in\mathfrak T$ we
have $\TM_\tct = \TM$.
Finally, the semigroup morphism induced by $\pmax$ is given by the projection onto the first factor.
\end{theorem}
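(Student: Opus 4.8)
The plan is to obtain the continuous statement directly from the discrete description of $E(\X,\Z^\nd)$ in Theorem~\ref{thm-Ellis} by transporting it through the suspension identity $E(\Omega_\Del,\R^\nd)\cong E(\X,\Z^\nd)\times_{\Z^\nd}\R^\nd$ recorded just above. Since the theorem claims only an \emph{algebraic} isomorphism, I would concentrate on the monoid structure and on identifying the groups $\TM_\tct$, leaving the (parallel) transfer of the first-countable topology to the suspension. The first step is to pin down the $\Z^\nd$-action on $E(\X,\Z^\nd)$ that enters the suspension. Recall that $E$ carries the left action $n\cdot f=\alpha_n\circ f$ and that the acting group sits inside $E(\X,\Z^\nd)$ as $\alpha_n=([n],\mathfrak{o})$; by the product law of Theorem~\ref{thm-Ellis} and $\mathfrak{o}\tct=\tct$ one gets $n\cdot(\xi,\tct)=([n]+\xi,\tct)$. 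Thus the action translates only the $\Tp$-coordinate and fixes the transformation type, so it is graded by $\mathfrak{T}$ and respects the decomposition $E(\X,\Z^\nd)=\bigcup_{\tct\in\mathfrak{T}}\Tp_\tct\times\{\tct\}$.

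Because the $\Z^\nd$-action leaves each $\tct$ fixed, the suspension splits over $\mathfrak{T}$: setting $\TM_\tct:=\Tp_\tct\times_{\Z^\nd}\R^\nd$ one obtains $E(\Omega_\Del,\R^N)\cong\bigcup_{\tct\in\mathfrak{T}}\TM_\tct\times\{\tct\}$, and the product $(\xi,\tct)(\xi',\tct')=(\xi+\xi',\tct\tct')$ suspends coordinatewise to $(\zeta,\tct)(\zeta',\tct')=(\zeta+\zeta',\tct\tct')$ with $\zeta+\zeta'$ now the group operation of $\TM$. Since each $\tct$ is idempotent, the piece $\TM_\tct\times\{\tct\}$ is a group with unit $(0,\tct)$, so $E(\Omega_\Del,\R^N)$ is a finite disjoint union of groups. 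It remains to place $\TM_\tct$ inside $\TM=\Tp\times_{\Z^\nd}\R^\nd$. As suspension by the common $\Z^\nd=\Gamma/\Delta$-action is functorial and monotone in the fibre, the chain $\Z^\nd=\Tp_{\mathfrak{o}}\subseteq\Tp_\tct\subseteq\Tp$ suspends to $\R^N=\TM_{\mathfrak{o}}\subseteq\TM_\tct\subseteq\TM$; here $\Tp_{\mathfrak{o}}=\Gamma/\Delta$ (because $\R C_{\mathfrak{o}}=\{0\}$), and its suspension $\Gamma/\Delta\times_{\Z^\nd}\R^\nd$ collapses to the orbit $j(\R^\nd)=\R^N$ since $\Z^\nd$ acts on itself regularly. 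For a chamber type $\tct$ (full domain) one has $\R C_\tct=\R^\np$, hence $\Tp_\tct=\Tp$ and $\TM_\tct=\TM$. Finally $(\pmax)_*$ is computed from its discrete analogue $E(\X,\Z^\nd)\to E(\Tp,\Z^\nd)\cong\Tp$, $(\xi,\tct)\mapsto\xi$, which comes from the factor map $\X\to\Tp$ being the first projection and suspends to $(\zeta,\tct)\mapsto\zeta$; that $E(\TM,\R^N)\cong\TM$ is Theorem~\ref{Ellis-fundamental}.

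It remains to analyse the finite monoid $\mathfrak{T}$. A direct check on the product law gives $\tct\tct=\tct$, so every element is idempotent, and $\tct\sss\tct=\tct\sss$, so $\mathfrak{T}$ is a left regular band; the assignment $\tct\mapsto\mathrm{dom}\,\tct$ is a morphism onto the join-semilattice of domains with $\mathrm{dom}(\tct\tct')=\mathrm{dom}\,\tct\cup\mathrm{dom}\,\tct'$. The chamber types $K:=\{\tct:\mathrm{dom}\,\tct=I\}$ (nonempty, as finitely many hyperplanes leave a chamber) form a left-zero subsemigroup ($\tct\tct'=\tct$ there) which is a left ideal; since any left ideal contained in $K$ already contains every chamber (apply $\sigma\tct=\sigma$ for $\sigma\in K$), $K$ is the \emph{unique} minimal left ideal $\Tct_{min}$, and its elements are precisely the minimal idempotents, for which $\TM_\tct=\TM$ by the previous paragraph. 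The main obstacle is the middle step: verifying that the abstract mapping-torus group $\Tp_\tct\times_{\Z^\nd}\R^\nd$ really is the claimed subgroup of $\TM$ — in particular handling the degenerate unit fibre $\Tp_{\mathfrak{o}}=\Gamma/\Delta$ correctly, since the suspension must be taken \emph{without} closure so that $\TM_{\mathfrak{o}}=\R^N$ rather than the dense hull $\overline{\R^N}=\TM$, and confirming that the coordinatewise product and, for the full statement, the convergence criterion of Theorem~\ref{thm-Ellis} survive the suspension.
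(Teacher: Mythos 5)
Your proposal follows exactly the route the paper itself takes: the paper gives no independent proof of Theorem~\ref{thm-Ellis-Om}, obtaining it (with the citation to \cite{Aujogue13}) as the suspension $E(\Omega_\Del,\R^\nd)\cong E(\X,\Z^\nd)\times_{\Z^\nd}\R^\nd$ of the discrete description in Theorem~\ref{thm-Ellis}, which is precisely your plan. The details you supply — the $\Z^\nd$-action $n\cdot(\xi,\tct)=([n]+\xi,\tct)$ fixing the $\mathfrak T$-coordinate so the suspension splits, $\TM_\tct=\Tp_\tct\times_{\Z^\nd}\R^\nd$ with the degenerate fibre giving $\TM_{\mathfrak o}=\R^N$ and full-span cones giving $\TM_\tct=\TM$, and the left-zero computation $\sigma\tct=\sigma$ showing the full-domain (chamber) types form the unique minimal left ideal — are all correct and consistent with the paper's framework, and your scoping of the claim as purely algebraic matches the paper's own caveat that the identification does not respect the topology.
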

When we say that $ E(\Omega_\Del,\R^N)$ is a finite disjoint union of groups we mean that the semigroup law restricted to the component $\TM_\tct \times \{\tct\}$ is a group law, which follows here since
$\tct\tct=\tct$ and so $([0],\tct)$ is the neutral element in $\TM_\tct \times \{\tct\}$. But this does not mean that $ E(\Omega_\Del,\R^N)$ is a group. $\mathfrak T$ is never a group. Moreover, as is the case for Lemma~\ref{lem-min}, the identification of $E(\Omega_\Del,\R^N)$ as a submonoid of $\Omax\times\Tct$ does not respect the topology; in fact, the above theorem says nothing about the topology of $E(\Omega_\Del,\R^N)$. The local nature of the topology, and the fact that it's first countable, can be got from Theorem \ref{thm-Ellis}.

In many cases the semigroup $\mathfrak T$ is very small, containing besides $\mathfrak o$ only minimal idempotents and so $E(\Omega_\Del,\R^N) = \TM\times \Tct_{min}\cup \R^N$.
These cases constitute the generic situation in \cite{FHKmem} and correspond to cut-and-project sets with {\em maximal} complexity \cite{Julien}.
On the opposite side, the almost canonical cut-and-project sets with minimal complexity have the largest possible $\mathfrak T$ (see the remark preceding Theorem \ref{thm-Ellis}).
Less complexity seems to make the Ellis group richer!
Almost canonical cut-and-project sets with minimal complexity share also the property that their rational cohomology groups are finitely generated \cite{Julien,FHKmem}.
All Delone sets coming from primitive substitutions and, more generally, all linearly repetitive tilings, have minimal complexity. This can be rather directly  inferred from \cite{Len04} and is discussed  in some detail in   \cite{BLR}. There it  is also shown that pure point spectrum implies zero entropy (i.e., sub-exponential complexity)  for  general uniquely ergodic  systems.

\subsection{Example of the octagonal tiling}
The octagonal tiling has dimension and codimension $2$.
Its window $W$ is a regular octahedron which is the projection of the unit cube for $\tilde L = \Z^4$ onto $\R^\np=\R^2$. $\Gamma$ is the lattice generated by the difference vectors between corners of  the octahedron.
Of the eight affine hyperplanes spanned by the sides of the octahedron, only four are independent modulo $\Gamma$ so we only need four lines  $H_i^0$ and can take $a_i=0$ to describe the affine hyperplanes furnishing the input data of the dynamical system. These four lines form a
regular $8$-star in $\R^2$. We number them so that $H_1^0$ and $H_3^0$ are orthogonal and hence also $H_2^0$ and $H_4^0$.

It turns out that we have $8$ possible cut types \cite{FHKmem}: 
If $\xi\in\Gamma/\Delta$ then $I(\xi) = \{1,2,3,4\}$, i.e.\ $\xi$ lies on four different affine hyperplanes.
If  $\xi\in (\frac{e_1+e_3}2 + \Gamma)/\Delta$ then $I(\xi) = \{2,4\}$ and, if  $\xi\in (\frac{e_2+e_4}2 + \Gamma)/\Delta$ then $I(\xi) = \{1,3\}$. If $\xi\in (H_i  + \Gamma)/\Delta$ but it lies not in the above sets then the cut type is $I(\xi) = \{i\}$. Here $i=1,2,3,4$ so these yield four cut types. Finally, the cut type of all other points is $I(\xi) = \emptyset$.

It follows that we have $25$ different point types: Cut  type $\{1,2,3,4\}$ allows $8$ different point types whose cones correspond to the $8$ cones with opening angle of $45$ degree and boundary contained in $H^0_1\cup H^0_2\cup H^0_3\cup H^0_4$. Cut  type $\{1,3\}$ and $\{2,4\}$ allow each for four point types which correspond to the $4$ cones with operning angle of $90$ degree and boundary contained in $H^0_1\cup H^0_3$ and $H^0_2\cup H^0_4$, respectively.
Cut type $\{i\}$ allows for two point types corresponding to the two open half spaces bounded by $H_i^0$. Finally, if $I(\xi ) = \emptyset$ we have a single point type $\pt =  \underline{\infty}$ with cone all of $\R^2$.

An element $(\xi ,\pct)\in\X$ corresponds to a tiling.
The elements with point type $\underline{\infty}$ correspond to the non-singular tilings. The other
elements correspond to tilings which have worms. A worm\footnote{In the Penrose tiling these worms are referred to as Conway worms \cite{Grunbaum}.} 
is a configuration of tiles along a line, in the octagonal tiling made of squares and rhombi, which may occur in two different orientations. 
More precisely, three rhombi fill a hexagon, and they can do this in two different ways. In the worm one or the other way is realised coherently for all hexagons and  
changing the way is referred to as flipping the worm.
Now whenever $i\in I(\xi)$ then the tiling corresponding to $(\xi ,\pct)\in\X$ contains a worm in
a direction determined by $i$. So depending on the cut type the tiling has one, two, or four worms in one, two, or four directions, respectively, and the point type corresponds precisely to the information in which way the worms are flipped. The first coordinate $\xi $ carries the information on where the worms cross. This describes the space $\X$. The $\Z^2$-action (N=2) is given by translation of the first variable which amounts to translation of the tiling.

We now describe $E(\X,\Z^2)$.
The octagonal tiling has the property that
$C'_\tct\cap\Lambda$ is dense in $C'_\tct$ provided $\tct\neq \mathfrak o$ which implies that all cones coincide with their effective cones. In the octagonal case these cones have dimension $2$, or $1$, or $0$, the latter only for $C_\mathfrak o$. The $2$-dimensional cones coincide with the $8$ cones of the point types which have an opening angle of $45$ degree.
They are associated to the minimal idempotents and their corresponding group $\Tp_\tct$ is equal to $\Tp$.
Furthermore there are eight $1$-dimensional cones corresponding to half-lines, more precisely for each $i$ one or the other half of $H_i^0$. Their corresponding group is $(H^0_i+\Gamma)/\Delta$. Finally there is the $0$-dimensional cone $C_{\mathfrak o}=\{0\}$ whose group is $\Gamma/\Delta = \Z^2$.

The product on $\mathfrak T$ can now be described geometrically by means of the associated cones. If $C^{(2)}_1$ is a $2$-dimensional cone and $C^{(*)}_2$ any other cone we have the left domination rule $C^{(2)}_1 C^{(*)}_2=C^{(2)}_1$.
If $C^{(1)}_1$ is a $1$-dimensional cone and $C^{(2)}_2$ a two-dimensional one then
the result is the $2$-dimensional cone
$C^{(1)}_1 C^{(2)}_2=C^{(2)}_3$ which can be described as "bringing $C^{(2)}_2$ alongside":
$C^{(1)}_1$ is a half line and
$C^{(2)}_2$ an open cone which is on a distinctive side of the half line.  $C^{(2)}_3$ is then the open cone which is on the same side as $ C^{(2)}_2$ and moreover contains $C^{(1)}_1$ in its boundary.
Almost the same sort of rule applies to the product of two one-dimensional cones
$C^{(1)}_1$ and  $C^{(1)}_2$. If these correspond to half lines which are not parallel then
$C^{(1)}_1 C^{(1)}_2=C^{(2)}_3$ where $C^{(2)}_3$ is the open cone which is on the same side
of $C^{(1)}_1$ as $ C^{(2)}_1$ and contains $C^{(1)}_1$ in its boundary. If however
$C^{(1)}_1$ and  $C^{(1)}_2$ are parallel then the left domination rule applies: $C^{(1)}_1 C^{(1)}_2=C^{(1)}_1$. Finally, there is only one cone of dimension $0$, this cone corresponds to the unit.

As we have already mentionned, the transformation cones are all disjoint. But the inclusion of a cone in the (euclidean) closure of another cone has an algebraic interpretation.
Indeed, if $C_\tct \subset \overline{C_{\tct'}}$ then $\tct$ and $\tct'$ satisfy
$\tct \tct'=\tct'\tct = \tct'$ which by the general definition of the order on a idempotent semigroup
means $\tct \geq \tct'$.

The action of $E$ on $\X$ is a more complicated, as the second coordinate depends on the point type.
But it simplifies in the particular case of an idempotent as follows:
 $([0],\mathfrak t)\cdot (\xi ,\mathfrak p) =  (\xi ,\pct')$ with $\mbox{\rm dom}\, \pct' = \mbox{\rm dom}\, \pct$ and
\begin{equation}\label{eq-E-idem}\nonumber
\pct'(i)  =
 \left\{
\begin{array}{cl}
\mathfrak t(i) &  \mbox{if }\tct(i)\neq 0 \\
\mathfrak p(i) & \mbox{if }\tct(i) = 0 \end{array}\right. .
\end{equation}
This can again geometrically be described in terms of the associated cones.
The action of $\tct$ is like "bringing the cone along": the point cone $C_{\pct'}$ is the unique cone
which has the same point type as $\pct$ and contains the cone $C_\tct$ in its closure.
In particular, point type $\underline{\infty}$ is  invariant under the action of a transformation type.
The effect of the action of the idempotent $([0],\tct)$ of the Ellis semigroup on the tiling corresponding to $(\xi,\pct)$ is thus as follows: If the tiling is non-singular then it acts like the identity. If the tiling is singular then the action is to flip the worms into (or to leave them in) the position which is encoded by $\pct'$.

If the acting element $(\xi ,\tct)$ is not an idempotent, then things may become more complicated, except if $\xi \in\Gamma$ in which case the point type of $\xi +\xi '$ agrees again with that of $\xi '$ and we have the combination of a translation of the tiling with a flip of worms.
Otherwise the formula for the action given in Theorem~\ref{thm-Ellis}
 takes into account that the domain of $\pct'$ coincides with the point type of $\xi +\xi'$ and tilings may be mapped to tilings with distinct worm configurations.

\end{document}